\numberwithin{figure}{section}
\numberwithin{equation}{section}
\let\cal\mathcal
\def\Ascr{{\cal A}}
\def\Bscr{{\cal B}}
\def\Cscr{{\cal C}}
\def\Lscr{{\cal L}}
\def\Oscr{{\cal O}}
\def\Wscr{{\cal W}}
\let\blb\mathbb
\def \ZZ{{\blb Z}}
\def \TT{{\blb T}}
\def \NN{{\blb N}}
\def \RR{{\blb R}}
\def\id{\text{id}}
\def\Id{\operatorname{id}}
\def\Lotimes{\overset{L}{\otimes}}
\def\quot{/\!\!/}
\def\mod{\operatorname{mod}}
\def\Lie{\mathop{\text{Lie}}}
\def\coh{\mathop{\text{\upshape{coh}}}}
\def\Spec{\operatorname {Spec}}
\def\GL{\operatorname {GL}}
\def\PGL{\operatorname {PGL}}
\def\diag{\operatorname {diag}}
\def\Ext{\operatorname {Ext}}
\def\Hom{\operatorname {Hom}}
\def\uHom{\operatorname {\mathcal{H}\mathit{om}}}
\def\End{\operatorname {End}}
\def\Sl{\operatorname {SL}}
\def\relint{\operatorname {relint}}
\def\im{\operatorname {im}}
\def\Tr{\operatorname {Tr}}
\def\ker{\operatorname {ker}}
\def\End{\operatorname {End}}
\def\id{{\operatorname {id}}}
\def\add{\operatorname {add}}
\def\rk{\operatorname {rk}}
\def\Tot{\operatorname {Tot}}
\def\gldim{\operatorname {gl\,dim}}
\def\r{\rightarrow}
\DeclareMathOperator{\Proj}{Proj}
\DeclareMathOperator{\Ind}{Ind}
\DeclareMathOperator{\Aut}{Aut}
\newcommand{\ol}{\bar}
\newtheorem{lemma}{Lemma}[section]
\newtheorem{proposition}[lemma]{Proposition}
\newtheorem{theorem}[lemma]{Theorem}
\newtheorem{corollary}[lemma]{Corollary}
\newtheorem{lemmas}{Lemma}[subsection]
\newtheorem{propositions}[lemmas]{Proposition}
\newtheorem{theorems}[lemmas]{Theorem}
\newtheorem{corollarys}[lemmas]{Corollary}
\theoremstyle{definition}
\newtheorem{definition}[lemma]{Definition}
\newtheorem{definitions}[lemmas]{Definition}
\theoremstyle{remark}
\newtheorem{remark}[lemma]{Remark}
\newtheorem{remarks}[lemmas]{Remark}
\newdimen\uboxsep \uboxsep=1ex
\def\uboxn#1{\vtop to 0pt{\hrule height 0pt depth 0pt\vskip\uboxsep
\hbox to 0pt{\hss #1\hss}\vss}}
\def\uboxs#1{\vbox to 0pt{\vss\hbox to 0pt{\hss #1\hss}
\vskip\uboxsep\hrule height 0pt depth 0pt}}
\def\codim{\operatorname{codim}}
\def\Ref{\operatorname{ref}}
\def\ind{\operatorname{ind}}
\def\cone{\operatorname{cone}}
\def\Spin{\operatorname{Spin}}
\def\SO{\operatorname{SO}}
\def\Pin{\operatorname{Pin}}
\def\Sp{\operatorname{Sp}}
\def\Xs{X^{\mathbf{s}}}
\def\Ob{\operatorname{Ob}}
\def\pdim{\operatorname{pdim}}
\def\Tot{\operatorname{Tot}}
\def\cone{\operatorname{cone}}
\def\Cl{\operatorname{Cl}}
\title[Non-commutative resolutions of quotient singularities]{Non-commutative resolutions of quotient singularities for reductive groups}
\author{\v{S}pela \v{S}penko}
\email[\v{S}pela \v{S}penko]{Spela.Spenko@ed.ac.uk}
\address{School of Mathematics\\
  The University of Edinburgh\\
  James Clerk Maxwell Building\\
  The King's Buildings\\
  Peter Guthrie Tait Road\\
  EDINBURGH\\
  EH9 3FD \\
  Scotland, UK} 
\author{Michel Van den Bergh}
\email[Michel Van den Bergh]{michel.vandenbergh@uhasselt.be}
\address{Universiteit Hasselt\\ Universitaire Campus\\ 3590 Diepenbeek\\Belgium}
\thanks{The second author is a senior researcher at the FWO}
\thanks{This research was carried out while the first author was visiting the University of Hasselt
supported by the Slovenian Research Agency and in part by the Slovene Human Resources Development and Scholarship Fund. The second author was supported by the FWO grant 1503512N}
\keywords{Non-commutative resolutions, modules of covariants}
\subjclass{13A50,14L24,16E35}
\begin{document}
\begin{abstract}
  In this paper we generalize standard results about non-commu\-tative
  resolutions of quotient singularities for finite groups to arbitrary reductive groups. We show
  in particular that quotient singularities for reductive groups
  always have non-commutative resolutions in an appropriate
  sense. Moreover we exhibit a large class of such singularities which
  have (twisted) non-commutative \emph{crepant} resolutions.

 We discuss
  a number of examples, both new and old, that can be treated using our
  methods. Notably we prove that  twisted non-commutative crepant resolutions exist  in previously unknown cases for determinantal varieties
of symmetric and skew-symmetric matrices. 

In contrast to almost all prior results in this area our techniques
are algebraic and do not depend on knowing a commutative resolution
of the singularity. 
\end{abstract}
\maketitle

\long\def\flash#1{{\color{blue}#1}}

\let\oldmarginpar\marginpar
\def\marginpar#1{\oldmarginpar{\tiny #1}}
\setcounter{tocdepth}{1}
\tableofcontents

\section{Introduction}
In this paper we generalize standard results about non-commutative
resolutions of quotient singularities for finite groups to arbitrary reductive groups.  Some
basic definitions are given in~\S\ref{ref-1.1-0}. Our main technical results
are stated in \S\ref{ref-1.5-23}-\S\ref{ref-1.6-26}.  A number of
applications are listed in \S\ref{ref-1.3-4}-\S\ref{ref-1.4-11}.

\medskip

Throughout $k$ is an algebraically closed field of characteristic
zero. All rings will be $k$-algebras. If $G$ is an algebraic group
then we denote the set of isomorphism classes of irreducible
$G$-representations by $\widehat{G}$.
\subsection{Preliminaries on non-commutative resolutions}
\label{ref-1.1-0}
We first recall the 
definition of a non-com\-mutative (crepant) resolution  (abbreviated as  NC(C)R).
\begin{definitions} \cite{DaoIyama,Leuschke,VdB32,Wemyss1}
\label{ref-1.1.1-1}
Assume that $S$ is a normal noetherian domain.
Then a \emph{non-commutative resolution}  of $S$ is
  an $S$-algebra of finite global
  dimension  of the form $\Lambda=\End_S(M)$ where $M$ is a non-zero finitely generated reflexive $S$-module.
The resolution is \emph{crepant} if $S$ is in addition Gorenstein\footnote{Sometimes NCCR's are defined assuming only that $S$ is a normal Cohen-Macaulay ring. E.g.\ \cite{IW}. However in
this paper we will only use NCCR's over Gorenstein rings.}  and $\Lambda$ is a maximal Cohen-Macaulay $S$-module.
\end{definitions}
Among other things NCRs are a crucial part of the Kuznetsov's
Homological Projective Duality program \cite{Kuznetsov4,Kuznetsov3}. The HPD dual of a smooth
projective variety is often a  NCR (possibly twisted, see below) of a
singular variety. See \cite{Kuznetsov2} for many examples.

\medskip

For the rationale behind the definition of a NCCR see \cite{VdB32}.
As shown in \cite{IW} NCCRs are optimal among the NCRs in the sense that they have excellent
homological properties which are lost when you enlarge or shrink
them. See Remark \ref{rem:nccropt} for a precise statement.

 In general  \mbox{NCCRs} are close cousins of (commutative)
crepant resolutions as defined in algebraic geometry (see e.g.\ \cite{VdBSt1}). 
They are so close that one may often pass from the commutative to the non-commutative context and vice versa. Given a
NCCR one may sometimes produce a crepant resolution as a GIT-moduli space
of representations
and conversely a crepant resolution may give rise to a NCCR obtained as the
endomorphism ring of a tilting bundle. Deep examples include the three dimensional McKay correspondence by Bridgeland-King-Reid \cite{BKR}
and Bezrukavnikov's ``non-commutative Springer resolution'' \cite{Bezrukavnikov}. For other examples see \cite{BezKal2,VdB100,VdB34,SmithGordon,VdB32}.  

\medskip

For three dimensional terminal Gorenstein singularities 
 the existence of commutative and non-commutative
 crepant resolutions are even equivalent \cite{VdB32}  and
this is part of the motivation for the algebraic approach to the three
dimensional minimal model program by Iyama and Wemyss
\cite{IW,IW1,Wemyss}.

\medskip

Under mild conditions a NC(C)R yields a ``categorical (crepant) resolution of singularities'' \cite{Kuznetsov,LuntsKuznetsov,Lunts}
as shown by the following trivial lemma which is an extension of \cite[Example 5.3]{Lunts}.
\begin{lemmas} 
\label{ref-1.1.2-2} Assume that $S$ is a finitely generated integrally closed $k$-algebra and that $\Lambda=\End_S(M)$ is a NCR for $S$. Then $\Lambda$
is smooth as a DG-algebra. Assume in addition that $M$ is a projective
$\Lambda$-module (so in particular: (1) if $S$ is a summand of $M$ or (2) if $S$ is Gorenstein, $\Lambda$ is a NCCR of $S$ and $M$ is Cohen-Macaulay). Then the functor
\[
D(S)\r D(\Lambda): N\mapsto M\Lotimes_S N
\]
is fully faithful and hence yields a categorical resolution of singularities of $S$ in the sense of \cite{Kuznetsov,LuntsKuznetsov,Lunts}. Moreover if $\Lambda$ is a NCCR then the categorical resolution is crepant in the sense of
\cite{Kuznetsov}.
\end{lemmas}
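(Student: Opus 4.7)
My plan is to verify the four independent claims of the lemma --- DG-smoothness of $\Lambda$, fully faithfulness of $F := M \Lotimes_S -$, $\Lambda$-projectivity of $M$ in the stated cases, and crepancy of the categorical resolution --- treating the statement as an amalgam of standard facts. For DG-smoothness, I note that $\Lambda$ is module-finite over the finitely generated $k$-algebra $S$, so it is itself a finitely generated noetherian $k$-algebra; together with finite global dimension and the perfectness of $k$ this forces $\Lambda$ to be perfect as a bimodule over itself, which is the defining condition of homological smoothness.

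For fully faithfulness I first record the double-centralizer identity $\End_\Lambda(M) = S$: the canonical map $S \to \End_\Lambda(M)$ is an isomorphism in codimension one (where $S$ is regular, $M$ is locally free, and the statement reduces to the classical Morita identity), and both sides are reflexive $S$-modules, so the isomorphism extends globally. Since $M$ is assumed finitely generated projective over $\Lambda$, the functor $\Hom_\Lambda(M, -)$ is exact and commutes with tensor products over $S$, hence for every $N \in D(S)$
\[
R\Hom_\Lambda(M, M \Lotimes_S N) \;\cong\; \End_\Lambda(M) \Lotimes_S N \;=\; N,
\]
which is precisely the unit of the adjunction $(M \Lotimes_S -,\; R\Hom_\Lambda(M,-))$. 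Thus $F$ is fully faithful.

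To verify the two sufficient conditions for $\Lambda$-projectivity of $M$: in case (1), writing $M = S \oplus M'$ and letting $e \in \Lambda$ be the idempotent projecting $M$ onto its $S$-summand, the map $\Lambda e \to M,\; \lambda e \mapsto \lambda(1)$ is a left $\Lambda$-module isomorphism, so $M$ is a direct summand of $\Lambda$. In case (2), $\Lambda$ has finite global dimension and is module-finite over the Cohen-Macaulay ring $S$, so the non-commutative Auslander--Buchsbaum formula gives
\[
\pdim_{\Lambda_{\mathfrak{p}}}(M_{\mathfrak{p}}) \;=\; \depth_{S_{\mathfrak{p}}}(\Lambda_{\mathfrak{p}}) - \depth_{S_{\mathfrak{p}}}(M_{\mathfrak{p}}) \;=\; 0
\]
at every prime $\mathfrak{p}$ of $S$, since both $\Lambda$ and $M$ are MCM over $S$.

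Finally, a fully faithful embedding $D(S) \hookrightarrow D(\Lambda)$ with $\Lambda$ homologically smooth is by definition a categorical resolution in the cited sense (after the standard restriction to appropriate bounded subcategories). For crepancy when $\Lambda$ is a NCCR, one invokes that $\Lambda$ being MCM over the Gorenstein ring $S$ trivializes the relative dualizing bimodule $\omega_{\Lambda/S} = \Hom_S(\Lambda, S)$ to $\Lambda$ up to the Serre functor of $S$; this identifies the left and right adjoints of $F$ in the manner required by Kuznetsov's definition of a crepant categorical resolution. The one place requiring genuine care is precisely this matching of the Serre functors of $S$ and $\Lambda$; everything else is formal.
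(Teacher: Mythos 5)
The paper states this lemma without proof (it is described as ``trivial'' and an extension of Lunts's Example 5.3), so there is no official argument to compare against. Your proof carries out what is clearly the intended argument, and each of the four claims is handled along the correct lines: the adjunction-unit computation using $\Hom_\Lambda(M,-)\cong \Hom_\Lambda(M,M)\otimes_S(-)$ for $M$ finitely generated projective over $\Lambda$, the double-centralizer identity $\End_\Lambda(M)=Z(\Lambda)=S$ via reflexivity and the codimension-one reduction, the idempotent argument for case (1), and the noncommutative Auslander--Buchsbaum equality for case (2) are all correct and complete.

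The one place I would push back is the DG-smoothness step, which you present as if it followed formally from the three bullet facts (module-finite over an affine base, finite global dimension, $k$ perfect). This \emph{is} a true implication, but it is not an immediate consequence: one has to know that the enveloping algebra $\Lambda^{\mathrm{op}}\otimes_k\Lambda$ again has finite global dimension, which requires a K\"unneth-type bound $\gldim(A\otimes_k B)\le\gldim A+\gldim B$ valid in this generality (noetherian $k$-algebras module-finite over central affine subalgebras, $k$ perfect). This is a genuine theorem --- note in particular that one cannot factor through $\Lambda\otimes_S\Lambda^{\mathrm{op}}$, since $S$ is singular and the diagonal $S$ has infinite projective dimension over $S\otimes_k S$ --- so it deserves a citation rather than a one-clause deduction. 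The crepancy step is similarly compressed: the substance is the bimodule isomorphism $\Hom_S(\Lambda,S)\cong\Lambda$ (which holds for \emph{any} reflexive $M$ via the trace pairing in codimension one) together with the Cohen--Macaulayness of $\Lambda$ forcing $R\Hom_S(\Lambda,S)$ to be concentrated in a single degree, so that the relative Serre functor $-\Lotimes_\Lambda R\Hom_S(\Lambda,\omega_S)$ is the identity; you name both ingredients, which is enough, though spelling out the identification of the right adjoint of $R\Hom_\Lambda(M,-)$ with $M\Lotimes_S-$ would make the argument self-contained.
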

Note that the existence of categorical resolutions of singularities
(satisfying even stronger conditions) has been shown in complete
generality in \cite{LuntsKuznetsov}. However we believe that, given
their simplicity, categorical resolutions given by NCRs are of
independent interest. 

\medskip

For use below we note that a ``twisted''
NC(C)R of index $u$ is just like a NC(C)R except that it is generically a central simple algebra of index $u$
rather than a matrix ring. See \S\ref{ref-3-32} below for more precise definitions.  A NC(C)R is a twisted NC(C)R of index 1. 

\subsection{Non-commutative resolutions by algebras of covariants}
\label{ref-1.2-3}
As an introduction to the rest of this paper we will first recall some
facts in the well understood finite group case. This material is classical.
See e.g.\ \cite{Auslander}, \cite[\S J]{Leuschke}, \cite[\S1.4]{Wemyss}.

Let $G$ be a finite
group which acts on a smooth affine variety $X$.  For $U$ a finite
dimensional $G$-representation let
$M(U)\overset{\text{def}}{=}(U\otimes k[X])^G$ be the corresponding
$k[X]^G$-\emph{module of covariants} \cite{Brion,VdB9}.

Let $\mod(G,k[X])$ be the abelian category of $G$-equivariant finitely generated $k[X]$-modules. This category has a projective generator $U\otimes k[X]$ where
$U=\bigoplus_{V\in \widehat{G}} V$. 
It follows that $\mod(G,k[X])$ is equivalent to $\mod(\Lambda)$ where
\[
\Lambda=\End_{G,k[X]}(U\otimes k[X])=(\End(U)\otimes k[X])^G=M(\End_k(U)).
\]
Hence $\Lambda$ is a $k[X]^G$-``algebra of covariants'' (as $\End_k(U)$ is a $G$-equivariant $k$-algebra). Since $\mod(G,k[X])$ has finite global dimension we also
obtain
$
\gldim \Lambda<\infty
$.
We may think of $\Lambda$ as a non-commutative resolution of $k[X]^G=k[X\quot G]$ in a
weak sense.

If no element of $G$ fixes a divisor then $k[X]/k[X]^G$ is \'etale in codimension one and
from this one easily obtains
\[
\Lambda=\End_{k[X]^G}(M(U)).
\]
Hence in this case $\Lambda$ is a true NCR of $k[X]^G$ in the sense of Definition \ref{ref-1.1.1-1}. If in addition $X=\Spec SW$ for a representation $W$ and
$G\subset \Sl(W)$ then $k[X]^G$ is Gorenstein and
$\Lambda$ is
a NCCR of~$k[X]^G$.

\medskip

For general reductive groups one may attempt to construct non-commutative resolutions in a similar way
 using the properties of the category $\mod(G,k[X])$. However if $G$
is not finite  the analysis is more complicated because
of two  non-trivial issues:
\begin{enumerate}
\item as $G$ has infinitely many non-isomorphic irreducible representations  the category $\mod(G,k[X])$ does not have a 
projective generator;
\item modules of covariants are usually not Cohen-Macaulay.
\end{enumerate}
The first issue will be handled in \S\ref{ref-11.2-105} where we construct certain nice complexes
  which relate different projectives in $\mod(G,k[X])$. The second
issue is handled using the results in \cite{VdB3,Vdb1,Vdb7,VdB4}. See \S\ref{ref-4.4-57} below.

\medskip

Other
papers which discuss the homological properties of categories of $G$-equi\-variant modules and more
generally coherent sheaves are \cite{BFK,DHL}.  To the best of our
understanding the results in those papers are complementary to ours.

\subsection{General results}
\label{ref-1.3-4}
In the rest of this paper we will look for non-commutative resolutions
of quotient singularities given by algebras of covariants.  
We first state some general results. 
\begin{theorems}  \label{ref-1.3.1-5} (See \S\ref{ref-11.4-116} below.) Assume that $G$ is a reductive group acting on a smooth affine  variety $X$. 
Then there exists a finite dimensional $G$-representation $U$ containing the trivial representation
such that $\Lambda=M(\End(U))=(\End(U)\otimes k[X])^G$ satisfies $\gldim \Lambda<\infty$. 
\end{theorems}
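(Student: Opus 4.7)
The plan is to produce a finite-dimensional $G$-module $U$ (including the trivial representation) such that each indecomposable projective $P_V := V \otimes k[X]$, $V \in \widehat{G}$, of $\mod(G, k[X])$ admits a resolution of uniformly bounded length by summands of $(U \otimes k[X])^{\oplus n}$. Since $X$ is smooth and $G$ is reductive, $\mod(G, k[X])$ has finite global dimension (bounded by $\dim X$), with the $P_V$ as projective generators. The functor $F := \Hom_{G, k[X]}(U \otimes k[X], -)$ restricts to a Morita-type equivalence $\add(U \otimes k[X]) \simeq \proj(\Lambda)$, so the bounded resolutions above are carried by $F$ to finite projective resolutions of the $\Lambda$-modules $F(P_V)$. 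A dimension-shifting argument, using that every finitely generated $\Lambda$-module is a quotient of a free one (which lies in the image of $F$), then propagates finite projective dimension throughout $\mod(\Lambda)$, yielding $\gldim \Lambda < \infty$.

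To build the required bounded resolutions I would invoke Luna's slice theorem: around $x \in X$ with reductive stabilizer $H := G_x$, the action is étale-locally $G \times_H N_x$, where $N_x$ is the slice $H$-representation. The Koszul complex of $N_x^*$ resolves the origin-skyscraper of $k[N_x]$ by modules of the form $k[N_x] \otimes \wedge^\bullet N_x^*$; twisting by an $H$-module $W$ and applying $\Ind_H^G$ converts this into a complex in $\mod(G, k[X])$ built from $\Ind_H^G(W \otimes \wedge^i N_x^*) \otimes k[X]$. These should be the ``nice complexes'' of \S\ref{ref-11.2-105}. Since $X$ has only finitely many Luna strata, only finitely many pairs $(H, N_x)$ occur, and the resolutions have length at most $\dim N_x \leq \dim X$.

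The chief obstacle will be to make one finite $U$ work for \emph{all} infinitely many $V \in \widehat{G}$ simultaneously. The weights appearing in $V \otimes \wedge^\bullet N_x^*$ stay within a bounded translate of those of $V$, so one expects that, modulo a suitable sublattice of the weight lattice, only finitely many $V$ need be treated directly while the remainder can be reached by iterated Koszul-type shifts that can be absorbed into $k[X]$. Pinning down this uniform boundedness across isotropy types, and compiling the resulting finite set of necessary irreducible $G$-summands into one representation $U$, is where the real technical content of the proof must live.
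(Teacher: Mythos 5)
Your high-level reduction — pass to $\Lambda$ via the Morita-type functor $F=\Hom_{G,k[X]}(U\otimes k[X],-)$ and bound the projective dimension of all $F(P_V)$ — matches the spirit of the paper's Lemma \ref{ref-11.1.1-103}, and you are right that the real difficulty is making one finite $U$ serve all $V\in\widehat G$. But the mechanism you propose for producing the needed complexes is not what the paper does, and it does not work as stated. The Luna slice theorem gives only an \'etale-local model $G\times_H N_x$ of $X$ near a closed orbit; applying $\Ind_H^G$ to a Koszul complex over $k[N_x]$ produces a complex in $\mod(G,k[G\times_H N_x])$, not in $\mod(G,k[X])$, and there is no canonical way to globalize it. Moreover, different closed orbits give different slices $(H,N_x)$ and you offer no way to merge these local constructions into a single complex over $X$. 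So the complexes you would build this way are not the ``nice complexes'' of \S\ref{ref-11.2-105}.

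In the paper, Luna slices are used for something else entirely: Theorem \ref{ref-4.3.1-48} restricts algebras of covariants from an ambient representation to the original $X$, reducing Theorem \ref{ref-1.3.1-5} to the linear case $X=W^\ast$ (then Lemma \ref{ref-4.5.1-63} reduces further to connected $G$). Once in the linear case, the complexes of \S\ref{ref-11.2-105} come from a completely different source: Springer-type diagrams $G\times^B Z_\lambda\hookrightarrow G\times^B X\to G/B$ attached to one-parameter subgroups $\lambda$, Koszul resolutions on $G/B$, and Bott's theorem to compute the derived pushforward. The resulting acyclic complexes $C_{\Lscr,\lambda,\chi}$ relate $\tilde P_{\Lscr,\chi}$ to the $\tilde P_{\Lscr,\mu}$ with $\mu=(\chi+\beta_{i_1}+\cdots+\beta_{i_{-p}})^+$. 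The uniform-finiteness problem you correctly flag is then resolved not by an a priori bounded-length resolution but by a descent argument: one introduces potentials $r_\chi,p_\chi$ measured against the polytope $\Sigma$ of weights of $W$, and shows that in each $C_{\Lscr,\lambda,\chi}$ every $\tilde P_{\Lscr,\mu}$ other than the single copy of $\tilde P_{\Lscr,\chi}$ has strictly smaller potential. Your sketch (``modulo a suitable sublattice\dots iterated Koszul-type shifts'') gives no concrete analogue of this descent, and with only the slice-Koszul complexes in hand there is no visible way to compare $P_V$ across different $V$ and close the induction.
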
 
Since $\Lambda=M(\End(U))$ is a $k[X]^G$-algebra which is
finitely generated as $k[X]^G$-module and furthermore $k[X]^G\subset
Z(\Lambda)$ we may view $\Lambda$ as a kind of
``weak'' non-commutative resolution of $k[X]^G=k[X\quot G]$. Moreover since $U$ contains a trivial direct summand
$M(U)$ is a projective left $\Lambda$-module such that in addition one has $\End_\Lambda(M(U))=k[X]^G$. Hence
by a variant of Lemma \ref{ref-1.1.2-2} we obtain.
\begin{corollarys} 
\label{ref-1.3.2-6} Assume that $G$ is a reductive group acting on a smooth affine  variety $X$.  Then $k[X]^G$ has
a categorical resolution of singularities in the sense of \cite{Lunts} given by an algebra of covariants.
\end{corollarys}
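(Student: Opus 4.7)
The plan is to realise the module of covariants $M(U)$ as a projective summand $\Lambda e$ for a suitable idempotent $e \in \Lambda$, and then to apply a variant of Lemma~\ref{ref-1.1.2-2}. The hypothesis in Theorem~\ref{ref-1.3.1-5} that $U$ contains the trivial representation is precisely what supplies this idempotent, and this is where one departs from the letter (but not the spirit) of Lemma~\ref{ref-1.1.2-2}.

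Concretely, I would write $U = \mathbf{1}\oplus U'$ and take $e\in\End(U)^G$ to be the projector onto the trivial summand; then $e\otimes 1$ is an idempotent in $\Lambda = (\End(U)\otimes k[X])^G$, also denoted $e$. Since $G$ is reductive, taking invariants is exact, so
\[
\Lambda e \;=\; (\End(U)\cdot e \otimes k[X])^G,\qquad e\Lambda e \;=\; (e\End(U)e\otimes k[X])^G.
\]
The natural $G$-equivariant identifications $\End(U)\cdot e \cong U$ (via $\phi e \mapsto \phi(v_0)$ for a generator $v_0$ of $\mathbf{1}$) and $e\End(U)e \cong k$ (trivial representation) then yield $\Lambda e \cong M(U)$ and $e\Lambda e \cong k[X]^G$. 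In particular $M(U)$ is a projective left $\Lambda$-module and $\End_\Lambda(M(U))=k[X]^G$, which is the assertion already anticipated in the paragraph preceding the corollary.

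With this in hand, I would consider the adjoint pair
\[
F\colon D(k[X]^G)\r D(\Lambda),\ N\mapsto M(U)\Lotimes_{k[X]^G} N,\qquad \RHom_\Lambda(M(U),-) = e(-),
\]
the right adjoint being exact since $\Lambda e$ is projective. Because
\[
\RHom_\Lambda\bigl(M(U),\,M(U)\Lotimes_{k[X]^G} N\bigr)\;=\;(e\Lambda e)\Lotimes_{k[X]^G} N \;=\; N,
\]
the unit is an isomorphism, so $F$ is fully faithful. Theorem~\ref{ref-1.3.1-5} supplies $\gldim \Lambda<\infty$, and $\Lambda$ is a finitely generated $k$-algebra (being finite over the finitely generated $k$-algebra $k[X]^G$); the same argument used in the first half of Lemma~\ref{ref-1.1.2-2} then gives smoothness of $\Lambda$ as a DG-algebra, so $F$ realises a categorical resolution of $k[X]^G$ in the sense of~\cite{Lunts}.

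The only real subtlety --- and the reason for the word ``variant'' --- is that $\Lambda$ is not presented as $\End_{k[X]^G}(M(U))$, so Lemma~\ref{ref-1.1.2-2} cannot be quoted verbatim; one must instead observe directly that $M(U)=\Lambda e$ is $\Lambda$-projective with the correct endomorphism ring, which is the content of the idempotent computation above. No codimension-one analysis (of the kind needed to exhibit a genuine NCR) is required, and I do not expect any further obstacle beyond this bookkeeping.
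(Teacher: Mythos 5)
Your argument is correct and follows exactly the route the paper takes: the paragraph preceding the corollary asserts that $M(U)$ is a projective left $\Lambda$-module with $\End_\Lambda(M(U))=k[X]^G$ and then invokes ``a variant of Lemma~\ref{ref-1.1.2-2}'', and your idempotent computation $M(U)\cong\Lambda e$, $e\Lambda e\cong k[X]^G$ is precisely the justification of those two claims, after which your fully-faithfulness and smoothness argument is the same as in Lemma~\ref{ref-1.1.2-2}. In short, you have unpacked the ``variant'' the paper leaves implicit, but the strategy and all the key identifications agree with the paper's.
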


 If $G$ is a reductive group acting on a smooth projective variety $X$ 
linearized by an ample line bundle $\Lscr$ and $U$ is a finite dimensional $G$-representation then we may define
an associated ``module'' of covariants $M^{ss}(U)$ on $X^{ss}\quot G$ which is a coherent sheaf of $\Oscr_{X^{ss}\quot G}$-modules
whose sections on $V\quot G$ for a $G$-invariant saturated\footnote{``Saturated'' means that $V$ is the inverse image of its image in $X^{ss}\quot G$.}  affine open $V\subset X^{ss}$ are given by $(U\otimes \Oscr(V))^G$.

We may prove a GIT-version of Theorem \ref{ref-1.3.1-5} when $X$ is projective.
\begin{theorems} (See \S\ref{ref-11.5-117} below.)
\label{ref-1.3.3-7} Let $G,X,\Lscr$ be as in the previous paragraph.
 Then  there exists a finite dimensional $G$-representation $U$ containing the trivial representation 
such that the coherent sheaf of algebras $\Lambda=M^{ss}(\End(U))$ on $X^{ss}\quot G$  has finite global dimension when restricted to affine opens.
\end{theorems}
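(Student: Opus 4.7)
The strategy is to reduce the projective GIT statement to the affine Theorem~\ref{ref-1.3.1-5} via a covering argument, then promote the local finite-global-dimension statements to a single global choice of $U$.

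First, cover $X^{ss}\quot G$ by finitely many affine opens. By standard GIT, $X^{ss}\quot G$ is a quasi-projective scheme of finite type, and it admits a finite cover by opens of the form $V_i\quot G$ where each $V_i\subset X^{ss}$ is a saturated $G$-invariant affine open (one may take the non-vanishing loci of finitely many $G$-invariant sections of some power of $\Lscr$ whose restrictions to $X^{ss}\quot G$ generate the line bundle there). Since $X$ is smooth, each $V_i$ is a smooth affine $G$-variety. For any $G$-representation $W$ the sheaf $M^{ss}(W)$ restricts on $V_i\quot G$ to the sheaf associated to the $k[V_i]^G$-module $(W\otimes k[V_i])^G$, so verifying finite global dimension of $\Lambda|_{V_i\quot G}$ reduces to a statement about affine modules of covariants.

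Now apply Theorem~\ref{ref-1.3.1-5} to each smooth affine $G$-variety $V_i$ to obtain a finite-dimensional $G$-representation $U_i$ containing the trivial representation with $\gldim(\End(U_i)\otimes k[V_i])^G<\infty$. Set
\[
U \;=\; k\oplus\bigoplus_{V\in S} V,
\]
where $S\subset \widehat{G}$ is the finite set of irreducible summands appearing in any $U_i$. Then $U$ contains every $U_i$ as a direct summand, and hence there is an idempotent $e_i\in\End(U)$ with $e_i\End(U)e_i=\End(U_i)$. Consequently, on $V_i\quot G$, the corner $e_i\Lambda|_{V_i\quot G}e_i$ equals $(\End(U_i)\otimes k[V_i])^G$ and has finite global dimension.

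The remaining step is to lift finite global dimension from the corner to the full algebra $\Lambda|_{V_i\quot G}$. This should be done using the ``nice complexes'' promised in \S\ref{ref-11.2-105}: for every irreducible $G$-representation $W$, these complexes express the projective $(W\otimes k[V_i])^G$-module $M(W)=(W\otimes k[V_i])^G$ in terms of the modules of covariants associated to summands of $U_i$, after tensoring with $k[V_i]$. Provided these complexes can be produced in a $V_i$-uniform way, every simple $\Lambda|_{V_i\quot G}$-module admits a finite projective resolution and $\gldim\Lambda|_{V_i\quot G}<\infty$ follows. The cleanest implementation is presumably to rerun the proof of Theorem~\ref{ref-1.3.1-5} directly on the smooth $G$-scheme $X^{ss}$, constructing the nice complexes globally on $X^{ss}$ and observing that the resulting $U$ witnesses finite global dimension on every saturated affine open $V_i$ at once.

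The main obstacle is globalization: making a single $U$ do the work on every patch, which requires that the combinatorial construction of $U$ in Theorem~\ref{ref-1.3.1-5} is ``monotone'' in the sense that enlarging $U$ preserves the property of being a non-commutative resolution, or alternatively that the auxiliary complexes of \S\ref{ref-11.2-105} can be produced uniformly across the cover. Once this is arranged, the reduction to Theorem~\ref{ref-1.3.1-5} is mechanical and the statement follows.
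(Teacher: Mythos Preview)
Your reduction to affine patches is natural, but the argument has a genuine gap at the ``corner to full algebra'' step. Finite global dimension of $e_i\Lambda e_i$ does \emph{not} in general imply finite global dimension of $\Lambda$, and the hoped-for monotonicity --- that enlarging $U$ preserves finite global dimension of $M(\End(U))$ --- is neither obvious nor established anywhere in the paper. The complexes of \S\ref{ref-11.2-105} are built for a fixed $\Lscr$ on a linear space $X=W^\ast$; they do not come with a mechanism for patching across different $U_i$'s on different $V_i$'s, and your fallback suggestion of rerunning the construction ``globally on $X^{ss}$'' runs into the problem that $X^{ss}$ is not affine and the whole machinery of \S\ref{ref-11.2-105}--\S\ref{ref-11.3-109} is set up for $R=SW$.

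The paper avoids this difficulty entirely by choosing $U$ \emph{once}, before passing to patches. One takes the homogeneous coordinate ring $T$ of $(X,\Lscr)$, surjects a polynomial ring $SW\twoheadrightarrow T$ $G$-equivariantly, and applies Theorem~\ref{ref-1.3.1-5} to the single affine $G$-variety $\Spec SW$ to produce a single $U$. For each invariant homogeneous $f\in T^G$ with lift $\tilde{f}\in (SW)^G$, the localization $M_{SW_{\tilde{f}}}(\End(U))$ still has finite global dimension; then $\Spec T_f$ is a smooth closed $G$-subvariety of $\Spec SW_{\tilde{f}}$ with closed orbits preserved, so Theorem~\ref{ref-4.3.1-48} transfers finite global dimension to $M_{T_f}(\End(U))$. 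Finally, since $T_f$ and $T_f^G$ are strongly graded (after arranging generation in degree one), degree zero of $M_{T_f}(\End(U))$ --- which is exactly the sections of $M^{ss}(\End(U))$ on the affine patch --- inherits finite global dimension. The key technical input you are missing is Theorem~\ref{ref-4.3.1-48}, which lets the same $U$ work uniformly on every closed subvariety.
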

Again the sheaf of algebras $M^{ss}(\End(U))$ yields a categorical
resolution of singularities of $X^{ss}\quot G$ in the sense of
\cite{Lunts} as in Corollary \ref{ref-1.3.2-6}. It would be interesting to
compare these categorical resolutions to the canonical partial
resolutions of $X^{ss}\quot G$ constructed by Kirwan in \cite{Ki1}.

\medskip

For simplicity of exposition we return to the case that $X$ is affine. The generalization
to $X$ projective are routine.
To obtain a genuine non-commutative resolution for $k[X]$ from Theorem \ref{ref-1.3.1-5}
we have to restrict $(G,X)$ as in the finite group
case. 
\begin{definitions}
\label{ref-1.3.4-8}
Below we will say that $G$ acts \emph{generically} on a smooth affine  variety $X$ if
\begin{enumerate}
\item  $X$ contains a point with closed orbit and trivial stabilizer.
\item If $X^{\mathbf{s}}\subset X$ is the locus of points that satisfy (1) then $\codim
  (X-\Xs) \ge 2$.
\end{enumerate}
If $W$ is a $G$-representation then we will say that $W$ is \emph{generic} if $G$ acts generically on
$\Spec SW\cong W^\ast$.
\end{definitions}
\begin{corollarys}
\label{ref-1.3.5-9}
Assume that $G$ acts generically on $X$.  Then there exists a finite
dimensional $G$-representation $U$ containing the trivial
representation such that $\Lambda=\End_{k[X]^G}(M(U))$ is a NCR for
$k[X]^G$.
\end{corollarys}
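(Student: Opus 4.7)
The plan is to upgrade the weak resolution of Theorem \ref{ref-1.3.1-5} to a genuine NCR using that on the free locus $\Xs$ the quotient behaves like a principal $G$-bundle. Apply Theorem \ref{ref-1.3.1-5} to obtain a finite-dimensional $G$-representation $U$ that contains the trivial representation and satisfies $\gldim M(\End(U)) < \infty$. Since the trivial representation is a summand of $U$, the module $M(U) = (U \otimes k[X])^G$ has $k[X]^G$ as a direct summand, hence is a non-zero finitely generated $k[X]^G$-module. There is a canonical $k[X]^G$-algebra homomorphism
\[
\Phi\colon M(\End(U)) \longrightarrow \End_{k[X]^G}(M(U))
\]
sending a $G$-invariant $\End(U)$-valued function to its induced action on $M(U)$. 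The whole argument reduces to showing that $\Phi$ is an isomorphism; granting this, $\Lambda := \End_{k[X]^G}(M(U)) \cong M(\End(U))$ inherits finite global dimension, and the reflexivity of $M(U)$ (established below) then yields the NCR property.

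To prove $\Phi$ is an isomorphism I first restrict to the open subscheme $\Xs/G$ of $Y := \Spec k[X]^G$. By Luna's slice theorem the quotient map $\Xs \to \Xs/G$ is an \'etale-locally trivial principal $G$-bundle, so for every finite-dimensional $G$-representation $V$ the associated sheaf on $\Xs/G$ obtained from $M(V)$ is locally free of rank $\dim V$. The formation of $\End$ commutes with taking $G$-invariants in this setting, so the restriction of $\Phi$ to $\Xs/G$ is the canonical isomorphism identifying the sheaf of endomorphisms of the locally free sheaf attached to $U$ with the sheaf of covariants attached to $\End(U)$.

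To globalize I invoke reflexivity. The ring $k[X]^G$ is normal, since $X$ is smooth and $G$ is reductive. The generic action hypothesis $\codim(X - \Xs) \ge 2$ implies $\codim_Y(Y - \Xs/G) \ge 2$ (using that the generic fibre of $X \to Y$ has dimension $\dim G$; compare \S\ref{ref-4.4-57}). Both $M(U)$ and $M(\End(U))$ are global sections over $Y$ of locally free sheaves defined on the codimension-$2$ open subset $\Xs/G$, hence are reflexive $k[X]^G$-modules; $\End_{k[X]^G}(M(U))$ is automatically reflexive as well. An isomorphism of reflexive modules that holds on the complement of a codimension-$2$ closed set is an isomorphism globally, so $\Phi$ is an isomorphism.

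The main obstacle I expect is the codimension transfer $\codim_X(X - \Xs) \ge 2 \Longrightarrow \codim_Y(Y - \Xs/G) \ge 2$: the bad locus in $X$ may consist of orbits with positive-dimensional stabilizers, which could \emph{a priori} be compressed to a divisor in $Y$ by the quotient map. Ruling this out uses that the generic orbit is free, so that the generic fibre of $\pi\colon X \to Y$ has full dimension $\dim G$. Once this codimension fact is in place, the remainder is the standard reflexive Morita-type argument familiar from the finite-group case.
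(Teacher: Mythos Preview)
Your approach is essentially the same as the paper's. The paper invokes Lemma~\ref{ref-4.1.3-38}, which establishes that under the generic-action hypothesis the functor $(-)^G:\Ref(G,R)\to\Ref(R^G)$ is a symmetric monoidal equivalence; specializing to $U\otimes R$ and $\End(U)\otimes R=\End_R(U\otimes R)$ yields both the reflexivity of $M(U)$ and the identification $M(\End(U))\cong\End_{R^G}(M(U))$ in one stroke. Your argument simply unpacks this special case directly: locally free on the principal-bundle locus $\Xs/G$ via Luna, then extend by reflexivity across a codimension-two complement.

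Two small points. First, your cross-reference to \S\ref{ref-4.4-57} for the codimension transfer is misplaced (that section treats Cohen--Macaulayness); the relevant fact appears in the proof of Lemma~\ref{ref-4.1.3-38} as ``$X\to X\quot G$ contracts no divisor'', with reflexivity-preservation then cited from Brion \cite[Prop.~1.3]{Brion}. Second, your sentence ``global sections over $Y$ of locally free sheaves defined on $\Xs/G$, hence reflexive'' tacitly uses that $M(U)=(U\otimes k[X])^G=(U\otimes k[\Xs])^G$, which holds because $X$ is normal and $\codim_X(X-\Xs)\ge 2$; without this identification, knowing that the restriction to $\Xs/G$ is locally free does not by itself give reflexivity of the original module. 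It is worth making that step explicit.
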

\begin{proof} This follows from Theorem \ref{ref-1.3.1-5} together with the fact that if the action is generic then  (see Lemma \ref{ref-4.1.3-38} below)
\[
 M(\End(U))=\End_{k[X]^G}(M(U))
\]
and furthermore $M(U)$ is reflexive.
\end{proof}
The following result was originally stated in~\cite{Yasuda} but the
proof was later retracted. We can now prove it by applying our
techniques to abelian reductive groups.
\begin{propositions} \label{ref-1.3.6-10} (See
  \S\ref{ref-11.6-118} below.)  Assume that $S\subset \ZZ^n$ is a
  finitely generated commutative positive (no units) normal
  semigroup. Then for $n\in\NN$, $n\gg 0$ the $k[S]$-module
  $M=k[\frac{1}{n}S]$ defines a NCR for $k[S]$.
\end{propositions}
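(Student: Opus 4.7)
The plan is to present $\Spec k[S]$ as a quotient $\AA^N\quot T$ of affine space by a diagonalizable group $T$ acting generically, apply Corollary \ref{ref-1.3.5-9}, and identify the resulting module of covariants with $k[\frac{1}{n}S]$ for $n$ sufficiently divisible. Positivity and normality of $S$ yield $S = \sigma^\vee \cap L$, with $L \subseteq \ZZ^n$ the subgroup generated by $S$ and $\sigma \subset L^\vee\otimes\RR$ a strictly convex rational polyhedral cone; let $u_1,\dots,u_N \in L^\vee$ be the primitive generators of its rays. The injection $\phi\colon L \hookrightarrow \ZZ^N$, $m\mapsto(\langle m,u_j\rangle)_j$, identifies $S$ with $\phi(L) \cap \NN^N$, and the cokernel $A=\ZZ^N/\phi(L)$ is the character group of the diagonalizable group $T=\Spec k[A]$, which acts on $\AA^N$ via $T \hookrightarrow (\GG_m)^N$ with $k[\AA^N]^T = k[S]$. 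Since each $u_j$ is primitive in $L^\vee$, the coordinate projection $\phi(L) \to \ZZ$ is surjective, so the generic stabilizer on each coordinate divisor $D_j$ is trivial; hence the action is generic in the sense of Definition \ref{ref-1.3.4-8}.

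Applying Corollary \ref{ref-1.3.5-9} provides a finite-dimensional $T$-representation $U$ containing the trivial summand with $\Lambda := \End_{k[S]}(M(U))$ an NCR of $k[S]$. Since $T$ is diagonalizable, $U$ decomposes as $\bigoplus_{\chi\in\Sigma} V_\chi^{\oplus m_\chi}$ for a finite $\Sigma\subseteq A$, and each $M(V_\chi) = k[\AA^N]_\chi$ is a rank one reflexive $k[S]$-module representing the class $\chi \in A \cong \Cl(k[S])$. On the other hand, the normality identity $\tfrac{1}{n}S \cap L = S$ shows that the finite group $G_n = \Hom(\tfrac{1}{n}L/L,\GG_m)$ acts on $k[\tfrac{1}{n}S]$ with invariants $k[S]$, and the isotypic decomposition reads
\[
k[\tfrac{1}{n}S] \;=\; \bigoplus_{[v]\in\frac{1}{n}L/L} k\bigl[(v+L)\cap\sigma^\vee\bigr],
\]
where each summand is a rank one reflexive $k[S]$-module whose class in $A$ is represented (via valuations along the $D_j$) by $(\lfloor\langle v, u_j\rangle\rfloor)_j \in \ZZ^N$.

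To conclude, for $n$ sufficiently divisible one arranges $\add(k[\tfrac{1}{n}S]) = \add(M(U))$, whence $\End_{k[S]}(k[\tfrac{1}{n}S])$ is Morita equivalent to the NCR $\Lambda$ and in particular has finite global dimension; reflexivity of $k[\tfrac{1}{n}S]$ is automatic from normality. The main obstacle lies in this matching step: when $\sigma$ is non-simplicial the class group $A$ has nonzero free part, but the classes of summands of $k[\tfrac{1}{n}S]$ all lie in the torsion subgroup of $A$. One must therefore use the freedom in Corollary \ref{ref-1.3.5-9} beyond its bare existence statement---namely, the explicit abelian-reductive construction of \S\ref{ref-11.6-118}---to choose $U$ so that $\Sigma$ lies inside this torsion subgroup. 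Once $\Sigma$ is so confined, an explicit argument (lifting each class of $\Sigma$ to $(a_j)_j \in \tfrac{1}{d}\phi(L)$ with $d$ the exponent of the torsion part and then choosing $v \in \tfrac{1}{n}L$ with $\phi(v) = (a_j)_j$ for $d\mid n$) shows that every class in $\Sigma$ is realized by a summand of $k[\tfrac{1}{n}S]$.
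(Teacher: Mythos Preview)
Your overall architecture---the Chouinard/Bruns--Gubeladze presentation $k[S]=R^G$ with $R=k[\NN^N]$ and $G=\Spec k[\Cl(k[S])]$, the verification that the action is generic, and the decomposition of $k[\tfrac{1}{n}S]$ into rank-one reflexive $k[S]$-modules---coincides with the paper's. The gap is in the matching step, and your diagnosis of the obstacle is incorrect.

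You claim that the divisor classes of the summands of $k[\tfrac{1}{n}S]$ all lie in the torsion subgroup of $\Cl(k[S])$. This is false. For the conifold ($L=\ZZ^3$, rays $u_1=(1,0,0)$, $u_2=(0,1,0)$, $u_3=(0,0,1)$, $u_4=(1,1,-1)$, $\Cl\cong\ZZ$) the coset $v=(1/2,1/2,0)\in\tfrac{1}{2}L$ gives the summand with class $-1\in\ZZ$, which is not torsion. More conceptually, the map $[v]\mapsto(\lfloor\langle v,u_j\rangle\rfloor)_j\bmod\phi(L)$ is not a group homomorphism because of the floor, so finiteness of $\tfrac{1}{n}L/L$ tells you nothing about torsion of the image. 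Consequently your proposed fix (forcing $\Sigma$ into the torsion part and then lifting) is aimed at the wrong target.

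What actually happens is this. Writing $G=T\times A$ with $T=G_e$ a torus and $A$ finite, the summands of $k[\tfrac{1}{n}S]$ for $n\gg0$ are precisely the conic fractional ideals, and by Lemma~\ref{ref-11.6.3-121} these are exactly the modules $M(-\chi)$ for $\chi$ \emph{strongly critical}, meaning the $X(T)$-component of $\chi$ lies in $\Sigma$ (the $X(A)$-component is unconstrained). On the other side, one does not invoke the bare existence Corollary~\ref{ref-1.3.5-9} but rather the explicit Theorem~\ref{ref-1.5.1-24} with $\Delta=\{0\}$ (and $\bar\rho=0$ since $T$ is a torus), which produces $\Lscr_T=X(T)\cap\Sigma$; Lemma~\ref{ref-4.5.1-63} then extends this to $\Lscr=X(A)\times\Lscr_T$. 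Thus $\Lscr$ is exactly the set of strongly critical weights, and the two sides match on the nose. The point you need, and did not supply, is this identification of conic fractional ideals with modules of covariants for strongly critical weights, together with the use of the explicit $\Lscr$ from Theorem~\ref{ref-1.5.1-24} rather than an unspecified $U$.
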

In characteristic $p>0$ the $k[S]$-modules $k[\frac{1}{p^e}S]$ arise as Frobenius twists of $k[S]$.
Proposition \ref{ref-1.3.6-10} then becomes  a confirmation of the idea that in some cases large Frobenius twists provide  canonical NCRs. See e.g.\ \cite[Thm 1.6]{YasudaToda} for 
other instances of this principle.

The proof of Proposition \ref{ref-1.3.6-10} is based on the fact that $k[S]$ may be canonically written \cite{BrGu} as $R^G$ where $R=SW$ and $G$ is a (generally non-connected) abelian
reductive group for which $W$ is generic. 

\subsection{Some applications for specific quotient singularities}
\label{ref-1.4-11}
Before stating our main technical results (see \S\ref{ref-1.5-23} below) we give
some applications for specific quotient singularities.
\subsubsection{Determinantal varieties}
We obtain a new proof for the following result from~\cite{VdB100}.
\begin{theorems} (See \S\ref{ref-5-68} below.) \label{ref-1.4.1-12}
For $n< h$ let $Y_{n,h}$ be the variety
of $h\times h$-matrices of rank $\le n$.
The $k$-algebra
$k[Y_{n,h}]$ has a NCCR.
\end{theorems}
We will prove in \S\ref{ref-5.3-74} below that the NCCR we obtain in this paper is the same
as the one constructed in \cite{VdB100}.
\subsubsection{Pfaffian varieties}
\begin{theorems} (See \S\ref{ref-6-77} below.)
\label{ref-1.4.3-14} 
For $2n<h$ let $Y_{2n,h}^-$ be  the variety of skew-symmetric $h\times h$ matrices of rank $\le 2n$.
If $h$ is odd then $k[Y_{2n,h}^-]$ has a NCCR.
\end{theorems}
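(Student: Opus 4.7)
The plan is to realize $k[Y_{2n,h}^-]$ as an invariant ring for a generic action of the symplectic group and then apply the main technical criteria of \S\ref{ref-1.5-23}--\S\ref{ref-1.6-26}, following the same blueprint as for Theorem~\ref{ref-1.4.1-12}.

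By the First Fundamental Theorem for $\Sp$, one has $k[Y_{2n,h}^-] \cong k[X]^G$, where $G = \Sp(V)$ for $V = k^{2n}$ equipped with its standard symplectic form $\omega$, $X = V^{\oplus h}$, and the quotient map sends an $h$-tuple $(v_1, \dots, v_h)$ to the skew-symmetric matrix $(\omega(v_i, v_j))_{i,j}$. Since $h > 2n$, the action of $G$ on $X$ is generic in the sense of Definition~\ref{ref-1.3.4-8}: a generic tuple spans $V$ and has trivial $\Sp(V)$-stabilizer, and the non-spanning locus has codimension $h - 2n + 1 \ge 2$ as the pullback of the rank-$(2n{-}1)$ stratum of $h \times 2n$ matrices.

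Next I would apply the criteria of \S\ref{ref-1.5-23}--\S\ref{ref-1.6-26} to this $(G,X)$ in order to exhibit a finite-dimensional $\Sp(V)$-representation $U = \bigoplus_{\lambda \in \Sigma} V_\lambda$ containing the trivial representation, where $\Sigma$ is an explicit set of dominant weights (partitions with at most $n$ parts, fitted to $h$), such that $\Lambda = M(\End(U))$ satisfies $\gldim \Lambda < \infty$. By Lemma~\ref{ref-4.1.3-38} and the genericity established above, $\Lambda = \End_{k[Y_{2n,h}^-]}(M(U))$ and $M(U)$ is reflexive, so this already produces a NCR. To promote it to a NCCR one must additionally verify that $M(U)$ is maximal Cohen--Macaulay over $k[Y_{2n,h}^-]$, which is the role of the results recalled in \S\ref{ref-4.4-57} and constrains the choice of $\Sigma$.

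Finally, the Gorenstein property of $k[Y_{2n,h}^-]$ is the classical fact that Pfaffian varieties of skew-symmetric matrices are Gorenstein precisely when the ambient size $h$ is odd; this is why the hypothesis enters. Since $\Sp(V) \subset \Sl(V)$, no determinantal twist appears, so the resulting NCCR is untwisted, in contrast with the analogous even-skew or symmetric cases alluded to in the abstract. The main obstacle is the middle step: identifying the set $\Sigma$ and verifying simultaneously the finiteness of global dimension and the Cohen--Macaulay property. The combinatorics is more delicate than in the $\GL_n$-case underlying Theorem~\ref{ref-1.4.1-12} because the standard representation of $\Sp(V)$ is self-dual and its tensor powers decompose through Brauer rather than purely symmetric-group combinatorics, so the weight region $\Sigma$ must be calibrated carefully to keep $M(V_\lambda)$ Cohen--Macaulay while still closing up under the resolution steps needed to bound the global dimension of $\Lambda$.
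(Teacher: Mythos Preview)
Your overall strategy matches the paper's: realize $k[Y_{2n,h}^-]$ as $R^G$ for $G=\Sp(V)$, $W=V^{\oplus h}$ via the fundamental theorems, check genericity, and then apply Theorem~\ref{ref-1.6.4-30}. However, you have misidentified where the hypothesis ``$h$ odd'' enters, and this is a genuine gap.

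It is \emph{not} true that $k[Y_{2n,h}^-]$ is Gorenstein precisely when $h$ is odd. In fact it is Gorenstein for all $h$: since $\Sp(V)\subset\Sl(V)$, the representation $W=V^{\oplus h}$ is unimodular, and Knop's theorem (Theorem~\ref{ref-4.1.7-41}) gives Gorensteinness whenever $W$ is generic and unimodular. (Classically this is also well known for Pfaffian varieties.) So your proposed explanation for the parity hypothesis is wrong, and with it the logical structure of your argument.

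In the paper the role of ``$h$ odd'' is entirely different. One computes that the $T$-weights of $W$ are $\pm L_i$ each with multiplicity $h$, so $W$ is quasi-symmetric and $\Sigma=\{\sum_i a_iL_i\mid a_i\in\,]-h,h[\}$, while $\bar{\rho}=\sum_i (n+1-i)L_i$. Since $G$ is semi-simple and simply connected, one is forced to take $\varepsilon=0$ and $\bar{\mu}=0$ in Theorem~\ref{ref-1.6.4-30}. The Cohen--Macaulayness of $\Lambda$ then hinges on condition~\eqref{ref-1.5-31}, namely
\[
X(T)\cap\bigl(-\bar{\rho}+\tfrac{1}{2}(\bar{\Sigma}-\Sigma)\bigr)=\emptyset.
\]
The boundary $\bar{\Sigma}-\Sigma$ consists of points with some $a_i=\pm h$, so after translating by $-\bar{\rho}$ and halving, the relevant coordinates are $\pm h/2-(n+1-i)$. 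When $h$ is odd these are half-integers and miss the lattice $X(T)$; when $h$ is even they do not, and the criterion fails. This is the actual source of the parity restriction.

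A smaller point: the NCCR condition requires $\Lambda=\End_{R^G}(M(U))$ to be Cohen--Macaulay, not $M(U)$ itself; your phrasing slips here. In the paper this is handled via Proposition~\ref{ref-4.4.4-61}, which reduces the question to the strongly-critical-weight condition on $\chi_1-w_0\chi_2$, and this is exactly what condition~\eqref{ref-1.5-31} guarantees.
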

\begin{remarks}
If $h$ is even then we show that $k[Y_{2n,h}^-]$ has a NCR which is very similar to the NCCR which exists in the odd case.
\end{remarks}
The existence of NCCRs for $k[Y^-_{2n,h}]$ when $h$ is odd seems to be
completely new.  NCRs for $k[Y^-_{2n,h}]$ were constructed by Weyman
and Zhao in \cite{WeymanZhao} but they are not NCCRs \cite[Prop.\ 7.4]{WeymanZhao} and they are
larger than ours also in the even case (see Remark \ref{ref-6.1.3-82}
below).  NC(C)Rs for $Y^-_{4,h}-\{0\}$ were constructed in
\cite{Kuznetsov} but they do not extend to NC(C)Rs for $Y^-_{4,h}$
(see Remark \ref{ref-6.1.4-84} below).
\subsubsection{Determinantal varieties for symmetric matrices}
\begin{theorems} (See \S\ref{ref-7-85} below.)
\label{ref-1.4.5-15} For $t<h$ let $Y^+_{t,h}$ be the the variety of symmetric $h\times h$ matrices of rank $\le t$.
If $t$ and $h$ have opposite parity then $k[Y^+_{t,h}]$ has a NCCR.
If $t$ and $h$ have the same parity then
$k[Y_{t,h}^+]$ has a twisted NCCR of index $2^{\lfloor h/2\rfloor}$. 
\end{theorems}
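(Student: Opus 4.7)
The plan is to realize $Y^+_{t,h}$ as a quotient singularity and then apply the main technical results of \S\ref{ref-1.5-23}--\S\ref{ref-1.6-26}. Classically one has
\[
k[Y^+_{t,h}] = k[W]^{O(t)},\qquad W := M_{h\times t} = \Hom(k^t,k^h),
\]
with $O(t)$ acting on the right and the categorical quotient map $A\mapsto AA^T$ landing in symmetric $h\times h$-matrices of rank $\le t$. Since $t<h$, a matrix of full column rank $t$ has trivial $O(t)$-stabilizer and the rank-deficient locus has codimension $h-t+1\ge 2$, so $W$ is a generic $O(t)$-representation in the sense of Definition~\ref{ref-1.3.4-8}. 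Corollary~\ref{ref-1.3.5-9} already supplies a NCR; the remaining task is to upgrade it to a (possibly twisted) crepant one.

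Following the pattern established in \S\ref{ref-5-68} and \S\ref{ref-6-77} for the $GL_n$-determinantal and skew-symmetric cases, one expects to take $U=\bigoplus_{\lambda\in\Sigma} V^\lambda$, for $\Sigma$ an explicit region of highest weights determined by the $O(t)$-weight decomposition of $W$. Since these weights are $\pm e_1,\dots,\pm e_t$ each with multiplicity $h$, $\Sigma$ should be (a truncation to the dominant chamber of) a box of side $h$ in the weight lattice of $O(t)$, and $M(\End U)$ will then be Cohen-Macaulay of finite global dimension over $k[W]^{O(t)}$. The parity split arises from the double cover $\Pin(t)\to O(t)$: the spinorial highest weights required to populate $\Sigma$ descend to honest $O(t)$-representations precisely when $t$ and $h$ have opposite parity, in which case one obtains a genuine NCCR. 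When they have the same parity the construction naturally lives upstairs on $\Pin(t)$, and the corresponding $\Pin(t)$-invariant algebra $(\End(U)\otimes k[W])^{\Pin(t)}$ becomes generically central simple of Brauer index exactly $2^{\lfloor h/2\rfloor}$, yielding the twisted NCCR of the claim.

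The main obstacle will be the explicit combinatorial description of $\Sigma$ (which requires care with the orthogonal Pieri rule and the folding condition on $O(t)$-partitions across row $t/2$) together with the precise spinorial computation that fixes the Brauer index in the same-parity case. The value $2^{\lfloor h/2\rfloor}$ reflects the dimension of a half-spin representation combined with the $h$-fold direct sum structure of $W$; tracking this through the construction, and ruling out any further splitting into smaller Azumaya blocks, is the technical heart of the twisted statement.
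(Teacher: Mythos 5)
Your overall strategy coincides with the paper's: realize $k[Y^+_{t,h}]$ as $k[W]^{O_t}$ with $W=V^h$ (first and second fundamental theorems for $O_t$), verify genericity, take $U$ a sum of irreducibles with highest weights in a box-shaped region of size controlled by $h$, and trace the parity dichotomy through the spin cover. Two points where the sketch diverges from or falls short of the actual argument are worth flagging.

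First, you apply the machinery directly to the disconnected group $O(t)$, but Theorem~\ref{ref-1.6.4-30} (and all of \S\ref{ref-1.6-26}) is stated only for \emph{connected} $G$. The paper first runs Theorem~\ref{ref-1.6.4-30} for $G=\SO_t(k)$ with $\bar G=\Spin_t(k)$ — this already splits into the $D_n$ and $B_n$ cases $t=2n$, $t=2n+1$, each with its own root data, dominant cone and $\bar\rho$ — and only afterwards descends to $O_t(k)$ via Lemma~\ref{ref-4.5.1-63}, after checking that the representation $U$ used is $\Pin_t$-invariant (Lemma~\ref{ref-7.2.1-86} or Lemma~\ref{ref-4.5.2-67}). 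Your appeal to an ``orthogonal Pieri rule and folding condition'' is not how the combinatorics is settled: one simply intersects the dominant cone for $\SO_t$ with the translated half-open box $-\bar\rho+(1/2)\bar\Sigma_\varepsilon$ (necessarily with $\varepsilon=0$ since $\SO_t$ is semisimple) inside the appropriate coset $X(T)_\theta$ of $X(\bar T)$, with $\theta$ depending on the parity of $h$.

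Second, and more seriously, your account of the index is wrong in mechanism. The index of the twisted NCCR is controlled entirely by the representation theory of $\Pin_t(k)$: by Proposition~\ref{ref-4.1.6-40} (applicable since $0$ is a fixed point) the index equals $i_{1/2}(\Pin_t(k))$, which by Lemma~\ref{ref-7.2.2-87} is $2^{\lfloor t/2\rfloor}$, the dimension of the spin representation of $\Spin_t$ (in the even case the index for $\Pin_t$ is $2^n$ rather than the half-spin dimension $2^{n-1}$ seen for $\Spin_{2n}$, because the two half-spins fuse). The multiplicity $h$ with which each $\pm L_i$ occurs in $W$ only decides \emph{which} coset $X(T)_\theta$ is used — i.e., whether the spinorial character is forced or not — but does not enter the value of the index. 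Your ``combined with the $h$-fold direct sum structure of $W$'' is a post hoc rationalization of the exponent $\lfloor h/2\rfloor$ in the statement, which is evidently a misprint for $\lfloor t/2\rfloor$ given the proofs in \S\ref{ref-7-85}. So this part of the sketch would not survive being filled in.
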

Again the existence of twisted NCCRs for $k[Y^+_{t,h}]$ 
seems to be completely new.
NCRs for $k[Y^+_{t,h}]$ were
  constructed by Weyman and Zhao in \cite{WeymanZhao} but they are  only NCCRs when $t=h-1$ \cite[Prop.\ 6.6]{WeymanZhao}.
If $t=h-1$ then the NCCR constructed in~\cite{WeymanZhao} coincides with ours
(see Remark \ref{ref-7.5.1-90} below). 
\subsubsection{Non-commutative resolutions for $\Sl_2$-invariants}
\begin{theorems}
  \label{ref-1.4.7-16} (See \S\ref{ref-8-92} below.)  Let $V$ be a
  vector space of dimension two and\footnote{We denote the group by
    $H$ here since in \S\ref{ref-8-92} below $G$ will stand for either
    $\PGL_2(k)$ or $\Sl_2(k)$ depending on whether all $(d_i)_i$ are
    even or not.} put $H=\Sl(V)$. Put $ W=\bigoplus_{i=1}^{d_i} S^{d_i}V $
  and assume in addition that $W$ is not a sum of $k^c$ and one of the
  following special representations
\begin{equation}
\label{ref-1.1-17}
V,S^2V,V\oplus V,V\oplus S^2V, S^2V\oplus S^2V, S^3V, S^4V.
\end{equation}
Put 
\[
s^{(n)}=\begin{cases}
n+(n-2)+\cdots+1=\dfrac{(n+1)^2}{4}&\text{if $n$ is odd}\\
n+(n-2)+\cdots+2=\dfrac{n(n+2)}{4}&\text{if $n$ is even}
\end{cases}
\]
and $s=\sum_i s^{(d_i)}$. Put $R=SW$.
If not all  $d_i$ are even then $R^H$ has a NCR given by 
\begin{equation}
\label{ref-1.2-18}
M=\bigoplus_{0\le i\le s/2-1} M(S^i V).
\end{equation}
It is an NCCR if $s$ is odd. If all $d_i$ are even (and hence $s$ is even) then~$R^H$ has a NCR given by
\begin{equation}
\label{ref-1.3-19}
M=\bigoplus_{0\le 2i\le s/2-1} M(S^{2i} V)
\end{equation}
 and it is an NCCR if $s/2$ is even. In the case that $s/2$ is odd   $R^H$ has a twisted NCCR of index $2$
given by
\begin{equation}
\label{ref-1.4-20}
M=\bigoplus_{0\le 2i+1\le s/2-1} M(S^{2i+1} V).
\end{equation}
\end{theorems}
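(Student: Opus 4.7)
The plan is to deduce the theorem as a consequence of the general technical results announced in \S\ref{ref-1.5-23}-\S\ref{ref-1.6-26}, applied to $H=\Sl(V)$ acting on $W^\ast$. The main technical theorem will give a criterion for a finite set $T\subset \widehat{H}$ of irreducible $H$-representations to yield an NC(C)R of $R^H$ via $U=\bigoplus_{\rho\in T}\rho$ and $\Lambda=\End_{R^H}(M(U))$; the proof consists of checking that the specific $T$'s appearing in \eqref{ref-1.2-18}, \eqref{ref-1.3-19}, \eqref{ref-1.4-20} satisfy this criterion.

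\textbf{Step 1 (Genericity).} I first verify that $W^\ast$ is a generic $H$-representation in the sense of Definition \ref{ref-1.3.4-8} (or, when all $d_i$ are even, that $W^\ast$ is generic for the faithful quotient action of $\PGL_2$). The excluded list \eqref{ref-1.1-17} is exactly the set of $\Sl_2$-representations for which either the generic stabilizer is positive-dimensional or the stable locus $\Xs$ has complement of codimension $\le 1$; this is a finite case check using classical results on generic stabilizers for $\Sl_2$-representations. Outside this list the action is generic, so Corollary \ref{ref-1.3.5-9} is applicable.

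\textbf{Step 2 (Identifying the window).} Irreducible $H$-representations are indexed by $\NN$ via $i\mapsto S^iV$. The main technical result selects admissible $T$'s as subsets of an integer interval (a ``window'') determined by a polytope associated to the weights of $W$. For a maximal torus $T\subset H$, the sum of the positive $T$-weights of $S^{d_i}V$ is exactly $s^{(d_i)}$, so the sum of the positive $T$-weights of $W$ is $s=\sum_i s^{(d_i)}$. Unwinding the combinatorics of the technical result, the window collapses to $[0,s/2)\cap \ZZ$. When $H$ acts faithfully (some $d_i$ odd) the admissible $T$ is $\{0,1,\dots,\lfloor s/2\rfloor -1\}$, yielding \eqref{ref-1.2-18}. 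When the action factors through $\PGL_2$ (all $d_i$ even), the $\PGL_2$-irreducibles correspond to even $i$, so we restrict to $T=\{0,2,4,\dots\}\cap[0,s/2)$, yielding \eqref{ref-1.3-19}; alternatively one may take the odd $i$'s to obtain \eqref{ref-1.4-20}.

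\textbf{Step 3 (NCR, NCCR, twisted NCCR).} The main technical result then gives that $\Lambda=\End_{R^H}(M)$ has finite global dimension, and combined with Step 1 one concludes via Corollary \ref{ref-1.3.5-9} that $\Lambda$ is an NCR. The NCCR property requires that $\Lambda$ be maximal Cohen-Macaulay over $R^H$, which reduces to each $M(S^iV)$ being Cohen-Macaulay (here one invokes the modules-of-covariants results of \cite{VdB3,Vdb1,Vdb7,VdB4} cited in \S\ref{ref-4.4-57}). This in turn holds precisely when the chosen window is symmetric in the appropriate sense; the parity conditions ($s$ odd, or $s/2$ even) are exactly the numerical translation of this symmetry. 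In the remaining $\PGL_2$ case (all $d_i$ even, $s/2$ odd) the even window is unbalanced but the odd window is balanced; since odd-weight $\Sl_2$-representations only descend projectively to $\PGL_2$, the resulting $\Lambda$ is not a matrix ring over $R^H$ but generically a central simple algebra of index $2$, giving a twisted NCCR.

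The principal obstacle is Step 2: extracting the precise numerical description of the admissible window from the general technical result and matching it with the explicit invariant $s$. Once the window is identified, the parity bookkeeping in Step 3 and the finite case check in Step 1 are routine.
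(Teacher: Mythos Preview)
Your overall strategy matches the paper's: apply the general criteria of \S\ref{ref-1.5-23}--\S\ref{ref-1.6-26} to $\Sl_2$, compute the combinatorial window, and read off the parity conditions. Two points deserve sharpening.

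First, the window $[0,s/2)$ in Step~2 does not come from Theorem~\ref{ref-1.5.1-24}/Corollary~\ref{ref-1.3.5-9}, which would give the larger window $-\bar\rho+\Sigma=\,]{-}s-1,s-1[$. The half-sized window requires the \emph{quasi-symmetric} refinements (Theorems~\ref{ref-1.6.1-27}, \ref{ref-1.6.3-29}, \ref{ref-1.6.4-30}), and these apply because every $\Sl_2$-representation is quasi-symmetric (the weights of $S^dV$ are $\{-d,-d+2,\dots,d\}$, summing to zero on each line). The paper makes this explicit: with $T=\{\diag(z,z^{-1})\}$, $X(T)=\ZZ$, one has $\Sigma=\,]{-}s,s[$ and $\bar\rho=1$, so $X(T)^+\cap(-\bar\rho+\tfrac12\bar\Sigma)=\{0,1,\dots,\lfloor s/2-1\rfloor\}$, and Theorem~\ref{ref-1.6.3-29} with $\varepsilon=0$ gives the NCR.

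Second, your Step~3 reduction is misstated. Cohen--Macaulayness of $\Lambda=\End_{R^H}(M)$ does not reduce to each $M(S^iV)$ being Cohen--Macaulay; it reduces to each $M(\Hom(S^iV,S^jV))$ being Cohen--Macaulay for $i,j$ in the window, which via Proposition~\ref{ref-4.4.4-61} means $\chi_1+\chi_2$ is strongly critical for all $\chi_1,\chi_2\in\Lscr$. The paper does not verify this directly but instead invokes the packaged criterion \eqref{ref-1.5-31} of Theorem~\ref{ref-1.6.4-30}: with $\varepsilon=0$ one needs $X(T)_{\bar\mu}\cap(-\bar\rho+\tfrac12(\bar\Sigma-\Sigma))=\emptyset$, i.e.\ $X(T)_{\bar\mu}\cap\{-s/2-1,\,s/2-1\}=\emptyset$. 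For $G=\Sl_2$ this says $s/2-1\notin\ZZ$, i.e.\ $s$ odd; for $G=\PGL_2$ with $\bar\mu=0$ it says $s/2-1\notin 2\ZZ$, i.e.\ $s/2$ even; and with $\bar\mu=\bar1$ it says $s/2-1\notin 1+2\ZZ$, i.e.\ $s/2$ odd. This is the precise content behind your ``symmetry of the window'' heuristic. The index~$2$ in the twisted case is then $i_{\bar1}(\Sl_2)=2$ via Proposition~\ref{ref-4.1.6-40}.
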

\begin{remarks}
\begin{enumerate}
\item If $W$ is one of the special representations \eqref{ref-1.1-17} then it is classical that $R^H$ is a polynomial
ring.
\item Theorem \ref{ref-1.4.7-16} implies in particular that if all $d_i$ are even then $R^H$ always has a twisted NCCR.
\item For an application to ``trace rings'' see \S\ref{ref-1.4.5-21} below.
\end{enumerate}
\end{remarks}
\subsubsection{Trace rings}
\label{ref-1.4.5-21}
Let $n\ge 2$, $m\ge 2$ and let $V$ be a vector space of dimension $n$, $G=\PGL_n$
  and $W=\End(V)^{\oplus m}$. Put $R=SW$. Then $Z_{m,n}\overset{\text{def}}{=}R^G$
  is the so-called (commutative) \emph{trace ring} of $m$, $n\times
  n$-matrices.  The reason for this terminology is as follows.
One has $X=\Spec SW=\End(V)^{\oplus m}$. For $x\in X$ let
  $x_i\in \End(V)$ be the $i$'th component of $x$. Then a famous
  result, conjectured by Artin \cite{Ar3} and proved by Procesi \cite{Procesi} (see
  also \cite{WeymanDerksen,DomokosZubkov,Donkin,Raz,SchVdB3}), asserts that $Z_{m,n}$ is
  generated by the traces $\Tr(x_{i_1}\cdots x_{i_t})$ of products of the
  $x_i$ and moreover Procesi also proves that all relations between
  these traces are derivable from the Cayley-Hamilton identity. If $(m,n)=(2,2)$
then $Z_{m,n}$ is a polynomial ring in 5 variables given by the traces $\Tr(x_i)_{i}$, $\Tr(x_ix_j)_{i\ge j}$ \cite[p.20]{Herstein}. In
all other cases $Z_{m,n}$ is singular \cite[Prop.\ II.3.1]{LBP}.

The ring $Z_{m,n}$ is the center of the ``non-commutative'' trace ring
$\TT_{m,n}$ which is the module of covariants $M(\End(V))$. The ring
$\TT_{m,n}$ is a free algebra in a suitable category of algebras with
trace \cite{Procesi1} and hence, from a non-commutative geometry standpoint,
it may be regarded as an analogue of a polynomial ring. This
makes it interesting to understand the homological
properties of $\TT_{m,n}$.

It turns out that the homological properties of $\TT_{m,n}$ are a bit
better than those of $Z_{m,n}$, but not much. The ring $\TT_{m,n}$ has
finite global dimension if and only if $(m,n)=(2,2),(3,2),(2,3)$ (see
\cite{VdB15} for the if direction and \cite{LBP} for the only if direction).

The proofs in \cite{VdB15} that $\gldim \TT_{m,n}<\infty$ in the indicated cases are rather adhoc but using the methods in this paper one may give a more systematic analysis.
The case $(m,n)=(2,2)$ may be deduced from Theorem \ref{ref-1.5.1-24}. The case $(m,n)=(3,2)$ is a special case
of \eqref{ref-1.4-20}. In particular $\TT_{3,2}$ is a twisted NCCR of $Z_{3,2}$. It also follows from Theorem \ref{ref-1.4.7-16}
that for $m\ge 3$, $Z_{m,2}$ has a twisted NCCR of index two if $m$ is odd and a NCCR if $m$ is even. However this (twisted) NCCR
is not given by $\TT_{m,2}$ for $m>3$ since the latter has infinite global dimension.

In \S\ref{ref-9-93} below we will show that $\TT_{2,3}$ is a twisted NCCR of its center $Z_{2,3}$ using our methods. In particular we will recover that it has
finite global dimension. We will also show the following general result:
\begin{theorems} \label{ref-1.4.9-22}
 Assume $m\ge 2$, $n\ge 2$. Then $Z_{m,n}$ has a twisted NCCR.
\end{theorems}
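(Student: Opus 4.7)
The plan is to apply the general twisted NCCR machinery of \S\ref{ref-1.5-23}--\S\ref{ref-1.6-26} to the reductive group $G = \GL(V)$ acting on $X = \End(V)^{\oplus m}$ by simultaneous conjugation. Since $Z(\GL(V))$ acts trivially on $X$ one has $k[X]^{\GL(V)} = k[X]^{\PGL(V)} = Z_{m,n}$, so any $\GL(V)$-equivariant construction automatically produces an algebra over $Z_{m,n}$. Working with $\GL(V)$ rather than $\PGL(V)$ gives access to all of $\widehat{\GL(V)}$, which is essential for building a sufficiently large ``window'' of representations, at the cost of introducing a generic central simple algebra rather than a matrix algebra --- i.e., producing a \emph{twisted} NCCR.

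First I would verify that the induced $\PGL(V)$-action on $X$ is generic in the sense of Definition \ref{ref-1.3.4-8}: for $m, n \geq 2$ the open locus $X^{\mathbf{s}} \subset X$ of tuples $(x_1,\dots,x_m)$ generating $\End(V)$ as a $k$-algebra has $\codim(X \setminus X^{\mathbf{s}}) \geq 2$, and such tuples have closed orbit and trivial $\PGL(V)$-stabilizer; these are classical facts from the invariant theory of matrix tuples after Artin--Procesi. Next I would feed this data into the main technical theorem of \S\ref{ref-1.5-23}--\S\ref{ref-1.6-26} to produce a finite set $\Sigma \subset \widehat{\GL(V)}$ of irreducible rational representations drawn from a Kempf--Ness type window, such that $U = \bigoplus_{\rho \in \Sigma} V_\rho$ yields $\gldim \Lambda < \infty$ for $\Lambda = (\End(U) \otimes k[X])^{\GL(V)}$. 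To land in a twisted NCCR of $Z_{m,n}$ I would further require that all $\rho \in \Sigma$ have a common $Z(\GL(V)) = \GG_m$-weight: this forces $\End(U)$ to descend to a genuine $\PGL(V)$-representation and hence makes $\Lambda$ an actual $Z_{m,n}$-algebra. The individual $V_\rho$ typically do not descend, so the summands $M(V_\rho) = (V_\rho \otimes k[X])^{\GL(V)}$ exist only as twisted coherent sheaves on the $\PGL(V)$-stack, making $\Lambda$ generically a central simple algebra of index dividing $n$.

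The main obstacle will be arranging $\Sigma$ to simultaneously satisfy three constraints: (i) the window condition guaranteeing $\gldim \Lambda < \infty$, (ii) uniform central weight under $Z(\GL(V))$, and (iii) Cohen--Macaulayness of $\Lambda$ as a $Z_{m,n}$-module, which is needed to upgrade NCR to NCCR and which invokes the modules-of-covariants criteria from \cite{VdB3,Vdb1,Vdb7,VdB4}. My expectation is that one first produces an unrestricted window via the general theorem of \S\ref{ref-1.5-23}, and then argues that after partitioning it by $\GG_m$-weight at least one slice remains rich enough to retain both the finite global dimension and the Cohen--Macaulay property. This reduction step is likely where quantitative control of the Kempf--Ness polytope of $W = \End(V)^{\oplus m}$, combined with the explicit combinatorics of $\widehat{\GL(V)}$ (dominant weights and their total degree), will play the key role.
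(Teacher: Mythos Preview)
Your strategic outline is in the right spirit, but it has a genuine gap and a framework mismatch.

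First, the framework. The paper's Theorem \ref{ref-1.6.4-30} requires a finite central extension $\bar{G}\to G$ with $G$ acting faithfully. The action on $\End(V)^{\oplus m}$ factors through $\PGL(V)$, so the correct choice is $G=\PGL_n$, $\bar{G}=\Sl_n$, $A=\ZZ/n\ZZ$; your $\GL(V)$ does not fit since $\GL(V)\to\PGL(V)$ has infinite kernel. Your ``partition by $\GG_m$-weight'' is then exactly the coset decomposition $X(\bar{T})=\coprod_{\bar{\mu}\in\ZZ/n\ZZ} X(T)_{\bar{\mu}}$ built into Theorem \ref{ref-1.6.4-30}, so you should invoke that theorem directly rather than hoping to slice an unrestricted window after the fact. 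Also, the case $(m,n)=(2,2)$ must be handled separately: $W$ is \emph{not} generic there (the paper disposes of it by noting $Z_{2,2}$ is polynomial).

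Second, and more seriously, the heart of the proof is entirely missing from your proposal. Since $\PGL_n$ is semi-simple one is forced to take $\varepsilon=0$ in Theorem \ref{ref-1.6.4-30}, so the crepancy condition \eqref{ref-1.5-31} becomes
\[
X(T)_{\bar{\mu}}\cap\bigl(-\bar{\rho}+(1/2)\partial\bar{\Sigma}\bigr)=\emptyset,
\]
i.e.\ the boundary hyperplanes of $(1/2)\bar{\Sigma}$ (shifted by $-\bar{\rho}$) must avoid the coset $\bar{\mu}$. Verifying this is not a matter of ``quantitative control of a polytope'': the paper carries out an explicit computation with the root system $A_{n-1}$, identifying the boundary hyperplanes $H_{p,q}$ of $(1/2)\bar{\Sigma}$ with partitions $n=p+q$, computing the ``color'' $c(m\rho_{p,q})\in\ZZ/n\ZZ$ of their base points, and then determining which colors $c(\chi)$ can occur on $X(\bar{T})\cap H_{p,q}$. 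The answer depends on the parities of $m$ and $n$ (with the case $m,n$ both odd dispatched by a separate integrality obstruction), and in each case one exhibits a specific $\bar{\mu}$ that is never hit. Your proposal does not identify this obstruction, let alone indicate how to overcome it; the phrase ``at least one slice remains rich enough'' is precisely where the real work lies.
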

Note that no ``nice'' commutative resolution of singularities  is known  for $Z_{m,n}$ and it seems very doubtful that it exists.
\subsection{Non-commutative resolutions for quotient singularities}

\label{ref-1.5-23}

Below we state a number of easy to verify sufficient conditions for the existence of NC(C)Rs (possibly twisted) for quotient singularities. 
Theorem \ref{ref-1.3.1-5} will be a corollary of these more refined results.
The criteria we state are special cases of a general combinatorial
method for verifying whether a module of covariants yields a NC(C)R (see Remark \ref{ref-11.3.2-114} below). 
For simplicity of exposition we will state our criteria for $G$ connected. In \S\ref{ref-4.5-62} below we explain how one may
handle the non-connected case.

\medskip

We first introduce some notations which will remain in force for the rest of the paper except when overruled locally.
Let $G$ be a reductive group and denote its identity connected component by $G_e$. Let
$T\subset B\subset G_e$ be respectively a maximal torus and a Borel
subgroup of $G_e$ with $\Wscr=N(T)/T$ being the corresponding Weyl group. Put
$X(T)=\Hom(T,G_m)$ and let $\Phi\subset X(T)$ be the roots of $G$.  
By convention
the roots of~$B$ are the negative roots $\Phi^-$ and $\Phi^+=\Phi-\Phi^-$ is the set
of  positive roots. 
We write $\bar{\rho}\in X(T)_\RR$ for half the sum of the positive roots.
 Let $X(T)^+_\RR$ be the dominant cone in $X(T)_\RR$ and let $X(T)^+=X(T)^+_\RR\cap X(T)$ be the set of dominant
weights. For $\chi\in X(T)^+$ we denote the simple
$G_e$-representation with highest weight~$\chi$ by~$V(\chi)$.

Let $W$ be a finite dimensional $G$-representation of dimension $d$ and put $R=SW$, $X=\Spec SW=W^\ast$.
Let 
$(\beta_i)_{i=1}^d\in X(T)$ be the $T$-weights of $W$. 

 Put
\begin{align*}
\Sigma&=\left\{\sum_i a_i \beta_i\mid a_i\in ]-1,0]\right\}\subset X(T)_\RR.
\end{align*}
\begin{theorems} \label{ref-1.5.1-24} (See \S\ref{ref-11.3-109} below.) Assume $G$ is connected and let $\Delta$ be a $\Wscr$-invariant bounded closed convex subset of $X(T)_\RR$. Let 
\begin{align*}
{\Lscr}&=X(T)^+\cap (-\bar{\rho}+\Sigma+\Delta),\\
U&=\bigoplus_{\chi\in \Lscr} V(\chi).
\end{align*}
Then one has 
$
\gldim M(\End(U)) <\infty
$.
\end{theorems}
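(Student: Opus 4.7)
The goal is to show that every simple $\Lambda$-module has finite projective dimension. Identify $\Lambda = M(\End(U))$ with $\End_{G,R}(U\otimes R)$, so that there is an exact functor $F := \Hom_{G,R}(U\otimes R,-) \colon \mod(G,R) \to \mod(\Lambda)$. For $\mu\in\Lscr$, $F$ sends the indecomposable $G$-equivariant projective $V(\mu)\otimes R$ to an indecomposable projective $\Lambda$-module $P_\mu$, and $F$ sends the $G$-equivariant skyscraper $V(\chi)$ at the origin of $X=W^\ast$ to the simple $\Lambda$-module $S_\chi$ for $\chi\in\Lscr$. Thus it suffices to construct, for each $\chi\in\Lscr$, a finite projective resolution of $S_\chi$ in $\mod(\Lambda)$.

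The natural candidate is obtained by applying $F$ to the equivariant Koszul resolution
\[
0 \to V(\chi)\otimes\wedge^d W\otimes R \to \cdots \to V(\chi)\otimes W\otimes R \to V(\chi)\otimes R \to V(\chi) \to 0
\]
of length $d=\dim W$. Since $F$ is exact, this yields an exact sequence of $\Lambda$-modules of length $d$ ending in $S_\chi$. Decomposing $V(\chi)\otimes\wedge^i W = \bigoplus_\mu V(\mu)^{m^i_\mu}$, the $i$-th term of the resolution equals $\bigoplus_\mu F(V(\mu)\otimes R)^{m^i_\mu}$ and is $\Lambda$-projective precisely when every $\mu$ with $m^i_\mu\ne 0$ lies in $\Lscr$.

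The main obstacle is that this ``Koszul closure'' of $\Lscr$ does not hold naively: tensoring $V(\chi)$ with $\wedge^i W$ produces irreducible summands with highest weights $\mu$ that generally escape the bounded region $-\bar\rho+\Sigma+\Delta$. The resolution is to invoke the general machinery of \S\ref{ref-11.2-105}, which should allow any projective $V(\mu)\otimes R$ with $\mu\notin\Lscr$ to be replaced by a finite complex of projectives $V(\nu)\otimes R$ with $\nu\in\Lscr$; combining such replacements with the Koszul resolution and totalizing yields a finite projective resolution of $S_\chi$. The specific shape $\Lscr = X(T)^+\cap(-\bar\rho+\Sigma+\Delta)$ is engineered precisely to make this replacement process terminate: the $-\bar\rho$ shift and the $\Wscr$-invariance of $\Delta$ encode Borel--Weil--Bott-type cancellations that relate $V(\mu)\otimes R$ to $V(w(\mu+\bar\rho)-\bar\rho)\otimes R$ for $w\in\Wscr$, while the half-open parallelepiped $\Sigma$ absorbs lattice translations by the $\beta_j$'s, with $\Delta$ providing a ``buffer'' to accommodate BWB-induced Weyl reflections. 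The technical heart of the proof is making this informal picture rigorous --- in particular, controlling boundary effects of $\Sigma$ and verifying that the combined replacement--plus--Koszul double complex collapses to a finite projective resolution of $S_\chi$ in $\mod(\Lambda)$.
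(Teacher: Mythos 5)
Your high-level architecture matches the paper's: reduce to showing the $\Lambda$-modules $\tilde P_{\Lscr,\chi}=\Hom_{G,R}(U\otimes R, V(\chi)\otimes R)$ have finite projective dimension (which, via the Koszul resolution of the skyscraper, is the content of the paper's Lemma \ref{ref-11.1.1-103}), then use the Springer/Bott-type ``creating complexes'' of \S\ref{ref-11.2-105} to relate a bad weight to better ones. However, you have deferred the entire content of the proof to an unproved claim, and the claim as you stated it is not quite what the machinery delivers.

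Specifically, you assert that a non-$\Lscr$ projective $V(\mu)\otimes R$ can be ``replaced by a finite complex of projectives $V(\nu)\otimes R$ with $\nu\in\Lscr$.'' A single creating complex $C_{\Lscr,\lambda,\mu}$ does no such thing: it gives an acyclic complex of $\Lambda$-modules whose terms are $\tilde P_{\Lscr,\nu}$ for weights $\nu=(\mu+\beta_{i_1}+\cdots+\beta_{i_{-p}})^+$, and in general most of these $\nu$ are \emph{still not} in $\Lscr$. What the paper actually proves is weaker but sufficient: it introduces the bivariate invariant $(r_\chi, p_\chi)$ where $r_\chi$ is minimal with $\chi\in-\bar\rho+r_\chi\bar\Sigma+\Delta$ and $p_\chi$ counts extremal coordinates, chooses $\lambda$ via the Minkowski-sum supporting-hyperplane Lemma \ref{ref-C.2-146}, and verifies (observation (3) in \S\ref{ref-11.3-109}) that every non-$\chi$ weight $\nu$ appearing in $C_{\Lscr,\lambda,\chi}$ has \emph{strictly smaller} $(r_\nu,p_\nu)$ in the lexicographic order, because the directions $\beta_{i_j}$ selected by $\langle\lambda,-\rangle>0$ are exactly those where $\chi$'s coordinates sit at $-r_\chi$. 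Termination then follows from well-foundedness of this order on $X(T)^+$ (and from the boundedness of $\Delta$). This descending induction, together with the separating-hyperplane argument that guarantees $C_{\Lscr,\lambda,\chi}$ is acyclic (Lemma \ref{ref-11.2.1-108}, which needs Corollary \ref{ref-D.3-151} on dot-action monotonicity), is the actual proof; your proposal names the pieces but supplies none of the estimates that make the process terminate, and slightly misdescribes what the replacement step produces. You should also argue by assuming $\gldim\Lambda=\infty$ and taking a minimal bad $\chi$, or equivalently prove $\pdim\tilde P_{\Lscr,\chi}<\infty$ for \emph{all} $\chi\in X(T)^+$ by induction, rather than only for $\chi\in\Lscr$ as your sketch suggests.
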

Note that we may always take $\Delta$ in such a way that $0\in {\Lscr}$ (e.g. let $\Delta$ be the convex
hull of $\Wscr\cdot \bar{\rho}$). In that case $U\neq 0$.
We obtain as in Corollary \ref{ref-1.3.5-9}.
\begin{corollarys}
\label{ref-1.5.2-25}
Assume $G$ is connected.
If $W$ is generic then for $\Delta$ such that $U\neq 0$ in 
Theorem~\ref{ref-1.5.1-24} one has that $M(U)$ yields a NCR of $R^G$.
\end{corollarys}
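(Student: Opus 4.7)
The plan is to mimic the deduction of Corollary~\ref{ref-1.3.5-9} from Theorem~\ref{ref-1.3.1-5}, but with Theorem~\ref{ref-1.5.1-24} supplying the finite-global-dimension input. First I would apply Theorem~\ref{ref-1.5.1-24} to the given $\Wscr$-invariant bounded closed convex $\Delta$ to obtain $\gldim \Lambda < \infty$ for $\Lambda = M(\End(U))$, where $U = \bigoplus_{\chi \in \Lscr} V(\chi)$.

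Since $W$ is generic, I would then invoke Lemma~\ref{ref-4.1.3-38}, exactly as in the proof of Corollary~\ref{ref-1.3.5-9}, to identify $M(\End(U))$ with $\End_{R^G}(M(U))$ and to conclude that $M(U)$ is reflexive as an $R^G$-module. Combining this with the previous step shows that $\End_{R^G}(M(U))$ has finite global dimension.

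It remains to verify the non-triviality conditions in Definition~\ref{ref-1.1.1-1}: $M(U)$ must be a non-zero, finitely generated $R^G$-module. Finite generation is standard since $R$ is noetherian and $G$ is reductive, so $R^G$ is noetherian and $M(U) \subset U \otimes R$ is finitely generated over $R^G$. For non-vanishing, the genericity of $W$ produces a dense $G$-invariant open $\Xs \subset X$ on which $G$ acts freely with principal bundle structure; hence for every $G$-representation $V$ the module of covariants $M(V)$ has generic rank $\dim V$, so $M(U) \neq 0$ precisely when $U \neq 0$, which is our standing hypothesis on $\Delta$.

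The substantive work is entirely contained in Theorem~\ref{ref-1.5.1-24} and Lemma~\ref{ref-4.1.3-38}; the present statement is essentially their packaging into the form of Definition~\ref{ref-1.1.1-1}. The only delicate point, and the main (but minor) obstacle, is tracking that the codimension-two condition in Definition~\ref{ref-1.3.4-8}(2) is what secures reflexivity of $M(U)$ and the identification of the algebra of covariants with the endomorphism ring.
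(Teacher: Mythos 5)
Your argument is correct and matches the paper's own: the paper simply says "We obtain as in Corollary~\ref{ref-1.3.5-9}", i.e.\ apply Theorem~\ref{ref-1.5.1-24} for finite global dimension and then Lemma~\ref{ref-4.1.3-38} (under genericity) to identify $M(\End(U))=\End_{R^G}(M(U))$ and get reflexivity of $M(U)$. Your extra remarks on finite generation and non-vanishing are fine but not new ideas; the approach is the same.
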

\subsection{Non-commutative crepant resolutions for quotient
  singularities}
\label{ref-1.6-26}
Let the notations be as in the previous section.  We will say that $W$
is \emph{quasi-symmetric} if for every line $\ell\subset X(T)_\RR$ through
the origin we have 
\[
\sum_{\beta_i\in\ell}\beta_i=0.
\]
This implies in particular that $W$ is unimodular (i.e.\ $\wedge^d W\cong k$)
and hence $R^G$ is Gorenstein if $W$ is generic by a result of Knop (see Theorem \ref{ref-4.1.7-41} below).

 The following result strengthens 
Theorem \ref{ref-1.5.1-24} in the quasi-symmetric case.
\begin{theorems} 
\label{ref-1.6.1-27} (See \S\ref{ref-12.1-123} below.) Let $G$ be connected and assume $W$ is quasi-symmetric. Let $\Delta$ be a $\Wscr$-invariant bounded closed convex subset of $X(T)_\RR$.

Put
\begin{align*}
{\Lscr}&=X(T)^+\cap (-\bar{\rho}+(1/2)\bar{\Sigma}+\Delta),\\
U&=\bigoplus_{\chi\in {\Lscr}} V(\chi).
\end{align*}
 Then one has
$
\gldim M(\End(U))<\infty
$. 
\end{theorems}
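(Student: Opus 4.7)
The plan is to follow the strategy used for Theorem~\ref{ref-1.5.1-24} but exploit the quasi-symmetric condition to work with only ``half'' of the Koszul differentials. By the combinatorial criterion set up in \S\ref{ref-11.3-109} (see Remark~\ref{ref-11.3.2-114}), proving $\gldim M(\End(U)) < \infty$ reduces to constructing, for each dominant weight $\chi$, a finite $G$-equivariant resolution of $M(V(\chi))$ in $\mod(G,R)$ by direct sums of $M(V(\chi'))$ with $\chi' \in \Lscr$. In the general setting of Theorem~\ref{ref-1.5.1-24} the (twisted) Koszul complex on $W^\ast$ provides such resolutions; its differentials introduce weight shifts lying in $\bar\Sigma$, which is why the region $-\bar\rho + \Sigma + \Delta$ appears there.

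To save a factor of two under quasi-symmetry, the plan is to polarize. Choose a generic one-parameter subgroup $\lambda : G_m \to T$ with $\langle \lambda, \beta_i \rangle \neq 0$ for all $i$ and decompose $W = W_+ \oplus W_-$ as a $T$-representation according to the sign of $\langle \lambda, \cdot \rangle$. Quasi-symmetry forces the multisets of weights of $W_+$ and $-W_-$ to coincide, and a direct computation then gives
\[
\bar\Sigma = \sum_{\beta \in W_+}[-1,1]\beta, \qquad (1/2)\bar\Sigma = \frac{1}{2}\sigma_+ + \bar{\Sigma}_+,
\]
where $\sigma_+ = \sum_{\beta \in W_+}\beta$ and $\bar{\Sigma}_+ = \sum_{\beta \in W_+}[-1,0]\beta$. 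Thus the weight-shift region coming from a ``half Koszul'' resolution on $W_+^\ast$ alone is $\bar{\Sigma}_+$, i.e., it equals $(1/2)\bar\Sigma$ up to the translation $\frac{1}{2}\sigma_+$, and the latter can be absorbed into the $-\bar\rho$ term.

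The heart of the argument is then the construction of such a half-Koszul resolution. On the torus level this is provided by the Koszul complex $V(\chi) \otimes \wedge^\bullet W_+^\ast \otimes R$, whose differentials introduce only weights from $W_+$. To promote it to a $G$-equivariant statement one must patch or average the torus-level resolutions over the Weyl chambers: here the $\Wscr$-invariance of $\Delta$ enters crucially, since it guarantees that the weight-shift region $\bigcup_{w \in \Wscr}(-\bar\rho + w(\bar{\Sigma}_+) + \Delta)$ has closure contained in $-\bar\rho + (1/2)\bar\Sigma + \Delta$. Once the resolutions have been constructed, finite global dimension follows by the same mechanism as in Theorem~\ref{ref-1.5.1-24}.

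The main obstacle I anticipate is precisely the construction of the $G$-equivariant half-Koszul resolution: the polarization $W_+$ depends on $\lambda$ and is not preserved by $G$, so a naive Koszul complex does not suffice. The most natural route is via a Borel--Weil--Bott-type cohomology computation on $G/B$ applied to bundles built from $W_+^\ast$, or via a ``staircase''/``magic window'' argument in the spirit of the complexes developed in \S\ref{ref-11.2-105} of the present paper. Keeping careful track of how the shift $\frac{1}{2}\sigma_+$ interacts with $\bar\rho$ across Weyl chambers is expected to be the most delicate bookkeeping in the argument.
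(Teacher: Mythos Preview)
Your proposal misidentifies where the factor of two is won, and this leads to a genuine gap. The complexes of \S\ref{ref-11.2-105} are \emph{already} built from a ``half-Koszul'': for $\lambda\in Y(T)^-_\RR$ the complex $C_{\lambda,\chi}$ uses only the Koszul differentials coming from $K_\lambda$, i.e.\ precisely your $W_+$. So the improvement from Theorem~\ref{ref-1.5.1-24} to Theorem~\ref{ref-1.6.1-27} cannot come from replacing a full Koszul by a half Koszul --- that reduction has already been made. What the paper actually does in \S\ref{ref-12.1-123} is keep the \emph{same} complexes $C_{\lambda,\chi}$ and sharpen the combinatorial analysis of the weights $\mu'=\chi+\sum_{i\in S}\beta_i$ that appear. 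The quasi-symmetric identity $\sum_{\beta_i\in\ell}\beta_i=0$ is used line by line: after adding the $\beta_i$'s from $S$, on each line $\ell$ the resulting coefficient can be rewritten either using the weights on the $\lambda$-positive side of $\ell$ or, via the identity, using those on the negative side. Because $r_\chi>1/2$ implies $1-r_\chi<r_\chi$, one of the two rewritings always lands in $]-r_\chi,0]$, which yields $r_{\mu'}<r_\chi$ or $p_{\mu'}<p_\chi$ and the induction proceeds.

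A specific place where your argument breaks: the shift $\tfrac12\sigma_+$ you want to ``absorb into the $-\bar\rho$ term'' depends on $\lambda$ and is not $\Wscr$-invariant, so it cannot be absorbed into a $\Wscr$-invariant $\Delta$; correspondingly your containment $\bigcup_{w\in\Wscr}(-\bar\rho+w(\bar\Sigma_+)+\Delta)\subset -\bar\rho+(1/2)\bar\Sigma+\Delta$ is false in general (since $w(\bar\Sigma_+)=-\tfrac12 w\sigma_++(1/2)\bar\Sigma$). Also, quasi-symmetry does \emph{not} force the multisets of weights of $W_+$ and $-W_-$ to coincide (e.g.\ weights $2,-1,-1$ on a line); your Minkowski identities for $\bar\Sigma$ happen to survive this, but the reasoning you give for them is incorrect. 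The right fix is not to search for a new complex but to redo the induction of \S\ref{ref-11.3-109} with the sharper line-by-line bookkeeping described above.
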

If $W$ is generic and ${\Lscr}\neq \emptyset$ then this yields again a NCR as in Corollary \ref{ref-1.5.2-25}. However our main
concern in the quasi-symmetric case will be the construction of NCCRs rather than just NCRs.
For this we need the concept of a \emph{half open polygon} which generalizes the notion of a half open
interval. Let $\Delta\subset\RR^n$ be a bounded closed convex polygon. For $\varepsilon\in \RR^n$ parallel to the linear space spanned by $\Delta$ put
\[
\begin{aligned}
\Delta_\varepsilon&=\bigcup_{r>0} \Delta\cap (r\varepsilon+\Delta),\\
\Delta_{\pm\varepsilon}&=\Delta_\varepsilon\cap \Delta_{-\varepsilon}.
\end{aligned}
\]
So $\Delta_\varepsilon$ is obtained from $\Delta$ by removing the boundary faces which are
moved inwards by $\varepsilon$ and $\Delta_{\pm\varepsilon}$ is obtained from $\Delta$ by removing the boundary faces not parallel to~$\varepsilon$.

We will say that
 $\varepsilon\in \RR^n$ is \emph{generic} for $\Delta$ if it is a
non-zero vector which is parallel to  $\Delta$, but not parallel
to any of its boundary faces. In that case $\Delta_{\pm \varepsilon}$ is the relative interior of $\Delta$.

Our main result concerning NCCRs is Theorem \ref{ref-1.6.4-30} below, but we will first 
state it in torus case.
\begin{theorems} (See \S\ref{ref-12.4-129} below.) 
\label{ref-1.6.2-28} 
Assume that $G=T$ is a torus and that $W$ is quasi-symmetric and generic. 
Fix any $\varepsilon\in X(R)_\RR$ which is
generic for $\bar{\Sigma}$ and put
\begin{align*}
{\Lscr}&=X(T)\cap (1/2)\bar{\Sigma}_{\varepsilon},\\
U&=\bigoplus_{\chi\in {\Lscr}}V(\chi).
\end{align*}
Put $M=M(U)$. Then
$
\End_{R^T}(M)
$
is a NCCR for $R^T$.
\end{theorems}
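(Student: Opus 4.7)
I will verify the four defining conditions of an NCCR for $\Lambda = \End_{R^T}(M)$: (i) $R^T$ is Gorenstein, (ii) $M$ is a finitely generated reflexive $R^T$-module, (iii) $\gldim\Lambda < \infty$, and (iv) $\Lambda$ is maximal Cohen--Macaulay over $R^T$. Conditions (i) and (ii) come almost for free from the hypotheses: since $W$ is quasi-symmetric it is unimodular (each line through the origin sums to zero, so $\sum_i \beta_i = 0$), and together with genericity Knop's theorem (Theorem \ref{ref-4.1.7-41}) gives $R^T$ Gorenstein; reflexivity of $M(U)$ together with the identification $\End_{R^T}(M(U)) = M(\End(U))$ is exactly Lemma \ref{ref-4.1.3-38} under genericity.

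For finite global dimension I would invoke Theorem \ref{ref-1.6.1-27}. Since $G = T$ we have $\bar{\rho}=0$, $X(T)^+ = X(T)$, and the Weyl group is trivial, so any bounded closed convex $\Delta$ is automatically $\Wscr$-invariant. I would take $\Delta = \{\delta\varepsilon\}$ for a sufficiently small $\delta > 0$. Using that $\varepsilon$ is generic for $\bar{\Sigma}$ (so no facet of $(1/2)\bar{\Sigma}$ is parallel to $\varepsilon$), a direct check on the supporting half-spaces shows that for $\delta$ small enough the lattice points of $(1/2)\bar{\Sigma} + \delta\varepsilon$ are exactly those on the $+\varepsilon$-side of $(1/2)\bar{\Sigma}$, i.e.\ precisely $X(T)\cap (1/2)\bar{\Sigma}_{\varepsilon} = \Lscr$. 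Theorem \ref{ref-1.6.1-27} then gives $\gldim\Lambda<\infty$.

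For the Cohen--Macaulay property I would use that each $V(\chi)$ is one-dimensional over the torus, so
\[
\Lambda = M(\End(U)) = \bigoplus_{\chi,\chi'\in \Lscr} M\bigl(V(\chi-\chi')\bigr) = \bigoplus_{\chi,\chi'\in \Lscr} R_{\chi-\chi'},
\]
the sum of weight spaces of $R$. By quasi-symmetry $\bar{\Sigma}=-\bar{\Sigma}$, so for $\chi,\chi'\in (1/2)\bar{\Sigma}_\varepsilon\subset (1/2)\bar{\Sigma}$ we have $-\chi'\in (1/2)\bar{\Sigma}$, hence
\[
\chi-\chi'\in (1/2)\bar{\Sigma} + (1/2)\bar{\Sigma} = \bar{\Sigma}
\]
by convexity. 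I would then invoke the Cohen--Macaulay criterion for modules of covariants developed in \S\ref{ref-4.4-57} (following \cite{VdB3,Vdb1,Vdb7,VdB4}), which in the generic quasi-symmetric torus setting states exactly that $R_\eta$ is MCM over $R^T$ whenever $\eta\in\bar{\Sigma}$. Since $\Lscr\times\Lscr$ is finite, $\Lambda$ is MCM.

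The main obstacle I anticipate is pinning down the precise Cohen--Macaulay criterion and confirming that, in the quasi-symmetric situation, the MCM region for torus weight modules is exactly $\bar{\Sigma}$ (not just its interior). This is where the quasi-symmetry hypothesis --- as opposed to just the weaker finite-global-dimension hypotheses of Theorem \ref{ref-1.5.1-24} --- plays its essential role, and where substantive input from the earlier papers must be cited. The other technical point, matching the half-open set $(1/2)\bar{\Sigma}_\varepsilon$ of Theorem \ref{ref-1.6.2-28} to the closed Minkowski sum appearing in Theorem \ref{ref-1.6.1-27}, is elementary convex geometry provided $\varepsilon$ is genuinely generic for $\bar\Sigma$.
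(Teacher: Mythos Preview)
Your overall architecture matches the paper's: the paper simply invokes Theorem~\ref{ref-1.6.4-30} with $\bar G=G=T$, $\bar\mu=0$ and $\varepsilon$ generic, and what you have written is essentially an unpacking of that theorem's proof (\S\ref{ref-12.2-127}--\S\ref{ref-12.3-128}) in the torus case. Steps (i)--(iii) are correct; your reduction of (iii) to Theorem~\ref{ref-1.6.1-27} with $\Delta=\{\delta\varepsilon\}$ is exactly the argument of \S\ref{ref-12.2-127}.

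The genuine gap is in (iv). The sufficient Cohen--Macaulay condition available from \S\ref{ref-4.4-57} (Theorem~\ref{ref-4.4.3-60}) is that $R_\eta$ is MCM when $\eta$ is \emph{strongly critical}, which for a torus means $\eta\in\Sigma$ --- the \emph{open} polytope, since $\bar\rho=0$. Quasi-symmetry does not upgrade this to the closure $\bar\Sigma$. Your computation only places $\chi-\chi'$ in $\bar\Sigma$, and boundary weights need not give MCM modules. So the obstacle you flag is real, but your proposed resolution (hoping the criterion extends to $\bar\Sigma$) is the wrong one.

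The correct fix --- and the only place where genericity of $\varepsilon$ enters the Cohen--Macaulay argument --- is the boundary-avoidance step of \S\ref{ref-12.3-128}. One has $\chi\in(1/2)\bar\Sigma_\varepsilon$ and, using $-\bar\Sigma_\varepsilon=\bar\Sigma_{-\varepsilon}$, also $-\chi'\in(1/2)\bar\Sigma_{-\varepsilon}$. If $\chi-\chi'$ lay on a boundary face of $\bar\Sigma$, then both $\chi$ and $-\chi'$ would lie on the corresponding face of $(1/2)\bar\Sigma$, forcing that face to be retained in both $\bar\Sigma_\varepsilon$ and $\bar\Sigma_{-\varepsilon}$, hence in $\bar\Sigma_{\pm\varepsilon}$. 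But for $\varepsilon$ generic, $\bar\Sigma_{\pm\varepsilon}$ is the relative interior $\Sigma$ and retains no boundary faces. Hence $\chi-\chi'\in\Sigma$ after all, and Theorem~\ref{ref-4.4.3-60} applies.
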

In \S\ref{ref-10.1-99} below we give an example of a
non-quasi-symmetric $W$ such that $R^T$ has no NCCR given by a module
of covariants. On the other hand if $R^T$ is Gorenstein and $\dim
\Spec R^T=3$ then a NCCR given by a module of covariants always
exists~\cite{Broomhead}.

\medskip

It turns out that in order to make the proof of Theorem \ref{ref-1.6.2-28}
work for more general connected reductive groups $\varepsilon$ needs to be generic \emph{and}
$\Wscr$-equivariant. Unfortunately it is usually not possible to
satisfy both conditions at once. Indeed if~$G$ is for example
semi-simple then $\Wscr$-invariance enforces $\varepsilon=0$ which is
in some sense the opposite to generic.  In \S\ref{ref-10.2-100} we will give an example of a
quotient singularity for a quasi-symmetric representation which does
not have a NCCR. See however \S\ref{ref-5-68} for a positive
example.

\medskip

In
order to state a more generally applicable version of Theorem \ref{ref-1.6.2-28} it will be convenient to expand our setting slightly.
So we assume that in addition to the connected $G$ that there is a surjective morphism
$\phi:\bar{G}\r G$ where $\bar{G}$ is a connected reductive group with
$\dim\bar{G}=\dim G$. Then $A\overset{\text{def}}{=}\ker\phi$ is a finite subgroup of the center of $\bar{G}$. Let $\bar{T}\subset \bar{G}$ be the inverse image of $T$ in $G$. 
Then~$\bar{T}$ is still a maximal torus and $A\subset \bar{T}$.
We have an exact sequence
\[
0\r X(T)\r X(\bar{T})\r X(A)\r 0
\]
and hence a corresponding coset decomposition of lattices inside $X(T)_\RR=X(\bar{T})_\RR$
\[
X(\bar{T})=\bigcup_{\bar{\mu}\in X(A)} X(T)_{\bar{\mu}},
\]
where $X(T)_{\bar{\mu}}=X(T)+\mu$. We set $X(T)^+_{\bar{\mu}}=X(T)^+_\RR\cap X(T)_{\bar{\mu}}$ and define \label{index}
\[{}
i_{\bar{\mu}}(\bar{G})=\gcd_{\chi\in X(T)^+_{\bar{\mu}}} \dim V(\chi).
\]
Clearly $i_{\bar{0}}(\bar{G})=1$. Our first result is a strengthening of Theorem \ref{ref-1.6.1-27}.
\begin{theorems}
\label{ref-1.6.3-29} (See \S\ref{ref-12.2-127} below.) Let $G$ be connected and let the other notations be as above. Assume~$W$ is quasi-symmetric. Let $\varepsilon\in X(T)_\RR$ be $\Wscr$-invariant
and $\bar{\mu}\in A$.
Put
\begin{align*}
{\Lscr}&=X(T)_{\bar{\mu}}^+\cap (-\bar{\rho}+(1/2)\bar{\Sigma}_\varepsilon),\\
U&=\bigoplus_{\chi\in {\Lscr}} V(\chi),\\
\Lambda&= M(\End(U)).
\end{align*}
 Then one has
$
\gldim \Lambda<\infty
$. If $W$ is in addition generic and ${\Lscr}\neq \emptyset$ then  $\Lambda$ is  a twisted NCR for $R^G$ of index $i_{\bar{\mu}}(\bar{G})$. 
\end{theorems}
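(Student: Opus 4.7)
The strategy is to deduce Theorem~\ref{ref-1.6.3-29} from the combinatorial criterion for finiteness of global dimension underlying Theorem~\ref{ref-1.5.1-24} and its quasi-symmetric refinement Theorem~\ref{ref-1.6.1-27}, and to upgrade the conclusion in two ways: from the closed region $(1/2)\bar{\Sigma}$ to the half-open region $(1/2)\bar{\Sigma}_\varepsilon$, and from NCR to twisted NCR on the coset $X(T)_{\bar{\mu}}^+$.

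First I would reformulate the statement $\gldim \Lambda<\infty$ equivariantly: by the equivalence $\mod(G,R) \simeq \mod(\Lambda)$ obtained from the projective generator $U\otimes R$, it is enough to produce, for every $\chi\in X(T)^+_{\bar{\mu}}$, a finite resolution of $V(\chi)\otimes R$ by direct sums of the $V(\chi')\otimes R$ with $\chi'\in \Lscr$. Restricting to the coset $X(T)_{\bar{\mu}}^+$ is legitimate because the weights of $W$ lie in $X(T)\subset X(\bar{T})$, so the Koszul-type complexes of §\ref{ref-11.2-105}, whose differentials multiply by elements of $W$, preserve the coset mod $X(T)$.

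Next I would appeal to the Koszul-type complexes constructed in §\ref{ref-11.2-105}: they relate $V(\chi)\otimes R$ to direct summands of $V(\chi')\otimes R$ whose highest weights $\chi'$ are obtained by shifting $\chi$ by subsums of the weights $\beta_i$ of $W$. Quasi-symmetry of $W$ means each line through the origin in $X(T)_{\RR}$ carries opposite weights summing to zero, so the relevant shifted weights can be chosen within $\chi+(1/2)\bar{\Sigma}$; this is the mechanism that produces the factor~$1/2$ in Theorem~\ref{ref-1.6.1-27}. Running this argument on the dominant chamber and translating by $-\bar{\rho}$ matches the shift of the Weyl chamber which appears in $\Lscr$.

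The novel step is to replace $(1/2)\bar{\Sigma}$ by the half-open $(1/2)\bar{\Sigma}_\varepsilon$. Here the $\Wscr$-invariance of $\varepsilon$ is crucial: it guarantees that the region $\Lscr$ is stable under the relevant symmetries of the dominant chamber, so the set of ``admissible'' highest weights is compatible with the Weyl-group averaging inherent in the Borel-Weil-Bott analysis of the Koszul complexes. I would argue that the faces of $(1/2)\bar{\Sigma}$ removed by an $\varepsilon$-perturbation correspond to Koszul terms that are ``degenerate'' (their top weights lie on a wall moved inward by $\varepsilon$) and can be eliminated from the resolution without affecting exactness, by a small $\varepsilon$-deformation argument run inductively; this is the analogue of the standard half-open decomposition trick for resolving monomials.

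For the twisted NCR claim, genericity of $W$ gives $M(V(\chi))\neq 0$ for every $\chi\in X(T)^+$ and, by a variant of Lemma~\ref{ref-4.1.3-38}, $\Lambda=M(\End(U))\cong \End_{R^G}(M(U))$ with $M(U)$ reflexive. Passing to the generic fibre of $\Spec R^G$, the algebra $\Lambda$ becomes $\End_{k(R^G)}$ of a sum $\bigoplus_{\chi\in\Lscr} V(\chi)$ regarded as a module over the central simple $k(R^G)$-algebra governing $\bar G$-isotypical components indexed by the coset $X(T)^+_{\bar \mu}$. The index of that algebra divides $\dim V(\chi)$ for every $\chi$ in the coset and, restricted to $\Lscr\neq\emptyset$, is detected by those highest weights; since $\Lscr$ is cofinal enough in $X(T)^+_{\bar\mu}$ (any $\chi\in X(T)^+_{\bar\mu}$ admits a finite resolution by $V(\chi')\otimes R$ with $\chi'\in\Lscr$, by Step 1), one concludes the index equals $i_{\bar{\mu}}(\bar G) = \gcd_{\chi\in X(T)^+_{\bar{\mu}}}\dim V(\chi)$.

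The main obstacle is Step 3: making the passage from $(1/2)\bar{\Sigma}$ to $(1/2)\bar{\Sigma}_\varepsilon$ rigorous. One needs very precise control over which boundary terms of the Koszul-type resolutions genuinely contribute, so that discarding the $\varepsilon$-inward faces still leaves enough room to close up all resolutions in finitely many steps. The $\Wscr$-invariance of $\varepsilon$ is what makes such a simultaneous pruning across the whole Weyl chamber possible.
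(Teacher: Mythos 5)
Your overall framework is on the right track (equivariant reformulation, Koszul complexes, coset-compatibility, genericity giving reflexivity and the Azumaya structure), and your coset observation is correct: the weights of $W$ lie in $X(T)$, so the Koszul-type complexes of \S\ref{ref-11.2-105} preserve the coset $X(T)_{\bar{\mu}}$, and the proof of Theorem~\ref{ref-1.6.1-27} applies verbatim there. The index statement is also handled correctly in spirit; the paper simply invokes Proposition~\ref{ref-4.1.6-40} to get the exact value $i_{\bar\mu}(\bar G)$.

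However there is a genuine gap in your Step 3, and you have in effect flagged it yourself. You propose to handle the passage from $(1/2)\bar{\Sigma}$ to the half-open set $(1/2)\bar{\Sigma}_\varepsilon$ by ``pruning degenerate Koszul terms'' via an inductive $\varepsilon$-deformation, and you note that making this precise would require very fine control over which boundary terms of the Koszul resolutions contribute. That is indeed where the difficulty would lie, and it is not the route the paper takes --- and it is far from clear it could be made to work. The paper's actual argument is a short lattice-point observation that sidesteps any reanalysis of the Koszul complexes. Namely, for $r>0$ small one has the chain of equalities of weight sets
\begin{align*}
X(T)_{\bar{\mu}}^+\cap \bigl(-\bar{\rho}+(1/2)\bar{\Sigma}_\varepsilon\bigr)
&=X(T)_{\bar{\mu}}^+\cap \bigl(-\bar{\rho}+(1/2)(\bar{\Sigma}\cap (r\varepsilon+\bar{\Sigma}))\bigr)\\
&=X(T)_{\bar{\mu}}^+\cap \bigl(-\bar{\rho}+r\varepsilon/2+(1/2)\bar{\Sigma}\bigr),
\end{align*}
the first because $\bar{\Sigma}_\varepsilon=\bigcup_{r>0}\bar{\Sigma}\cap(r\varepsilon+\bar{\Sigma})$ and the lattice points stabilize for small $r$, the second because lattice points in $(1/2)(r\varepsilon+\bar{\Sigma})\setminus(1/2)\bar{\Sigma}$ miss the coset for small $r$. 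At that point one simply applies Theorem~\ref{ref-1.6.1-27} with the singleton $\Delta=\{r\varepsilon/2\}$. This also explains the real role of $\Wscr$-invariance of $\varepsilon$: it is exactly what makes the singleton $\Delta$ a $\Wscr$-invariant set, as required by Theorem~\ref{ref-1.6.1-27} --- a much more concrete reason than the ``compatibility with Weyl-group averaging'' you suggest. In short: no new Koszul-complex analysis is needed; the half-open region is identified on the nose (on lattice points) with a translate of the closed region, reducing Theorem~\ref{ref-1.6.3-29} to Theorem~\ref{ref-1.6.1-27}.
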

We now give the criterion for the existence of (twisted) NCCRs we will use.
\begin{theorems}
\label{ref-1.6.4-30} 
(See \S\ref{ref-12.3-128} below.) Let $G$ be connected. Assume that $W$ is quasi-symmetric and generic.
Assume  that $\varepsilon\in X(T)_\RR$ is $\Wscr$-invariant and let $\bar{\mu}$ be such that
\begin{equation}
\label{ref-1.5-31}
X(T)_{\bar{\mu}}\cap (-\bar{\rho}+(1/2)(\bar{\Sigma}_{\pm\varepsilon}-\Sigma))=\emptyset.
\end{equation}
Put
\begin{align*}
{\Lscr}&=
X(T)^+_{\bar{\mu}}\cap 
(-\bar{\rho}+(1/2)\bar{\Sigma}_\varepsilon),\\
U&=\bigoplus_{\chi\in {\Lscr}}V(\chi),\\
\Lambda&=M(\End(U)).
\end{align*}
If ${\Lscr}\neq \emptyset$ then $\Lambda$
 is a twisted NCCR of index $i_{\bar{\mu}}(\bar{G})$ for $R^G$. 
\end{theorems}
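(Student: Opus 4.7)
The plan is to deduce Theorem \ref{ref-1.6.4-30} by combining Theorem \ref{ref-1.6.3-29} with a Cohen-Macaulayness criterion for modules of covariants in the quasi-symmetric case. Theorem \ref{ref-1.6.3-29} already delivers the non-trivial homological input: for the given $\Lscr$ we have $\gldim \Lambda < \infty$, and since $W$ is generic and $\Lscr\neq\emptyset$ the algebra $\Lambda = M(\End(U))$ is a twisted NCR of $R^G$ of index $i_{\bar{\mu}}(\bar{G})$; in particular $\Lambda = \End_{R^G}(M(U))$ and $M(U)$ is reflexive. Moreover the quasi-symmetry of $W$ combined with Knop's theorem (Theorem \ref{ref-4.1.7-41}) guarantees that $R^G$ is Gorenstein. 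Hence the only extra fact to be proved over Theorem \ref{ref-1.6.3-29} is that $\Lambda$ is a maximal Cohen-Macaulay $R^G$-module, which is the crepancy condition from Definition \ref{ref-1.1.1-1}.

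To establish the Cohen-Macaulay property, I would decompose
\[
\Lambda \;=\; M(\End(U)) \;=\; \bigoplus_{\chi_1,\chi_2\in \Lscr} M\bigl(V(\chi_1)^{\ast}\otimes V(\chi_2)\bigr)
\]
and show each summand is Cohen-Macaulay as an $R^G$-module. For this I would invoke the Cohen-Macaulay criterion for modules of covariants from \cite{Vdb1,VdB3,VdB4}, alluded to in \S\ref{ref-4.4-57}, which under the quasi-symmetric hypothesis takes the clean form: $M(V)$ is Cohen-Macaulay whenever the $T$-weights of $V$ lie in a suitable shift of $(1/2)\bar{\Sigma}$, where $\Sigma$-translates account for the weights appearing in $V\otimes R$. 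The condition $\Lscr\subset -\bar{\rho}+(1/2)\bar{\Sigma}_\varepsilon$ ensures that the highest weights of irreducible constituents of $V(\chi_1)^{\ast}\otimes V(\chi_2)$, shifted by $-\bar{\rho}$, lie in a difference region controlled by $(1/2)\bar{\Sigma}_\varepsilon-(1/2)\bar{\Sigma}_\varepsilon$.

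The hypothesis \eqref{ref-1.5-31} is then exactly the combinatorial obstruction-free condition for this criterion: the set $-\bar{\rho}+(1/2)(\bar{\Sigma}_{\pm\varepsilon}-\Sigma)$ captures precisely those lattice points whose appearance as highest weights of summands in $V(\chi_1)^{\ast}\otimes V(\chi_2)$ (for $\chi_1,\chi_2\in \Lscr$) would cause a failure of the Cohen-Macaulay criterion. The presence of $\bar{\Sigma}_{\pm\varepsilon}$ rather than $\bar{\Sigma}_\varepsilon-\bar{\Sigma}_\varepsilon$ arises from the $\Wscr$-invariance of $\varepsilon$: only the faces of $\bar{\Sigma}$ parallel to $\varepsilon$ survive a symmetric difference, because the non-parallel inward boundary contributions cancel. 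The $-\Sigma$ correction accounts for the multi-weights that appear in the relevant tensor modules after passing through $R$. Coset-wise, all differences $\chi_2-\chi_1$ live in $X(T)=X(T)_{\bar 0}$, and the translation by $-\bar\rho$ pushes the relevant testing set back into the coset $X(T)_{\bar\mu}$ where \eqref{ref-1.5-31} is posited.

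The main obstacle will be the third step: rigorously reducing the Cohen-Macaulayness of each $M(V(\chi_1)^{\ast}\otimes V(\chi_2))$ to the stated emptiness condition \eqref{ref-1.5-31}. This requires careful bookkeeping with the weight polytopes — in particular identifying which boundary strata of $(1/2)\bar{\Sigma}_\varepsilon-(1/2)\bar{\Sigma}_\varepsilon$ coincide with $(1/2)\bar{\Sigma}_{\pm\varepsilon}$ after taking $\Wscr$-orbits into account — together with an application of the Borel--Weil--Bott/Kempf-style vanishing that underlies the Cohen-Macaulay criterion in \cite{Vdb1,VdB4}. Once this verification is in place, each summand $M(V(\chi_1)^{\ast}\otimes V(\chi_2))$ is maximal Cohen-Macaulay, hence so is $\Lambda$, and together with the finite global dimension and the twisted-generic-structure from Theorem \ref{ref-1.6.3-29} this produces the asserted twisted NCCR of index $i_{\bar{\mu}}(\bar{G})$.
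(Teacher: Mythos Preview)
Your high-level plan matches the paper exactly: invoke Theorem \ref{ref-1.6.3-29} for finite global dimension and the twisted NCR structure, then verify that each summand $M(V(\chi_1)^\ast\otimes V(\chi_2))$ is Cohen-Macaulay. The gap is in the execution of this last step, which you yourself flag as the ``main obstacle''.

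The paper does not appeal to a general weight-polytope criterion from \cite{Vdb1,VdB4} directly; it uses the packaged Proposition \ref{ref-4.4.4-61}: if $\chi_1+\chi_2$ is \emph{strongly critical} (i.e.\ lies in $-2\bar\rho+\Sigma$) then $M((V(\chi_1)\otimes V(\chi_2))^\ast)$ is Cohen-Macaulay. Writing $V(\chi_1)^\ast\otimes V(\chi_2)=(V(\chi_1)\otimes V(-w_0\chi_2))^\ast$, one is reduced to showing $\chi_1-w_0\chi_2\in -2\bar\rho+\Sigma$. Since $-\bar\Sigma=\bar\Sigma$ (quasi-symmetry) and $-w_0\bar\rho=\bar\rho$, one has $\chi_1+\bar\rho\in(1/2)\bar\Sigma_\varepsilon$ and $\bar\rho-w_0\chi_2\in(1/2)\bar\Sigma_{-\varepsilon}$, so their sum $\chi_1-w_0\chi_2+2\bar\rho$ lies in $\bar\Sigma$. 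If it lay on $\partial\bar\Sigma$, both summands would have to lie on the \emph{same} face $F$ of $(1/2)\bar\Sigma$; but then $F$ is simultaneously a face of $(1/2)\bar\Sigma_\varepsilon$ and of $(1/2)\bar\Sigma_{-\varepsilon}$, hence of $(1/2)\bar\Sigma_{\pm\varepsilon}$, and $\chi_1\in X(T)_{\bar\mu}\cap(-\bar\rho+F)$ contradicts \eqref{ref-1.5-31}.

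Two points in your sketch are off. First, the relevant combination is $\chi_1-w_0\chi_2$, not $\chi_2-\chi_1$, and the hypothesis \eqref{ref-1.5-31} is applied to $\chi_1$ itself (which lies in $X(T)_{\bar\mu}$), not to a difference in $X(T)_{\bar 0}$; your coset bookkeeping does not match the actual argument. Second, the appearance of $\bar\Sigma_{\pm\varepsilon}$ is not a feature of a ``symmetric difference $\bar\Sigma_\varepsilon-\bar\Sigma_\varepsilon$'': it comes from the fact that $-w_0$ sends $\bar\Sigma_\varepsilon$ to $\bar\Sigma_{-\varepsilon}$ (using $\Wscr$-invariance of $\varepsilon$), so the common face must survive both half-open truncations. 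The ``$-\Sigma$'' in \eqref{ref-1.5-31} is set-theoretic difference, recording that $F\subset\partial\bar\Sigma$ is disjoint from the open $\Sigma$; it is not a Minkowski correction for weights of $R$.
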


\begin{remarks}
In Example \ref{ref-10.3-101} below we give an example of a pair $(G,W)$ such that $R^G$ has a twisted NCCR but it has no  NCCR.
\end{remarks}

\section{Acknowledgement}
The authors thank Roland Abuaf, Michel Brion, Hailong Dao, Johan de Jong, Craig Huneke, Jean Michel, Michael
Wemyss and Ga\v sper Zadnik for interesting discussions. The first
author also thanks the University of Hasselt for its hospitality. 
In addition, the authors thank the referee for his careful reading 
of the manuscript and his helpful comments.

\section{Non-commutative resolutions}
\label{ref-3-32}
Let $S$ be a normal noetherian domain with quotient field $K$. A finitely generated~$S$-module $M$ is said to be \emph{reflexive}
if the canonical map $M\mapsto M^{\ast\ast}$ is an isomorphism. This implies in particular that $M$ is torsion free.
Reflexive modules are not affected by codimension two phenomena. For example a morphism $\phi:M\r N$ between reflexive modules is an isomorphism if this is the case for all $\phi_P:M_P\r N_P$
where $P$ runs through the height one primes in~$S$.

The category $\Ref(S)$ of reflexive $S$-modules is a rigid symmetric
monoidal category  with the tensor product being given by $(M,N)\mapsto (M\otimes_S N)^{\ast\ast}$.  This implies that many
concepts for $S$-modules and $S$-algebras have a natural ``reflexive'' analogue. For example a
\emph{reflexive Azumaya algebra} \cite{Leb99} $A$ is a non-zero $S$-algebra $A$
which is a reflexive $S$-module such that the natural map $ A\otimes_S
A^\circ\r \End_S(A):a\otimes b\mapsto (x\mapsto axb) $ becomes an
isomorphism after applying $(-)^{\ast\ast}$. Such
reflexive notions will be used without further comment below. A reflexive Azumaya algebra~$A$ is said to be trivial if it is of the form $\End_S(M)$ for $M$ a
reflexive $S$-module. In that case $\Ref(S)$ and $\Ref(A)$ are
equivalent. This is a particular case of ``reflexive Morita
equivalence'' which is defined in the obvious way.

The \emph{index} $\ind(A)$ of a reflexive Azumaya algebra $A$ is the index of the central
simple $K$-algebra $K\otimes_S A$. If $\ind(A)=1$ then it is easy to see that $A$ is trivial.
\begin{definition} A \emph{twisted non-commutative resolution} of $S$ of index $m$ is a reflexive Azumaya
algebra $A$ of index $m$ over $S$ such that $\gldim A<\infty$. If $m=1$ then 
$A$ is said to be a \emph{non-commutative resolution (NCR)} of $S$.
\end{definition}
\begin{definition} Assume that $S$ is Gorenstein
A \emph{twisted non-commutative crepant resolution} $A$ of $S$ of index $m$ 
is a twisted NCR of $S$ of index $m$ which is in addition a Cohen-Macaulay $S$-module.
If $m=1$ then
such $A$ is said to be a \emph{non-commutative crepant resolution (NCCR)} of $S$.
\end{definition}
The notion of a twisted NC(C)R is obviously local. We will use it in the following sense.
\begin{proposition} \label{ref-3.3-33} Assume that $S$ is the coordinate ring of a normal affine algebraic
variety over $k$. For $m$ a maximal ideal in $S$ let $\hat{S}_m$ be the completion
at $m$. Let $A$ be an $S$-algebra which is finitely generated as an $S$-module. Then
$A$ is a twisted NC(C)R for $S$ if and only if for each maximal ideal in $S$ one has that $\hat{A}_m=\hat{S}_m\otimes_S A$ is
a twisted NC(C)R for~$\hat{S}_m$.
\end{proposition}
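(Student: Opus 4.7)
The plan is to verify that each of the defining conditions of a twisted NC(C)R is a local condition that commutes with passage to completions. Unwinding the definition, one must check the following conditions ascend and descend between $S$ and the collection $\{\hat{S}_m\}_m$: (1) $A$ is a reflexive $S$-module, (2) the natural map $A\otimes_S A^\circ \r \End_S(A)$ becomes an isomorphism after $(-)^{**}$, (3) $\gldim A<\infty$, and (for NCCR) (4) $A$ is a maximal Cohen--Macaulay $S$-module. Since $S$ is finitely generated over a field of characteristic zero it is excellent, so each completion $\hat{S}_m$ is a normal noetherian domain, each map $S_m \r \hat{S}_m$ is faithfully flat, and jointly the family $\{\hat{S}_m\}_m$ faithfully detects properties of finitely generated $S$-modules.

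For (1), I would use the equivalence of reflexivity with Serre's $(S_2)$-condition over a normal noetherian domain, which is a local property stable under faithfully flat base change. For (2), I would reformulate the reflexive Azumaya condition as the statement that the kernel and cokernel of $A\otimes_S A^\circ \r \End_S(A)$ are supported in codimension $\ge 2$; this is a local codimension-one condition, and since $\otimes$, $\Hom$ for finitely presented modules, and the formation of supports all commute with faithfully flat base change, it passes between $S$ and the $\hat{S}_m$. For (3), $A$ is module-finite over $S$ hence noetherian, so $\gldim A = \sup_m \gldim A_m$ over maximal ideals $m$, and $\gldim A_m = \gldim \hat{A}_m$ by the commutation of $\Ext$ with faithfully flat completion on finitely generated modules over a noetherian local ring. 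For (4), Cohen--Macaulayness is a textbook local property preserved and reflected by completion.

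The main obstacle I anticipate lies in (3): a priori the finite local global dimensions could be unbounded, making $\sup_m \gldim A_m$ infinite. This is resolved by Auslander--Buchsbaum-type bounds applicable to module-finite reflexive Azumaya algebras over normal noetherian local rings, which force $\gldim A_m \le \dim S_m$ whenever finite, so the supremum is automatically bounded by $\dim S$. With this in hand, the remaining pieces assemble routinely to yield the biconditional.
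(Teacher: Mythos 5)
The paper states Proposition~\ref{ref-3.3-33} without proof, treating it as a tautology (``the notion of a twisted NC(C)R is obviously local''). Your proof is a correct reconstruction of the argument and follows the only natural route: decompose the definition into local conditions and check each passes to and from completions. Items (1), (2), (4) are handled correctly; for (2), the reformulation of the reflexive-Azumaya condition as ``$\Ker$ and $\coker$ of $A\otimes_S A^\circ\to\End_S(A)$ are supported in codimension $\ge 2$'' is the right one (both sides are determined in codimension one once one knows $A$ is reflexive), and codimension of supports is preserved by $S_m\to\hat S_m$ because $S$ is excellent. The $\Ext$-base-change argument for (3) also works: for $M,N$ finitely generated $A_m$-modules one has $\Ext^i_{\hat A_m}(\hat M,\hat N)\cong\Ext^i_{A_m}(M,N)\otimes_{S_m}\hat S_m$, which together with the identification of simple $\hat A_m$-modules with simple $A_m/mA_m$-modules gives $\gldim A_m=\gldim\hat A_m$.

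You correctly identify the one genuinely nontrivial point: in the ``if'' direction one needs $\sup_m\gldim A_m<\infty$, not merely that each term is finite. Your proposed fix via an Auslander--Buchsbaum bound is right in spirit, but needs a caveat. The noncommutative Auslander--Buchsbaum formula $\pdim_\Lambda M+\depth_R M=\depth_R\Lambda$ for module-finite $R$-algebras $\Lambda$ of finite global dimension (which, applied to $M=\Lambda/\rad\Lambda$, yields $\gldim\Lambda=\depth_R\Lambda\le\dim R$) is usually stated for $R$ a Cohen--Macaulay local ring, whereas Proposition~\ref{ref-3.3-33} only assumes $S$ normal. This is harmless in the NCCR case ($S$ is there assumed Gorenstein, hence CM, and $\hat A_m$ is MCM so the classical argument gives $\gldim\hat A_m=\dim\hat S_m\le\dim S$), and it is also harmless in every application in the paper, since there $S=R^G$ for a reductive $G$ acting on a smooth variety and is therefore Cohen--Macaulay by Hochster--Roberts/Boutot. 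But for a bare normal $S$ of dimension $\ge 3$, the clean inequality $\gldim\hat A_m\le\dim\hat S_m$ you invoke is not a ``textbook'' fact; you should either add the CM hypothesis (which suffices for the paper) or supply a reference/argument covering the non-CM normal case.

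One small stylistic point: rather than passing through Serre's condition for (1), it is slightly cleaner to observe directly that reflexivity of a finitely generated module over a normal noetherian domain is the conjunction of torsion-freeness and of a depth condition at primes of height $\ge 2$, both of which visibly commute with faithfully flat completion; this sidesteps any quibbles about the precise equivalence with $(S_2)$.
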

Note that the notion of a NC(C)R is not fully 
local in the sense of Proposition \ref{ref-3.3-33}. The index may go down under completion. 

The following result which is proved using basic homological algebra
is of similar nature.
\begin{proposition}
\label{ref-3.4-34} Assume that $S$ is a finitely generated commutative normal
  $\NN$-graded algebra which is connected (i.e.\ $S=k\oplus S_1\oplus
  S_2\oplus\cdots$) and let $m=S_1\oplus S_2\oplus\cdots$ be the
  augmentation ideal. Let $A$ be a graded $S$-algebra which is
  finitely generated as an $S$-module. Then $\gldim \hat{A}_m=\gldim A$ for
$\hat{A}_m=\hat{S}_m\otimes_S A$.
Moreover
$A$ is a twisted NC(C)R
  for $S$ if and only if $\hat{A}_m$ is a
  twisted NC(C)R for~$\hat{S}_m$.
\end{proposition}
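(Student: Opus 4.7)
The plan is to treat the two assertions of the proposition---equality of global dimensions and preservation of the twisted NC(C)R property---separately, exploiting the grading in each case.

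For the global dimension equality I will identify a common semisimple ``residue algebra'' $T$ over which both $\gldim A$ and $\gldim \hat{A}_m$ are computed. Since $A$ is finitely generated as a graded module over the connected graded algebra $S$, the quotient $A/mA$ is a finite-dimensional graded $k$-algebra, and the graded Jacobson radical of $A$ contains $mA$ with semisimple quotient $T=(A/mA)/\operatorname{Jac}(A/mA)$. By the standard identification of graded and ungraded global dimension for $\ZZ$-graded noetherian rings, we have $\gldim A=\pdim_A T$, since every simple graded $A$-module is a summand of $T$. On the completed side, $\hat{A}_m$ is a complete semilocal noetherian ring whose Jacobson radical contains $m\hat{A}_m$ with the same semisimple quotient $T$, so $\gldim \hat{A}_m=\pdim_{\hat{A}_m} T$. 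The equality $\pdim_A T=\pdim_{\hat{A}_m}T$ then follows by applying the flat functor $\hat{A}_m\otimes_A -$ to a minimal graded projective resolution of $T$ over $A$, which yields a minimal projective resolution of $T$ over $\hat{A}_m$ of the same length.

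For the twisted NC(C)R equivalence, the forward direction is immediate from Proposition \ref{ref-3.3-33} applied at the maximal ideal $m$. For the converse, I plan to use the $\GG_m$-action on $X=\Spec S$ induced by the grading: $m$ is the unique $\GG_m$-fixed closed point, and the closed $\GG_m$-invariant subsets of $X$ are the vanishing loci of homogeneous ideals, all of which contain $m$. Each of the conditions defining a twisted NC(C)R---reflexivity of $A$, being a reflexive Azumaya algebra, and (in the crepant case) the Cohen--Macaulay property---has a ``bad locus'' in $X$ which is closed, and is $\GG_m$-invariant because $A$ is graded. Faithful flatness of $\hat{S}_m$ over $S_m$ propagates each of these conditions from $\hat{S}_m$ down to $S_m$, so the bad locus avoids $m$, and being $\GG_m$-invariant and closed it must then be empty. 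One then concludes via Proposition \ref{ref-3.3-33}.

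The step requiring the most care is the reflexive Azumaya property: the generic index of a central simple algebra can change under the field extension $K\hookrightarrow \operatorname{Frac}(\hat{S}_m)$, so the index of $A$ over $S$ need not equal the index of $\hat{A}_m$ over $\hat{S}_m$. The statement however only requires existence of \emph{some} twisted NC(C)R, and the central simple algebra property is preserved and detected by faithfully flat base extension: $A\otimes_S K$ is CSA iff $A\otimes_S\operatorname{Frac}(\hat{S}_m)=\hat{A}_m\otimes_{\hat{S}_m}\operatorname{Frac}(\hat{S}_m)$ is CSA. This suffices for the equivalence at the level of existence.
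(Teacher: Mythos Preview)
Your proof is correct. The paper does not actually give a proof of this proposition---it only remarks that it ``is proved using basic homological algebra''---so there is no approach to compare against; you have simply supplied the missing argument.

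One remark: the step where you invoke ``the standard identification of graded and ungraded global dimension for $\ZZ$-graded noetherian rings'' is the only place where something beyond routine manipulation is needed. This equality does hold and is indeed standard (it appears for instance in the graded ring theory references such as \cite{NVO}), but it is worth being aware that it is a genuine theorem rather than a formality. An alternative that stays slightly closer to first principles and avoids citing this result wholesale: one can argue directly that $\gldim A=\sup_{\mathfrak n}\gldim A_{\mathfrak n}$ over maximal ideals $\mathfrak n$ of the central subring $S$, and then use the $\GG_m$-action together with upper semicontinuity of the fibrewise projective dimension of $A/mA$ (equivalently, of the length of minimal resolutions) to see that the supremum is attained at $\mathfrak n=m$. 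Your route via the graded--ungraded comparison is cleaner, however, and perfectly acceptable.

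Your treatment of the NC(C)R equivalence via $\GG_m$-invariant closed bad loci is exactly right, and your observation that the index may drop under completion (so that the equivalence is only at the level of ``is a twisted NC(C)R for \emph{some} index'') correctly anticipates the caveat the paper makes just after Proposition~\ref{ref-3.3-33}.
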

A highly useful criterion for recognizing NCCRs that does not refer to
finite global dimension has been given by Iyama and Wemyss
\cite{IW,Wemyss1}. 
If $B$ is a ring and $M$ is a left
$B$-module then we write $\add M$ for the category of left $B$
modules which are direct summands of some $M^{\oplus n}$.
\begin{proposition} \label{ref-3.5-35} \cite[Ex.\ 4.34]{Wemyss1}
  Assume that $S$ is a local normal
  Gorenstein
 ring. Let $M$ be a reflexive $S$-module such that
  $\Lambda=\End_S(M)$ is a Cohen-Macaulay $S$-module. Then $\Lambda$
  is a NCCR over $S$ if and only if for every reflexive $S$-module
  which has the property that $\Hom_S(M,N)$ is a Cohen-Macaulay
  $S$-module, we have that $N\in \add M$.
\end{proposition}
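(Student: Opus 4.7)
The plan is to prove both implications using Auslander--Buchsbaum for the finite $S$-algebra $\Lambda$ together with a ``reflexive Morita'' dictionary that matches $\add M$ (inside reflexive $S$-modules) with $\add\Lambda$ (inside finitely generated $\Lambda$-modules) via the functor $\Hom_S(M,-)$.

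For the ``only if'' direction, assume $\Lambda$ is an NCCR and let $N$ be reflexive with $\Hom_S(M,N)$ Cohen--Macaulay. Since $\gldim\Lambda<\infty$, $\Hom_S(M,N)$ has finite projective dimension over $\Lambda$, and the Auslander--Buchsbaum formula for the finite $S$-algebra $\Lambda$ (local, $S$-finite) gives
\[
\pdim_\Lambda\Hom_S(M,N)=\depth_S\Lambda-\depth_S\Hom_S(M,N)=\dim S-\dim S=0,
\]
since both $\Lambda$ and $\Hom_S(M,N)$ are MCM over $S$. Hence $\Hom_S(M,N)\in\add\Lambda=\add\Hom_S(M,M)$, so there is $M'\in\add M$ with $\Hom_S(M,N)\cong\Hom_S(M,M')$. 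To deduce $N\cong M'$, I would tensor with $M$ over $\Lambda$, take reflexive hulls, and observe that at every height one prime $P$ the DVR $S_P$ makes $M_P$ free and $\Lambda_P$ Morita equivalent to $S_P$ via $M_P$; thus the induced map $N\to M'$ is an isomorphism at each $P$ of height one, and reflexive modules on the normal domain $S$ are determined by their codimension one localizations. So $N\in\add M$.

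For the ``if'' direction, let $X$ be a finitely generated right $\Lambda$-module and set $d=\dim S$. Resolving $X$ by finitely generated projective $\Lambda$-modules, each $P_i\in\add\Lambda$ is MCM over $S$ (by the hypothesis that $\Lambda$ is MCM), so iterating the depth lemma $\depth\Omega^{i+1}X\ge\min(\depth P_i,\depth\Omega^iX+1)$ shows that $Y:=\Omega^dX$ is MCM over $S$. The key claim is that any such $Y$ has the form $Y\cong\Hom_S(M,N)$ for some reflexive $S$-module $N$: take $N:=(Y\otimes_\Lambda M)^{\ast\ast}$, so the unit of adjunction yields a natural map $Y\to\Hom_S(M,N)$ where both sides are reflexive over $S$ ($Y$ as MCM on Gorenstein $S$, and $\Hom_S(M,N)$ as a kernel of a map between reflexives). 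The same codimension one Morita argument as above shows this map is an isomorphism at all height one primes, hence globally. Then $N$ is reflexive and $\Hom_S(M,N)=Y$ is MCM, so by hypothesis $N\in\add M$, whence $Y\in\add\Lambda$ is projective. Thus $\pdim_\Lambda X\le d$, giving $\gldim\Lambda\le d<\infty$, and, combined with the MCM hypothesis on $\Lambda$, $\Lambda$ is an NCCR.

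The main obstacle is the reflexive Morita step, i.e.\ showing that every MCM right $\Lambda$-module arises as $\Hom_S(M,N)$ for some reflexive $N$ and, symmetrically, that $\Hom_S(M,-)$ is fully faithful on reflexive $S$-modules. Both reduce to the codimension one check where $S_P$ is a DVR, $M_P$ is free, and $\Lambda_P$ is Morita equivalent to $S_P$, combined with the $S_2$-property of reflexive modules over a normal Noetherian domain. Once this dictionary is in place, the rest is a routine application of Auslander--Buchsbaum and the depth lemma.
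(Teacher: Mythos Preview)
The paper does not give its own proof of this proposition: it is simply quoted from \cite[Ex.\ 4.34]{Wemyss1}, so there is nothing in the paper to compare against directly. Your argument is essentially the standard Iyama--Wemyss proof (Auslander--Buchsbaum for the module-finite algebra $\Lambda$, combined with the reflexive Morita equivalence $\Ref(S)\simeq\Ref(\Lambda)$ induced by $\Hom_S(M,-)$ and $(-\otimes_\Lambda M)^{\ast\ast}$), and it is correct.

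A couple of minor points you might tighten. First, the Auslander--Buchsbaum formula you invoke, $\pdim_\Lambda X+\depth_S X=\depth_S\Lambda$ for $X$ of finite projective dimension, is not the classical commutative statement and deserves a reference (e.g.\ Iyama--Reiten or \cite{IW}); it holds because $\Lambda$ is a module-finite $S$-algebra with $S$ local. Second, your justification that $\Hom_S(M,N)$ is reflexive (``kernel of a map between reflexives'') is not quite the right reason; the clean statement is that over a normal domain $\Hom_S(M,N)$ is reflexive whenever $N$ is reflexive and $M$ is finitely generated, which is exactly what you need. With those two clarifications the argument is complete and matches the one in the cited source.
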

Of course this result has a graded version for a connected graded ring $S=k\oplus S_1\oplus S_2\oplus \cdots$. We will use this without further comment.
\begin{remark} 
\label{rem:nccropt} It easily follows from Proposition \ref{ref-3.5-35} that NCCRs
  are optimal among the NCRs in the following sense: if $\Lambda=\End_S(M)$ is a NCCR as in
  the statement of the proposition and $M'$ is a reflexive $S$-module
  such that $\add(M')\subsetneq \add(M)$ then $\gldim \End_S(M')=\infty$. On the other hand if 
$\add(M)\subsetneq \add(M')$ then $\End_S(M')$ is not Cohen-Macaulay. Informally: NCCRs
cannot be enlarged or shrunk without losing some of their good properties.
\end{remark}

\section{Modules of covariants}

\subsection{Preliminaries}
Let $G$ be a reductive group 
and let $R$ be the coordinate ring of a smooth connected affine $G$-variety $X$ with function field $K$. 
Let $\mod(G,R)$ be the category of $G$-equivariant finitely generated $R$-modules. The following is standard.
\begin{lemmas}\label{ref-4.1.1-36} \begin{enumerate}
\item The objects $U\otimes R$ with $U\in \hat{G}$ form a family of projective generators for $\mod(G,R)$.
\item A projective object in $\mod(G,R)$ is projective as an $R$-module.
\item If $P\in \mod(G,R)$ is projective as an $R$-module then it is a projective object in $\mod(G,R)$.
\item $\gldim \mod(G,R)\le \gldim R$.
\item If $X$ has a fixed point then  $\gldim \mod(G,R)= \gldim R$.
\end{enumerate}
\end{lemmas}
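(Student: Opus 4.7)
The plan is to prove the five items in sequence, using as the main input the exactness of the $G$-invariants functor $(-)^G:\mod(G,-)\to \mod(-)$ for reductive $G$ in characteristic zero (Peter-Weyl / complete reducibility).

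For (1), given $M\in\mod(G,R)$ I would first produce a $G$-equivariant surjection from a finite-dimensional $G$-representation $V$ onto a $G$-stable generating set of $M$, which by adjunction gives a $G$-equivariant surjection $V\otimes R\twoheadrightarrow M$. Decomposing $V$ into isotypic components reduces to objects of the form $U\otimes R$, $U\in\widehat G$. For projectivity, I would use the adjunction $\Hom_{G,R}(U\otimes R,-)=\Hom_G(U,-)$, together with the fact that taking the $U$-isotypic component is exact (reductivity).

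For (2), a projective object is a summand of a sum of objects $U_i\otimes R$, which is free over $R$, so the summand is $R$-projective. For (3), given $P$ that is $R$-projective and a surjection $M\twoheadrightarrow N$ in $\mod(G,R)$, apply $\Hom_R(P,-)$ to get a surjection of $G$-equivariant $R$-modules, then apply $(-)^G$, which is exact by reductivity, obtaining the surjection $\Hom_{G,R}(P,M)\twoheadrightarrow \Hom_{G,R}(P,N)$.

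For (4), take $M\in\mod(G,R)$ and, using (1), build a projective resolution $\cdots\to P_1\to P_0\to M\to 0$ in $\mod(G,R)$. By (2) each $P_i$ is $R$-projective, so the $(\gldim R)$-th syzygy $K$ is $R$-projective; by (3) it is projective in $\mod(G,R)$. Hence $M$ has projective dimension at most $\gldim R$ in $\mod(G,R)$.

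For (5), the main obstacle is showing $\gldim\mod(G,R)\ge \gldim R$. Let $x$ be a fixed point with maximal ideal $m$; since $X$ is smooth, $\gldim R=\dim X=:d$ and the residue $R/m=k$ carries a trivial $G$-action. Because $G$-invariants are exact, $\Ext^i_{G,R}(-,-)=\Ext^i_R(-,-)^G$. Computing $\Ext^{\ast}_R(k,k)$ after localizing at $m$ via the Koszul complex (and noting $m/m^2=T^*_xX$ is a $G$-representation since $x$ is fixed), one gets $\Ext^d_R(k,k)\cong \wedge^d T_xX$ as a $G$-representation. Taking $N$ to be the $G$-equivariant $R$-module obtained by equipping $\wedge^d T^*_xX$ with trivial $R$-action through $R\to R/m$, one finds $\Ext^d_R(k,N)\cong \wedge^d T_xX\otimes\wedge^d T^*_xX\cong k$ as trivial $G$-representation, whose $G$-invariants are non-zero. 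This produces the required non-vanishing $\Ext^d_{G,R}(k,N)\neq 0$.
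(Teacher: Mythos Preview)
Your proof is correct. Items (1)--(4) follow essentially the same lines as the paper's argument; your treatment of (3) via exactness of $\Hom_R(P,-)$ followed by $(-)^G$ is a slight repackaging of the paper's, which instead chooses an $R$-linear splitting of a $G$-equivariant surjection $U\otimes R\twoheadrightarrow P$ and averages it via the Reynolds operator to obtain a $G$-equivariant splitting. These are the same idea viewed from two sides.

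For (5), however, the paper takes a markedly shorter route. Since by (2) every projective resolution of $k(x)$ in $\mod(G,R)$ is already an $R$-projective resolution, and $\pdim_R k(x)=\dim X$, any such resolution has length at least $\dim X$; this immediately gives $\gldim\mod(G,R)\ge\dim X$. Your approach --- identifying $\Ext^d_R(k,k)\cong\wedge^d T_xX$ as a $G$-representation and then twisting by its dual to force nontrivial invariants --- is correct but more than is needed: one does not need to exhibit a specific nonvanishing $\Ext^d_{G,R}$, only to observe that forgetting the $G$-action cannot shorten a projective resolution. (Even within your approach the explicit identification of the character is unnecessary: knowing $\Ext^d_R(k,k)$ is one-dimensional, you could simply twist by its dual.) Your argument does buy something, namely the formula $\Ext^i_{G,R}(-,-)=\Ext^i_R(-,-)^G$, which is independently useful; the paper's argument trades this for brevity.
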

\begin{proof} 
  The first statement is clear. The second statement is true because it is true for
the generators $U\otimes R$.
 The fourth statement follows from
  the first three combined with the fact that $R$ has finite
  global dimension.  For the third statement choose a $G$-equivariant
  surjection $\phi:U\otimes R\r P$ with $U$ a representation of $G$ and an $R$-linear splitting
  $\theta:P\r U\otimes R$ for $\phi$.  Applying the Reynolds operator
  $R$ to the identity $\phi\theta=\Id$ yields $\phi
  R(\theta)=R(\phi\theta)=R(\id)=\id$. Hence $R(\theta)$ is a
  $G$-equivariant splitting for $\phi$ and thus $P$ is projective in
  $\mod(G,R)$. Finally to prove (5) let $x$ be the fixed point. Using (2) we see that a
  $G$-equivariant projective resolution of $k(x)$ must have at least
  length $\dim X$ since this is true if we forget the $G$-action. This
  proves $\gldim \mod(G,R)\ge \dim X=\gldim R$.
\end{proof}
Now let $\bar{G}\r G$ be a central extension of $G$ with kernel $A$ where $\bar{G}$ is also reductive. Put $X(A)=\Hom(A,G_m)$. 
For $\chi\in X(A)$ let $\mod(\bar{G},R)_{\chi}$ be the abelian category of $\bar{G}$-equivariant finitely generated $R$-modules on which $A$
acts through the character $\chi$. Note $\mod(\bar{G},R)_0=\mod(G,R)$ and furthermore $\mod(\bar{G},R)=\bigoplus_{\chi\in X(A)}\mod(\bar{G},R)_\chi$. Clearly Lemma \ref{ref-4.1.1-36} extends
to $\mod(\bar{G},R)_\chi$. We let $i_\chi(\bar{G})$ be the greatest common divisor of the dimensions of the objects in $\mod(\bar{G},k)_\chi$. 
Obviously $i_0(\bar{G})=1$ and $i_{\chi}(\bar{G})=i_{\chi^{-1}}(\bar{G})$.

\begin{lemmas} 
\label{ref-4.1.2-37}  Assume that the $G$-action on $X$ has a fixed point. For every $M\in \mod(\bar{G},R)_\chi$ one has that $\rk M\overset{\text{def}}{=}\dim_K(K\otimes_R M)$ is divisible
by $i_\chi(\bar{G})$.
\end{lemmas}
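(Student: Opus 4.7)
The plan is to combine the existence of a finite $\bar{G}$-equivariant projective resolution with the observation that generic rank can be read off at the fixed point $x$. Since $\dim_K(-\otimes_R K)$ is an additive invariant on short exact sequences of finitely generated $R$-modules, it suffices to prove the divisibility assertion for projective objects of $\mod(\bar{G},R)_\chi$ and then apply it to the terms of a finite resolution of $M$.

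First I would extend Lemma~\ref{ref-4.1.1-36} to the $\chi$-isotypic subcategory: the $U\otimes R$ with $U\in\widehat{\bar{G}}$ on which $A$ acts by $\chi$ form a set of projective generators for $\mod(\bar{G},R)_\chi$, any projective object there is projective (hence locally free) as an $R$-module, and $\gldim \mod(\bar{G},R)_\chi\le\gldim R<\infty$. This is exactly the proof of Lemma~\ref{ref-4.1.1-36} carried out verbatim in each $\chi$-block. Consequently $M$ admits a finite $\bar{G}$-equivariant projective resolution
\[
0\to P_n\to\cdots\to P_1\to P_0\to M\to 0
\]
inside $\mod(\bar{G},R)_\chi$, and additivity of generic rank yields $\rk M=\sum_{i=0}^n(-1)^i \rk P_i$. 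Thus the problem reduces to showing that $i_\chi(\bar{G})$ divides $\rk P$ for every projective $P\in\mod(\bar{G},R)_\chi$.

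For this, let $x\in X$ be a $G$-fixed point, noting that $A\subset\ker(\bar{G}\to G)$ acts trivially on $X$, so $x$ is also $\bar{G}$-fixed and $k(x)$ carries a trivial $\bar{G}$-action. Any such $P$ is locally free as an $R$-module, so $\rk P=\dim_k(P\otimes_R k(x))$. The fiber $P\otimes_R k(x)$ inherits a $\bar{G}$-action on which $A$ still acts by $\chi$, hence it is an object of $\mod(\bar{G},k)_\chi$, and by definition its dimension is divisible by $i_\chi(\bar{G})$. Combining the two displays finishes the proof.

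The only genuine point to verify carefully is the extension of Lemma~\ref{ref-4.1.1-36} to $\mod(\bar{G},R)_\chi$ together with the claim that restricting $\bar{G}$-equivariance to the $\chi$-isotypic component does not inflate homological dimension; this is handled by the Reynolds-operator splitting argument of the original lemma, applied to the reductive group $\bar{G}$ and its decomposition $\mod(\bar{G},R)=\bigoplus_{\chi\in X(A)}\mod(\bar{G},R)_\chi$. Apart from this, the argument is a clean Euler-characteristic computation, and I do not expect any other obstacle.
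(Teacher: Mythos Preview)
Your proof is correct and follows essentially the same approach as the paper's own proof, which reads in full: ``By taking the fiber in the fixed point we see that the claim is true for projectives. The general case follows by replacing $M$ with a projective resolution in $\mod(G,R)_\chi$.'' You have simply unpacked the details that the paper leaves implicit---the extension of Lemma~\ref{ref-4.1.1-36} to the $\chi$-block, the observation that the $G$-fixed point is automatically $\bar{G}$-fixed, and the additivity of generic rank along a finite resolution.
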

\begin{proof} By taking the fiber in the fixed point we see that the claim is true for projectives. The general case follows by replacing
$M$ with a projective resolution in $\mod(G,R)_\chi$.
\end{proof}
\begin{lemmas} 
\label{ref-4.1.3-38} Assume that $G$ acts generically on $X$ (see \S\ref{ref-1.3-4}).
Let $\Ref(G,R)$ be the category of $G$-equivariant $R$-modules which are reflexive as $R$-modules. Then
the  functors
\begin{gather*}
\Ref(G,R)\mapsto \Ref(R^G):M\mapsto M^G,\\
\Ref(R^G)\mapsto \Ref(G,R):N\mapsto (R\otimes_{R^G}N)^{\ast\ast}
\end{gather*}
are inverse equivalences between the symmetric monoidal categories $\Ref(G,R)$ and $\Ref(R^G)$.
\end{lemmas}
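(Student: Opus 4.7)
The plan is to reduce the equivalence to the open locus $X^{\mathbf{s}}$, where the situation is much simpler, and then use the codimension hypothesis to extend reflexive modules uniquely. Put $Y^{\mathbf{s}}=X^{\mathbf{s}}/G$ viewed as an open subset of $X/\!\!/G=\operatorname{Spec} R^G$. Since on $X^{\mathbf{s}}$ all stabilizers are trivial and all orbits are closed, Luna's \'etale slice theorem (in characteristic zero) implies that $\pi:X^{\mathbf{s}}\to Y^{\mathbf{s}}$ is an \'etale locally trivial principal $G$-bundle. Standard descent for principal bundles then yields a symmetric monoidal equivalence
\[
\operatorname{QCoh}(G,\mathcal{O}_{X^{\mathbf{s}}})\;\xrightarrow{\;(-)^G\;}\;\operatorname{QCoh}(\mathcal{O}_{Y^{\mathbf{s}}}),\qquad \pi^{\ast}(N)\mapsfrom N,
\]
which restricts to an equivalence on reflexive sheaves.

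The next step is to show that the complement $(X/\!\!/G)-Y^{\mathbf{s}}$ has codimension $\geq 2$ in $X/\!\!/G$. Write $Z=X-X^{\mathbf{s}}$, a $G$-stable closed subset of codimension $\geq 2$. One first checks that $\pi^{-1}(Y^{\mathbf{s}})=X^{\mathbf{s}}$: if $\pi(x)=\pi(x')$ with $x'\in X^{\mathbf{s}}$, then the closed free orbit $G\cdot x'$ of dimension $\dim G$ must meet $\overline{G\cdot x}$, forcing $G\cdot x=G\cdot x'$, so $x\in X^{\mathbf{s}}$. Consequently $(X/\!\!/G)-Y^{\mathbf{s}}=\pi(Z)$ and $\pi^{-1}(\pi(Z))=Z$. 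By upper semicontinuity of fibre dimension, every fibre of $\pi$ over $\pi(Z)$ has dimension $\geq \dim G$ (the generic fibre dimension attained on $X^{\mathbf{s}}$). Hence for any closed irreducible $D\subseteq\pi(Z)$ one has $\dim D+\dim G\leq \dim\pi^{-1}(D)\leq\dim Z\leq\dim X-2$, i.e.\ $\dim D\leq\dim(X/\!\!/G)-2$, as required.

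Now combine the two ingredients. Reflexive modules over a normal noetherian variety are determined by their restriction to any open subset whose complement has codimension $\geq 2$: extension is by $j_\ast$, which coincides with the reflexive hull. Applying this to the open immersions $X^{\mathbf{s}}\hookrightarrow X$ and $Y^{\mathbf{s}}\hookrightarrow X/\!\!/G$ (both $G$-equivariantly) together with the equivalence of the first paragraph yields the claimed equivalence of symmetric monoidal categories $\operatorname{Ref}(G,R)\simeq\operatorname{Ref}(R^G)$. The functor $M\mapsto M^G$ is compatible with restriction to $X^{\mathbf{s}}$ and therefore agrees with the composition ``restrict, descend, extend''. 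For the inverse, given $N\in\operatorname{Ref}(R^G)$, the module $R\otimes_{R^G}N$ restricts on $X^{\mathbf{s}}$ to $R_{X^{\mathbf{s}}}\otimes_{R^G_{Y^{\mathbf{s}}}}N|_{Y^{\mathbf{s}}}=\pi^{\ast}(N|_{Y^{\mathbf{s}}})$, which is the image of $N|_{Y^{\mathbf{s}}}$ under the torsor equivalence; taking $(-)^{\ast\ast}$ gives the unique reflexive extension to $X$, matching the formula in the statement.

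The only real obstacle is the codimension estimate $\operatorname{codim}_{X/\!\!/G}\pi(Z)\geq 2$, which is where the generic action hypothesis is used decisively; the rest is formal descent combined with the routine fact that reflexive modules extend uniquely across codimension two. Monoidality of both functors is automatic, since the tensor product on either side is the reflexive hull of the ordinary tensor product and both descend/ascend through the principal bundle and the codimension-two extension.
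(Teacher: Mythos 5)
Your proof is correct and follows essentially the same route as the paper: use Luna's \'etale slice theorem to see that $\pi$ is a $G$-torsor over $Y^{\mathbf s}$ (so descent applies on the stable locus), and then extend reflexive modules uniquely across complements of codimension $\geq 2$. The paper compresses the first direction into a citation of Brion's Proposition~1.3 after asserting without proof that the quotient map contracts no divisor, and it writes out only one of the two composites; you make the codimension bound $\codim_{X\quot G}\pi(Z)\ge 2$ explicit via the saturation of $X^{\mathbf s}$ and upper semicontinuity of fibre dimension, and you handle both composites uniformly through the restrict--descend/pull back--extend picture. That makes your argument somewhat more self-contained, but the underlying mechanism is the same.
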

\begin{proof}
The hypotheses imply that $X\r X\quot G$ contracts no divisor. This
implies that $(-)^G$ preserves reflexive modules by  \cite[Prop.\ 1.3]{Brion}.

For the remaining part of the lemma we have to show that for $M\in \Ref(G,R)$
the map $(R\otimes_{R^G} M^G)^{\ast\ast}\r M$ is an isomorphism, or equivalently
that $\phi:R\otimes_{R^G} M^G\r M$ is an isomorphism outside a closed subset of $X$ of codimension two. 
We will show that $\phi$ is an isomorphism in a neighborhood of any point $x$ of $X^{\mathsf{s}}$ (see Definition \ref{ref-1.3.4-8}).

Up to restricting to a suitable $G$-invariant affine \'etale neighborhood of $x$ 
we may assume by  the Luna slice theorem \cite{Luna} that $X=G\times S$. But then the result is clear by descent. 
\end{proof}

Let $U$ be finite dimensional
$G$-representation. Then  
\[
M(U)\overset{\text{def}}{=} (U\otimes R)^G
\]
is a finitely generated $R^G$-module which is called the \emph{module of covariants} associated to $U$. Sometimes we use additional decorations such as $M_G(U)$, $M_{G,R}(U)$, $M_{G,X}(U)$ to indicate context.

\begin{corollarys}
\label{ref-4.1.4-39} Assume that $G$ acts generically on $X$. Then modules of covariants are reflexive and moreover
\[
\mod(G,k)\mapsto \Ref(R^G):U\mapsto M(U)
\]
is a symmetric monoidal functor. 
\end{corollarys}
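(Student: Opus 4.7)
The plan is to realize $U \mapsto M(U)$ as the composition of two symmetric monoidal functors and then invoke Lemma \ref{ref-4.1.3-38}.

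First, I would check that the assignment $U \mapsto U \otimes R$ defines a symmetric monoidal functor $\mod(G,k) \to \Ref(G,R)$. For $U$ a finite dimensional $G$-representation the $R$-module $U \otimes R$ is free, hence reflexive, so the target lands in $\Ref(G,R)$. With the reflexive tensor product $(-\otimes_R -)^{\ast\ast}$ on the target, the canonical $G$-equivariant isomorphism
\[
(U \otimes R) \otimes_R (V \otimes R) \;\cong\; (U \otimes V) \otimes R
\]
already lives in $\Ref(G,R)$ (both sides are free), so taking double duals changes nothing; the braiding on the left transports to the usual symmetry on $U \otimes V$ on the right, so the functor is strong symmetric monoidal. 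The unit $k \mapsto R$ works as well.

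Second, by Lemma \ref{ref-4.1.3-38} the functor $M \mapsto M^G$ gives an equivalence of symmetric monoidal categories $\Ref(G,R) \to \Ref(R^G)$, where the monoidal structure on the right is again the reflexive tensor product over $R^G$. Composing the two functors yields
\[
U \;\longmapsto\; (U \otimes R)^G \;=\; M(U),
\]
and the composition of symmetric monoidal functors is symmetric monoidal.

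The only step that requires a moment of thought is the very first one: one must be sure that the strict monoidal isomorphism $(U \otimes R) \otimes_R (V \otimes R) \cong (U \otimes V) \otimes R$ agrees, after applying $(-)^G$, with the reflexive monoidal structure on $\Ref(R^G)$. This is automatic from Lemma \ref{ref-4.1.3-38} since it asserts that $(-)^G$ is monoidal; there is no genuine obstacle and no explicit identification $M(U) \otimes_{R^G} M(V) \to M(U\otimes V)$ needs to be computed by hand.
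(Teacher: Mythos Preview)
Your proposal is correct and is precisely the argument the paper intends: the corollary is stated without proof immediately after Lemma~\ref{ref-4.1.3-38}, and the intended reasoning is exactly the factorization $U\mapsto U\otimes R\mapsto (U\otimes R)^G=M(U)$ through the symmetric monoidal equivalence of that lemma. You have simply made explicit what the paper leaves to the reader.
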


\begin{lemmas} Assume that $G$ acts generically on $X$ and that
  $X$ contains a fixed point. If $M(U)\cong M(U')$ as
  $R^G$-modules then $U\cong U'$ as $G$-representations.
\end{lemmas}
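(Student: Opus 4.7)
The plan is to use the symmetric monoidal equivalence of Lemma~\ref{ref-4.1.3-38} to lift the isomorphism $M(U)\cong M(U')$ of $R^G$-modules to an isomorphism of the free $G$-equivariant $R$-modules $U\otimes R$ and $U'\otimes R$, and then to recover the underlying $G$-representations $U,U'$ by taking fibers at the fixed point.

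More precisely, first I would note that $U\otimes R$ is reflexive as an $R$-module (it is $R$-free), so it lies in $\Ref(G,R)$, and similarly for $U'\otimes R$. Since $M(U)=(U\otimes R)^G$, Lemma~\ref{ref-4.1.3-38} gives canonical isomorphisms
\[
(R\otimes_{R^G} M(U))^{\ast\ast}\cong U\otimes R,\qquad (R\otimes_{R^G} M(U'))^{\ast\ast}\cong U'\otimes R
\]
as $G$-equivariant $R$-modules. An $R^G$-linear isomorphism $M(U)\cong M(U')$ therefore induces a $G$-equivariant $R$-linear isomorphism $U\otimes R\cong U'\otimes R$ by functoriality of $(R\otimes_{R^G}-)^{\ast\ast}$.

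Next I would specialize at the fixed point. Let $x\in X$ be a $G$-fixed point with maximal ideal $\mathfrak{m}_x\subset R$; the quotient $R/\mathfrak{m}_x=k(x)=k$ carries the trivial $G$-action because $G$ stabilizes $\mathfrak{m}_x$. Tensoring the isomorphism $U\otimes R\cong U'\otimes R$ over $R$ with $k(x)$ is a $G$-equivariant operation and yields an isomorphism $U\cong U'$ of $G$-representations, completing the proof.

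There is no substantive obstacle here: the argument is just the combination of the monoidal equivalence of Lemma~\ref{ref-4.1.3-38} (for which genericity of the action is needed to guarantee that $(-)^G$ preserves reflexivity and is quasi-inverse to $(R\otimes_{R^G}-)^{\ast\ast}$) and the fiber-at-a-fixed-point trick (for which the existence of a $G$-fixed point is used). The only point worth flagging is that the fixed point $x$ need not lie in $X^{\mathbf{s}}$; this does not matter because the equivalence is proved globally on $X$, and one only needs a $G$-equivariant closed point to extract $U$ from $U\otimes R$.
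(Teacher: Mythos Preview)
Your proof is correct and follows essentially the same approach as the paper: use Lemma~\ref{ref-4.1.3-38} to lift the isomorphism of $R^G$-modules to a $G$-equivariant isomorphism $U\otimes R\cong U'\otimes R$, then specialize at the fixed point. The paper's proof is just a two-line version of what you wrote.
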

\begin{proof} By Lemma \ref{ref-4.1.3-38} $U\otimes R\cong U'\otimes R$ in $\Ref(G,R)$. Specializing at the fixed point yields what we want. 
\end{proof}
Since $\End(U)$ is a $G$-equivariant $k$-algebra we obtain that $M(\End(U))$ is an $R^G$-algebra. We will call it an \emph{algebra of covariants}.  From Lemma \ref{ref-4.1.3-38} we obtain that if $G$ acts generically then
\[
M(\End(U))=\End_{R^G}(M(U)).
\]
Now assume that $U\in \mod(\bar{G},k)_\chi$. If $\chi\neq 0$ then $(U\otimes R)^{\bar{G}}=0$. However $A$ acts trivially on $\End(U)$ so the algebra of covariants $M(\End(U))$ is still 
interesting. We will  use the following result.  
\begin{propositions}
\label{ref-4.1.6-40} Assume that $G$ acts generically on $X$ and $0\neq U\in \mod(\bar{G},k)_\chi$. Then $\Lambda=M(\End(U))$ is a reflexive Azumaya algebra over $R^G$ and the 
reflexive Morita equivalence class of $\Lambda$ depends only on $\chi$. In addition one has $\ind(\Lambda)\mid i_\chi(\bar{G})$.
Moreover if $X$ contains a fixed point
then $\ind(\Lambda)=i_\chi(\bar{G})$. 
\end{propositions}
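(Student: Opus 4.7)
The plan is to establish the four assertions in sequence, using throughout the equivariant reflexive equivalence of Lemma \ref{ref-4.1.3-38} (and its natural extension from modules to algebras and bimodules). Since the kernel $A$ acts on $U$ by $\chi$ and on $U^*$ by $\chi^{-1}$, it acts trivially on $\End(U) = U \otimes U^*$, so $\End(U) \otimes R$ lies in $\mod(G, R)$ and is identified, as a $G$-equivariant $R$-algebra, with the split Azumaya algebra $\End_R(U \otimes R)$. Applying Lemma \ref{ref-4.1.3-38} to this algebra yields the reflexive Azumaya claim: $A = M(\End(U)) = (\End(U) \otimes R)^G$ is a reflexive Azumaya algebra over $R^G$. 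For the Morita invariance, given a second $U' \in \mod(\bar{G}, k)_\chi$, the space $\Hom_k(U, U') = U' \otimes U^*$ lies in $\mod(\bar{G}, k)_0 = \mod(G, k)$, so $\Hom_k(U, U') \otimes R$ is a $G$-equivariant Morita bimodule between $\End_R(U' \otimes R)$ and $\End_R(U \otimes R)$. Taking $G$-invariants descends this to a reflexive Morita equivalence between $M(\End(U'))$ and $A$; in particular they determine a common class in the Brauer group of $K^G$ and share an index.

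For $\ind(A) \mid i_\chi(\bar{G})$, fix any nonzero $V \in \mod(\bar{G}, k)_\chi$. Because $G$ acts generically, $M(W)$ has generic $R^G$-rank $\dim W$ for every $G$-representation $W$; in particular $M(\End V)$ has generic fiber a central simple $K^G$-algebra of degree $\dim V$, and by the previous paragraph its index equals $\ind(A)$. Since the index divides the degree, $\ind(A) \mid \dim V$ for every such $V$, whence $\ind(A) \mid \gcd_V \dim V = i_\chi(\bar{G})$.

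For the final assertion assume $X$ has a fixed point; we must show $i_\chi(\bar{G}) \mid \ind(A)$. The reflexive equivariant Morita equivalence identifies reflexive right $A$-modules with $N \in \Ref(\bar{G}, R)_\chi$ via $N \mapsto M(N \otimes U^*)$, and the $K^G$-dimension of the corresponding $A \otimes K^G$-module equals $(\dim U)(\rk_R N)$. Take a simple right module over $B = A \otimes_{R^G} K^G$, which has $K^G$-dimension $\ind(A) \cdot \dim U$; spreading it out to a reflexive right $A$-module and passing through the equivalence produces an $N \in \Ref(\bar{G}, R)_\chi$ with $\rk_R N = \ind(A)$. Applying Lemma \ref{ref-4.1.2-37} (which is where the fixed-point hypothesis enters) forces $i_\chi(\bar{G}) \mid \rk_R N = \ind(A)$, completing the proof.

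The main obstacle I anticipate is verifying cleanly the extension of the monoidal equivalence of Lemma \ref{ref-4.1.3-38} to reflexive Azumaya algebras and their bimodules (which underlies both the Morita invariance and the classification of reflexive right $A$-modules used in the last step), together with the standard ``spreading out'' of an arbitrary $B$-module to a reflexive right $A$-module over $R^G$ in the reflexive Azumaya setting.
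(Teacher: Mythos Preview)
Your proposal is correct and follows essentially the same strategy as the paper: both use Lemma~\ref{ref-4.1.3-38} (extended monoidally to algebras and bimodules) for the reflexive Azumaya and Morita claims, deduce $\ind(A)\mid i_\chi(\bar G)$ from index-divides-degree applied to all $M(\End(V))$, and for the reverse divisibility pass a minimal $A$-module through the equivalence to obtain $N\in\Ref(\bar G,R)$ with $\rk_R N=\ind(A)$, then invoke Lemma~\ref{ref-4.1.2-37}. The only cosmetic difference is that the paper phrases the last step via $B=\End_A(M)$ for $M$ a reflexive left $A$-module of minimal rank and computes $\rk B=(\rk N)^2$ (landing in $\Ref(\bar G,R)_{\chi^{-1}}$ rather than $\Ref(\bar G,R)_\chi$, which is harmless since $i_\chi=i_{\chi^{-1}}$), whereas you spread out a simple $A\otimes K^G$-module directly; your route is marginally more direct but uses the same ingredients.
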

\begin{proof} The fact that $\Lambda$ is reflexive Azumaya follows immediately from Lemma \ref{ref-4.1.3-38}. Similarly let $U'$ be another object in $\mod(\bar{G},k)_\chi$. 
Then the bimodules $\Hom(U,U')\otimes R$ and $\Hom(U',U)\otimes R$ define a $G$-equivariant Morita context between $\End(U)\otimes R$ and $\End(U')\otimes R$.
Using Lemma \ref{ref-4.1.3-38} this descends to a reflexive Morita context between $M(\End(U))$ and $M(\End(U'))$. We deduce in particular $\ind(\Lambda)\mid \dim U'$ for any $U'$.
Hence $\ind(\Lambda)\mid i_\chi(\bar{G})$.

For the next claim assume that the $G$-action on $X$ has a fixed point. Let
$M$ be non-zero reflexive left $\Lambda$-module of minimal rank. Put
$\Gamma=\End_\Lambda(M)$.  Then $\Gamma$ is a reflexive Azumaya algebra over $R^G$
which is reflexive Morita equivalent to~$\Lambda$. Put
$\tilde{M}=(R\otimes_{R^G} M)^{\ast\ast}$, $\tilde{\Gamma}=(R\otimes_{R^G}
\Gamma)^{\ast\ast}$, $\tilde{\Lambda}=\End(U)\otimes
R$. Then by Lemma \ref{ref-4.1.3-38} $\tilde{M}$ is a
$G$-equivariant reflexive $\tilde{\Lambda}$-module and $\End_{\tilde{\Lambda}}(\tilde{M})=\tilde{\Gamma}$. Put $N=\Hom_{\tilde{\Lambda}}(U\otimes R,\tilde{M})$. Then
$N$ is a reflexive object in $\mod(\bar{G},R)_{\chi^{-1}}$ and moreover by (reflexive) Morita theory the $\tilde{\Gamma}$-action on $\tilde{M}$
induces a $G$-equivariant isomorphism $\tilde{\Gamma}\r \End_R(N)$.
By Lemma \ref{ref-4.1.2-37} we know
that $\rk N$ is divisible by $i_{\chi^{-1}}(\bar{G})$. Hence $i_{\chi^{-1}}(\bar{G})^2\mid \rk_{R}(\tilde{\Gamma})=\rk_{R^G}(\Gamma)$. It follows that $i_{\chi^{-1}}(\bar{G})\mid \ind(\Gamma)=\ind(\Lambda)$.
Finally we use $i_{\chi^{-1}}(\bar{G})=i_\chi(\bar{G})$.
\end{proof}
For further reference recall the following result.
\begin{theorems} \cite{Knop3} \label{ref-4.1.7-41} Assume that $W$ is a generic unimodular ($\det W\cong k$) $G$-representation and $R=SW$. Then $R^G$ is Gorenstein.
\end{theorems}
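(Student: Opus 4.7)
The plan is to compute the canonical module $\omega_{R^G}$ explicitly and show that unimodularity forces it to be free of rank one. Since $R$ is regular and $G$ is reductive, the Hochster--Roberts theorem yields that $R^G$ is Cohen--Macaulay, so it suffices to identify $\omega_{R^G}$ with a free $R^G$-module.

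The first step would be to identify $\omega_R$ $G$-equivariantly. Because $X=W^\ast$ is an affine space with $\Omega_{R/k}\cong W\otimes_k R$, we have $\omega_R\cong (\det W)\otimes_k R$ as a $G$-equivariant $R$-module, and unimodularity collapses this to $\omega_R\cong R$ equivariantly.

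Next I would descend this computation to $R^G$ using the generic quotient. Writing $Y=\Spec R^G$ and $\pi:X^{\mathbf{s}}\to Y^{\mathbf{s}}:=X^{\mathbf{s}}/G$ for the induced geometric quotient on the free locus, $\pi$ is a principal $G$-bundle with $Y^{\mathbf{s}}$ smooth. The equivariant sequence
\[
0\to \mathfrak{g}\otimes_k \Oscr_{X^{\mathbf{s}}}\to T_{X^{\mathbf{s}}}\to \pi^\ast T_{Y^{\mathbf{s}}}\to 0
\]
coming from the infinitesimal action produces $\omega_{X^{\mathbf{s}}}\cong \pi^\ast\omega_{Y^{\mathbf{s}}}\otimes \det\mathfrak{g}^\ast$, with $\mathfrak{g}^\ast$ in the coadjoint representation. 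For a reductive $G$ in characteristic zero, $\det\mathfrak{g}^\ast$ is the trivial character: a maximal torus acts trivially on its own Lie algebra, and the nonzero root weights come in $\pm$ pairs. Taking $G$-invariants of $\pi_\ast$ therefore yields $\omega_{Y^{\mathbf{s}}}\cong (\pi_\ast \omega_{X^{\mathbf{s}}})^G\cong M(\det W)|_{Y^{\mathbf{s}}}\cong \Oscr_{Y^{\mathbf{s}}}$.

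Finally I would extend from $Y^{\mathbf{s}}$ to $Y$. Genericity gives $\codim_X(X\setminus X^{\mathbf{s}})\geq 2$, and since $\pi$ has equidimensional fibers of dimension $\dim G$ over $Y^{\mathbf{s}}$ this transfers to $\codim_Y(Y\setminus Y^{\mathbf{s}})\geq 2$. The dualizing module of the normal Cohen--Macaulay ring $R^G$ is reflexive, so by Lemma \ref{ref-4.1.3-38} it is determined by its restriction to $Y^{\mathbf{s}}$; this reflexive extension is $M(\det W)=M(k)=R^G$, giving $\omega_{R^G}\cong R^G$ and hence $R^G$ is Gorenstein. The most delicate step I expect is the triviality of the character $\det\mathfrak{g}^\ast$ combined with cleanly packaging the reflexive-extension argument through the module-of-covariants formalism; everything else is standard bookkeeping.
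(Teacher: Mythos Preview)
The paper does not prove this statement itself; it is quoted from Knop \cite{Knop3} as a known result, so there is no ``paper's proof'' to compare against. Your argument is the standard Watanabe-type computation of the canonical module of a quotient, and it is essentially correct.

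Two points deserve tightening. First, the codimension transfer $\codim_Y(Y\setminus Y^{\mathbf{s}})\ge 2$ does hold, but the reason is not equidimensionality over $Y^{\mathbf{s}}$: rather, every non-empty fibre of the dominant morphism $X\to Y$ has dimension at least $\dim X-\dim Y=\dim G$, and since $\pi^{-1}(Y\setminus Y^{\mathbf{s}})\subset X\setminus X^{\mathbf{s}}$ one obtains the bound. A cleaner alternative, more in the spirit of the paper, avoids this step altogether: use the equivalence of Lemma~\ref{ref-4.1.3-38} to lift $\omega_{R^G}$ to an object of $\Ref(G,R)$, compare it with $(\det W)\otimes R$ on $X^{\mathbf{s}}$, and extend reflexively over the codimension-$\ge 2$ complement \emph{in $X$}, where the bound is the genericity hypothesis itself. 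Your final sentence in fact invokes Lemma~\ref{ref-4.1.3-38} immediately after the $Y$-codimension claim, which is redundant; either route suffices on its own.

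Second, the root-pairing argument shows only that $\det\mathfrak{g}$ is trivial as a $G_e$-character. For non-connected reductive $G$ the component group may act on $\det\mathfrak{g}$ by a sign (e.g.\ $G=O_2$), so the formula becomes $\omega_{R^G}\cong M(\det W\otimes\det\mathfrak{g})$ rather than $M(\det W)$. This does not affect the paper, whose applications of this theorem all take $G$ connected, but you should flag the restriction.
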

\subsection{Modules of covariants and the Luna slice theorem}
\label{ref-4.2-42}
Let $G$ be a reductive group and let $X$ be a smooth affine $G$-variety. Recall that the inverse image of every point in $X\quot G$ contains
a unique closed orbit.
This may be used to analyze questions
that are local on $X\quot G$. The facts we recall in this section will
 be used in \S\ref{ref-4.3-47} and Remark \ref{ref-6.1.4-84} below.

Let $x\in X$ be a point with closed orbit and let $G_x\subset G$ be the stabilizer
of $x$. Then $G_x$ is a reductive subgroup of $G$. Choose a $G_x$-invariant complement~$N_x$ to the inclusion 
of $G_x$-representations $T_x(G)/T_x(G_x)\subset T_x(X)$. 
For a finite dimensional $G$-representation~$U$ put
$M_{G,X}(U)=(U\otimes k[X])^G$. Write $M_{G,X}(U)\,\hat{}_{\bar{x}}$ for the completion of $M_{G,X}(U)$ at the image of $x$ in $X\quot G$. The following
well-known lemma is a direct consequence of the Luna slice theorem (see e.g.\ \cite{LBP}). Since it is not explicitly stated in loc.\ cit.\ 
we include the short proof for the benefit of the reader.
\begin{lemmas} 
\label{ref-4.2.1-43} Let $\bar{x}$ be the image of $x$ in $X\quot G$ and similarly for $\bar{0}\in N_x\quot G_x$. One has compatible isomorphisms
\begin{equation}
\label{ref-4.1-44}
\begin{aligned}
k[X\quot G]\,\hat{}_{\bar{x}}&\cong k[N_x\quot G_x]\,\hat{}_{\bar{0}}&& \text{as $k$-algebras,}\\
M_{G,X}(U)\,\hat{}_{\bar{x}}&\cong M_{G_x,N_x}(U)\,\hat{}_{\bar{0}} && \text{as modules over these algebras.}
\end{aligned}
\end{equation}
\end{lemmas}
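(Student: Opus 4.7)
The plan is to apply the Luna étale slice theorem and then identify covariants on the associated fibre bundle with covariants on the slice itself. Since $x$ has closed orbit and reductive stabiliser $G_x$, the slice theorem produces a $G_x$-saturated affine open $S \subset N_x$ containing $0$ together with a strongly étale $G$-equivariant morphism $\phi : G \times_{G_x} S \to X$ sending $[e,0]$ to $x$. Strongly étale here means that both $\phi$ and the induced morphism $\bar\phi : S \quot G_x \to X \quot G$ on GIT quotients are étale and that the square relating $G \times_{G_x} S$, $X$, $S \quot G_x$ and $X \quot G$ is cartesian.

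The étale morphism $\bar\phi$ induces an isomorphism of complete local rings at the $k$-rational points $\bar 0$ and $\bar x$, which gives the first line of \eqref{ref-4.1-44}. For the second line I would use that the formation of $G$-invariants of a coherent sheaf commutes with flat base change on the GIT quotient; applied to the cartesian square above this yields a natural isomorphism
\[
M_{G,X}(U) \otimes_{k[X \quot G]} k[S \quot G_x] \;\cong\; M_{G,\, G \times_{G_x} S}(U).
\]
Completing both sides at $\bar x$ and $\bar 0$ respectively (legitimate because $M_{G,X}(U)$ is finitely generated over the noetherian ring $k[X \quot G]$) produces an isomorphism of completed modules compatible with the algebra isomorphism.

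The remaining step is to identify $M_{G,\, G \times_{G_x} S}(U)$ with $M_{G_x, S}(U)$. Writing $k[G \times_{G_x} S] = (k[G] \otimes k[S])^{G_x}$, with $G$ acting on $k[G]$ by left translation and $G_x$ by right translation, one obtains
\[
M_{G,\, G \times_{G_x} S}(U) = \bigl((U \otimes k[G])^G \otimes k[S]\bigr)^{G_x},
\]
and Frobenius reciprocity (evaluation at $e \in G$) identifies $(U \otimes k[G])^G$ with $U$ as a $G_x$-representation, whence this equals $M_{G_x, S}(U)$. Since $S \quot G_x$ is an open neighbourhood of $\bar 0$ in $N_x \quot G_x$, passing to completions at $\bar 0$ and combining with the previous paragraph gives the second line of \eqref{ref-4.1-44}. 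The proof is largely bookkeeping; the only point requiring care is the correct interaction of the cartesian slice square with the covariant functor, which is why one must first establish that invariants commute with flat base change.
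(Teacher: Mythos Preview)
Your proof is correct and follows essentially the same route as the paper's: Luna's slice theorem, flat base change for covariants along the cartesian square, the identification $M_{G,\,G\times_{G_x} S}(U)\cong M_{G_x,S}(U)$ (which the paper simply attributes to ``descent'' and which you make explicit via Frobenius reciprocity), and passage to completions. One small inaccuracy: the standard form of Luna's theorem yields an \'etale $G_x$-morphism $S\to N_x$, not an open inclusion $S\subset N_x$; the paper accordingly uses a second cartesian square relating $S$ and $N_x$ over their $G_x$-quotients. Since \'etale morphisms induce isomorphisms on completions at $k$-points, this does not affect the validity of your completion step.
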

\begin{proof}
The Luna slice theorem \cite{Luna} (see also \cite[App.\ D to Ch.\ 1]{Mumford})
asserts the existence of an affine  $G_x$-invariant ``slice'' $S$ to the $G$-orbit of $x$ and a
$G_x$-invariant \'etale map $S\r N_x$ sending $x$ to $0$ such that there are Cartesian diagrams
\[
\xymatrix{
N_x\ar[d]&&S\ar[d]\ar[ll]_{\text{\'etale}}\\
N_x\quot G_x&& S\quot G_x\ar[ll]^{\text{\'etale}} 
}
\qquad \xymatrix{
G\times_{G_x} S\ar[d]\ar[rr]^{\text{\'etale}}&& X\ar[d]\\
S\quot G_x \ar[rr]_{\text{\'etale}} && X\quot G
}
\]
We find
\[
M_{G,G\times_{G_x}S}(U)=k[S\quot G_x]\otimes_{k[X\quot G]}M_{G,X}(U)
\]
and using descent $M_{G,G\times_{G_x}S}(U)=M_{G_x,S}(U)$. Finally we also have
\[
M_{G_x,S}(U)=k[S\quot G_x]\otimes_{k[N_x\quot G_x]}M_{G_x,N_x}(U).
\]
To finish one uses  the fact that \'etale morphisms induce isomorphisms on completions.
\end{proof}
For further reference we note that the Luna slice theorem is particularly easy to apply
if $X$ is a representation. In that case there is a natural embedding $x+N_x\subset X$ and  the morphisms in \eqref{ref-4.1-44} may be taken to be the restriction
morphisms. Furthermore
let $\mathfrak{g}=\Lie(G)$, $\mathfrak{g}_x=\Lie(G_x)$.
Then we have
\begin{align}
\label{ref-4.2-45}
\frak{g}_x&=\{v\in \frak{g}\mid v\cdot x=0\},\\
\label{ref-4.3-46}
T_x(X)&=X=\mathfrak{g}/\mathfrak{g}_x\oplus N_x.
\end{align}
Since $G_x$ is reductive we see that the $G_x$-representation $N_x$ is uniquely determined by $X$ and $\mathfrak{g}/\mathfrak{g}_x$.

\subsection{Restricting to locally closed embeddings}
\label{ref-4.3-47}
The Luna slice theorem allows us to reduce certain questions to the linear case. Here we discuss an example of  such a reduction.
We use similar notations as in \S\ref{ref-4.2-42}.
\begin{theorems}
\label{ref-4.3.1-48}
Let $G$ be a reductive group and let $Y\hookrightarrow X$ be a $G$-equivariant locally closed embedding of smooth affine
$G$-varieties, such that closed orbits in $Y$ remain closed in $X$.
Let $U$ be a finite dimensional $G$-repre\-sentation. Then we have
\begin{equation}
\label{ref-4.4-49}
\gldim M_{G,Y}(\End(U))\le \gldim M_{G,X}(\End(U)).
\end{equation}
\end{theorems}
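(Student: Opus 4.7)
The strategy is to localize at closed orbits in $Y$, apply the Luna slice theorem to reduce to a linear setting, and then compare global dimensions in this concrete situation.

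By Proposition~\ref{ref-3.3-33}, it suffices to verify the inequality after completion at each closed point $\bar y \in Y \quot G$. Such a point corresponds to a closed $G$-orbit $Gy \subset Y$ which, by the hypothesis on closed orbits, also determines a closed point $\bar y'$ of $X \quot G$. Applying Lemma~\ref{ref-4.2.1-43} at a representative $y \in Y$ and exploiting reductivity of $G_y$, I would choose the Luna slices so that $N_y^Y$ is a $G_y$-subrepresentation of $N_y^X$ with a $G_y$-stable complement $C$, i.e.\ $N_y^X = N_y^Y \oplus C$. The identifications in \eqref{ref-4.1-44} translate the problem into a comparison of completed algebras of covariants for the linear actions of $G_y$ on $N_y^Y$ and $N_y^X$. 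By the graded version of Proposition~\ref{ref-3.4-34}, one may discard the completion and reduce to the graded statement: for $H := G_y$ reductive and $V := V' \oplus C$ an $H$-representation with $V' := N_y^Y$,
\[
\gldim M_{H,V'}(\End U) \le \gldim M_{H,V}(\End U).
\]

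In this linear setting, put $R = k[V]$, $R' = k[V']$, $\Lambda_X = M_{H,V}(\End U)$ and $\Lambda_Y = M_{H,V'}(\End U)$. The decomposition $R = R' \otimes k[C]$ gives a split diagram $R' \hookrightarrow R \twoheadrightarrow R'$ of $H$-equivariant algebras: the injection is pullback along the projection $V \to V'$, while the surjection is restriction along $V' \hookrightarrow V$. Taking $\End U \otimes (-)$ and passing to $H$-invariants (exact since $H$ is reductive) produces an analogous split $\Lambda_Y \hookrightarrow \Lambda_X \twoheadrightarrow \Lambda_Y$. The key auxiliary tool is the $H$-equivariant Koszul resolution
\[
0 \to R \otimes \Lambda^{\dim C} C^* \to \cdots \to R \otimes C^* \to R \to R' \to 0,
\]
which on $H$-invariants becomes a length-$\dim C$ exact sequence of $\Lambda_X$-bimodules ending in $\Lambda_Y$, providing a bimodule-level comparison between the two algebras.

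The main obstacle is to convert this bimodule picture into a genuine bound $\pdim_{\Lambda_Y} M \le \gldim \Lambda_X$ for an arbitrary $\Lambda_Y$-module $M$. The terms of the invariant Koszul complex are not automatically $\Lambda_X$-projective (unless all $H$-representations $\End U \otimes \Lambda^i C^*$ lie in $\add U$), so one cannot directly assert $\pdim_{\Lambda_X} \Lambda_Y \le \dim C$; instead, I would proceed by writing any $\Lambda_Y$-module $M$ as $\Hom_{H, R'}(U \otimes R', \widetilde M)$ for some $\widetilde M \in \mod(H, R')$, pushing $\widetilde M$ forward to $\mod(H, R)$ via $R \twoheadrightarrow R'$, and using that $\mod(H, R)$ has global dimension $\le \dim V$ (Lemma~\ref{ref-4.1.1-36}) to obtain a finite equivariant $R$-resolution. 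Transferring this resolution via $\Hom_{H, R}(U \otimes R, -)$ and then descending along the split retraction $\Lambda_X \twoheadrightarrow \Lambda_Y$ should produce the required bound, but the bookkeeping of which $H$-representations survive the retraction, and the verification that the surviving terms are $\Lambda_Y$-projective of the correct total length $\le \gldim \Lambda_X$, is the delicate step of the argument.
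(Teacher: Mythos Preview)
Your reduction to the linear case is correct and matches the paper exactly: localize at closed orbits, invoke the Luna slice theorem (Lemma~\ref{ref-4.2.1-43}) on both $Y$ and $X$, and use Proposition~\ref{ref-3.4-34} to strip the completions. The paper arrives at precisely the same linear statement you do, with $H=G_y$, $V'=N_{y,Y}$, and $V=N_{y,Y}\oplus N$.

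Where you diverge is the endgame, and there your argument is incomplete. You yourself flag it: the Koszul terms $(\End(U)\otimes\Lambda^i C^*\otimes R)^H$ are not $\Lambda_X$-projective in general, so the invariant Koszul complex does not directly bound $\pdim_{\Lambda_X}\Lambda_Y$. Your proposed fix via equivariant resolutions in $\mod(H,R)$ also runs into trouble: pushing $\widetilde M$ from $\mod(H,R')$ to $\mod(H,R)$ and resolving there gives you projectives of the form $U'\otimes R$ for \emph{arbitrary} $H$-representations $U'$, not just for summands of $U$; after applying $\Hom_{H,R}(U\otimes R,-)$ these become modules $\tilde P_{\Lscr,\chi}$ that are typically not $\Lambda_X$-projective, and there is no mechanism in your outline for bounding their $\Lambda_X$-projective dimension by $\gldim\Lambda_X$ in a way that survives restriction to $\Lambda_Y$. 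This is a genuine gap, not just bookkeeping.

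The paper sidesteps all of this with a one-line trick. Give $k[N_{y,Y}\oplus N]$ the $\NN$-grading that puts $N_{y,Y}$ in degree $0$ and $N$ in degree $1$. Then $\Lambda:=M_{G_y,N_{y,Y}\oplus N}(\End(U))$ is an $\NN$-graded ring whose degree-zero part is exactly $M_{G_y,N_{y,Y}}(\End(U))$. Now invoke the elementary fact (Lemma~\ref{ref-4.3.2-56}) that for any $\NN$-graded ring one has $\gldim\Lambda_0\le\gldim\Lambda$: restrict a graded projective $\Lambda$-resolution of $\Lambda\otimes_{\Lambda_0}M$ to degree zero. This immediately gives
\[
\gldim M_{G_y,N_{y,Y}}(\End(U))\le \gldim M_{G_y,N_{y,Y}\oplus N}(\End(U)),
\]
and the latter is bounded by $g=\gldim M_{G,X}(\End(U))$ via the completions and Proposition~\ref{ref-3.4-34}. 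No Koszul complexes, no tracking of which representations appear.
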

\begin{proof}
We
have
\begin{equation}
\label{ref-4.5-50}
g:=\gldim M_{G,X}(\End(U))=\max_{x\in X,Gx\text{ closed }}  \gldim M_{G,X}(\End(U))\,\hat{}_{\bar{x}}
\end{equation}
and a similar identity for $\gldim M_{G,Y}(\End(U))$.
Assume $y\in Y$ has closed orbit in~$Y$ and hence in $X$. We will show  that
\begin{equation}
\label{ref-4.6-51}
\gldim M_{G,Y}(\End(U))\,\hat{}_{\bar{y}}\le g,
\end{equation}
which then implies \eqref{ref-4.4-49}.

Choose $G_y$-equivariant splittings $T_yX=T_y Y\oplus N$,
$T_yY=N_{y,Y}\oplus T_y(Gy)$. By Lemma \ref{ref-4.2.1-43} we have
\begin{align}
\label{ref-4.7-52}
 M_{G,Y}(\End(U))\,\hat{}_{\bar{y}}&\cong M_{G_y,N_{y,Y}}(\End(U))\,\hat{}_{\bar{0}}\\
\label{ref-4.8-53}
 M_{G,X}(\End(U))\,\hat{}_{\bar{y}}&\cong M_{G_y,N_{y,Y}\oplus N}(\End(U))\,\hat{}_{\bar{0}}
\end{align}
Giving $k[N_{y,Y}\oplus N]$ the standard connected grading by putting $N_{y,Y}\oplus N$ in degree one we obtain by Proposition \ref{ref-3.4-34}, \eqref{ref-4.8-53} and \eqref{ref-4.5-50}.
\begin{equation}
\label{ref-4.9-54}
\gldim  M_{G_y,N_{y,Y}\oplus N}(\End(U))=\gldim M_{G_y,N_{y,Y}\oplus N}(\End(U))\,\hat{}_{\bar{0}}\le g
\end{equation}
We now give $k[N_{y,Y}\oplus N]$ a different $\NN$-grading by putting
$N_{y,Y}$, $N$ respectively in degrees $0,1$. By Lemma
\ref{ref-4.3.2-56} below and  Proposition \ref{ref-3.4-34} 
(for the standard grading) we find
\begin{equation}
\label{ref-4.10-55}
\gldim M_{G_y,N_{y,Y}}(\End(U))\,\hat{}_{\bar{0}}=\gldim  M_{G_y,N_{y,Y}}(\End(U))\le \gldim  M_{G_y,N_{y,Y}\oplus N}(\End(U))
\end{equation}
Combining \eqref{ref-4.10-55} with \eqref{ref-4.9-54}
and \eqref{ref-4.7-52} yields \eqref{ref-4.6-51}. 
\end{proof}
\begin{lemmas}
\label{ref-4.3.2-56}
Assume that $\Lambda=\Lambda_0+\Lambda_1+\cdots$ is a $\NN$-graded ring. Then
\[
\gldim \Lambda_0\le \gldim \Lambda.
\]
\end{lemmas}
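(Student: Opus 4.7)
My plan is to establish, for every left $\Lambda_0$-module $M$, the bound $\pdim_{\Lambda_0}(M) \le \gldim\Lambda$; the conclusion then follows by taking the supremum over $M$. The overall strategy is to lift $M$ to a graded $\Lambda$-module, resolve it in the graded category, and recover a $\Lambda_0$-resolution by extracting the degree-zero part.

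First I would use the augmentation $\epsilon:\Lambda \twoheadrightarrow \Lambda_0$ (with kernel $\Lambda_{\ge 1}$) to view $M$ as a graded left $\Lambda$-module concentrated in degree zero. Because $M$ sits in degree $0$, one can inductively construct a graded projective resolution $P_\bullet \to M$ in which the generators at every step lie in non-negative degrees; concretely each $P_i$ may be taken as a graded direct summand of a graded free module of the form $\bigoplus_j \Lambda(-n_{i,j})$ with $n_{i,j}\ge 0$. Applying the (manifestly exact) functor $(-)_0$ that takes the degree-zero component, and using that $\Lambda_m=0$ for $m<0$, only the summands with $n_{i,j}=0$ contribute, so $(P_i)_0$ becomes a direct summand of a free $\Lambda_0$-module, hence projective over $\Lambda_0$. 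Since $M_0 = M$, the complex $(P_\bullet)_0 \to M$ is then an exact complex of projective $\Lambda_0$-modules, i.e.\ a projective $\Lambda_0$-resolution of $M$.

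The main point still to verify is that $P_\bullet$ can be chosen of length at most $\gldim\Lambda$. If $\gldim\Lambda=\infty$ there is nothing to prove, so assume $n=\gldim\Lambda<\infty$. Starting from any graded projective resolution and considering the $n$-th syzygy $K$, one knows that $K$ is projective as an \emph{ungraded} $\Lambda$-module. The hard step will be the (standard) fact that a graded $\Lambda$-module which is ungraded-projective is automatically graded-projective; this is proved by an averaging argument using the induced $\ZZ$-grading on $\End_\Lambda(K)$, namely, the degree-zero component of an ungraded splitting is itself a graded splitting. Once this is in hand one may truncate after $n$ terms, and the proof is complete. This averaging step is the only place where the graded structure is used in a non-routine way, and is the principal obstacle to address.
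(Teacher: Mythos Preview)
Your proof is correct and follows the same approach as the paper's: pass to a graded $\Lambda$-module, take a graded projective resolution, and extract the degree-zero part. The only differences are that the paper resolves the induced module $\Lambda\otimes_{\Lambda_0}M$ rather than $M$ viewed via the augmentation, and that it leaves implicit the standard fact (which you spell out) that a graded module which is ungraded-projective is already graded-projective.
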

\begin{proof} Let $M$ be a $\Lambda_0$-module and consider a graded projective resolution
$P^\bullet$ of $\Lambda\otimes_{\Lambda_0}M$. Restricting this resolution to degree zero yields
a projective resolution of $M$, finishing the proof. 
\end{proof}

\subsection{Cohen-Macaulayness of modules of covariants}
\label{ref-4.4-57}
We let the notations be as in \S\ref{ref-1.5-23} in the introduction. 
We will be interested in sufficient criteria for $M(U)$ to be Cohen-Macaulay for $U$ a finite dimensional representation
of $G$. If $M_{G_e}(U)$ is Cohen-Macaulay
then so is $M_G(U)$ so from now on we will restrict ourselves to the case that $G=G_e$ is connected.

A relevant conjecture in the connected case was stated in \cite{St} and this conjecture was almost completely
proved in \cite{VdB3}. Below we use those results to obtain  easy to verify criteria
for Cohen-Macaulayness in the cases that interests us. 
\begin{definitions}
\label{ref-4.4.1-58}
The elements of the intersection 
\[
X(T)^+\cap(-2\bar{\rho}+\Sigma)
\] 
are called \emph{strongly critical (dominant) weights} for $G$.
\end{definitions}
\begin{lemmas} \label{ref-4.4.2-59}
Assume that $\chi$ is strongly critical for $G$ and let the $T$-weights of $U=V(\chi)$ be given by $(\chi_i)_i$. Then for any $S\subset \Phi$
and for any $i$ we have that $\chi_i+\sum_{\rho\in S}\rho$ is strongly critical for $T$.
\end{lemmas}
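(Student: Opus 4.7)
The statement can be unpacked concretely: since $T$ has empty root system we have $\bar{\rho}_T = 0$ and $X(T)^+_\RR = X(T)_\RR$, so ``strongly critical for $T$'' is just membership in $\Sigma$. The weights $\beta_i$ of $W$ are unchanged when we pass from $G$ to $T$, so $\Sigma$ is the same. Hence the goal is:
\[
\chi_i + \sum_{\rho\in S}\rho\;\in\;\Sigma
\qquad\text{for every weight $\chi_i$ of $V(\chi)$ and every $S\subseteq\Phi$.}
\]

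First I would make a preliminary cleanup: any opposite pair $\{\rho,-\rho\}\subseteq S$ contributes $0$ to $\sum_{\rho\in S}\rho$, so WLOG we may write $S = A \sqcup (-B)$ with $A,B\subseteq\Phi^+$ disjoint. Next, I would combine the strong criticality hypothesis $\chi+2\bar{\rho}=\sum_i c_i\beta_i$ with $c_i\in(-1,0]$, with the standard representation-theoretic fact that every weight $\chi_i$ of $V(\chi)$ satisfies $\chi-\chi_i\in\sum_{\alpha\in\Phi^+}\NN\alpha$, to rewrite
\[
\chi_i + \sum_{\rho\in S}\rho = \sum_i c_i\beta_i \;-\; \sum_{\alpha\in\Phi^+} q_\alpha\,\alpha,
\]
where $q_\alpha := n_\alpha + \mathbf{1}_{\alpha\notin A} + \mathbf{1}_{\alpha\in B}\in\NN$ (so $q_\alpha = 0$ exactly when $\alpha\in A$ and $n_\alpha=0$). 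At this point the problem is reduced to a purely combinatorial statement about points of $\Sigma$ shifted by a non-negative integer combination of positive roots.

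The main step is showing that the right-hand side still lies in $\Sigma$. The key structural ingredients I would exploit are: (a)~$\mathcal{W}$-invariance of $\Sigma$, which holds because $\mathcal{W}$ permutes the multiset $\{\beta_i\}$ of $T$-weights of $W$; (b)~convexity of $\Sigma$; and (c)~the fact that $\chi_i\in\operatorname{conv}(\mathcal{W}\chi)$. Writing $\chi_i$ as a convex combination $\sum_w t_w\,w\chi$ and using that $w\chi+2w\bar{\rho}\in\Sigma$ for every $w\in\mathcal{W}$ (by applying (a) to strong criticality), one expresses $\chi_i+\sum_\rho\rho$ as a convex combination of expressions of the form $w\chi+\sum_\rho\rho$, reducing by (b) to showing the latter lies in $\Sigma$ for each $w$.

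The hard part will be this last verification, since $\Sigma$ is generally \emph{not} closed under subtracting arbitrary combinations of positive roots. I would try to handle it by choosing the Weyl element $w$ adapted to $S$, for example one with $A\subseteq w\Phi^+$ and $B\subseteq w\Phi^-$ so that the telescoping $\sum_\rho\rho-2w\bar{\rho}$ becomes a non-positive combination of roots that can be absorbed against the slack in $c_i\in(-1,0]$ on the $\beta_i$-side; when no such $w$ exists, I would fall back to an induction on $|S|$ peeling off one root $\rho$ at a time and using the reflection $s_\rho\in\mathcal{W}$ to reduce to a previous case. In short, the outline is clear but the essential technical difficulty lies in relating the root combinatorics of $\sum_\rho\rho$ to the half-open zonotope structure of $\Sigma$.
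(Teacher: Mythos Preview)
Your unpacking of ``strongly critical for $T$'' as membership in $\Sigma$, the observation that $\Sigma$ is convex and $\Wscr$-invariant, and the reduction of $\chi_i$ to the vertices $w\chi$ via $\chi_i\in\operatorname{conv}(\Wscr\chi)$ are all correct and match the paper's argument. The gap is in what you yourself flag as the ``hard part'': you reduce to showing $w\chi+\sum_{\rho\in S}\rho\in\Sigma$ for each $w$, and then offer only speculative strategies (choosing $w$ adapted to $S$, or an induction peeling off one root). Neither is carried out, and the first cannot work as stated since you need the conclusion for \emph{every} $w$ appearing in the convex combination for $\chi_i$, not a single well-chosen one.

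The missing idea is to treat $\sum_{\rho\in S}\rho$ exactly the same way you treated $\chi_i$. The zonotope $\Gamma=\{\sum_{\rho\in\Phi}u_\rho\rho\mid u_\rho\in[0,1]\}$ obviously contains every such sum, and its vertices are precisely the points $\sum_{\rho\in\Phi^+}w\rho=w(2\bar{\rho})$ for $w\in\Wscr$; hence $\Gamma=\operatorname{conv}(\Wscr\cdot 2\bar{\rho})$. So both summands lie in convex hulls of Weyl orbits of the dominant elements $\chi$ and $2\bar{\rho}$, and by convexity of $\Sigma$ the claim reduces to $v\chi+w(2\bar{\rho})\in\Sigma$ for all $v,w\in\Wscr$. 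This in turn follows from a clean general lemma (Lemma~\ref{ref-D.1-148} in the paper): if $\Delta$ is convex and $\Wscr$-invariant and $x,y$ are dominant with $x+y\in\Delta$, then $vx+wy\in\Delta$ for all $v,w$. Applying this with $x=\chi$, $y=2\bar{\rho}$, $\Delta=\Sigma$ finishes the proof with no case analysis or induction.
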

\begin{proof} Let $\Gamma$ be the convex polygon
\[
\Gamma=\{\sum_{\rho\in \Phi} u_\rho\rho\mid u_\rho\in [0,1]\}.
\]
For every $S\subset\Phi$ we have $\sum_{\rho\in S}\rho\in \Gamma$ and moreover by Corollary \ref{ref-B.3-144} below $\Gamma$ is the convex hull of the $\Wscr$-orbit of $2\bar{\rho}$.

Similarly all $\chi_i$ are contained in the convex hull of the $\Wscr$-orbit of $\chi$ by \cite[Thm 14.18]{FH}. Hence we have to prove that for all $v,w\in \Wscr$ one has $v\chi+w(2\bar{\rho})\in \Sigma$.
This follows from Lemma \ref{ref-D.1-148} below since $\Sigma$ is convex and $\Wscr$-invariant,  $2\bar{\rho}$ and $\chi$ are dominant and finally by hypothesis $\chi+2\bar{\rho}\in \Sigma$.
\end{proof}
Recall that a stable point is a point with closed orbit and finite stabilizer.
\begin{theorems} \label{ref-4.4.3-60}
Assume $X$ contains a stable  point. Let $\chi\in X(T)^+$ be a strongly critical weight and $U=V(\chi)$. Then 
$M(U^\ast)$ is a Cohen-Macaulay $R^G$-module.
\end{theorems}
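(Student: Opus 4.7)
The strategy is to reduce the Cohen-Macaulayness of $M(V(\chi)^\ast)$ to a combinatorial condition on weights that can be checked directly. The main external input I would use is the near-resolution of the Stanley conjecture \cite{St} given in \cite{VdB3} (together with \cite{Vdb1,Vdb7,VdB4}). That work provides a criterion for Cohen-Macaulayness of a module of covariants $M(U^\ast)$ on a representation space $W^\ast$ with a stable point, phrased as the vanishing of certain local cohomology groups. These local cohomology groups are computed, via a Grothendieck-Cousin / Borel--Weil--Bott type argument, in terms of weights of $U$ translated by sums of roots: roughly, one needs that for every $T$-weight $\chi_i$ of $U$ and every subset $S \subseteq \Phi$, the weight $\chi_i + \sum_{\rho \in S}\rho$ lies in $-2\bar\rho + \Sigma$, i.e.\ is strongly critical for $T$.

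Second, with this criterion in hand, the Cohen-Macaulayness of $M(V(\chi)^\ast)$ reduces to verifying exactly the combinatorial condition above for $U = V(\chi)$. This is precisely the content of Lemma \ref{ref-4.4.2-59}: the hypothesis that $\chi$ is strongly critical for $G$ is designed so that all weights of $V(\chi)$ translated by arbitrary subsets of $\Phi$ remain strongly critical for $T$. Feeding this into the criterion of \cite{VdB3} directly yields the conclusion.

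The role of the stable-point hypothesis is to guarantee that the criterion of \cite{VdB3} applies in its cleanest form (stable points have finite stabilizer, so generic fibers of $X \to X\sslash G$ behave like $G$-torsors, which is what makes the Borel--Weil--Bott analysis of local cohomology go through cleanly). Since the notation is inherited from \S\ref{ref-1.5-23}, we are already in the linear setting $X = W^\ast$, so no reduction via the Luna slice theorem is needed at this stage.

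The main obstacle is essentially bookkeeping: one must match the precise combinatorial condition appearing in the Cohen-Macaulayness criterion of \cite{VdB3} with the formulation in terms of the set $-2\bar\rho + \Sigma$ and strongly critical weights used here. The conceptual work is all in Lemma \ref{ref-4.4.2-59} (which uses the convex-geometric inputs from Appendix B and Lemma \ref{ref-D.1-148}); once that lemma is available, the theorem is a direct appeal to the results of \cite{VdB3}.
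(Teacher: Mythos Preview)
Your proposal is correct and takes essentially the same approach as the paper: reduce to the criterion of \cite[Thm~1.3]{VdB3} (via the isotypical component $R_U=U\otimes M(U^\ast)$), and verify its hypothesis using Lemma~\ref{ref-4.4.2-59}. One small slip: the condition ``strongly critical for $T$'' means $\chi_i+\sum_{\rho\in S}\rho\in\Sigma$, not $-2\bar\rho+\Sigma$, since for a torus $\bar\rho_T=0$; this is exactly what Lemma~\ref{ref-4.4.2-59} establishes.
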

\begin{proof} Let $R_{U}$ be the isotypical component of $R$
  corresponding to $U$. I.e.\ $R_{U}$ is the sum of all
  subrepresentations of $R$ isomorphic to $U$. One has
  $R_{U}=U\otimes M(U^\ast)$. Hence $M(U^\ast)$ is Cohen-Macaulay if
  and only if $R_{U}$ is Cohen-Macaulay.

Let $(\chi_i)_i$ be the $T$-weights of $U$. 
According to \cite[Thm 1.3]{VdB3} $R_{U}$ will be Cohen-Macaulay if for every $i$
and for every $S\subset \Phi$ one has that $\chi_i+\sum_{\rho\in S}\rho\in \Sigma$, or
equivalently if $\chi_i+\sum_{\rho\in S}\rho$ is strongly critical for $T$. This 
condition holds by Lemma \ref{ref-4.4.2-59}.
\end{proof}
\begin{propositions}
\label{ref-4.4.4-61}
Assume that $X$ contains a stable point. 
Let $\chi_1,\chi_2\in X(T)^+$. If $\chi_1+\chi_2$ is strongly critical then $M((V(\chi_1)\otimes V(\chi_2))^\ast)$ is Cohen-Macaualay.
\end{propositions}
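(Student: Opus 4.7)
The plan is to reduce to Theorem~\ref{ref-4.4.3-60} by decomposing the tensor product into irreducibles. Write $V(\chi_1)\otimes V(\chi_2)=\bigoplus_{\mu} V(\mu)^{\oplus n_\mu}$ as a $G$-representation, so that
$$M((V(\chi_1)\otimes V(\chi_2))^\ast)=\bigoplus_{\mu} M(V(\mu)^\ast)^{\oplus n_\mu}.$$
Since a direct sum of Cohen-Macaulay modules is Cohen-Macaulay, it suffices to show that each dominant $\mu$ with $n_\mu>0$ is strongly critical; then Theorem~\ref{ref-4.4.3-60} applied to each summand finishes the proof.

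To verify strong criticality of such a $\mu$, I would first use that any $\mu$ appearing in the decomposition is a dominant weight of $V(\chi_1)\otimes V(\chi_2)$, and hence satisfies $\mu\le \chi_1+\chi_2$ in the standard dominance order. A classical fact about root systems (dominant weights below a dominant weight in the usual partial order lie in the convex hull of its Weyl orbit) then yields $\mu\in\mathrm{conv}(\Wscr\cdot(\chi_1+\chi_2))$, so I can write $\mu=\sum_v a_v\, v(\chi_1+\chi_2)$ as a convex combination with $a_v\ge 0$ and $\sum_v a_v=1$.

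Next I would invoke Lemma~D.1 in exactly the same way it was used in the proof of Lemma~\ref{ref-4.4.2-59}: applied to the dominant weights $\chi_1+\chi_2$ and $2\bar\rho$ with the $\Wscr$-invariant convex set $\Sigma$ (which contains $\chi_1+\chi_2+2\bar\rho$ by the strong criticality hypothesis), it gives $v(\chi_1+\chi_2)+2\bar\rho\in\Sigma$ for every $v\in\Wscr$. Then by convexity of $\Sigma$,
$$\mu+2\bar\rho=\sum_v a_v\bigl(v(\chi_1+\chi_2)+2\bar\rho\bigr)\in\Sigma,$$
so $\mu$ is strongly critical as required.

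The only slightly non-trivial ingredient is the classical convex-hull statement used to pass from $\mu\le\chi_1+\chi_2$ to $\mu$ being a convex combination of the $\Wscr$-orbit of $\chi_1+\chi_2$; this is standard (e.g.\ in Bourbaki) and the rest of the argument is a straightforward application of results already available in the excerpt.
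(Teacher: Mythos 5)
Your proof is correct and reaches the same conclusion, but by a slightly different route. The paper's own argument first invokes the decomposition result \cite[Ex.\ 25.33]{FH}: any constituent $V(\mu)$ of $V(\chi_1)\otimes V(\chi_2)$ has highest weight $\mu=\chi_1+\chi'$ with $\chi'$ a weight of $V(\chi_2)$. It then uses \cite[Thm 14.18]{FH} to place $\chi'$ in $\mathrm{conv}(\Wscr\cdot\chi_2)$, and applies Lemma~\ref{ref-D.1-148} with the dominant pair $(x,y)=(\chi_1+2\bar\rho,\chi_2)$ to get $\chi_1+w\chi_2+2\bar\rho\in\Sigma$ for all $w\in\Wscr$; the convexity of $\Sigma$ (used implicitly) then gives $\mu+2\bar\rho\in\Sigma$. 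You instead bound $\mu\le\chi_1+\chi_2$ in dominance order via the elementary tensor-product weight inequality, pass to $\mu\in\mathrm{conv}(\Wscr\cdot(\chi_1+\chi_2))$ using the standard criterion for dominant weights, and apply Lemma~\ref{ref-D.1-148} with the dominant pair $(\chi_1+\chi_2,2\bar\rho)$, carrying out the final convexity step explicitly. The two applications of Lemma~\ref{ref-D.1-148} are interchangeable here, and your route is marginally more elementary in that it avoids the finer decomposition result \cite[Ex.\ 25.33]{FH}; the paper's version has the advantage of fitting seamlessly into the proof of Lemma~\ref{ref-4.4.2-59}, whose pattern it reuses verbatim.
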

\begin{proof} Assume that $V(\chi)$ with $\chi\in X(T)^+$ is a summand of $V(\chi_1)\otimes V(\chi_2)$. Then by 
\cite[Ex.\ 25.33]{FH}
 $\chi=\chi_1+\chi'$ with $\chi'$ a weight
of $V(\chi_2)$. Hence by \cite[Thm 14.18]{FH} $\chi'$ is in the convex hull of the $\Wscr$-orbit of $\chi_2$. Hence by Theorem \ref{ref-4.4.3-60} we have to prove that $\chi_1+w\chi_2+2\bar{\rho}\in \Sigma$ for $w\in \Wscr$.
This follows again from Lemma \ref{ref-D.1-148} below since $\chi_1+2\bar{\rho}$ and $\chi_2$ are dominant and  by hypothesis $\chi_1+\chi_2+2\bar{\rho}\in \Sigma$.
\end{proof}
\subsection{Non-connected groups}
\label{ref-4.5-62}
Assume that $G$ is a reductive group and $H$ is a normal
subgroup of finite index.
We assume that $G$ acts on a smooth affine variety
with coordinate ring $R$.

If $g\in G$ then we write $\sigma_g=g\cdot g^{-1}\in \Aut(H)$. We will say that a finite
dimensional representation $U$ of $H$ is $G$-invariant if we have that for every $g\in G$
the $\sigma_g$-twisted $H$-representation ${}_{\sigma_g} U$ is isomorphic to $U$.
Note that if $U$ is the restriction of a $G$-representation then $U$ is automatically $G$-invariant. 

To relate NC(C)R's for $R^G$ and $R^{H}$ we
will use the following trivial result. 
\begin{lemmas}
\label{ref-4.5.1-63}
Assume that $U$ is  a finite dimensional $G$-invariant $H$-representation. Then
\[
\gldim M_{H}(\End(U))=\gldim M_G(\End(\Ind^G_{H}(U))).
\]
Moreover if $M_{H}(\End(U))$ is Cohen-Macaulay then so is  $M_G(\End(\Ind^G_{H}(U)))$.
\end{lemmas}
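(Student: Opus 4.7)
The plan is to exhibit an explicit algebra isomorphism $\Lambda \cong \Lambda_e \# \Gamma$, where $\Lambda_e := M_{G_e}(\End U)$, $\Lambda := M_G(\End V)$ with $V := \Ind_{G_e}^G U$, $\Gamma := G/G_e$, and $\Gamma$ acts on $\Lambda_e$ via the conjugation automorphisms $\sigma_g$ on $G_e$ (and trivially on $R$). Once this is established, the conclusion will follow from standard facts about smash products by finite groups in characteristic zero.

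\emph{Step 1: The matrix algebra and its $\Gamma$-action.} Because $G_e \trianglelefteq G$, the Mackey-type formula gives $\Res^G_{G_e} V = \bigoplus_{\bar g \in \Gamma} {}_{\sigma_g} U$. The hypothesis that $U$ is $G$-invariant lets us fix $G_e$-equivariant isomorphisms ${}_{\sigma_g} U \cong U$, producing a $(G_e, R)$-isomorphism $V \otimes R \cong (U \otimes R)^n$ with $n := [G:G_e]$, and correspondingly an $R^{G_e}$-algebra isomorphism $M_{G_e}(\End V) \cong M_n(\Lambda_e)$. Unpacking how the $G$-action on $V \otimes R$ descends, the induced $\Gamma$-action on $M_n(\Lambda_e)$ is the regular permutation of the $n$ matrix slots (from left multiplication on $\Gamma$), twisted by the $\sigma_g$-action on the $\Lambda_e$-entries.

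\emph{Step 2: Identifying invariants with the smash product.} A $\Gamma$-invariant matrix $(a_{ij})_{i,j \in \Gamma}$ is determined by its first row $(a_{1,g})_{g \in \Gamma}$ via $a_{ij} = i \cdot a_{1,\,i^{-1}j}$, and any such choice of $(b_g = a_{1,g})_{g \in \Gamma} \in \Lambda_e^{\Gamma}$ gives back a $\Gamma$-invariant matrix. A short calculation translates matrix multiplication into the smash-product rule
\[
\bigl(\textstyle\sum_g b_g [g]\bigr)\bigl(\sum_h b'_h [h]\bigr) = \sum_{g,h} b_g\,(g \cdot b'_h)\,[gh],
\]
establishing $M_n(\Lambda_e)^\Gamma \cong \Lambda_e \# \Gamma$. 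Taking $\Gamma$-invariants of the isomorphism of Step 1 yields
\[
\Lambda = \bigl(M_{G_e}(\End V)\bigr)^\Gamma \cong \Lambda_e \# \Gamma.
\]

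\emph{Step 3: Concluding from smash-product generalities.} Two standard facts about $\Lambda_e \# \Gamma$ for finite $\Gamma$ in characteristic zero now finish the proof. First, $\gldim(\Lambda_e \# \Gamma) = \gldim \Lambda_e$; this is classical, coming from the semisimplicity of $k[\Gamma]$. Second, $\Lambda_e \# \Gamma$ is a free $\Lambda_e$-module of rank $|\Gamma|$, so it is Cohen-Macaulay as an $R^{G_e}$-module iff $\Lambda_e$ is; combined with the fact that $R^{G_e}$ is a finite integral extension of $R^G$ (so depth and Krull dimension agree over the two rings for finitely generated $R^{G_e}$-modules), this transfers the CM property from $\Lambda_e$ over $R^{G_e}$ to $\Lambda$ over $R^G$. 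The only substantive computation is the smash-product identification in Step 2; the rest is bookkeeping and standard theory, which is presumably why the authors label the lemma as trivial.
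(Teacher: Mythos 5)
Your strategy coincides with the paper's: both realize $\Lambda := M_G(\End(\Ind^G_{G_e}U))$ as a product-type extension of $\Lambda_e := M_{G_e}(\End U)$ by the finite group $\Gamma := G/G_e$ and then invoke finiteness of $\Gamma$ and characteristic zero to transfer $\gldim$ and Cohen--Macaulayness. The paper does this by exhibiting $\Lambda$ as a strongly $\Gamma$-graded ring with identity component $\Lambda_e$ (choosing units $\theta_g \in \Lambda_{\bar g}$), which is what your $M_n(\Lambda_e)^\Gamma$ description also amounts to.

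The one place where you overreach is the assertion $\Lambda \cong \Lambda_e \# \Gamma$ as a \emph{smash} product. The hypothesis that $U$ is $G$-invariant only furnishes $G_e$-isomorphisms $\theta_g : {}_{\sigma_g}U \xrightarrow{\sim} U$ for each $g$, with no control over the composites $\theta_g\theta_h\theta_{gh}^{-1}$; these generically give a nontrivial $2$-cocycle $\tau$ with values in $Z(\Lambda_e)^\times$, so the ``$\Gamma$-action'' on $\Lambda_e$ that Steps 1--2 rely on need not be an honest group action. What you actually obtain is the crossed product $\Lambda_e \ast_\tau \Gamma$ --- equivalently a strongly $\Gamma$-graded ring, which is precisely the paper's structure --- not necessarily a smash product. (A second, smaller slip: the parenthetical that $\Gamma$ acts ``trivially on $R$'' cannot be right; any representative of $\bar g \in \Gamma$ acts on $R$, and the induced maps on $\Lambda_e$ are semilinear over $R^{G_e}$, not $R^{G_e}$-linear.) Fortunately both facts used in Step 3 --- that $\gldim \Lambda = \gldim \Lambda_e$ when $|\Gamma|$ is invertible, and that $\Lambda$ is $\Lambda_e$-free of rank $|\Gamma|$ --- hold for arbitrary crossed products, so replacing ``smash product'' by ``crossed product''/``strongly graded'' throughout repairs the argument and brings it into agreement with the paper's.
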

\begin{proof}
Let $U$ be a finite dimensional $G$-representation, not necessarily $G$-invariant. Put $V=\Ind^G_{H}U$ and $\Lambda=M_G(\End(V))$. We have
\[
V=\Gamma_{H}(G,U)\overset{\text{def}}{=}\{f:G\r U\mid \forall h\in H, g\in G: f(hg)=hf(g)\}.
\]
with $G$-action: $(g\cdot f)(g')=f(g'g)$.
For $\bar{g}\in G/H$ let 
\[
U_{\bar{g}}=\{f\in \Gamma_{H}(G,U)\mid \forall g'\in G: f\mid Hg'=0\text{ if 
$Hg'\neq Hg$}\}.
\]
Then 
\[
V=\bigoplus_{\bar{g}\in G/H} U_{\bar{g}}.
\]
We have for $g'\in G$: 
\begin{equation}
\label{ref-4.11-64}
g' U_{\bar{g}}\subset U_{\bar{g}\bar{g}'}.
\end{equation}
Moreover there is an isomorphism of $H$-representations
\[
U_{\bar{g}}\r {}_{\sigma_{g}} U:f\mapsto f(g)
\]
Now we define a $G/H$-grading on $\Lambda$ as follows
\[
\Lambda_{\bar{g}}=\{f\in \Lambda\mid \forall g'\in G:f(U_{\bar{g}'}\otimes R)\subset
U_{\bar{g}\bar{g}'}\otimes R
\}.
\]
By \eqref{ref-4.11-64} one has
\begin{equation}
\label{ref-4.12-65}
\Lambda_{\bar{g}}\cong M_{H}(\Hom(U,U_{\bar{g}})).
\end{equation}
Assume now that $U$ is $G$-invariant. Then \eqref{ref-4.12-65} implies 
the Cohen-Macaulayness claim since $U_{\bar{g}}\cong {}_{\sigma_{g}} U
\cong U$ as $H$-representations.

Choose a $H$-linear isomorphism $\theta_g:U\r U_{\bar{g}}$. Then it is easy to see that under the isomorphism
\eqref{ref-4.12-65} $\theta_g$ corresponds to an element of $\Lambda_{\bar{g}}$ which is a unit in $\Lambda$. 
Recall that a $L$-graded ring $\Gamma$ for a group $L$ is said to be \emph{strongly
  graded} if $\Gamma_{e}=\Gamma_u\Gamma_{u^{-1}}$ for all $u\in L$, (see \cite[\S I.3]{NVO}). 
  In particular
$\Lambda$ is a strongly $G/H$-graded ring.  
Hence $\gldim \Lambda = \gldim \Lambda_e=\gldim M_{H}(\End(U))$ follows from \cite[Cor. 7.6.18]{MR}
 combined with the fact that the categories of $\Lambda$-modules and graded
$\Lambda_e$-modules are equivalent.
\end{proof}
\emph{Now we restrict to the case that $H$ is the identity component $G_e$ of $G$.}
To check whether a $G_e$-representation is $G$-invariant we proceed as follows. Let $\alpha$ be an automorphism of
$G_e$. Then $\alpha(T)\subset \alpha(B)$ are respectively a maximal torus and a Borel subgroup and there exists $g_0\in G_e$ such that $g_0 \alpha(B) g_0^{-1}=B$,
$g_0 \alpha(T) g_0^{-1}=T$. Put $\tilde{\alpha}=(g_0\cdot g_0^{-1})\circ \alpha$. Then $\tilde{\alpha}$ preserves $(T,B)$ and hence it acts on $X(T)_\RR$
preserving the (positive) roots and the dominant cone. From this we obtain for $\chi\in X(T)^+$:
\begin{equation}
\label{ref-4.13-66}
{}_\alpha V(\chi)\cong {}_{\tilde{\alpha}}V(\chi)=V(\tilde{\alpha}^\ast(\chi))
\end{equation}
where $\tilde{\alpha}^\ast(\chi)=\chi\circ \tilde{\alpha}$.
We may use this observation in conjunction with the following lemma.
\begin{lemmas} \label{ref-4.5.2-67}
Assume ${}_\alpha W\cong W$ as $G_e$-representations (hence in particular if $\alpha=\sigma_g$ as above). Then for
$u,v\in \RR$, 
\[
-u\bar{\rho}+v\Sigma\qquad 
-u\bar{\rho}+v\bar{\Sigma}
\]
are stable under $\tilde{\alpha}$. The same is true for
\[
-u\bar{\rho}+v\bar{\Sigma}_{\varepsilon}
\]
provided $\varepsilon$ is stable under $\tilde{\alpha}$.
\end{lemmas}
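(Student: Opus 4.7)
\medskip

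\noindent\textbf{Plan of proof.} The key observation is that $\tilde\alpha$ acts linearly on $X(T)_\RR$, so the three sets will be $\tilde\alpha$-stable once we show that $\tilde\alpha^\ast$ fixes $\bar\rho$, preserves the multiset of weights $(\beta_i)_i$ of $W$, and (in the last case) fixes $\varepsilon$.

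First, since $\tilde\alpha$ preserves the pair $(T,B)$ by construction, it permutes $\Phi^+$; consequently $\tilde\alpha^\ast(\bar\rho)=\bar\rho$. So I only need to prove that $\tilde\alpha^\ast$ leaves $\Sigma$, $\bar\Sigma$ and $\bar\Sigma_\varepsilon$ invariant, since then the sets $-u\bar\rho+v\Sigma$, $-u\bar\rho+v\bar\Sigma$ and $-u\bar\rho+v\bar\Sigma_\varepsilon$ will be stable under $\tilde\alpha$.

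Next I would show that the multiset of $T$-weights of $W$ is invariant under $\tilde\alpha^\ast$. Starting from the hypothesis ${}_\alpha W\cong W$, I first observe that ${}_\alpha W\cong {}_{\tilde\alpha}W$ via the map $w\mapsto g_0 w$ (this is the same identification which gives the isomorphism \eqref{ref-4.13-66} in the previous discussion). Hence ${}_{\tilde\alpha}W\cong W$ as $G_e$-representations. Now $\tilde\alpha$ preserves $T$, so for a weight vector $w\in W$ of weight $\beta_i$ one computes $t\cdot_{\tilde\alpha}w=\tilde\alpha(t)w=\beta_i(\tilde\alpha(t))w$, showing that the $T$-weights of ${}_{\tilde\alpha}W$ are precisely $\{\tilde\alpha^\ast(\beta_i)\}_i$. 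Comparing with the weights of $W$ yields $\{\tilde\alpha^\ast(\beta_i)\}_i=\{\beta_i\}_i$ as multisets.

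Since $\Sigma=\{\sum_i a_i\beta_i\mid a_i\in {]{-1},0]}\}$ is defined purely from this multiset, and $\tilde\alpha^\ast$ is linear, the equality of multisets gives $\tilde\alpha^\ast(\Sigma)=\Sigma$; taking closures yields $\tilde\alpha^\ast(\bar\Sigma)=\bar\Sigma$ as well. For the last assertion, using linearity of $\tilde\alpha^\ast$ together with $\tilde\alpha^\ast(\varepsilon)=\varepsilon$ one has $\tilde\alpha^\ast(r\varepsilon+\bar\Sigma)=r\varepsilon+\bar\Sigma$ for every $r>0$, hence
\[
\tilde\alpha^\ast(\bar\Sigma_\varepsilon)=\tilde\alpha^\ast\Bigl(\bigcup_{r>0}\bar\Sigma\cap(r\varepsilon+\bar\Sigma)\Bigr)=\bigcup_{r>0}\bar\Sigma\cap(r\varepsilon+\bar\Sigma)=\bar\Sigma_\varepsilon,
\]
which completes the proof. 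There is no real obstacle here; the only point that requires a moment of care is the bookkeeping with the twist ${}_\alpha W\rightsquigarrow{}_{\tilde\alpha}W$, which is settled by the explicit intertwiner $w\mapsto g_0w$.
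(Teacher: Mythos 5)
Your proof is correct and follows essentially the same route as the paper: deduce from $W\cong{}_{\tilde\alpha}W$ that $\tilde\alpha^\ast$ permutes the weights $(\beta_i)_i$, hence preserves $\Sigma$ and $\bar\Sigma$, and use that $\tilde\alpha^\ast$ fixes $\bar\rho$ because it permutes $\Phi^+$. The paper states this more tersely (``the lemma is now obvious''), whereas you spell out the computation for $\bar\Sigma_\varepsilon$ and the bookkeeping of the twist ${}_\alpha W\rightsquigarrow{}_{\tilde\alpha}W$, which is fine and matches the intended argument.
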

\begin{proof} Since $W\cong {}_\alpha W\cong {}_{\tilde{\alpha}} W$ and $\tilde{\alpha}$ preserves $T$ we see that $\tilde{\alpha}$
permutes the weights of~$W$. Hence $\tilde{\alpha}$ preserves $\Sigma$ and its closure $\bar{\Sigma}$. Since $\tilde{\alpha}$
preserves positive roots, it also preserves $\bar{\rho}$. The lemma is now obvious. 
\end{proof}
\section{Determinantal varieties}
\label{ref-5-68}
In the sections \S\ref{ref-5-68}-\S\ref{ref-9-93} below we will apply the
technical results stated in \S\ref{ref-1.5-23}-\S\ref{ref-1.6-26}. Proofs 
of these technical results will be given in  \S\ref{ref-11-102}-\S\ref{ref-12-122}.
\subsection{Preliminaries}
\label{ref-5.1-69}
  In the examples below we will have to verify the genericity condition (Definition \ref{ref-1.3.5-9}). This is routine but sometimes a bit
messy so we will leave it to the reader. Here we outline how one may do the verification in general
but in practice there are usually short cuts. Assume that $X=W^\ast$ is a~$G$-representation. Let $X^s$ be
  the locus of points in $X$ which are \emph{stable}, i.e.\ which have
  closed orbit and finite stabilizer. Then $X-X^s$ may be described by
  the usual numerical criterion in terms of one-parameter subgroups \cite{Mumford}. This
  may be used to bound the dimension of $X-X^s$. 

  The locus $X_n$ of points in $X$ which are stabilized by an element
  of order $n$ may also be described numerically using one parameter
  subgroups. Indeed assume that $g\in G$ has order $n$ and $x\in X$ is
  such that $gx=x$. The element~$g$ is in particular semi-simple and
  hence it is contained in a maximal torus and therefore in the image
  of an injective one-parameter subgroup $\lambda:G_m\r G$. Let
  $(\mu_i)_i\in \ZZ$ be the weights for a diagonalization of the
  $\lambda$-action on $X$. Then since $x$ is a fixed point for
  $\lambda^{-1}(g)$ which has order $n$ we see that if $n\nmid \mu_i$
  we must have $x_i=0$. In this way one may bound the dimension of
  $X_n$. Since $X^{\mathbf{s}}=X^s-\bigcup_{p\text{ prime}} X_p$ we are done.
\subsection{Proof of Theorem \ref{ref-1.4.1-12}}
In this section we will prove Theorem \ref{ref-1.4.1-12}. Let $Y=(y_{ij})$ be a generic $h\times h$ matrix. Put
$S_{n,h}:=k[Y_{n,h}]=k[Y]/(I_{n})$ where $I_{n}$ is the ideal generated by the minors of size $n+1$. 

Let~$V$ be a vector space of dimension $n$ and let $G=\GL(V)$. 
Put $W=V^h\oplus (V^\ast)^h $, $R=SW$. Then $X=\Spec R=W^\ast\cong (V^\ast)^h\oplus V^h$. Let $y_{ij}$ be the functions
on $X$ obtained by pairing the $i$'th copy of $V^\ast$ and the $j$'th copy of $V$ in $X$. Then the first and second
fundamental theorems for the general linear group state: 
\begin{theorems} \label{ref-5.2.1-70} \cite{weyl} The morphism $S_{n,h}\r R^G:y_{ij}\mapsto y_{ij}$ is an isomorphism.
\end{theorems}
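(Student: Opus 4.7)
The statement is the classical first and second fundamental theorem for $\GL_n$ acting on copies of the defining representation and its dual. My plan is to split the proof into a surjectivity part (the first fundamental theorem) and an injectivity part (the second fundamental theorem), both of which are standard.

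First I would unpack the geometric picture. The space $X = (V^*)^h \oplus V^h$ carries the natural $G=\GL(V)$-action, and there is a $G$-equivariant morphism (with $G$ acting trivially on the target)
\[
\psi\colon X \to \mathrm{Mat}_{h\times h}(k), \qquad (v^*_1,\dots,v^*_h,v_1,\dots,v_h)\mapsto \bigl(\langle v^*_i, v_j\rangle\bigr)_{i,j},
\]
whose image is precisely the locus of matrices of rank at most $n=\dim V$ (since any such matrix factors through $V$). This identifies the pulled back coordinate $y_{ij}$ on $X$ with the contraction $v^*_i\otimes v_j$, so the map of the theorem factors as the composition of $S_{n,h}\hookrightarrow k[Y]\to R^G$ with the quotient map $k[Y]\twoheadrightarrow S_{n,h}=k[Y]/I_n$.

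For surjectivity I would invoke the first fundamental theorem for $\GL(V)$: the algebra of $\GL(V)$-invariants on $(V^*)^{\oplus h}\oplus V^{\oplus h}$ is generated by the mixed contractions $\langle v^*_i,v_j\rangle$. One way to prove this is via the Cauchy decomposition
\[
S^\bullet(V^{\oplus h}\oplus (V^*)^{\oplus h}) \;\cong\; \bigoplus_{\lambda,\mu} S^\lambda(k^h)\otimes S^\lambda V\otimes S^\mu(k^h)\otimes S^\mu V^*,
\]
from which one reads off that the $\GL(V)$-invariant subspace is controlled by pairings $S^\lambda V\otimes S^\lambda V^*\to k$, i.e.\ by products of contractions. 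Alternatively one can appeal to the Weyl classical-groups proof via polarization (reducing to multilinear invariants, then to a single copy, which is Schur--Weyl duality). This shows that the map $k[Y]\to R^G$, $y_{ij}\mapsto \langle v^*_i,v_j\rangle$, is surjective.

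For injectivity I would invoke the second fundamental theorem: the kernel of $k[Y]\to R^G$ is the determinantal ideal $I_n$. One direction is obvious—every $(n+1)\times (n+1)$ minor of $\psi$ vanishes since the image lies in rank $\le n$ matrices. The opposite inclusion is the nontrivial content, and can be established either by the classical straightening algorithm (standard monomials in the $y_{ij}$ modulo $I_n$ form a $k$-basis of both sides), or more intrinsically by noting that the surjection $X\twoheadrightarrow Y_{n,h}$ is dominant with connected geometric fibers of the expected dimension, so it induces an isomorphism on the level of coordinate rings of the integral normal target. The main obstacle is exactly the second fundamental theorem: surjectivity is a fairly clean representation-theoretic bookkeeping, but identifying the kernel precisely as $I_n$ requires either the combinatorics of standard tableaux or a geometric argument showing that $Y_{n,h}$ is reduced and irreducible of the correct dimension together with the fact that $k[Y]/I_n$ has no embedded components. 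Both of these routes are carried out in full in \cite{weyl}, which is why it is legitimate to cite that source here.
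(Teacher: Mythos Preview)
The paper does not prove this statement at all: it is stated with a citation to Weyl and then used as a black box, so there is no ``paper's own proof'' to compare against. Your outline is a correct sketch of the first and second fundamental theorems for $\GL(V)$, and the separation into surjectivity (generators) and injectivity (relations) is exactly the right structure.

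A couple of minor comments. First, your closing sentence asserts that both routes you describe (Cauchy decomposition / standard monomial theory, and the geometric argument via normality and fiber dimension) are ``carried out in full in \cite{weyl}''. That is not quite accurate historically: Weyl's original arguments go through the Capelli identity, the $\Omega$-process, and polarization; the standard monomial / straightening approach and the Cauchy formula in the Schur functor form you wrote are later developments (De Concini--Procesi, Doubilet--Rota--Stein, etc.). This does not affect correctness, but if you want to cite \cite{weyl} you should either follow Weyl's actual line or cite a modern source (e.g.\ De Concini--Procesi or Procesi's book) for the approaches you sketched. Second, in your geometric alternative for the second fundamental theorem, the phrase ``induces an isomorphism on the level of coordinate rings of the integral normal target'' is hiding the real work: you still need to know that $k[Y]/I_n$ is reduced (equivalently, that the scheme-theoretic image of $\psi$ is $\Spec k[Y]/I_n$ and not some thickening), which is precisely the content of the second fundamental theorem and does not follow from dominance and fiber connectedness alone. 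So that route is not really an alternative to the straightening argument; it presupposes it or something equivalent.
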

Hence to prove Theorem \ref{ref-1.4.1-12} it suffices to construct the
corresponding resolutions for $R^G$. We will do this using Theorem \ref{ref-1.6.4-30}  taking into account that the condition $h\ge n+1$
ensures that $W$ is generic (cfr.\ \S\ref{ref-5.1-69}). Choose a basis for~$V$ and let $T$ be the standard maximal torus $\{\diag(z_1,\ldots,z_n)\}$.
We let $L_i\in X(T)$ be given by $(z_1,\ldots,z_n)\mapsto z_i$.  Hence 
$
X(T)=\{\sum_i a_iL_i\mid a_i\in \ZZ\}
$.
For~$\lambda$ a partition let $S^\lambda$ be the corresponding Schur
functor. If $\chi=\sum_i \lambda_i L_i\in X(T)$ is the weight
corresponding to a partition $\lambda$ 
then $V(\chi)=S^\lambda V$. For $a,b$ let
$B_{a,b}$ be the set of partitions fitting in a box of size $a\times
b$.  We prove the following result.
\begin{propositions}
\label{ref-5.2.2-71}
Put
\begin{equation}
\label{ref-5.1-72}
M=\bigoplus_{\lambda\in B_{n,h-n}} M(S^\lambda V).
\end{equation}
Then $\End_{R^G}(M)$ is a NCCR for ${R^G}$.
\end{propositions}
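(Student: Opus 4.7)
We plan to apply Theorem~\ref{ref-1.6.4-30} with $\bar G = G = \GL(V)$, so that $A$ is trivial, $\bar\mu = 0$, and $i_{\bar\mu}(\bar G) = 1$; this will produce a genuine (untwisted) NCCR. The $T$-weights of $W = V^h \oplus (V^*)^h$ are $\pm L_j$ for $j = 1,\ldots,n$, each with multiplicity $h$, so $\bar\Sigma = [-h, h]^n$ in $L_j$-coordinates.

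The standing hypotheses are straightforward to verify. For quasi-symmetry, the only lines through the origin meeting a weight are the axes $\RR L_j$, on each of which the weights sum to $h L_j + h(-L_j) = 0$. Genericity under $h \ge n+1$ follows from the recipe of \S\ref{ref-5.1-69}: a pair consisting of a surjection $k^h \twoheadrightarrow V$ and an injection $V \hookrightarrow k^h$ is a stable point with trivial stabilizer, and the complement of the stable locus is cut out by rank-drop conditions of codimension at least two.

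For the combinatorial input we take $\varepsilon = L_1 + \cdots + L_n$, which is $\Wscr = S_n$-invariant and has nonzero component along every facet normal of $\bar\Sigma$; then $\bar\Sigma_\varepsilon = (-h, h]^n$ and $\bar\Sigma_{\pm\varepsilon}$ is the open interior $(-h, h)^n$. Using $\bar\rho = ((n-1)/2, (n-3)/2, \ldots, -(n-1)/2)$, a direct enumeration shows that the dominant integer points of $-\bar\rho + (1/2)\bar\Sigma_\varepsilon$ form a translate of $B_{n, h-n}$ by $c(1,\ldots,1)$ for an appropriate $c \in \ZZ$. Tensoring each summand $V(\chi)$ of $U$ by $(\det V)^{-c}$ cancels in the $G$-algebra $\End(U)$, so it does not change $\Lambda = M(\End(U))$; combining this with genericity of $W$ and Lemma~\ref{ref-4.1.3-38}, we obtain $\Lambda = \End_{R^G}(M)$ for $M$ as in~\eqref{ref-5.1-72}.

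The main obstacle is the vanishing condition~\eqref{ref-1.5-31}. Here the crucial feature of the $\GL$-case is that the $\pm$-pairs of weights of $W$ have equal multiplicity $h$, which forces the half-open zonotope $\Sigma$ to coincide with the open cube $(-h, h)^n$, and hence with the relative interior $\bar\Sigma_{\pm\varepsilon}$. This degeneracy collapses the relevant difference in~\eqref{ref-1.5-31} and reduces the condition to a short inspection; once it is checked, Theorem~\ref{ref-1.6.4-30} delivers the NCCR.
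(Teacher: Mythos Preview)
Your proposal is correct and follows essentially the same route as the paper's proof: both apply Theorem~\ref{ref-1.6.4-30} with $\bar G=G=\GL(V)$, verify quasi-symmetry and genericity, take $\varepsilon$ a nonzero multiple of $L_1+\cdots+L_n$, observe that $\bar\Sigma_{\pm\varepsilon}=\Sigma$ is the open cube so that condition~\eqref{ref-1.5-31} is vacuous, and then identify $\Lscr$ with a character-translate of $B_{n,h-n}$. The only cosmetic differences are the sign of $\varepsilon$ (you take it positive, the paper takes it negative) and the justification for why translation by a character leaves the NCCR unchanged (you use $\End(U\otimes L)\cong\End(U)$ directly, the paper invokes Corollary~\ref{ref-4.1.4-39}); neither affects the argument.
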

In the next section we will show that this NCCR is the same NCCR as in \cite{VdB100}.
\begin{proof}[Proof of Proposition \ref{ref-5.2.2-71}]
The weights of  $V$ are $(L_i)_i$.  A system
of positive roots is given by $(L_i-L_j)_{i>j}$. From this we compute
\begin{equation}
\label{ref-5.2-73}
\bar{\rho}=(n-1)/2 L_1+(n-3)/2 L_2+\cdots+(-n+1)/2 L_n.
\end{equation}
The dominant cone is given by
\[
X(T)_\RR^+=\{\sum_i a_iL_i\mid a_i\in \RR, a_1\ge \cdots\ge a_n\}.
\]
The weights of $W$ are $(\pm L_i)_i$, each weight occurring with multiplicity $h$. Hence $W$ is quasi-symmetric. Furthermore
\[
\Sigma=\{\sum_i a_iL_i\mid a_i\in ]-h,h[\}
\]
and so
\[
-\bar{\rho}+(1/2)\bar{\Sigma}=\{\sum_i a_iL_i\mid a_i\in [-h/2-(n+1)/2+i,h/2-(n+1)/2+i]\}.
\]
The Weyl group permutes the $L_i$. From this we see that if we choose the $\Wscr$-invariant $\varepsilon=\epsilon(L_1+\cdots+L_n)$ with $\epsilon<0$
we find
\begin{align*}
\bar{\Sigma}_{\varepsilon}&=\{\sum_i a_iL_i\mid a_i\in [-h,h[\},\\
\bar{\Sigma}_{\pm\varepsilon}&=\{\sum_i a_iL_i\mid a_i\in ]-h,h[\}=\Sigma.
\end{align*}
Applying Theorem \ref{ref-1.6.4-30} we conclude that $R^G$ has a NCCR given by
\begin{align*}
{\Lscr}=X(T)^+\cap (-\bar{\rho}+(1/2)\bar{\Sigma}_\varepsilon)&=\{\sum_i a_iL_i\mid a_i\in [-h/2-(n+1)/2+i,h/2-(n+1)/2+i[\cap \ZZ,a_1\ge \cdots \ge a_n\}\\
&=\{\sum_i a_iL_i\mid a_i\in \ZZ, h/2-(n-1)/2>  a_1\ge \cdots \ge a_n\ge -h/2+(n-1)/2\}.
\end{align*}
Since $W$ is generic it follows from Corollary \ref{ref-4.1.4-39} that we get the same NCCR by translating ${\Lscr}$ by a character of~$G$. Using as character a suitable power of the determinant
 we see that the following set of weights 
\[
{\Lscr}'=\{\sum_i a_iL_i\mid a_i\in \ZZ, h-n\ge a_1\ge \cdots \ge a_n\ge 0\}
\]
works equally well,
from which one obtains the statement of the proposition.
\end{proof}
\begin{remarks} The results in \cite{VdB100} are stated for non-necessarily square matrices. In the non-square 
case the $k$-algebra $S_{n,h}$ is not Gorenstein and in that case one obtains a NCR instead of a NCCR. One may still obtain this NCR from
the methods in this paper by applying the combinatorial algorithm given in Remark \ref{ref-11.3.2-114} below  directly.
\end{remarks}

\makeatletter
\def\@cite@ofmt{\bfseries\hbox}
\makeatother
\subsection{Comparison with 
\cite{VdB100}}
\label{ref-5.3-74}
\strut
\makeatletter
\def\@cite@ofmt{\hbox}
\makeatother
The NCCR constructed
in \cite{VdB100} is obtained from
 a tilting bundle on a Springer type crepant resolution of $\Spec S_{n,h}$. This is the main geometric 
method for constructing such resolutions. However it is also shown that there is an equivalent algebraic construction (see \cite[Prop.\ 3.5]{VdB100}) which we now describe.

As in \cite{VdB100} it will be convenient to introduce auxiliary vector spaces $F_1,{F_2}=k^h$ and to put
$W={F_1}^\ast\otimes V\oplus {F_2}\otimes V^\ast$. We may then identify $k[Y]$ with the coordinate ring $S$
of the vector space $\Hom_k({F_2},{F_1})$ and $\Spec S_{n,h}$ is the locus in $\Hom_k({F_2},{F_1})$ of the
matrices which have rank $\le n$. The tautological map of $S$-modules
\[
\phi:{F_2}\otimes_k S\r {F_1}\otimes_k S
\]
restricts to a map
\[
\phi:{F_2}\otimes_k S_{n,h}\r {F_1}\otimes_k S_{n,h},
\]
which has generically rank $n$.  
The NCCR constructed in \cite{VdB100} is of the form
$\End_{S_{n,h}}(N)$ where $N=\bigoplus_{\lambda\in B_{n,h-n}} N_{\lambda}$ with
\[
N_{\lambda}=\text{image}\, (S^\lambda {F_1}^\ast\otimes S_{n,h}
\xrightarrow{S^\lambda \phi^\ast} S^\lambda {F_2}^\ast\otimes S_{n,h}).
\]
\begin{lemmas} We have an isomorphism of $S_{n,h}$-algebras
\[
\End_{S_{n,h}}(N)\cong \End_{S_{n,h}}(M)
\]
where $M$ is as in \eqref{ref-5.1-72}.
\end{lemmas}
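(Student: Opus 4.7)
The plan is to identify each summand $N_\lambda$ of $N$ with the corresponding summand $M(S^\lambda V)$ of $M$, whence $N\cong M$ as $S_{n,h}$-modules and $\End_{S_{n,h}}(N)\cong \End_{S_{n,h}}(M)$. Modulo the symmetry of $W=V^h\oplus (V^*)^h$ swapping the two factors (combined with the outer automorphism $g\mapsto (g^T)^{-1}$ of $\GL(V)$, which sends $S^\lambda V$ to $S^\lambda V^*$), one may equivalently identify $N_\lambda$ with $M(S^\lambda V^*)$; I adopt this convention to match the conventions of \cite{VdB100}, noting that any two choices of such twists give the same endomorphism algebra.

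Since $X=\Spec R$ is canonically $\Hom(V,F_1)\oplus \Hom(F_2,V)$, the coordinate ring $R$ carries $G$-equivariant tautological $R$-linear morphisms
\[
\beta: F_2\otimes R\longrightarrow V\otimes R,\qquad \alpha: V\otimes R\longrightarrow F_1\otimes R,
\]
whose composition $\alpha\beta$ is $G$-invariant and restricts on $G$-invariants to $\phi$. Dualizing, applying $S^\lambda$, and taking $G$-invariants gives a factorization
\[
S^\lambda F_1^*\otimes S_{n,h}\xrightarrow{\,f_\lambda\,}M(S^\lambda V^*)\xrightarrow{\,g_\lambda\,}S^\lambda F_2^*\otimes S_{n,h},
\]
and by definition $N_\lambda=\im(g_\lambda\circ f_\lambda)$. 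The map $g_\lambda$ is injective: since $h\ge n$, $\beta$ is surjective on the dense open subset of $X$ where $b\in\Hom(F_2,V)$ has maximal rank, so $\beta^*$ is generically injective; applying $S^\lambda$ and using that $S^\lambda V^*\otimes R$ is torsion-free over the domain $R$ yields that $S^\lambda\beta^*$ is injective, and exactness of $(-)^G$ on rational $G$-modules gives injectivity of $g_\lambda$. Thus $N_\lambda\hookrightarrow M(S^\lambda V^*)$, and it remains to prove equality.

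On the open subset $U=Y_{n,h}\setminus Y_{n-1,h}\subset Y_{n,h}$, whose complement has codimension $\ge 2$ since $n<h$ (standard codimension formula for determinantal loci), the map $\phi|_U$ has constant rank $n$, so $\alpha$ identifies $V\otimes\mathcal O_U$ with the image of $\phi|_U$ inside $F_1\otimes\mathcal O_U$; consequently $f_\lambda|_U$ is surjective and $N_\lambda|_U=M(S^\lambda V^*)|_U$. Now $M(S^\lambda V^*)$ is reflexive by Lemma~\ref{ref-4.1.3-38}, using that $W$ is generic for $h\ge n+1$, and $N_\lambda$ is reflexive by its description in \cite{VdB100} as the pushforward under the Grassmannian-bundle resolution $\tilde Y_{n,h}\to Y_{n,h}$ of a Schur power of the tautological sub-bundle. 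Two reflexive submodules of a common torsion-free $S_{n,h}$-module that coincide at every height-one prime must agree, so $N_\lambda=M(S^\lambda V^*)$.

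The one non-trivial input is the reflexivity of $N_\lambda$: its definition as the image of a matrix makes torsion-freeness manifest but not reflexivity, and I rely on the geometric description from \cite{VdB100} to promote it. If one wished to avoid this dependence on \emph{loc. cit.}, the alternative would be to prove directly that $f_\lambda$ is surjective, i.e.\ to establish that $M(S^\lambda V^*)$ is generated over $S_{n,h}$ by $S^\lambda F_1^*$ for $\lambda\in B_{n,h-n}$; this is a (classical) generation statement for modules of covariants in the determinantal setting, and together with injectivity of $g_\lambda$ it immediately yields $N_\lambda=\im(g_\lambda)=M(S^\lambda V^*)$.
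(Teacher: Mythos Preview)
Your argument is essentially the paper's own proof. Both factor $\phi^*$ through $S^\lambda V^*\otimes R$, observe that the map into $S^\lambda F_2^*$ is injective while the map from $S^\lambda F_1^*$ is surjective outside codimension $\ge 2$, and then conclude $N_\lambda\cong M(S^\lambda V^*)$ by invoking reflexivity of $M(S^\lambda V^*)$ (via genericity, Lemma~\ref{ref-4.1.3-38}) and of $N_\lambda$ (via \cite{VdB100}). The paper packages this as Lemma~\ref{ref-5.3.2-75}, so $N\cong M':=\bigoplus_{\lambda\in B_{n,h-n}}M(S^\lambda V^*)$ and hence $\End(N)\cong\End(M')$.

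The one place you diverge is in the passage from $M'$ to $M=\bigoplus M(S^\lambda V)$. You appeal to the symmetry of $W$ swapping $V^h$ and $(V^*)^h$ together with the outer automorphism of $\GL(V)$. This does give an isomorphism $\End_{R^G}(M)\cong\End_{R^G}(M')$, but only after twisting the $R^G$-structure by the induced automorphism of $R^G$, which is the transpose $Y\mapsto Y^T$ on $S_{n,h}$; so strictly speaking you obtain a $k$-algebra isomorphism rather than an $S_{n,h}$-linear one. The paper instead uses the box-complement identity $S^\lambda V^*\cong S^{\lambda^!}V\otimes(\det V)^{n-h}$ with $\lambda^!=(h-n-\lambda_n,\ldots,h-n-\lambda_1)$: since $\lambda\mapsto\lambda^!$ is a bijection on $B_{n,h-n}$ and twisting by the character $(\det V)^{n-h}$ does not change the endomorphism algebra (Corollary~\ref{ref-4.1.4-39}), one gets $\End_{R^G}(M')\cong\End_{R^G}(M)$ as genuine $S_{n,h}$-algebras. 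This is a one-line fix to your write-up; the substantive part of your proof matches the paper.
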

\begin{proof} We identify $S_{n,h}$ with $R^G$ as in Theorem \ref{ref-5.2.1-70}.
By Lemma
\ref{ref-5.3.2-75} below we have $N_\lambda=M(S^\lambda V^\ast)$. Hence $\End_{R^G}(N)=\End_{R^G}(N')$ with $N'=\bigoplus_{\lambda\in B_{n,h-n}}M(S^\lambda V^\ast)$.
However we also have for $\lambda\in B_{n,h-n}$: $S^\lambda
V^\ast=S^{\lambda^!} V\otimes (\det V)^{-h+n}$ with
$\lambda^!=(h-n-\lambda_n,\cdots,h-n-\lambda_1)\in B_{n,h-n}$ from which we easily deduce that
$\End_{R^G}(M)=\End_{R^G}(N')$.
\end{proof}
We have used the following result
\begin{lemmas} 
\label{ref-5.3.2-75} Let $\lambda$ be a partition. There is an isomorphism of
$R^G$-modules $N_\lambda\cong M(S^\lambda V^\ast)$.
\end{lemmas}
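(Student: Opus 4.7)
The plan is to exhibit an $R^G$-linear injection $\psi \colon M(S^\lambda V^*) \hookrightarrow S^\lambda F_2^* \otimes R^G$ and identify its image with $N_\lambda$. First I will set up a factorization of $\phi$ through $V$: letting $\alpha \colon F_2 \otimes R \to V \otimes R$ and $\beta \colon V \otimes R \to F_1 \otimes R$ be the $R$-linear extensions of the tautological elements $b \in \Hom(F_2,V)$ and $a \in \Hom(V,F_1)$ sitting inside $W^* \subset R$, under the identification $R^G = S_{n,h}$ of Theorem~\ref{ref-5.2.1-70} the base change of $\phi$ to $R$ equals $\beta \circ \alpha$. Dualizing and applying $S^\lambda$ gives a $G$-equivariant factorization $S^\lambda \phi^* = S^\lambda \alpha^* \circ S^\lambda \beta^*$, and taking $G$-invariants (exact for reductive $G$, and trivial on $S^\lambda F_i^*$) produces the factorization
\[
S^\lambda F_1^* \otimes R^G \xrightarrow{S^\lambda \beta^*} M(S^\lambda V^*) \xrightarrow{\psi} S^\lambda F_2^* \otimes R^G,
\]
with composite $S^\lambda \phi^*$; in particular $N_\lambda \subseteq \im \psi$.

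Next I will check that $\psi$ is injective: on the nonempty open $\{b \text{ surjective}\} \subseteq W^*$ (nonempty since $h \ge n$), $\alpha^*$ is injective, so $S^\lambda \alpha^*$ is generically injective on the torsion-free $R$-module $S^\lambda V^* \otimes R$, hence injective, and this survives taking $G$-invariants. Then I will compare $\im \psi$ and $N_\lambda$ on the open $Y^\circ \subseteq Y_{n,h}$ of rank-$n$ matrices, whose complement has codimension $\ge 2$ since $h > n$. Over $Y^\circ$, the preimage $U = \{a \text{ injective},\ b \text{ surjective}\} \subseteq W^*$ is a principal $G$-bundle, so $V \otimes \mathcal{O}_U$ descends to a rank-$n$ vector bundle $\mathcal{V}$ on $Y^\circ$ and hence $M(S^\lambda V^*)|_{Y^\circ} = S^\lambda \mathcal{V}^*$. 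On the other hand, over $Y^\circ$ the map $\phi^*$ factors through its rank-$n$ image subbundle $\mathcal{I} \subseteq F_2^* \otimes \mathcal{O}_{Y^\circ}$, giving $N_\lambda|_{Y^\circ} = S^\lambda \mathcal{I}$, and $\alpha^*$ induces an isomorphism $\mathcal{V}^* \cong \mathcal{I}$, identifying $\psi|_{Y^\circ}$ with the inclusion $S^\lambda \mathcal{V}^* \hookrightarrow S^\lambda F_2^* \otimes \mathcal{O}_{Y^\circ}$ and showing $\psi$ is an isomorphism onto $N_\lambda$ over $Y^\circ$.

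The final step is to propagate the equality from $Y^\circ$ to all of $\Spec R^G$. That $M(S^\lambda V^*)$ is reflexive follows from Lemma~\ref{ref-4.1.3-38}, using that $G$ acts generically on $W^*$ for $h \ge n+1$ (cf.\ \S\ref{ref-5.1-69}) and that $S^\lambda V^* \otimes R$ is free; consequently $\im \psi \cong M(S^\lambda V^*)$ is a reflexive $R^G$-submodule of $S^\lambda F_2^* \otimes R^G$. The main obstacle I expect is to establish the reflexivity of $N_\lambda$ itself, so that the agreement on the codimension-$\ge 2$ open $Y^\circ$ forces $N_\lambda = \im \psi$ globally; this I will handle via the Grassmannian-bundle (Springer-type) resolution $\pi \colon Z \to Y_{n,h}$ with $Z$ the total space of $\Hom(F_2, \mathcal{S})$ over $\Gr(n,F_1)$ and $\mathcal{S}$ the tautological subbundle: a standard Kempf-vanishing argument identifies $N_\lambda = \pi_*(S^\lambda \mathcal{S}^*)$ with vanishing higher direct images, so $N_\lambda$ is Cohen-Macaulay over the Gorenstein ring $R^G$ and therefore reflexive.
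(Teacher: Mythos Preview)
Your proof is correct and follows essentially the same approach as the paper: factor $\phi$ through $V$, apply $S^\lambda(-)^*$ and take $G$-invariants to obtain $N_\lambda \subseteq M(S^\lambda V^*)$ as submodules of $S^\lambda F_2^* \otimes R^G$, then conclude equality from reflexivity of both sides and agreement off a codimension~$\ge 2$ locus. The only minor difference is that the paper cites \cite{VdB100} for the reflexivity of $N_\lambda$ (Cohen--Macaulayness is asserted there only for $\lambda \in B_{n,h}$, with reflexivity in general following by a similar argument), whereas you sketch the Springer-resolution/Kempf-vanishing argument directly; your Cohen--Macaulay claim may be slightly stronger than what holds for arbitrary $\lambda$, but only reflexivity is needed and your route to it is the same.
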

\begin{proof}
With our current conventions $R$ is the coordinate ring of $X=W^\ast=\Hom({F_2},V)\oplus \Hom(V,{F_1})$. 
The tautological map $\phi:{F_2}\otimes S_{n,h}\rightarrow {F_1}\otimes S_{n,h}$ is obtained by taking invariants
of the composition of the following tautological maps
\[
{F_2}\otimes R\xrightarrow{\phi_1} V\otimes R\xrightarrow{\phi_2} {F_1}\otimes R
\]
and hence the dual map $\phi^\ast$ decomposes as 
\[
{F_1}^\ast\otimes R\xrightarrow{\phi_2^\ast} V^\ast\otimes R\xrightarrow{\phi_1^\ast} {F_2}^\ast\otimes R.
\]
Here $\phi_1^\ast$ is injective and the support of the cokernel of $\phi_2^\ast$
 has codimension $\ge 2$.  
Let $\lambda\in B_{n,h-n}$. Applying Schur functors and taking invariants  we get that
$S^\lambda \phi^\ast$ decomposes as 
\[
S^\lambda {F_1}^\ast \otimes R^G\xrightarrow{S^\lambda \phi_2^\ast} M(S^\lambda V^\ast)
\xrightarrow{S^\lambda \phi_1^\ast} S^\lambda {F_2}^\ast\otimes R^G.
\]
Thus
\begin{equation}
\label{ref-5.3-76}
 M(S^\lambda V^\ast)\supset \im S^\lambda\phi^\ast=N_\lambda.
\end{equation}
It is shown in \cite[Cor.\ 3.6]{VdB100} that $N_{\lambda}$ is
Cohen-Macaulay for $\lambda\in B_{n,h}$.  In fact a similar argument
shows that $N_\lambda$ is reflexive in general.  

The morphism \eqref{ref-5.3-76} is an inclusion of reflexive $R^G$ modules which differ on a subset of codimension $\ge 2$. Hence
it is in fact an equality. This finishes the proof.
\end{proof}

\section{Pfaffian varieties}
\label{ref-6-77}
\subsection{Preliminaries}
\label{ref-6.1-78}
In this section we will prove Theorem \ref{ref-1.4.3-14}. 
Let $Y=(y_{ij})$ be a generic skew-symmetric $h\times h$ matrix  and put
$S_{2n,h}^-:=k[Y_{2n,h}^-]=k[Y]/(I^-_{2n})$ where $I^-_{2n}$ is the ideal generated by principal Pfaffians  of size $2n+2$. 

Let $V$ be a vector space of dimension $2n$ equipped with a non-degenerate skew-symmetric bilinear form $\langle-,-\rangle$ and let $G=\Sp_{2n}(k)$
be the corresponding symplectic group. Put $W=V^h$, $R=SW$. Then $X=\Spec R=W^\ast\cong V^h$. Let $y_{ij}$ be the functions
on $X$ obtained by pairing the $i$'th and $j$'th copy of $V$ in $X$ using $\langle-,-\rangle$. Then the first and second
fundamental theorems for the symplectic group state:
\begin{theorems} \cite[Thm 6.7]{DeConciniProcesi} The morphism $S^-_{2n,h}\r R^G:y_{ij}\mapsto y_{ij}$ is an isomorphism.
\end{theorems}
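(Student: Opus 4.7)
The plan is to establish the two classical statements (the first and second fundamental theorems for the symplectic group) underlying the claimed isomorphism $\phi\colon S^-_{2n,h}\to R^G$ sending $y_{ij}$ to the bracket function $(v_1,\ldots,v_h)\mapsto \langle v_i,v_j\rangle$ on $X = V^h$.

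First I would verify that $\phi$ is well-defined. The functions $\langle v_i,v_j\rangle$ are $\Sp(V)$-invariant by the very definition of the symplectic group, and because $(\langle v_i,v_j\rangle)_{i,j}$ is the Gram matrix of $h$ vectors in a $2n$-dimensional space, it has rank $\leq 2n$, so every $(2n+2)\times(2n+2)$ principal Pfaffian vanishes on the image. Therefore $\phi$ factors through $S^-_{2n,h}=k[Y]/I^-_{2n}$.

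For surjectivity (the FFT) I would invoke Weyl's polarization technique: $\Sp(V)$-invariants on $V^h$ are obtained by polarization of invariants on $V^{2n}$, and on the open subset of $V^{2n}$ consisting of ordered symplectic bases $\Sp(V)$ acts simply transitively, so any invariant polynomial is determined by (hence is a polynomial in) the values $\langle v_i,v_j\rangle$. Equivalently, one can appeal to Howe's $(\Sp_{2n},\mathfrak{so}_{2h})$-duality to decompose $R=S(V^{\ast h})$ as an $\Sp_{2n}\times\GL_h$-module and observe that the $\Sp_{2n}$-invariant part is generated as an algebra by the quadratic invariants $y_{ij}$.

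The main obstacle is injectivity (the SFT): showing that $\ker\phi=I^-_{2n}$. Here I would argue that $Y^-_{2n,h}=\Spec(k[Y]/I^-_{2n})$ is integral and normal of the expected dimension, so that the natural surjection $V^h\quot \Sp(V)\to Y^-_{2n,h}$ induced by $\phi$ is a morphism of normal varieties that is bijective on closed points, because any two $h$-tuples with the same Gram matrix lie in the same $\Sp(V)$-orbit by Witt's extension theorem. Zariski's Main Theorem then forces this morphism, and hence $\phi$, to be an isomorphism. The integrality and normality of $Y^-_{2n,h}$ are the hardest input; the classical approach of De Concini--Procesi is to produce an explicit basis of $S^-_{2n,h}$ consisting of standard monomials in the subpfaffians (a Hodge algebra / straightening-law structure), which simultaneously yields the Cohen--Macaulay property and the linear independence of the images of these standard monomials in $R^G$.
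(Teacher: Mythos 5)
The paper itself offers no proof here: it simply cites \cite[Thm 6.7]{DeConciniProcesi}, where the first and second fundamental theorems for $\Sp_{2n}$ are established via standard monomial theory. Your sketch reconstructs the classical argument, and its overall skeleton (well-definedness, FFT by polarization or Howe duality, SFT by integrality/normality of the Pfaffian variety plus a geometric comparison) is the right one, so this is essentially the same route as the cited source.

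Two steps are, however, glossed over. For the FFT, the parenthetical ``hence is a polynomial in'' the brackets does not follow from the fact that an invariant is \emph{determined} by the brackets on the open locus of symplectic frames: a priori this only yields a rational function in the $y_{ij}$, and upgrading to a polynomial is exactly the content of Weyl's polarization/Capelli argument (or, as you note, of Howe duality), so the deduction should be attributed to that machinery rather than presented as immediate. For the SFT, the assertion that ``any two $h$-tuples with the same Gram matrix lie in the same $\Sp(V)$-orbit'' is false as stated (compare $(e_1,0)$ and $(e_1,e_1)$ in a symplectic plane); Witt extension only gives this once the assignment $v_i\mapsto w_i$ is a well-defined linear isometry of spans, which fails for degenerate tuples. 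Fortunately the GIT quotient $V^h\quot\Sp(V)$ only parametrizes \emph{closed} orbits, where the statement is correct, and that is all your argument needs -- but the claim should be restricted accordingly. Finally, once the FFT gives surjectivity of $\phi$ and one knows $S^-_{2n,h}$ is a domain, injectivity follows by a simple dimension count (a surjection from a domain onto a ring of the same Krull dimension has zero kernel), which is lighter than invoking Zariski's Main Theorem; the real work, as you correctly identify, is in establishing the integrality of $k[Y]/I^-_{2n}$, for which standard monomials (or, alternatively, a normality argument via the Kempf resolution) are the standard input.
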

Hence to prove Theorem \ref{ref-1.4.3-14} it suffices to construct the
corresponding resolutions for $R^G$. We will do this using the results in \S\ref{ref-1.6-26} taking into account that the condition $h\ge 2n+1$
ensures that $W$ is generic (cfr.\ \S\ref{ref-5.1-69}).
Let $v_1,\ldots,v_{2n}$ be a basis for~$V$ such that $\langle-,-\rangle$ is given by
 $\langle v_i,v_{i+n}\rangle=1$, $\langle v_i,v_j\rangle=0$
for $j\neq i\pm n$.  $G$ is a simply connected semi-simple algebraic group which contains a standard
maximal torus given by
$
T=\{\diag(z_1,\ldots,z_n,z_1^{-1},\ldots,z_n^{-1})\}
$.
We let $L_i\in X(T)$ be given by $(z_1,\ldots,z_n)\mapsto z_i$.  Hence 
$
X(T)=\{\sum_i a_iL_i\mid a_i\in \ZZ\}
$.
If~$\lambda$ is partition of length $n$ with corresponding weight $\chi=\sum_i\lambda_i L_i$ write $S^{\langle \lambda\rangle}V$ for $V(\chi)$
(such $\chi$ is dominant, see below).
We prove the following result.
\begin{propositions}
Put
\begin{equation}
\label{ref-6.1-79}
M=\bigoplus_{\lambda\in B_{n,\lfloor h/2\rfloor-n}} M(S^{\langle \lambda\rangle} V).
\end{equation}
Then $\End_{S^-_{2n,h}}(M)$ is an NCR for $S^-_{2n,h}$. It is a NCCR if $h$ is odd.
\end{propositions}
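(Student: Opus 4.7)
The plan is to parallel Proposition~\ref{ref-5.2.2-71} in the determinantal setting, with one key difference: since $\Sp_{2n}$ is semisimple, the only $\Wscr$-invariant element of $X(T)_\RR$ is $\varepsilon=0$, and Theorem~\ref{ref-1.6.4-30} cannot be applied directly. Instead I would obtain finite global dimension and the NCR conclusion from Theorem~\ref{ref-1.6.3-29}, and in the odd case upgrade to a NCCR by verifying the Cohen--Macaulayness of $\Lambda=\End_{R^G}(M)$ directly via Proposition~\ref{ref-4.4.4-61}.

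The set-up is standard. The positive roots of $\Sp_{2n}$ are $\{L_i\pm L_j\}_{i<j}\cup\{2L_i\}_i$, so
\[
\bar\rho=nL_1+(n-1)L_2+\cdots+L_n,
\]
the dominant cone is $\{\sum a_iL_i:a_1\ge\cdots\ge a_n\ge 0\}$, and $\Wscr$ acts by signed permutations of the $L_i$. The $T$-weights of $W=V^h$ are $\pm L_1,\ldots,\pm L_n$, each with multiplicity $h$, so $W$ is quasi-symmetric and
\[
\bar\Sigma=\bigl\{\textstyle\sum_ia_iL_i:a_i\in[-h,h]\bigr\},\qquad
\Sigma=\bigl\{\textstyle\sum_ia_iL_i:a_i\in\,]-h,h[\,\bigr\}.
\]
Taking $\bar G=G$, $\bar\mu=\bar 0$, and $\varepsilon=0$ in Theorem~\ref{ref-1.6.3-29} gives $\Lscr=X(T)^+\cap(-\bar\rho+\tfrac12\bar\Sigma)$. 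On a dominant integer weight $\chi=\sum a_iL_i$ the constraint from $-\bar\rho+\tfrac12\bar\Sigma$ is $a_i\le h/2-(n-i+1)$, and combined with dominance $a_1\ge\cdots\ge a_n\ge 0$ and integrality the lower bounds are vacuous while all upper bounds collapse to the single binding inequality $a_1\le\lfloor h/2\rfloor-n$. Hence $\Lscr$ is in bijection with the partitions in $B_{n,\lfloor h/2\rfloor-n}$, so $U=\bigoplus_{\chi\in\Lscr}V(\chi)$ coincides with the sum in~\eqref{ref-6.1-79}. Since $h\ge 2n+1$ ensures that $W$ is generic (cf.~\S\ref{ref-5.1-69}), the final clause of Theorem~\ref{ref-1.6.3-29} yields that $\Lambda$ is an NCR for $R^G$.

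For the NCCR statement when $h$ is odd, it remains to show that $\Lambda$ is Cohen--Macaulay over $R^G$. Since every irreducible $\Sp_{2n}$-representation is self-dual, Lemma~\ref{ref-4.1.3-38} identifies
\[
\Lambda=M(\End U)=\bigoplus_{\chi_1,\chi_2\in\Lscr}M\bigl(V(\chi_1)\otimes V(\chi_2)\bigr),
\]
and by Proposition~\ref{ref-4.4.4-61} each summand is Cohen--Macaulay as soon as $\chi_1+\chi_2+2\bar\rho\in\Sigma$. Writing $\chi_j=\sum_k\lambda_{j,k}L_k$, the $L_k$-coefficient of $\chi_1+\chi_2+2\bar\rho$ equals $\lambda_{1,k}+\lambda_{2,k}+2(n-k+1)\in[2,\,2\lfloor h/2\rfloor]$. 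When $h$ is odd the upper bound equals $h-1<h$, so $\chi_1+\chi_2+2\bar\rho\in\Sigma$, $\Lambda$ is Cohen--Macaulay, and the NCR becomes a NCCR. The sharp parity inequality $2\lfloor h/2\rfloor<h$ is the only real obstacle --- when $h$ is even one has $2\lfloor h/2\rfloor=h$, the hypothesis of Proposition~\ref{ref-4.4.4-61} can fail at the $L_1$-coordinate for worst-case pairs in $\Lscr^2$, and only the NCR conclusion survives.
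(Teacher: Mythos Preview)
Your argument is correct, but your framing of the NCCR step rests on a small misconception. You write that Theorem~\ref{ref-1.6.4-30} ``cannot be applied directly'' because the only $\Wscr$-invariant $\varepsilon$ is $0$; in fact the paper applies Theorem~\ref{ref-1.6.4-30} precisely with $\varepsilon=0$. The hypothesis~\eqref{ref-1.5-31} does not require $\varepsilon$ to be generic --- with $\varepsilon=0$ one has $\bar\Sigma_{\pm\varepsilon}=\bar\Sigma$, and the condition becomes $X(T)\cap(-\bar\rho+\tfrac12\partial\bar\Sigma)=\emptyset$, i.e.\ that no integer weight lies on the boundary of $-\bar\rho+\tfrac12\bar\Sigma$. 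Since the boundary faces sit at coordinates $\pm h/2-(n-i+1)$, this holds exactly when $h$ is odd. The paper simply checks this and invokes Theorem~\ref{ref-1.6.4-30}.

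Your route --- verifying Cohen--Macaulayness of $\Lambda$ by hand via Proposition~\ref{ref-4.4.4-61}, exploiting that $-w_0=\id$ for type $C_n$ so that $V(\chi)^\ast\cong V(\chi)$ --- is essentially the proof of Theorem~\ref{ref-1.6.4-30} (see \S\ref{ref-12.3-128}) unpacked for this particular group. Both arguments reduce to the same inequality $\chi_1+\chi_2+2\bar\rho\in\Sigma$, and your coordinate check is the concrete form of the boundary condition the paper verifies abstractly. So the approaches are equivalent; the paper's is shorter because it has already packaged this reasoning into Theorem~\ref{ref-1.6.4-30}, while yours makes the self-duality of symplectic representations explicit.
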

\begin{proof}
We use some fragments of the representation theory of the symplectic group, following \cite[Ch.\ 16]{FH}.
The weights of  $V$ are $(\pm L_i)_i$. A system
of positive roots is given by $(L_i\pm L_j)_{i>j}$, $(2L_i)_i$. From this we compute
\[
\bar{\rho}=nL_1+(n-1)L_2+\cdots+L_n.
\]
The dominant cone is given by
\[
X(T)_\RR^+=\{\sum_i a_iL_i\mid a_i\in \RR, a_1\ge \cdots\ge a_n\ge 0\}.
\]
The weights of $W$ are $(\pm L_i)_i$, each weight occurring with multiplicity $h$. Hence $W$ is quasi-symmetric. Furthermore
\begin{equation}
\label{ref-6.2-80}
\Sigma=\{\sum_i a_iL_i\mid a_i\in ]-h,h[\}
\end{equation}
and so
\[
-\bar{\rho}+(1/2)\bar{\Sigma}=\{\sum_i a_iL_i\mid a_i\in [-h/2-n-1+i,h/2-n-1+i]\}.
\]
The only way we can apply Theorem \ref{ref-1.6.3-29}  is with $A=0$ ($G$ is simply connected) and $\varepsilon=0$ ($G$ is semi-simple). Thus $\bar{\Sigma}_{\epsilon}=\bar{\Sigma}$. 
We conclude that 
$R^G$ has a NCR given by
\begin{align}
{\Lscr}=X(T)^+\cap (-\bar{\rho}+(1/2)\bar{\Sigma})&= \{\sum_i a_iL_i\mid a_i\in [-h/2-n-1+i,h/2-n-1+i]\cap\ZZ, a_1\ge \cdots \ge a_n\ge 0\}\\
&=\{\sum_i a_iL_i\mid a_i\in \ZZ,  h/2-n\ge a_1\ge \cdots \ge a_n\ge 0\}.\label{ref-6.4-81}
\end{align}
If $h$ is odd
we have 
\[
X(T)\cap (-\bar{\rho}+(1/2)(\bar{\Sigma}-\Sigma))=\emptyset.
\]
In this case Theorem \ref{ref-1.6.4-30} implies that ${\Lscr}$ as in \eqref{ref-6.4-81} yields a NCCR provided $h$ is odd. This finishes the proof of
Theorem \ref{ref-1.4.3-14}.
\end{proof}
\begin{remarks} 
\label{ref-6.1.3-82}
In the same way as in \S\ref{ref-5.3-74} one may check that the NCR for  $S^{-}_{2n,h}$ constructed by Weyman and Zhao in \cite{WeymanZhao} is given by the module of covariants
\[
\bigoplus_{\lambda\in B_{2n,h-2n}}M(S^\lambda V).
\]
This resolution is not a NCCR if $h$ is odd \cite[Prop.\
7.4]{WeymanZhao} and also in the even case it is larger than ours. Indeed
$B_{2h,h-2n}$ contains the partition $(h-2n)$ and the restriction of
$S^{h-2n}V$ to the symplectic group is $S^{\langle h-2n\rangle} V$
(e.g. by \cite[(17.10)]{FH}).  However it is clear that
$M(S^{\langle h-2n\rangle}V)$ will not be a summand of
\eqref{ref-6.1-79}.
\end{remarks}
\begin{remarks} \label{ref-6.1.4-84} 
The notion of a NC(C)R   may be extended in an obvious way to schemes (replacing modules by coherent sheaves and $\Hom$ by $\uHom$).
The $k$-algebra~$R$ carries  an $\NN$-grading by giving the elements of $W$ degree one. With this convention $S^-_{2n,h}$ is generated in
degree two (and hence lives in even degree). Since gradings are equivalent to $G_m$-actions we obtain an equivalence of symmetric monoidal categories
\[
\coh(\Proj S^-_{2n,h})\cong \coh(G_m,\Spec S^-_{2n,h}-\{0\})_{(2)}
\]
where the subscript $(2)$ indicates that   $\ker(G_m\xrightarrow{(-)^2} G_m)$-act trivially.

Since the notion of a NC(C)R is 
compatible with restriction we obtain from Theorem \ref{ref-1.4.3-14} a NC(C)R for $\Spec S^-_{2n,h}-\{0\}$. This NC(C)R can be graded
in a naive way but then it does not live on in even degree, so it does not descend to
a NC(C)R for $\Proj  S^-_{2n,h}$. We may fix this by defining the graded NCCR to be obtained from
\[
M=\bigoplus_{\lambda\in B_{n,\lfloor h/2\rfloor-n}} M(S^{\langle \lambda\rangle} V)(|\lambda|)
\]
where the functor $(n)$ shifts a graded object $n$ places to the left. Looking at the action of $Z(\Sp_{2n}(k))\cong \ZZ/2\ZZ$ we see
that with this choice $\End_{R^G}(M)$ only lives in even degree and hence it defines a NC(C)R for $\Proj S^-_{2n,h}$.

In \cite[Lemma 8.6]{Kuznetsov} a much smaller NC(C)R for $\Proj  S^-_{4,h}$ or equivalently for $\Spec S^-_{4,h}-\{0\}$ is constructed. 
With a similar method as \S\ref{ref-5.3-74} one may check that 
this resolution yields a NCR for  $\Spec S^-_{4,h}-\{0\}$ given by
a module of covariants (dropping shifts and hence the grading information)
\[
N=M(k)\oplus M(V)\oplus\cdots \oplus M(S^{\lfloor h/2\rfloor-2} V)
\]
(where in this case $n=2$ and hence $\dim V=4$). 
Note that $S^tV$ is an irreducible
representation for $\Sp_{2n}(k)$ (e.g. by \cite[(17.10)]{FH}) with highest weight $tL_1$ whereas ${\Lscr}$ in \eqref{ref-6.4-81} contains $L_1+L_2$. 
Thus the representations occurring in the construction of $N$ are a strict subset of those given by 
\eqref{ref-6.4-81}. 
If $h$ is odd then this immediately implies $N$ cannot yield a NCCR for $S^-_{4,h}$ by Proposition \ref{ref-3.5-35}. 

\medskip

It is possible to confirm using our methods that $N$ does indeed give a NCR for $\Spec S^-_{4,h}-\{0\}$ and a NCCR if $h$ is odd.
By  Lemma \ref{ref-4.2.1-43} it is sufficient to look at the slice representations. There is only
one non-trivial case to consider, which is when $x\in X-\{0\}$ has closed orbit and non-trivial stabilizer. In that case there must be 
a one-parameter subgroup $\lambda$ of $G$ which fixes $x$. 
Let $(x_i)_{i=1}^h\in V$ be the components of $x$ in the decomposition $X=V^h$. Computing $X^\lambda$ and changing $\lambda$ in its $G$-orbit (in particular
making it a one-parameter subgroup of $T$) we see that
we may assume $(x_i)_i\in kv_2+kv_4$. Moreover the $(x_i)_i$ must span $ kv_2+kv_4$ since otherwise $0\in \overline{Gx}$. Let $\mathfrak{g}=\Lie(G)$. 
According to \cite[\S16.1]{FH} we have 
\[
\mathfrak{g}=
\begin{pmatrix}
A&B\\
C&-A^t
\end{pmatrix}
\]
where $B,C$ are symmetric $2\times 2$-matrices and $A$ is an arbitrary $2\times 2$-matrix.
By \eqref{ref-4.2-45}  we find that $\mathfrak{g}_x$ is given by the matrices
\[
\begin{pmatrix}
a&0&b&0\\
0&0&0&0\\
c&0&-a&0\\
0&0&0&0
\end{pmatrix}.
\]
Hence $\mathfrak{g}_x=\mathfrak{sl}_2$. Let $V'$ be the standard representation of $\mathfrak{sl}_2$. Direct
inspection shows $V=V'\oplus k^{\oplus 2}$ and $\mathfrak{g}/\mathfrak{g}_x=(V')^{\oplus 2}\oplus k^{\oplus 3}$ as $\Sl_2$-representations.  Hence according to \eqref{ref-4.3-46}  we find that the slice representation at $x$ is given by
\[
\mathsf N_x=(V')^{\oplus h-2}\oplus k^{\oplus 2h-3}.
\]
We conclude that $\widehat{N}_{\bar{x}}$ is a sum of summands of $\widehat{N}'_{\bar{0}}$ where $N'$ is the $k[\mathsf{N}_x]^{\Sl_2}$-module of covariants given by
\[
N'=M(k)\oplus M(V')\oplus \cdots \oplus M(S^{\lfloor h/2\rfloor-2}V')
\]
with each summand appearing at least once. It now follows from Theorem \ref{ref-1.4.7-16} that $N'$ does indeed yield a NCR and a NCCR when $h$ is odd.
\end{remarks}

\section{Determinantal varieties: symmetric matrices}
\label{ref-7-85}
\subsection{Preliminaries}
In the next few sections we prove Theorem \ref{ref-1.4.5-15}. Let $Y=(y_{ij})$ be a generic symmetric $h\times h$ matrix and put 
$S_{t,h}^+:=k[Y_{t,h}^+]=k[Y]/(I^+_{t})$ where $I^+_{t}$ is the ideal generated by minors of size $t+1$.

Let $V$ be a vector space of dimension $t$ equipped with a non-degenerate symmetric  bilinear form $(-,-)$ and let $H=O_t(k)$
we the corresponding orthogonal group. Put $W=V^h$, $R=SW$. Then $X=\Spec R=W^\ast\cong V^h$. Let $y_{ij}$ be the functions
on $X$ obtained by pairing the $i$'th and $j$'th copy of $V$ in $X$ using $(-,-)$. Then the first and second
fundamental theorems for the orthogonal group state:
\begin{theorems} \cite{weyl} The morphism $S^+_{t,h}\r R^H:y_{ij}\mapsto y_{ij}$ is an isomorphism.
\end{theorems}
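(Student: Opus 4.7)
The plan is to handle surjectivity and injectivity separately, corresponding to the classical split of Weyl's result into a First and Second Fundamental Theorem for $O_t$. First observe that the map is well-defined: the Gram matrix $((v_i,v_j))_{i,j}$ has rank at most $\dim V = t$, so all of its $(t{+}1)\times(t{+}1)$ minors vanish, killing $I^+_t$.

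For surjectivity (FFT), I would use polarization to reduce to classifying multilinear $H$-invariants in the $h$ vector arguments. The non-degenerate form $(-,-)$ provides an $H$-equivariant isomorphism $V\cong V^\ast$, so any multilinear $O_t$-invariant on $V^h$ can be rewritten as a multilinear $\GL_t$-invariant on a suitable number of vectors and covectors; the classical FFT for $\GL_t$ then asserts that such invariants are spanned by iterated contractions $\langle v_i,\phi_j\rangle$, which on the symmetric side become the pairings $(v_i,v_j)=y_{ij}$. A modern packaging uses the $\Hscr(t)$-decomposition of $R$ into $H$-isotypical pieces via the Cauchy formula and then computes the invariant part explicitly; alternatively one can invoke Weyl's unitarian trick, reducing to compact $O_t(\RR)$ where Peter--Weyl theory applies directly.

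For injectivity (SFT), I would factor $\phi\colon k[Y]\to R^H$ through $S^+_{t,h}=k[Y]/I^+_t$. The inclusion $I^+_t\subseteq\ker\phi$ is the well-definedness noted above. For the reverse inclusion, consider the Gram morphism $\gamma\colon V^h\to\Spec k[Y]$ sending $(v_i)_i$ to $((v_i,v_j))_{ij}$; its image is set-theoretically $Y^+_{t,h}$ because over an algebraically closed field of characteristic zero every symmetric $h\times h$ matrix of rank $\le t$ admits a Gram factorization (diagonalize and extract square roots). Since $V^h$ is irreducible, so is its image, and $R^H=\Gamma(V^h\quot H,\Oscr)\cong\Gamma(\overline{\gamma(V^h)},\Oscr)$ after using that $\gamma$ is $H$-invariant with generically closed $H$-orbits as fibers. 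The induced map $S^+_{t,h}\to R^H$ is then a surjection between normal domains, and the two have the same Krull dimension (both equal to $h t-\binom{t}{2}$), forcing it to be an isomorphism by comparing function fields.

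The main obstacle is the SFT direction: to conclude rigorously one needs that $I^+_t$ itself (not merely its radical) is the full kernel, equivalently that $Y^+_{t,h}$ is reduced and irreducible of the expected dimension. This is classical but nontrivial, established by De Concini--Procesi via standard monomial theory; a modern proof can instead be given by exhibiting a resolution of singularities $\{(F,M)\in\operatorname{Gr}(t,V')\times \Spec k[Y]\mid M(F^\perp)=0\}\to Y^+_{t,h}$ and computing cohomology via Kempf vanishing, which yields both reducedness and the vanishing of higher direct images.
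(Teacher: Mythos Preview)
The paper does not prove this statement: it is quoted as a classical result with a bare citation to Weyl's book, so there is no argument in the paper to compare against. Your proposal is an attempt to sketch the classical proof, and the overall architecture (well-definedness, FFT for surjectivity, SFT for injectivity) is the right one.

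Your SFT direction is a reasonable outline and you correctly isolate the real content, namely that $I^+_t$ is prime (equivalently $Y^+_{t,h}$ is reduced and irreducible) of the expected dimension; once that is known, a surjection of integral domains of equal Krull dimension has zero kernel, and you are done. Mentioning normality is harmless but unnecessary for this step. The dimension count $ht-\binom{t}{2}$ is correct.

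The FFT direction, however, contains a genuine gap. The identification $V\cong V^\ast$ given by the form is $O_t$-equivariant but not $\GL_t$-equivariant, so it does \emph{not} convert an $O_t$-invariant into a $\GL_t$-invariant. Since $O_t\subset\GL_t$, the inclusion of invariant rings goes the other way: every $\GL_t$-invariant is an $O_t$-invariant, not conversely. Thus you cannot deduce the $O_t$ FFT from the $\GL_t$ FFT by the substitution you describe. The Cauchy-formula alternative you mention is also circular as stated, because computing $(S^\lambda V)^{O_t}$ for all $\lambda$ is essentially the FFT itself. Valid routes are Weyl's original polarization and Capelli-identity argument, Schur--Weyl duality via the Brauer algebra (which identifies $\End_{O_t}(V^{\otimes m})$ with the span of Brauer diagrams, so that multilinear invariants are spanned by products of pairings), or the characteristic-free treatment of De Concini--Procesi \cite{DeConciniProcesi}, which the paper already cites for the symplectic analogue and which handles both FFT and SFT at once.
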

Hence to prove Theorem \ref{ref-1.4.3-14} it suffices to construct the
corresponding resolutions for $R^H$. 
We will do this using Theorem \ref{ref-1.6.4-30} taking into account that the condition $h\ge t+1$
ensures that $W$ is generic (cfr.\ \S\ref{ref-5.1-69}). Note however that $H=O_t(k)$ is not connected so Theorem \ref{ref-1.6.4-30} does not immediately apply.
Instead we will apply Theorem \ref{ref-1.6.4-30} with $G=\SO_t(k)$ and its double cover $\bar{G}=\Spin_t(k)$ and we will then apply Lemma \ref{ref-4.5.1-63}
together with Lemma \ref{ref-4.5.2-67}.
\subsection{Some facts about the orthogonal group}
For the benefit of the reader we recall some facts about the orthogonal group. 
 For the
representation theory of the special orthogonal group, and more
generally the spin group we follow \cite[Ch.\ 19,20]{FH}.

The orthogonal group fits in a diagram consisting of four related groups. 
\[
\xymatrix@1{
&&0\ar[d]&0\ar[d]\\
0\ar[r]&\ZZ/2\ZZ\ar[r]\ar@{=}[d]&\Spin_t(k)\ar[d]\ar[r]&\SO_t(k)\ar[d]\ar[r]&0\\
0\ar[r]&\ZZ/2\ZZ\ar[r]&\Pin_t(k)\ar[d]\ar[r]& O_t(k)\ar[d]\ar[r]&0\\
&&\ZZ/2\ZZ\ar[d]\ar@{=}[r]&\ZZ/2\ZZ\ar[d]\\
&& 0 &0
}
\]
where the horizontal arrows are central extensions. 
If $t=2n$ is even let $v_1,\ldots,v_{2n}$ be a basis for
$V$ such that $(-,-)$ is given by $(v_i,v_{i+n})=1$, $(v_i,v_j)=0$ for
$j\neq i\pm n$.    Then $\SO_t(k)$ contains a standard maximal torus given by
$T=
\diag(z_1,\ldots,z_n,z_1^{-1},\ldots,z_n^{-1})
$.

If $t=2n+1$ is odd let $v_1,\ldots,v_{2n},v_{2n+1}$ be a basis for $V$ such that $(-,-)$ is given by
 $(v_i,v_{i+n})=1$ for $i=1,\ldots,n$, $(v_{2n+1},v_{2n+1})=1$, 
$(v_i,v_j)=0$
for $(i,j)\neq (2n+1,2n+1)$ and $j\neq i\pm n$.  Then $\SO_t(k)$ contains a standard maximal torus given by
$T=
\diag(z_1,\ldots,z_n,z_1^{-1},\ldots,z_n^{-1},1)$.

We let $\bar{T}$ be the inverse image of~$T$ in $\Spin_{t}(k)$. It is still a maximal torus.
Put $A=\ker(\bar{T}\r T)\cong \ZZ/2\ZZ$. We let $L_i\in X(T)$ be given by $(z_1,\ldots,z_n)\mapsto z_i$.  Hence 
$
X(T)=\{\sum_i a_iL_i\mid a_i\in \ZZ\}
$. Inside $X(T)_\RR=X(\bar{T})_\RR$ we have the coset decomposition 
\[
X(\bar{T})=X(T)\coprod X(T)_{1/2}
\]
where
\[
X(T)_{1/2}\overset{\text{def}}{=}\{\sum_i a_iL_i\mid a_i\in 1/2+\ZZ\}.
\]
The dominant cone is given by 
\[
X(T)_\RR^+=\{\sum_i a_iL_i\mid a_i\in \RR, a_1\ge \cdots\ge a_{n-1}\ge |a_n|\ge 0\}
\]
if $t=2n$ is even and by
\[
X(T)_\RR^+=\{\sum_i a_iL_i\mid a_i\in \RR, a_1\ge \cdots\ge a_{n-1}\ge a_n\ge 0\}
\]
if $t=2n+1$ is odd. 
It is well known how the representation theory of $O_t(k)$ and $\SO_t(k)$ are related. The same
relation holds between $\Pin_t(k)$, $\Spin_t(k)$ but we did not find an explicit reference where this is stated.
Hence the following lemma, which will in fact only be used for the computation of $i_{1/2}(\Pin_t(k))$ (see p.\pageref{index})  in Lemma \ref{ref-7.2.2-87}
below.
\begin{lemmas} {}
\label{ref-7.2.1-86}
  Assume that $\chi=a_1L_1+\cdots+a_nL_n\in X(\bar{T})^+$ and let
  $V(\chi)$ be the corresponding representation of $\Spin_t(k)$.  If
  $t=2n+1$ is odd or if $t=2n$ is even and $a_n=0$ then the action of
  $\Spin_t(k)$ on $V(\chi)$ can be lifted in two distinct ways, up to isomorphism, to an
  action of $\Pin_t(k)$.  Denote the resulting irreducible $\Pin_t(k)$
  representations by $V^+(\chi)$ and $V^-(\chi)$.

If $t=2n$ is even and $a_n\neq 0$ then put $\chi^+=\chi$, $\chi^-=a_1L_1+\cdots+a_{n-1}L_{n-1}-a_nL_n$. Then the $\Spin_t(k)$-action on $V(\chi^+)\oplus V(\chi^-)$ extends
uniquely to a $\Pin_t(k)$-action, up to isomorphism. The resulting $\Pin_t(k)$-representation is irreducible. Denote it by $\tilde{V}(\chi)$.

The representations $V^{\pm}(\chi)$ and $\tilde{V}(\chi)$ (with $a_n>0$) form a complete list of irreducible $\Pin_t(k)$-representations. 
\end{lemmas}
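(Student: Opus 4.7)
The plan is to apply Clifford theory to the index-two inclusion $\Spin_t(k) \hookrightarrow \Pin_t(k)$. Fix a coset representative $g \in \Pin_t(k)\setminus \Spin_t(k)$ and let $\sigma_g = g\cdot g^{-1}$ denote the associated automorphism of $\Spin_t(k)$. The standard Clifford-theoretic dichotomy says the following. Either ${}_{\sigma_g} V(\chi) \cong V(\chi)$, in which case $V(\chi)$ has exactly two non-isomorphic extensions to $\Pin_t(k)$, which differ by tensoring with the non-trivial character of $\Pin_t(k)/\Spin_t(k) \cong \ZZ/2\ZZ$; or ${}_{\sigma_g}V(\chi) \not\cong V(\chi)$, in which case $\Ind^{\Pin_t(k)}_{\Spin_t(k)} V(\chi)$ is irreducible and its restriction to $\Spin_t(k)$ is $V(\chi) \oplus {}_{\sigma_g}V(\chi)$.

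Next I would identify the action of $\sigma_g$ on irreducible $\Spin_t(k)$-representations via the formula \eqref{ref-4.13-66}. Since $g$ covers an orthogonal transformation of determinant $-1$, conjugation by $g$ yields an outer automorphism of $\Spin_t(k)$, well-defined up to inner automorphisms; after adjusting it to preserve $(\bar{T}, B)$ we obtain an automorphism $\tilde{\sigma}_g$ of $X(\bar{T})$ preserving the set of roots and the dominant cone. When $t = 2n+1$ the Dynkin diagram of $\Spin_t(k)$ is of type $B_n$ and admits no non-trivial diagram automorphisms; hence $\tilde{\sigma}_g$ acts trivially on $X(\bar{T})$ and every irreducible $V(\chi)$ is $\sigma_g$-stable. (Concretely, one may take $g$ to be the lift of $-\Id_V$, which commutes with $\bar{T}$.) When $t = 2n$ the diagram is of type $D_n$ and its unique non-trivial diagram involution corresponds to the map $L_n \mapsto -L_n$ fixing the other $L_i$; one verifies this by choosing $g$ to be the $\Pin_t(k)$-lift of the reflection swapping $v_n$ and $v_{2n}$. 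Hence ${}_{\sigma_g}V(\chi) \cong V(\chi^-)$ with $\chi^- = a_1L_1 + \cdots + a_{n-1}L_{n-1} - a_nL_n$, and $V(\chi)$ is $\sigma_g$-stable iff $a_n = 0$.

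Combining the two ingredients yields the three stated cases. If $t$ is odd, or if $t=2n$ and $a_n = 0$, then $V(\chi)$ admits two non-isomorphic $\Pin_t(k)$-extensions $V^\pm(\chi)$. If $t = 2n$ and $a_n\neq 0$ then $\Ind^{\Pin_t(k)}_{\Spin_t(k)} V(\chi)$ is irreducible and its restriction is $V(\chi^+) \oplus V(\chi^-)$, giving a single $\Pin_t(k)$-representation $\tilde{V}(\chi)$ (independent of the choice between $\chi^+$ and $\chi^-$). Completeness then follows from the general principle that any irreducible $\Pin_t(k)$-representation, restricted to $\Spin_t(k)$, decomposes into isotypical components forming a single $\tilde{\sigma}_g$-orbit in $\widehat{\Spin_t(k)}$; the orbit analysis above therefore enumerates every irreducible exactly once provided one restricts to $a_n > 0$ in the $\tilde{V}(\chi)$ case.

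The main technical point is to pin down the action of $\tilde{\sigma}_g$ on $X(\bar{T})$, in particular verifying that it is trivial for $t$ odd and is the expected $L_n \mapsto -L_n$ involution for $t = 2n$. This is classical but requires some care because $X(\bar{T})$ contains the half-integer coset $X(T)_{1/2}$ on which one must check the involution is compatible with dominance; the cleanest route is to make an explicit choice of $g$ as above and compute its conjugation action on $\bar{T}$ directly from the definition of $\Pin_t(k)$ inside the Clifford algebra.
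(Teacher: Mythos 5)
Your argument is correct and follows essentially the same route as the paper: both invoke the standard Clifford-theory dichotomy for an index-two inclusion (the paper cites Weyl \cite[\S5.8]{weyl}), both reduce to identifying the Galois twist on highest weights, and both choose an explicit reflection in $O_t(k)$ (or equivalently $-\Id_V$ in the odd case) to compute that $\tilde{\sigma}_g$ acts trivially on $X(\bar{T})$ when $t$ is odd and as $L_n\mapsto -L_n$ when $t$ is even. Your added observation that this must be the unique non-trivial diagram automorphism in type $D_n$ and that type $B_n$ has none is a useful sanity check but does not change the underlying argument.
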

\begin{proof}
Let $\bar{y}\in \Pin_t(k)-\Spin_t(k)$ and let $\bar{\sigma}\in \Aut(\Spin_t(k))$ be given by conjugation by $\bar{y}$.
As explained for example in \cite[\S5.8)]{weyl} 
the irreducible $\Pin_t(k)$-representations
are determined by the orbits of the $\ZZ/2\ZZ$-action $V(\chi)\mapsto {}_{\bar{\sigma}}V(\chi)$ on the irreducible
$\Spin_t(k)$-representations. 
Hence it is sufficient to describe this action. To do this we will choose $\bar{y}$ in a special way.
If $t$ is even let $y$ be the reflection $v_n\leftrightarrow v_{2n}$, if $t$ is odd
let $y$ be the reflection $v_{2n+1}\mapsto -v_{2n+1}$ (all other basis vectors remaining fixed). Let $\sigma$ be the automorphism of
$\SO_t(k)$ given by conjugation by~$y$. Then clearly $\sigma$ preserves $T$  and moreover $\sigma^\ast:X(T)\r X(T):\chi\mapsto\chi\circ \sigma$
has the following form: if $t$ is odd then it is the identity and if $t$ is even then 
it sends $L_n$ to $-L_n$ and preserves the other $L_i$.

Let $\bar{y}\in \Pin_t(k)$ be a lift of $y$ and as above let $\bar{\sigma}$ be the automorphism of $\Spin_t(k)$ given by conjugation by $\bar{y}$.
 Then $\bar{\sigma}$ preserves $\bar{T}$ and the
action of $\bar{\sigma}^\ast$ on $X(\bar{T})$ is simply the extension of the action of $\sigma^\ast$ on $X(T)$. Hence $\bar{\sigma}^\ast$ is given by
the same formula as $\sigma^\ast$.

It is clear that $\bar{\sigma}$ preserves dominant weights, and moreover it also preserves a system of positive roots (see \eqref{ref-7.1-88} and \eqref{ref-7.2-89} below).
In particular it preserves the ordering on weights.
As in \eqref{ref-4.13-66} we find  ${}_{\bar{\sigma}} V(\chi)=V(\bar{\sigma}^\ast(\chi))$.
This finishes the proof.
\end{proof}
For use below we state the following lemma.
\begin{lemmas} 
\label{ref-7.2.2-87}
Denote the unique non-trivial character of $A$ by $1/2$. Then we have 
\begin{align*}
i_{1/2}(\Spin_{t}(k))&=
\begin{cases}
2^{n-1}&\text{if $t=2n$ is even}\\
2^n&\text{if $t=2n+1$ is odd}
\end{cases}\\
i_{1/2}(\Pin_t(k))&=
\begin{cases}
2^n&\text{if $t=2n$ is even}\\
2^n&\text{if $t=2n+1$ is odd}
\end{cases}
\end{align*}
\end{lemmas}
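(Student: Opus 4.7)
The plan is to prove matching upper and lower bounds for each of the four quantities by (a) exhibiting explicit small-dimensional representations and (b) relating representations with central character $1/2$ to modules over the Clifford algebra.

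For the upper bounds I would invoke the (half-)spin representations. For $\Spin_{2n+1}(k)$ the spin fundamental weight $\omega=(L_1+\cdots+L_n)/2$ lies in $X(T)_{1/2}$ and the corresponding simple representation $V(\omega)$, whose weight set is the full $\Wscr$-orbit of $\omega$ with multiplicities one, has dimension $2^n$. For $\Spin_{2n}(k)$ the two half-spin fundamental weights $(L_1+\cdots\pm L_n)/2$ likewise give representations of dimension $2^{n-1}$. For $\Pin_t(k)$ I would then apply Lemma~\ref{ref-7.2.1-86}: in the even case every half-integral dominant $\chi$ has $a_n\in\tfrac12+\ZZ$, hence $a_n\neq 0$, so the relevant irreducible $\Pin_{2n}(k)$-representations are all of type $\tilde V(\chi)$ with $\dim\tilde V(\chi)=2\dim V(\chi)$, giving $2\cdot 2^{n-1}=2^n$ on the half-spin weight. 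In the odd case one is always in the $V^{\pm}(\chi)$ case of dimension $\dim V(\chi)$, so the spin representation contributes $2^n$.

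For the lower bounds I would work inside the Clifford algebra $Cl_t=Cl(V,(-,-))$, so that $\Pin_t(k)\hookrightarrow Cl_t^{\times}$ with the generator of $A\subset\Pin_t(k)$ corresponding to the element $-1\in Cl_t$. The key claim is that the $k$-linear span of $\Pin_t(k)$ is all of $Cl_t$: indeed $\Pin_t(k)$ contains every unit vector $v\in V$ (those with $(v,v)=1$), these span $V$, and by induction on $r$ the $k$-linear span of products $v_1\cdots v_r$ of $r$ unit vectors equals $\bigoplus_{0\le i\le r,\,i\equiv r \pmod 2}\wedge^i V\subset Cl_t$, which exhausts $Cl_t$ as $r$ varies. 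Therefore $k$-linear extension of the group action promotes every $\Pin_t(k)$-representation on which $-1$ acts as $-\id$ to a $Cl_t$-module, and conversely $Cl_t$-modules restrict to such representations; this correspondence preserves dimensions. The classical Wedderburn decompositions
\[
Cl_{2n}\cong M_{2^n}(k),\qquad Cl_{2n+1}\cong M_{2^n}(k)\oplus M_{2^n}(k)
\]
then force every such $\Pin_t(k)$-representation to have dimension divisible by $2^n$. The analogous argument applied to $\Spin_t(k)\subset Cl_t^+$, using $Cl_{2n}^+\cong M_{2^{n-1}}(k)^{\oplus 2}$ and $Cl_{2n+1}^+\cong M_{2^n}(k)$, yields the lower bounds $2^{n-1}$ (even $t$) and $2^n$ (odd $t$) for $i_{1/2}(\Spin_t(k))$. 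Combined with the upper bounds this establishes all four equalities.

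The technically delicate step is the spanning verification $k\langle\Pin_t(k)\rangle=Cl_t$ (and its even analogue $k\langle\Spin_t(k)\rangle=Cl_t^+$); this is classical folklore but does require the short induction above. An alternative that avoids Clifford algebras altogether would be a direct $2$-adic analysis of the Weyl dimension formula applied to a half-integral dominant weight written as $\omega+\mu$ with $\mu$ integral dominant, but this is considerably more computational.
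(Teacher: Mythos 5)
Your upper bounds (exhibiting the spin and half-spin representations) and the reduction from $\Pin_t(k)$ to $\Spin_t(k)$ via Lemma~\ref{ref-7.2.1-86} are correct and match the paper's approach, but the Clifford-algebra argument for the lower bound has a genuine gap. The problematic step is the claim that every $\Pin_t(k)$- (resp.\ $\Spin_t(k)$-) representation on which $-1$ acts as $-\id$ extends by $k$-linear extension to a module over the Clifford algebra $\mathrm{Cl}_t$ (resp.\ its even part $\mathrm{Cl}_t^+$). The spanning verification that the $k$-linear span of $\Pin_t(k)$ is all of $\mathrm{Cl}_t$ is correct, but it does not make the extension well defined: one would need \emph{every} $k$-linear relation $\sum_i a_i g_i = 0$ among group elements $g_i$ that holds inside $\mathrm{Cl}_t$ to be preserved by the given representation. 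The group algebra $k[\Pin_t(k)]$ surjects onto $\mathrm{Cl}_t$, but a general representation of the group does not factor through this quotient; indeed, the representations that do factor through are exactly the direct sums of (half-)spin representations, since $\mathrm{Cl}_t^+$ is a product of one or two matrix algebras whose simple modules restrict precisely to the (half-)spin representations.

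A concrete counterexample already appears at $t=3$: here $\Spin_3\cong\Sl_2$, $\mathrm{Cl}_3^+\cong M_2(k)$, and the irreducible $V(3)$ of dimension $4$ has $-1$ acting as $-\id$, yet carries no $M_2(k)$-module structure compatible with the $\Sl_2$-action, since every $M_2(k)$-module is a multiple of $k^2$, i.e.\ of $V(1)$. Explicitly, for $g=\diag(z,z^{-1})\in\Sl_2\subset (\mathrm{Cl}_3^+)^\times$ one has the relation $g+g^{-1}=(z+z^{-1})\cdot 1$ in $M_2(k)$, whereas on $V(3)$ the operator $g+g^{-1}$ acts as $\diag(z^3+z^{-3},\,z+z^{-1},\,z+z^{-1},\,z^3+z^{-3})$, which is not scalar. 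Hence the Wedderburn decomposition of $\mathrm{Cl}_t$ gives no constraint on the dimension of a general representation with nontrivial central character, and your lower bound is not established. The divisibility you need is a genuinely deeper fact; the paper obtains it by citing the multiplicative structure of the representation ring of $\Spin_t(k)$ (Fulton--Harris \S23.2), where the half-integral part is generated over the integral part by the (half-)spin representations. If you want a self-contained argument, the $2$-adic analysis of the Weyl dimension formula that you mention as an alternative is actually the viable route here, not the Clifford-algebra shortcut.
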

\begin{proof}
  The group $\Spin_{t}(k)$ has one or two distinguished irreducible
  representations which are called \emph{spin representations}. If
  $t=2n$ is even there are two spin representations $S^{\pm}$ of
  dimension $2^{n-1}$ with highest weight given by
  $(1/2)(L_1+\cdots+L_{n-1}\pm L_n)$. If $t=2n+1$ is odd then there is
  a unique spin representation $S$ of dimension $2^n$. 

To prove the lemma for $\Spin_t(k)$ we clearly need that the dimension of every
finite dimensional representation of $\Spin_t(k)$ with non-trivial central character is divisible by the dimension 
of the spin representations. This follows for example from the structure of
the representation ring of $\Spin_t(k)$. See \cite[\S23.2]{FH}. The result
for $\Pin_t(k)$ immediately follows from Lemma \ref{ref-7.2.1-86}.
\end{proof}
We now discuss non-commutative resolutions for invariants for special
orthogonal groups. Since these are connected we may apply Theorem \ref{ref-1.6.4-30}.
As usual the cases where $t$ is even and $t$ is odd are a bit
different, so to keep things clear we discuss them separately.
\subsection{The case \mathversion{bold} $G=\SO_{2n}(k)$}
Below we assume $t=2n$. We put $G=\SO_{2n}(k)$ and $\bar{G}=\Spin_{2n}(k)$.
We prove the following result.
\begin{propositions}
Let $\theta=0$ if $h$ is odd and $\theta=1/2$ if $h$ is even. Put
\[
{\Lscr}^e=\{ \sum_i a_i L_i\mid  a_i\in \theta+\ZZ,  h/2-(n-1)>a_1\ge a_2\ge\cdots \ge a_{n-1}\ge |a_n|\}
\]
and let $U^e$ be the $\Spin_{2n}(k)$-representation given by
$
U^e=\bigoplus_{\chi\in {\Lscr}^e} V(\chi)
$.
If $h$ is even then
$
M(\End(U^e))
$
is a twisted NCCR of index $2^{n-1}$ for $R^G$. If $h$ is odd then it is a NCCR.
\end{propositions}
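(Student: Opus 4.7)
The plan is to apply Theorem \ref{ref-1.6.4-30} with $G=\SO_{2n}(k)$ and $\bar G=\Spin_{2n}(k)$, exactly mirroring the strategy used in \S\ref{ref-5-68} and \S\ref{ref-6-77}, but allowing a non-trivial central character $\bar\mu$ when needed. First I record the root-theoretic data: the positive roots of $G$ are $(L_i-L_j)_{i<j}$ and $(L_i+L_j)_{i<j}$, so
\[
\bar\rho=(n-1)L_1+(n-2)L_2+\cdots+L_{n-1}+0\cdot L_n,
\]
and the dominant cone is $\{\sum a_iL_i\mid a_1\ge\cdots\ge a_{n-1}\ge |a_n|\}$. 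The weights of $W=V^h$ are $\pm L_i$, each with multiplicity $h$, so $W$ is quasi-symmetric and
\[
\Sigma=\Bigl\{\sum_i a_iL_i\;\Big|\; a_i\in{}]{-h},h[\Bigr\},\qquad \bar\Sigma=\overline{\Sigma}.
\]
Since $h\ge 2n+1>t$, the representation $W$ is generic (cf.\ \S\ref{ref-5.1-69}), and Theorem \ref{ref-4.1.7-41} gives that $R^G$ is Gorenstein.

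Because $G$ is semi-simple, the only $\Wscr$-invariant choice is $\varepsilon=0$, for which $\bar\Sigma_\varepsilon=\bar\Sigma_{\pm\varepsilon}=\bar\Sigma$. The condition \eqref{ref-1.5-31} of Theorem \ref{ref-1.6.4-30} becomes
\[
X(T)_{\bar\mu}\cap\bigl(-\bar\rho+(1/2)(\bar\Sigma-\Sigma)\bigr)=\emptyset.
\]
The set $\bar\Sigma-\Sigma$ consists of points with at least one coordinate equal to $\pm h$; after halving and shifting by $-\bar\rho$ the critical values of the coordinates are of the form $\pm h/2-(n-i)$. These are integers precisely when $h$ is even and half-integers when $h$ is odd. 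So I choose $\bar\mu=0$ ($\theta=0$) when $h$ is odd and $\bar\mu=1/2$ ($\theta=1/2$) when $h$ is even; in both cases the parity mismatch forces the intersection above to be empty.

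Next I compute $\Lscr=X(T)^+_{\bar\mu}\cap(-\bar\rho+(1/2)\bar\Sigma)$. The defining inequalities are $c_i+(n-i)\in[-h/2,h/2]$ for $i<n$ and $c_n\in[-h/2,h/2]$, together with dominance $a_1\ge\cdots\ge a_{n-1}\ge|a_n|$. A small check shows that for $i\ge 2$ the bounds coming from $\bar\Sigma$ are implied by dominance combined with the bound on $a_1$, so that the only binding inequality beyond dominance and the coset condition is
\[
a_1<h/2-(n-1),
\]
where strictness comes from the fact that the endpoint lies in the wrong coset by our choice of $\bar\mu$. This identifies $\Lscr$ with $\Lscr^e$.

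Theorem \ref{ref-1.6.4-30} then produces a twisted NCCR of index $i_{\bar\mu}(\Spin_{2n}(k))$ for $R^G$. By Lemma \ref{ref-7.2.2-87}, this index is $1$ when $\bar\mu=0$ (so $h$ odd) and $2^{n-1}$ when $\bar\mu=1/2$ (so $h$ even), matching the claim. The main technical nuisance is the last $L_n$ coordinate, whose coefficient in $\bar\rho$ vanishes and for which the dominance condition involves $|a_n|$ rather than $a_n$; one must verify carefully that the boundary of $(1/2)\bar\Sigma$ in the $a_n$-direction is also avoided in the correct coset, but this follows from the same parity argument since the relevant endpoints are $\pm h/2$.
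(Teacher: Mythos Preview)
Your proof is correct and follows essentially the same approach as the paper: both apply Theorem~\ref{ref-1.6.4-30} with $\varepsilon=0$, verify the emptiness condition \eqref{ref-1.5-31} by the parity mismatch between $h/2$ and the chosen coset $X(T)_{\bar\mu}$, and then identify $\Lscr$ with $\Lscr^e$ using Lemma~\ref{ref-7.2.2-87} for the index. You supply slightly more detail than the paper does (e.g.\ the observation that the $\bar\Sigma$-bounds for $i\ge 2$ are subsumed by dominance plus the bound on $a_1$, and the explanation of why the inequality on $a_1$ is strict), but the route is the same.
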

\begin{proof}
A system
of positive roots is given by $(L_i\pm L_j)_{i>j}$. From this we compute
\begin{equation}
\label{ref-7.1-88}
\bar{\rho}=(n-1)L_1+(n-2)L_2+\cdots+L_{n-1}.
\end{equation}
The weights of $V$ are $\pm L_i$ and hence $W$ has the same weights occurring with multiplicity $h$. In particular $W$ is quasi-symmetric. Furthermore
\[
\Sigma=\{\sum_i a_iL_i\mid a_i\in ]-h,h[\}
\]
and so
\[
-\bar{\rho}+(1/2)\bar{\Sigma}=\{ a_i L_i\mid -h/2-(n-i)\le a_i \le h/2-(n-i)\}.
\]
We can only apply Theorem \ref{ref-1.6.4-30} with $\varepsilon=0$ ($G$ is semi-simple). Hence $\bar{\Sigma}_{\varepsilon}=\bar{\Sigma}$.
If $h$ is odd we have
\[
X(T)\cap (-\bar{\rho}+(1/2)(\bar{\Sigma}-\Sigma))=\emptyset
\]
and if $h$ is even then
\[
X(T)_{1/2}\cap (-\bar{\rho}+(1/2)(\bar{\Sigma}-\Sigma))=\emptyset.
\]
The statement of the proposition now follows by inspecting
$
X(T)_{\theta}^+\cap (-\bar{\rho}+(1/2)\bar{\Sigma})
$ and using Theorem \ref{ref-1.6.4-30} together with Lemma \ref{ref-7.2.2-87}.
\end{proof}
\subsection{The case \mathversion{bold} $G=\SO_{2n+1}(k)$}
Below we assume $t=2n+1$. We put $G=\SO_{2n+1}(k)$ and $\bar{G}=\Spin_{2n+1}(k)$.
We prove the following result.
\begin{propositions}
Let $\theta=1/2$ if $h$ is odd and $\theta=0$ if $h$ is even. Put
\[
{\Lscr}^o=\{ \sum_i a_i L_i\mid  a_i\in \theta+\ZZ,  h/2-(n-1/2)>a_1\ge a_2\ge\cdots \ge a_{n-1}\ge a_n\ge 0\}
\]
and let $U^o$ be the $\Spin_{2n+1}(k)$-representation given by
$
U^o=\bigoplus_{\chi\in {\Lscr}^\circ} V(\chi)
$.
Then if $h$ is odd
$
M(\End(U^o))
$
is a twisted NCCR for $R^G$ of index $2^{n}$. If $h$ is even then it is a NCCR.
\end{propositions}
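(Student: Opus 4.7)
The plan is to mirror the preceding $\SO_{2n}(k)$ argument, working instead with the $B_n$ root system of $\SO_{2n+1}(k)$. First I would record the standard data: a system of positive roots is $(L_i\pm L_j)_{i>j}$ together with the short roots $(L_i)_i$, giving
\[
\bar{\rho}=(n-1/2)L_1+(n-3/2)L_2+\cdots+(1/2)L_n,
\]
and dominant cone $\{\sum a_iL_i\mid a_1\ge a_2\ge\cdots\ge a_n\ge 0\}$. The defining representation $V$ has weights $\pm L_1,\ldots,\pm L_n$ together with one copy of the zero weight, so each occurs with multiplicity $h$ in $W=V^h$. This makes $W$ quasi-symmetric (the zero weights contribute trivially to the line sums), yields $\Sigma=\{\sum_i c_iL_i\mid c_i\in\,]-h,h[\,\}$, and hence
\[
-\bar{\rho}+(1/2)\bar{\Sigma}=\{\textstyle\sum_i b_iL_i\mid b_i\in[-h/2-(n-i+1/2),\,h/2-(n-i+1/2)]\}.
\]

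Next I would apply Theorem \ref{ref-1.6.4-30} with $G=\SO_{2n+1}(k)$, $\bar{G}=\Spin_{2n+1}(k)$ and $A=\ZZ/2\ZZ$. Since $G$ is semi-simple the only $\Wscr$-invariant choice is $\varepsilon=0$, giving $\bar{\Sigma}_\varepsilon=\bar{\Sigma}_{\pm\varepsilon}=\bar{\Sigma}$, so condition \eqref{ref-1.5-31} reduces to $X(T)_{\bar{\mu}}\cap(-\bar{\rho}+(1/2)(\bar{\Sigma}-\Sigma))=\emptyset$. The boundary set $\bar{\Sigma}-\Sigma$ consists of points with $c_j=\pm h$ for some $j$, so after the $-\bar{\rho}$-shift and halving the distinguished coordinate has the form $b_j=\pm h/2-(n-j+1/2)$; this lies in $\ZZ$ precisely when $h$ is odd and in $1/2+\ZZ$ precisely when $h$ is even. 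Since the remaining coordinates can be freely adjusted inside their intervals, emptiness of the intersection forces $\bar{\mu}=1/2$ when $h$ is odd and $\bar{\mu}=0$ when $h$ is even, which is exactly the $\theta$ in the statement.

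Finally I would unpack $\Lscr=X(T)^+_{\bar{\mu}}\cap(-\bar{\rho}+(1/2)\bar{\Sigma})$: imposing the lattice condition $a_i\in\theta+\ZZ$, combined with the dominant-chamber ordering (which makes $a_1\le h/2-(n-1/2)$ the only binding upper bound and $a_n\ge 0$ the only binding lower bound), yields exactly the stated $\Lscr^o$; the apparent strict inequality $h/2-(n-1/2)>a_1$ arises because the endpoint $h/2-(n-1/2)$ never belongs to $\theta+\ZZ$ for either parity of $h$. Theorem \ref{ref-1.6.4-30} then supplies a twisted NCCR for $R^G$ of index $i_{\bar{\mu}}(\Spin_{2n+1}(k))$, and Lemma \ref{ref-7.2.2-87} identifies this index as $2^n$ when $h$ is odd and $1$ when $h$ is even. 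No serious obstacle is anticipated: the argument is essentially bookkeeping parallel to the $\SO_{2n}(k)$ proposition, and the only real contrast is that the half-integer character of $\bar{\rho}$ for $B_n$ swaps which parity of $h$ selects which coset $\bar{\mu}$.
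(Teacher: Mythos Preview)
Your proposal is correct and follows essentially the same approach as the paper's own proof: compute $\bar\rho$ and $\Sigma$ for the $B_n$ root system, take $\varepsilon=0$ since $G$ is semi-simple, check that the parity of $h/2-(n-j+1/2)$ forces the choice of coset $\bar\mu$ in condition~\eqref{ref-1.5-31}, and then read off $\Lscr^o$ from the intersection with the dominant cone before invoking Theorem~\ref{ref-1.6.4-30} and Lemma~\ref{ref-7.2.2-87}. The only cosmetic difference is that you spell out explicitly why the inequality on $a_1$ becomes strict, which the paper leaves implicit.
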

\begin{proof}
 A system
of positive roots is given by $(L_i\pm L_j)_{i>j}$, $(L_i)_i$. From this we compute
\begin{equation}
\label{ref-7.2-89}
\bar{\rho}=(n-1/2)L_1+(n-3/2)L_2+\cdots+(3/2)L_{n-1}+(1/2)L_n.
\end{equation}
The weights of $V$ are $0,\pm L_i$.  Hence $W$ as the same weights with multiplicity $h$. In particular $W$ is quasi-symmetric. Furthermore
\[
\Sigma=\{\sum_i a_iL_i\mid a_i\in ]-h,h[\}
\]
and so
\[
-\bar{\rho}+(1/2)\bar{\Sigma}=\{ a_i L_i\mid -h/2-(n-i+1/2)\le a_i \le h/2-(n-i+1/2)\}.
\]
Reasoning as in the even case we note that for
$h$  even
\[
X(T)\cap (-\bar{\rho}+(1/2)(\bar{\Sigma}-\Sigma))=\emptyset
\]
and for $h$ odd
\[
X(T)_{1/2}\cap (-\bar{\rho}+(1/2)(\bar{\Sigma}-\Sigma))=\emptyset.
\]
The statement of the proposition now follows by inspecting
$
X(T)_{\theta}^+\cap (-\bar{\rho}+(1/2)\bar{\Sigma})
$ and using Theorem \ref{ref-1.6.4-30} together with Lemma \ref{ref-7.2.2-87}.
\end{proof}
\subsection{Proof of Theorem \ref{ref-1.4.5-15}}
It follows from Lemma \ref{ref-7.2.1-86} that both $U^e$ and $U^o$ may be
lifted to representations of $\Pin_t(k)$ (alternatively one may see
that they are $\Pin_t(k)$-invariant by the general Lemma
\ref{ref-4.5.2-67}). It now suffices to use Lemma \ref{ref-4.5.1-63} together with
Lemma \ref{ref-7.2.2-87}.
\begin{remarks} 
\label{ref-7.5.1-90}
In the same way as in \S\ref{ref-5.3-74} one may check that the NCR for  $S^{+}_{t,h}$ constructed by Weyman and Zhao in \cite{WeymanZhao} is given by the module of covariants
\[
N=\bigoplus_{\lambda\in B_{t,h-t}}M(S^\lambda V).
\]
This is  not a NCCR unless $t=h-1$ \cite[Prop 6.6]{WeymanZhao}. In the latter case the construction by Weymann and Zhao gives the same result as ours. 
Indeed now $N$ is given by 
\[
N=\bigoplus_{i\le t}M(\wedge^i V),
\]
whereas for $t$ being respectively equal to $2n$ and $2n+1$ one has
\begin{align*}
\Lscr^{e}&=\{\sum_i a_i L_i\mid a_i\in \ZZ, 1\ge a_1\ge a_2\ge \cdots \ge a_{n-1}\ge |a_n|\},\\
\Lscr^{o}&=\{\sum_i a_i L_i\mid a_i\in \ZZ, 1\ge a_1\ge a_2\ge \cdots \ge a_{n-1}\ge a_n\ge 0\}.
\end{align*}
By the results in \cite[\S 19.5]{FH} the $(\wedge^i V)_i$ are irreducible $O_t(k)$-representations
and they are precisely the irreducible $O_t(k)$ representations whose restrictions to $SO_t(k)$
have highest weights in $\Lscr^e$ or $\Lscr^o$. This proves our claim.
\end{remarks}
\section{Non-commutative resolutions for $\Sl_2$-invariants}
\label{ref-8-92}
Here we prove Theorem \ref{ref-1.4.7-16}. We use the fact that $\Sl_2$-representations are always quasi-symmetric. If all $d_i$ are even we put $G=\PGL_2(k)$ and $\bar{G}=\Sl_2(k)$. If not all $d_i$ are even then we put $G=\Sl_2(k)$.
Note that excluding the special representations \eqref{ref-1.1-17} ensures that $W$ is generic in all cases (cfr \S\ref{ref-5.1-69}).
\subsection{The case \mathversion{bold} $G=\Sl_2(k)$}
Let $T$ be $\{\diag(z,z^{-1})\}$. We have $X(T)=\ZZ$ where $1$ corresponds to the character $z\mapsto z$. We put\footnote{We follow the convention that $0\in\NN$.} $X(T)^+=\NN$. With this identification  one has
$
\Sigma=]-s,s[
$
and
$\bar{\rho}=1$. So the claim that \eqref{ref-1.2-18} yields an NCR follows from Theorem \ref{ref-1.6.3-29} with $\bar{\mu}=0,\varepsilon=0$.

We can only apply Theorem \ref{ref-1.6.4-30} with $A=0$ and $\varepsilon=0$. 
We find that if $s$ is odd $X(T)\cap -\bar{\rho}+(1/2)(\bar{\Sigma}-\Sigma)=\emptyset$. Hence in this case \eqref{ref-1.2-18} yields a NCCR by Theorem \ref{ref-1.6.4-30}.
\subsection{The case \mathversion{bold} $G=\PGL_2(k)$}
Let $\bar{T}=\diag(z,z^{-1})\subset \bar{G}$ and let $T$ be the image of $\bar{T}$ in $\bar{G}$. We have $X(\bar{T})=\ZZ$ where $1$ corresponds to the character $z\mapsto z$. We put $X(\bar{T})^+=\NN$. One still has $
\Sigma=]-s,s[
$
and
$\bar{\rho}=1$. 
We now have  $A=\ZZ/2\ZZ$ and $X(T)=2\ZZ$.
So the claim that \eqref{ref-1.2-18} yields an NCR follows from Theorem \ref{ref-1.6.3-29}.

 If $\bar{1}$ denotes the unique non-trivial character of $A$ then $X(T)_{\bar{1}}=1+2\ZZ$.
For  Theorem \ref{ref-1.6.4-30} we still have $\varepsilon=0$. Then
we have if $s/2$ is even
\[
X(T)\cap -\bar{\rho}+(1/2)(\bar{\Sigma}-\Sigma)=\emptyset ,
\]
which implies that \eqref{ref-1.3-19} is a NCCR
and if $s/2$ is odd
\[
X(T)_{\bar{1}}\cap -\bar{\rho}+(1/2)(\bar{\Sigma}-\Sigma)=\emptyset, 
\]
which implies that \eqref{ref-1.4-20} is a twisted NCCR. The statement about the index follows from the fact that $i_{\bar{1}}(\Sl_2)=2$ and Proposition \ref{ref-4.1.6-40}.

\section{Trace rings}
\label{ref-9-93}
In this section we prove Theorem \ref{ref-1.4.9-22}. To
illustrate the proof goes we will first handle the case $(m,n)=(2,3)$
graphically. From this discussion we will obtain a new proof for the fact
that $\TT_{2,3}$ has finite global dimension (see \cite{VdB15}).

Put $G=\PGL_3(k)$, $\bar{G}=\Sl_3(k)$, $V=k^3$. Put $W=\End(V)^{\oplus 2}$. It is easy to see that $W$ is
generic (cfr. \S\ref{ref-5.1-69}).
Let $\bar{T}$ be the diagonal maximal torus in $\bar{G}$ and let $T$
be its image in $\PGL_3(k)$. We have $A=\ker(\bar{T}\r T)\cong
\ZZ/3\ZZ$. In Figure \ref{ref-9.1-97} we have drawn the elements of
$X(\bar{T})$, color coded according the character of $A$
they correspond to. The red weights are those in $X(T)$. We have also indicated the simple
roots $\alpha_{1,2}$ and the fundamental weights  $\phi_{1,2}$ for $\bar{G}$.

The dominant cone has been colored yellow. Finally we have also drawn part of the open regular hexagon 
$-\bar{\rho}+(1/2)\Sigma$.
From
this picture we see that $-\bar{\rho}+(1/2)(\bar{\Sigma}-\Sigma)$ contains no green or blue weights. Hence
either of these colors yields a twisted NCCR by Theorem \ref{ref-1.6.4-30}. The picture shows that  $-\bar{\rho}+(1/2)\bar{\Sigma}$ contains a single
dominant green and a single dominant blue weight. These are precisely the fundamental weights so they correspond to $V$ and $\wedge^2 V=V^\ast$ respectively. 
The corresponding twisted NCCRs are $M(\End(V))\cong \TT_{2,3}$ and $M(\End(V^\ast))\cong \TT_{2,3}^\circ$.
\begin{proof}[Proof of Theorem \ref{ref-1.4.9-22}]
  $Z_{2,2}$ is its own NCCR because it is a polynomial ring \cite[p.20]{Herstein}.
  Hence without
  loss of generality we may assume $(m,n)\neq (2,2)$ which is the same
  as saying that $W=\End(V)^{\oplus m}$ is generic. 

  Inspecting Figure \ref{ref-9.1-97} for the case $(m,n)=(2,3)$ we see
  that in fact the green and blue weights are missed by the entire
  boundary lines of $-\bar{\rho}+(1/2)\bar{\Sigma}$ and not just by
  the faces.  We will show that this pattern persists for $(m,n)$
  arbitrary: the boundary hyperplanes of
  $-\bar{\rho}+(1/2)\bar{\Sigma}$ will always miss certain ``colors''
  and these colors then yield a twisted NCCR by Theorem
  \ref{ref-1.6.4-30}.

Put $G=\PGL_n(k)$,
  $\bar{G}=\Sl_n(k)$. We use the standard associated notations (see \S\ref{ref-1.5-23}). We let
  $\bar{T}$ be the standard maximal torus in $\bar{G}$ given by
  $\{\diag(z_1,\ldots,z_n)\mid z_1\cdots z_n=1\}$ and $T$  its image in $G$.
We let $L_i\in X(\bar{T})$ be the projection $\bar{T}\mapsto G_m:(z_1,\ldots,z_n)\mapsto
z_i$. Then $\sum_i L_i=0$ and in fact $X(\bar{T})=\bigoplus_i \ZZ L_i/(\sum_i L_i)$.

The group $A=\ker(\bar{T}\r T)=Z(\bar{G})$ consists of the diagonal matrices $\diag(\xi,\ldots,\xi)$
where $\xi^n=1$. Hence canonically $X(A)\cong \ZZ/n\ZZ$ where $\bar{1}$ corresponds to the 
inclusion of the $n$-th roots of unity in $G_m$. Keeping the above terminology we will refer
to the image of a weight $\chi$ in $X(A)$ as its ``color'' and denote it by $c(\chi)$. Of course color is preserved by the Weyl group action.
One finds that the color of $L_i$ 
is equal to $\bar{1}$.

As in \eqref{ref-5.2-73} we have
\begin{align*}
\bar{\rho}&=(n-1)/2 L_1+(n-3)/2 L_2+\cdots+(-n+1)/2 L_n\\
&=n/2 L_1+(n-2)/2 L_2+\cdots+(-n+2)/2 L_n
\end{align*}
We conclude 
\begin{equation}
\label{ref-9.1-94}
c(\bar{\rho})=
\begin{cases}
\bar{0}&\text{if $n$ is odd}\\
\overline{n/2}&\text{if $n$ is even}
\end{cases}
\end{equation}
The non-zero weights of $W$ are 
given by the root system $\Phi=A_{n-1}$. I.e.\ they are of the form $(L_i-L_j)_{i\neq j}$ where each such weight
occurs with multiplicity $m$.
Using the description of $\partial\overline{\Sigma}$ in Lemma
\ref{ref-B.1-142} below one sees that the boundary hyperplanes of
$\bar{\Sigma}$ correspond to maximal subroot systems of $\Phi$ and
these are given by $A_{p-1}\times A_{q-1}$ for a partition $n=p+q$
(with $A_0=0$). More precisely, one verifies that up to the action of the Weyl group 
the boundary hyperplanes of $(1/2)\bar{\Sigma}$ are given
by
\[
H_{p,q}= m\rho_{p,q}+\sum_{1\le i<j\le p}\RR (L_i-L_j)+\sum_{p+1\le i<j\le n}\RR (L_i-L_j)
\]
where
\begin{align*}
\rho_{p,q}&=\frac{1}{2}\sum_{i=1,\ldots,p,j=p+1,\ldots,n} (L_i-L_j)\\
&=\frac{q}{2}\sum_{i=1,\ldots,p}L_i-\frac{p}{2}\sum_{i=p+1,\ldots,n}L_i
\end{align*}
from which we compute in the same way as for $\bar{\rho}$:
\begin{equation}
\label{ref-9.2-95}
c(m\rho_{p,q})=
\begin{cases}
\bar{0}&\text{if $m$ even}\\
\bar{0}&\text{if $m$ odd, $p,q$ even}\\
\overline{n/2}&\text{if $m$ odd, $p,q$ odd}\\
\text{undefined}&\text{if $m$ odd, $n$ odd}\\
\end{cases}
\end{equation}
The last possibility is because if both $m,n$ are odd then $m\rho_{p,q}\not\in X(\bar{T})$. 

We have
\begin{equation}
\label{ref-9.3-96}
H_{p,q}=\{\sum_i a_i L_i\mid \sum_{i=1}^p a_i=mq/2,\sum_{i=p+1}^n a_i=- mp/2\}.
\end{equation}
Assume
\[
\chi=\sum_i n_i L_i\in X(\bar{T})\cap H_{p,q}
\]
for $n_i\in \ZZ$. Then there exists $\epsilon\in \RR$ such that for $a_i=n_i-\epsilon$  the conditions on $(a_i)_i$ on the righthand side of \eqref{ref-9.3-96} are true. Hence in particular
\begin{align*}
mq/2+p\epsilon\in \ZZ\\
- mp/2+q\epsilon\in \ZZ
\end{align*}
which implies $m(p^2+q^2)/2\in\ZZ$ which is impossible if $m,n$ are both odd. Hence in this case we are done. So below we assume that $m,n$ are not both odd. So in particular $m\rho_{p,q}\in X(\bar{T})$ and hence
\[
X(\bar{T})\cap H_{p,q}=m\rho_{p,q}+X(\bar{T})\cap \{\sum_i a_i L_i\mid \sum_{i=1}^p a_i=0,\sum_{i=p+1}^n a_i=0\}.
\]
Now assume
\[
\chi'=\sum_i n'_i L_i\in X(\bar{T})\cap \{\sum_i a_i L_i\mid \sum_{i=1}^p a_i=0,\sum_{i=p+1}^n a_i=0\}.
\]
Then putting $a_i=n'_i-\epsilon$ as above we find $p\epsilon\in \ZZ$, $q\epsilon\in\ZZ$. Hence $\epsilon$ may be written as
$
\epsilon=\frac{k}{d}
$
for $d=\gcd(p,q)$ and $k\in \ZZ$. Furthermore we find
\[
c(\chi')=c((p+q)\epsilon)=\overline{\frac{nk}{d}}
\]
so that ultimately using \eqref{ref-9.2-95} we obtain
\[
c(\chi)=
\overline{\frac{nk}{d}}+
\begin{cases}
\bar{0}&\text{$m$ even}\\
\bar{0}&\text{$m$ odd, $n$ even, $2\mid d$}\\
\overline{n/2}&\text{$m$ odd, $n$ even, $2\nmid d$}
\end{cases}
\]
with $d\mid n$, $d\neq n$.  
 One now verifies the following statements.
\begin{enumerate}
\item If $m$ is even then $c(\chi)\neq \bar{1}$.
\item If $m$ is odd, $n$ is even but $n/2$ is odd then $c(\chi)\neq \bar{2}$.
\item If $m$ is odd, $n$ is even but $n/2$ is even then $c(\chi)\neq \bar{1}$.
\end{enumerate}
So as mentioned $c(-\bar{\rho})+c(\chi)$ always misses certain colors, finishing the proof. 
\end{proof}
\begin{figure}[H]
\begin{tikzpicture}
\begin{scope}
\clip(-4.500000,-2.700000) rectangle (4.500000,2.700000);
\path[draw,fill=yellow] (0.000000,0.000000)-- (3.500000,6.062178)-- (-3.500000,6.062178);
\path[draw] (0.000000,0.000000) -- (-7.000000,0.000000);
\path[draw] (0.000000,0.000000) -- (-3.500000,6.062178);
\path[draw] (0.000000,0.000000) -- (3.500000,6.062178);
\path[draw] (0.000000,0.000000) -- (7.000000,0.000000);
\path[draw] (0.000000,0.000000) -- (3.500000,-6.062178);
\path[draw] (0.000000,0.000000) -- (-3.500000,-6.062178);
\path[draw,fill=blue,color=blue] (4.041452,-7.000000) circle [radius=0.100000];
\path[draw,fill=green,color=green] (4.618802,-6.000000) circle [radius=0.100000];
\path[draw,fill=red,color=red] (5.196152,-5.000000) circle [radius=0.100000];
\path[draw,fill=blue,color=blue] (5.773503,-4.000000) circle [radius=0.100000];
\path[draw,fill=green,color=green] (6.350853,-3.000000) circle [radius=0.100000];
\path[draw,fill=red,color=red] (6.928203,-2.000000) circle [radius=0.100000];
\path[draw,fill=blue,color=blue] (7.505553,-1.000000) circle [radius=0.100000];
\path[draw,fill=green,color=green] (8.082904,0.000000) circle [radius=0.100000];
\path[draw,fill=green,color=green] (2.886751,-7.000000) circle [radius=0.100000];
\path[draw,fill=red,color=red] (3.464102,-6.000000) circle [radius=0.100000];
\path[draw,fill=blue,color=blue] (4.041452,-5.000000) circle [radius=0.100000];
\path[draw,fill=green,color=green] (4.618802,-4.000000) circle [radius=0.100000];
\path[draw,fill=red,color=red] (5.196152,-3.000000) circle [radius=0.100000];
\path[draw,fill=blue,color=blue] (5.773503,-2.000000) circle [radius=0.100000];
\path[draw,fill=green,color=green] (6.350853,-1.000000) circle [radius=0.100000];
\path[draw,fill=red,color=red] (6.928203,0.000000) circle [radius=0.100000];
\path[draw,fill=blue,color=blue] (7.505553,1.000000) circle [radius=0.100000];
\path[draw,fill=red,color=red] (1.732051,-7.000000) circle [radius=0.100000];
\path[draw,fill=blue,color=blue] (2.309401,-6.000000) circle [radius=0.100000];
\path[draw,fill=green,color=green] (2.886751,-5.000000) circle [radius=0.100000];
\path[draw,fill=red,color=red] (3.464102,-4.000000) circle [radius=0.100000];
\path[draw,fill=blue,color=blue] (4.041452,-3.000000) circle [radius=0.100000];
\path[draw,fill=green,color=green] (4.618802,-2.000000) circle [radius=0.100000];
\path[draw,fill=red,color=red] (5.196152,-1.000000) circle [radius=0.100000];
\path[draw,fill=blue,color=blue] (5.773503,0.000000) circle [radius=0.100000];
\path[draw,fill=green,color=green] (6.350853,1.000000) circle [radius=0.100000];
\path[draw,fill=red,color=red] (6.928203,2.000000) circle [radius=0.100000];
\path[draw,fill=blue,color=blue] (0.577350,-7.000000) circle [radius=0.100000];
\path[draw,fill=green,color=green] (1.154701,-6.000000) circle [radius=0.100000];
\path[draw,fill=red,color=red] (1.732051,-5.000000) circle [radius=0.100000];
\path[draw,fill=blue,color=blue] (2.309401,-4.000000) circle [radius=0.100000];
\path[draw,fill=green,color=green] (2.886751,-3.000000) circle [radius=0.100000];
\path[draw,fill=red,color=red] (3.464102,-2.000000) circle [radius=0.100000];
\path[draw,fill=blue,color=blue] (4.041452,-1.000000) circle [radius=0.100000];
\path[draw,fill=green,color=green] (4.618802,0.000000) circle [radius=0.100000];
\path[draw,fill=red,color=red] (5.196152,1.000000) circle [radius=0.100000];
\path[draw,fill=blue,color=blue] (5.773503,2.000000) circle [radius=0.100000];
\path[draw,fill=green,color=green] (6.350853,3.000000) circle [radius=0.100000];
\path[draw,fill=green,color=green] (-0.577350,-7.000000) circle [radius=0.100000];
\path[draw,fill=red,color=red] (0.000000,-6.000000) circle [radius=0.100000];
\path[draw,fill=blue,color=blue] (0.577350,-5.000000) circle [radius=0.100000];
\path[draw,fill=green,color=green] (1.154701,-4.000000) circle [radius=0.100000];
\path[draw,fill=red,color=red] (1.732051,-3.000000) circle [radius=0.100000];
\path[draw,fill=blue,color=blue] (2.309401,-2.000000) circle [radius=0.100000];
\path[draw,fill=green,color=green] (2.886751,-1.000000) circle [radius=0.100000];
\path[draw,fill=red,color=red] (3.464102,0.000000) circle [radius=0.100000];
\path[draw,fill=blue,color=blue] (4.041452,1.000000) circle [radius=0.100000];
\path[draw,fill=green,color=green] (4.618802,2.000000) circle [radius=0.100000];
\path[draw,fill=red,color=red] (5.196152,3.000000) circle [radius=0.100000];
\path[draw,fill=blue,color=blue] (5.773503,4.000000) circle [radius=0.100000];
\path[draw,fill=red,color=red] (-1.732051,-7.000000) circle [radius=0.100000];
\path[draw,fill=blue,color=blue] (-1.154701,-6.000000) circle [radius=0.100000];
\path[draw,fill=green,color=green] (-0.577350,-5.000000) circle [radius=0.100000];
\path[draw,fill=red,color=red] (0.000000,-4.000000) circle [radius=0.100000];
\path[draw,fill=blue,color=blue] (0.577350,-3.000000) circle [radius=0.100000];
\path[draw,fill=green,color=green] (1.154701,-2.000000) circle [radius=0.100000];
\path[draw,fill=red,color=red] (1.732051,-1.000000) circle [radius=0.100000];
\path[draw,fill=blue,color=blue] (2.309401,0.000000) circle [radius=0.100000];
\path[draw,fill=green,color=green] (2.886751,1.000000) circle [radius=0.100000];
\path[draw,fill=red,color=red] (3.464102,2.000000) circle [radius=0.100000];
\path[draw,fill=blue,color=blue] (4.041452,3.000000) circle [radius=0.100000];
\path[draw,fill=green,color=green] (4.618802,4.000000) circle [radius=0.100000];
\path[draw,fill=red,color=red] (5.196152,5.000000) circle [radius=0.100000];
\path[draw,fill=blue,color=blue] (-2.886751,-7.000000) circle [radius=0.100000];
\path[draw,fill=green,color=green] (-2.309401,-6.000000) circle [radius=0.100000];
\path[draw,fill=red,color=red] (-1.732051,-5.000000) circle [radius=0.100000];
\path[draw,fill=blue,color=blue] (-1.154701,-4.000000) circle [radius=0.100000];
\path[draw,fill=green,color=green] (-0.577350,-3.000000) circle [radius=0.100000];
\path[draw,fill=red,color=red] (0.000000,-2.000000) circle [radius=0.100000];
\node[] at (0.000000,-2.400000) {$-\bar{\rho}$};
\path[draw,fill=blue,color=blue] (0.577350,-1.000000) circle [radius=0.100000];
\path[draw,fill=green,color=green] (1.154701,0.000000) circle [radius=0.100000];
\path[draw,fill=red,color=red] (1.732051,1.000000) circle [radius=0.100000];
\node[] at (2.032051,1.300000) {$\alpha_2$};
\path[draw,fill=blue,color=blue] (2.309401,2.000000) circle [radius=0.100000];
\path[draw,fill=green,color=green] (2.886751,3.000000) circle [radius=0.100000];
\path[draw,fill=red,color=red] (3.464102,4.000000) circle [radius=0.100000];
\path[draw,fill=blue,color=blue] (4.041452,5.000000) circle [radius=0.100000];
\path[draw,fill=green,color=green] (4.618802,6.000000) circle [radius=0.100000];
\path[draw,fill=green,color=green] (-4.041452,-7.000000) circle [radius=0.100000];
\path[draw,fill=red,color=red] (-3.464102,-6.000000) circle [radius=0.100000];
\path[draw,fill=blue,color=blue] (-2.886751,-5.000000) circle [radius=0.100000];
\path[draw,fill=green,color=green] (-2.309401,-4.000000) circle [radius=0.100000];
\path[draw,fill=red,color=red] (-1.732051,-3.000000) circle [radius=0.100000];
\path[draw,fill=blue,color=blue] (-1.154701,-2.000000) circle [radius=0.100000];
\path[draw,fill=green,color=green] (-0.577350,-1.000000) circle [radius=0.100000];
\path[draw,fill=red,color=red] (0.000000,0.000000) circle [radius=0.100000];
\path[draw,fill=blue,color=blue] (0.577350,1.000000) circle [radius=0.100000];
\node[] at (0.277350,1.300000) {$\phi_2$};
\path[draw,fill=green,color=green] (1.154701,2.000000) circle [radius=0.100000];
\path[draw,fill=red,color=red] (1.732051,3.000000) circle [radius=0.100000];
\path[draw,fill=blue,color=blue] (2.309401,4.000000) circle [radius=0.100000];
\path[draw,fill=green,color=green] (2.886751,5.000000) circle [radius=0.100000];
\path[draw,fill=red,color=red] (3.464102,6.000000) circle [radius=0.100000];
\path[draw,fill=blue,color=blue] (4.041452,7.000000) circle [radius=0.100000];
\path[draw,fill=blue,color=blue] (-4.618802,-6.000000) circle [radius=0.100000];
\path[draw,fill=green,color=green] (-4.041452,-5.000000) circle [radius=0.100000];
\path[draw,fill=red,color=red] (-3.464102,-4.000000) circle [radius=0.100000];
\path[draw,fill=blue,color=blue] (-2.886751,-3.000000) circle [radius=0.100000];
\path[draw,fill=green,color=green] (-2.309401,-2.000000) circle [radius=0.100000];
\path[draw,fill=red,color=red] (-1.732051,-1.000000) circle [radius=0.100000];
\path[draw,fill=blue,color=blue] (-1.154701,0.000000) circle [radius=0.100000];
\path[draw,fill=green,color=green] (-0.577350,1.000000) circle [radius=0.100000];
\node[] at (-0.277350,1.300000) {$\phi_1$};
\path[draw,fill=red,color=red] (0.000000,2.000000) circle [radius=0.100000];
\path[draw,fill=blue,color=blue] (0.577350,3.000000) circle [radius=0.100000];
\path[draw,fill=green,color=green] (1.154701,4.000000) circle [radius=0.100000];
\path[draw,fill=red,color=red] (1.732051,5.000000) circle [radius=0.100000];
\path[draw,fill=blue,color=blue] (2.309401,6.000000) circle [radius=0.100000];
\path[draw,fill=green,color=green] (2.886751,7.000000) circle [radius=0.100000];
\path[draw,fill=red,color=red] (-5.196152,-5.000000) circle [radius=0.100000];
\path[draw,fill=blue,color=blue] (-4.618802,-4.000000) circle [radius=0.100000];
\path[draw,fill=green,color=green] (-4.041452,-3.000000) circle [radius=0.100000];
\path[draw,fill=red,color=red] (-3.464102,-2.000000) circle [radius=0.100000];
\path[draw,fill=blue,color=blue] (-2.886751,-1.000000) circle [radius=0.100000];
\path[draw,fill=green,color=green] (-2.309401,0.000000) circle [radius=0.100000];
\path[draw,fill=red,color=red] (-1.732051,1.000000) circle [radius=0.100000];
\node[] at (-2.032051,1.300000) {$\alpha_1$};
\path[draw,fill=blue,color=blue] (-1.154701,2.000000) circle [radius=0.100000];
\path[draw,fill=green,color=green] (-0.577350,3.000000) circle [radius=0.100000];
\path[draw,fill=red,color=red] (0.000000,4.000000) circle [radius=0.100000];
\path[draw,fill=blue,color=blue] (0.577350,5.000000) circle [radius=0.100000];
\path[draw,fill=green,color=green] (1.154701,6.000000) circle [radius=0.100000];
\path[draw,fill=red,color=red] (1.732051,7.000000) circle [radius=0.100000];
\path[draw,fill=green,color=green] (-5.773503,-4.000000) circle [radius=0.100000];
\path[draw,fill=red,color=red] (-5.196152,-3.000000) circle [radius=0.100000];
\path[draw,fill=blue,color=blue] (-4.618802,-2.000000) circle [radius=0.100000];
\path[draw,fill=green,color=green] (-4.041452,-1.000000) circle [radius=0.100000];
\path[draw,fill=red,color=red] (-3.464102,0.000000) circle [radius=0.100000];
\path[draw,fill=blue,color=blue] (-2.886751,1.000000) circle [radius=0.100000];
\path[draw,fill=green,color=green] (-2.309401,2.000000) circle [radius=0.100000];
\path[draw,fill=red,color=red] (-1.732051,3.000000) circle [radius=0.100000];
\path[draw,fill=blue,color=blue] (-1.154701,4.000000) circle [radius=0.100000];
\path[draw,fill=green,color=green] (-0.577350,5.000000) circle [radius=0.100000];
\path[draw,fill=red,color=red] (0.000000,6.000000) circle [radius=0.100000];
\path[draw,fill=blue,color=blue] (0.577350,7.000000) circle [radius=0.100000];
\path[draw,fill=blue,color=blue] (-6.350853,-3.000000) circle [radius=0.100000];
\path[draw,fill=green,color=green] (-5.773503,-2.000000) circle [radius=0.100000];
\path[draw,fill=red,color=red] (-5.196152,-1.000000) circle [radius=0.100000];
\path[draw,fill=blue,color=blue] (-4.618802,0.000000) circle [radius=0.100000];
\path[draw,fill=green,color=green] (-4.041452,1.000000) circle [radius=0.100000];
\path[draw,fill=red,color=red] (-3.464102,2.000000) circle [radius=0.100000];
\path[draw,fill=blue,color=blue] (-2.886751,3.000000) circle [radius=0.100000];
\path[draw,fill=green,color=green] (-2.309401,4.000000) circle [radius=0.100000];
\path[draw,fill=red,color=red] (-1.732051,5.000000) circle [radius=0.100000];
\path[draw,fill=blue,color=blue] (-1.154701,6.000000) circle [radius=0.100000];
\path[draw,fill=green,color=green] (-0.577350,7.000000) circle [radius=0.100000];
\path[draw,fill=red,color=red] (-6.928203,-2.000000) circle [radius=0.100000];
\path[draw,fill=blue,color=blue] (-6.350853,-1.000000) circle [radius=0.100000];
\path[draw,fill=green,color=green] (-5.773503,0.000000) circle [radius=0.100000];
\path[draw,fill=red,color=red] (-5.196152,1.000000) circle [radius=0.100000];
\path[draw,fill=blue,color=blue] (-4.618802,2.000000) circle [radius=0.100000];
\path[draw,fill=green,color=green] (-4.041452,3.000000) circle [radius=0.100000];
\path[draw,fill=red,color=red] (-3.464102,4.000000) circle [radius=0.100000];
\path[draw,fill=blue,color=blue] (-2.886751,5.000000) circle [radius=0.100000];
\path[draw,fill=green,color=green] (-2.309401,6.000000) circle [radius=0.100000];
\path[draw,fill=red,color=red] (-1.732051,7.000000) circle [radius=0.100000];
\path[draw,fill=green,color=green] (-7.505553,-1.000000) circle [radius=0.100000];
\path[draw,fill=red,color=red] (-6.928203,0.000000) circle [radius=0.100000];
\path[draw,fill=blue,color=blue] (-6.350853,1.000000) circle [radius=0.100000];
\path[draw,fill=green,color=green] (-5.773503,2.000000) circle [radius=0.100000];
\path[draw,fill=red,color=red] (-5.196152,3.000000) circle [radius=0.100000];
\path[draw,fill=blue,color=blue] (-4.618802,4.000000) circle [radius=0.100000];
\path[draw,fill=green,color=green] (-4.041452,5.000000) circle [radius=0.100000];
\path[draw,fill=red,color=red] (-3.464102,6.000000) circle [radius=0.100000];
\path[draw,fill=blue,color=blue] (-2.886751,7.000000) circle [radius=0.100000];
\path[draw,fill=blue,color=blue] (-8.082904,0.000000) circle [radius=0.100000];
\path[draw,fill=green,color=green] (-7.505553,1.000000) circle [radius=0.100000];
\path[draw,fill=red,color=red] (-6.928203,2.000000) circle [radius=0.100000];
\path[draw,fill=blue,color=blue] (-6.350853,3.000000) circle [radius=0.100000];
\path[draw,fill=green,color=green] (-5.773503,4.000000) circle [radius=0.100000];
\path[draw,fill=red,color=red] (-5.196152,5.000000) circle [radius=0.100000];
\path[draw,fill=blue,color=blue] (-4.618802,6.000000) circle [radius=0.100000];
\path[draw,fill=green,color=green] (-4.041452,7.000000) circle [radius=0.100000];
\path[draw,fill=red,thick] (-1.732051,1.000000) circle [radius=0.130000];
\path[draw,fill=red,thick] (1.732051,1.000000) circle [radius=0.130000];
\path[draw,fill=red,thick] (0.000000,2.000000) circle [radius=0.130000];
\path[draw,fill=red,thick] (1.732051,-1.000000) circle [radius=0.130000];
\path[draw,fill=red,thick] (-1.732051,-1.000000) circle [radius=0.130000];
\path[draw,fill=red,thick] (-0.000000,-2.000000) circle [radius=0.130000];
\path[draw,dashed] (-3.464102,0.000000) -- (0.000000,2.000000);
\path[draw,dashed] (0.000000,2.000000) -- (3.464102,0.000000);
\path[draw,dashed] (3.464102,0.000000) -- (3.464102,-4.000000);
\path[draw,dashed] (3.464102,-4.000000) -- (-0.000000,-6.000000);
\path[draw,dashed] (-0.000000,-6.000000) -- (-3.464102,-4.000000);
\path[draw,dashed] (-3.464102,-4.000000) -- (-3.464102,0.000000);
\end{scope}
\node[,align=left] at (-4.700000,-0.500000) {$-\bar{\rho}+(1/2)\Sigma$};
\node[,align=left] at (-3.700000,2.500000) {$X(\bar{T})$};
\end{tikzpicture}
\caption{Relevant data for $Z_{2,3}$.}
\label{ref-9.1-97}
\end{figure}
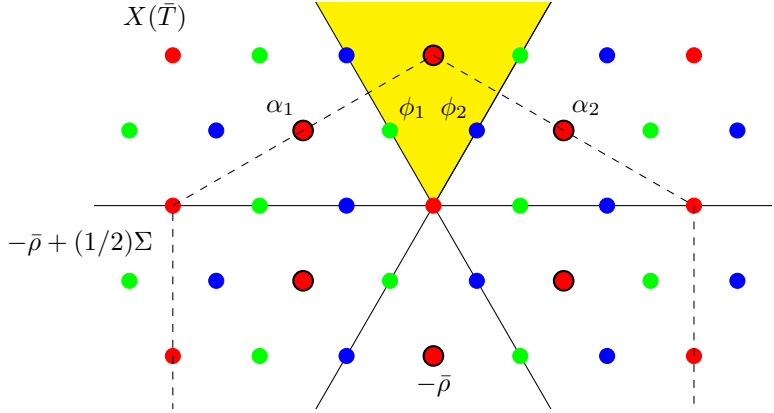
\section{Some counter examples}
\label{ref-10-98}
\subsection{Example}
\label{ref-10.1-99}
We show that the quasi-symmetry hypothesis in Theorem \ref{ref-1.6.2-28} 
cannot be dropped by
giving an example where there is no NCCR given
by modules of covariants. Let $T=G_m^2$ be the standard two
dimensional torus and identify its character group  $X(T)$ with
$\ZZ^2$. Let $W$ be the generic $T$-representation with weights
$(1,1),(-1,0),(0,-1),(-3,-3),(3,0),(0,3)$.  Using \cite[Cor.\
4.1.1]{Vdb1} one verifies that the (indecomposable) modules of covariants
which are Cohen-Macaulay correspond to the weights marked by black
dots in the following picture:
\[
\begin{tikzpicture}[scale=0.500000]
\path[draw,color=lightgray] (-4.000000,-4.000000) grid (4.000000,4.000000);
\path[draw,fill=red,color=red] (1.000000,1.000000) circle [radius=0.180000];
\path[draw,fill=red,color=red] (-1.000000,0.000000) circle [radius=0.180000];
\path[draw,fill=red,color=red] (0.000000,-1.000000) circle [radius=0.180000];
\path[draw,fill=red,color=red] (-3.000000,-3.000000) circle [radius=0.180000];
\path[draw,fill=red,color=red] (3.000000,0.000000) circle [radius=0.180000];
\path[draw,fill=red,color=red] (0.000000,3.000000) circle [radius=0.180000];
\path[draw,color=black,thick] (0.000000,-0.300000) -- (0.000000,0.300000);
\path[draw,color=black,thick] (-0.300000,0.000000) -- (0.300000,0.000000);
\path[draw,fill=black,color=black] (-3.000000,-3.000000) circle [radius=0.100000];
\path[draw,fill=black,color=black] (-3.000000,-2.000000) circle [radius=0.100000];
\path[draw,fill=black,color=black] (-3.000000,-1.000000) circle [radius=0.100000];
\path[draw,fill=black,color=black] (-3.000000,0.000000) circle [radius=0.100000];
\path[draw,fill=black,color=black] (-2.000000,-3.000000) circle [radius=0.100000];
\path[draw,fill=black,color=black] (-2.000000,-2.000000) circle [radius=0.100000];
\path[draw,fill=black,color=black] (-2.000000,-1.000000) circle [radius=0.100000];
\path[draw,fill=black,color=black] (-2.000000,0.000000) circle [radius=0.100000];
\path[draw,fill=black,color=black] (-2.000000,1.000000) circle [radius=0.100000];
\path[draw,fill=black,color=black] (-1.000000,-3.000000) circle [radius=0.100000];
\path[draw,fill=black,color=black] (-1.000000,-2.000000) circle [radius=0.100000];
\path[draw,fill=black,color=black] (-1.000000,-1.000000) circle [radius=0.100000];
\path[draw,fill=black,color=black] (-1.000000,0.000000) circle [radius=0.100000];
\path[draw,fill=black,color=black] (-1.000000,1.000000) circle [radius=0.100000];
\path[draw,fill=black,color=black] (-1.000000,2.000000) circle [radius=0.100000];
\path[draw,fill=black,color=black] (0.000000,-3.000000) circle [radius=0.100000];
\path[draw,fill=black,color=black] (0.000000,-2.000000) circle [radius=0.100000];
\path[draw,fill=black,color=black] (0.000000,-1.000000) circle [radius=0.100000];
\path[draw,fill=black,color=black] (0.000000,0.000000) circle [radius=0.100000];
\path[draw,fill=black,color=black] (0.000000,1.000000) circle [radius=0.100000];
\path[draw,fill=black,color=black] (0.000000,2.000000) circle [radius=0.100000];
\path[draw,fill=black,color=black] (0.000000,3.000000) circle [radius=0.100000];
\path[draw,fill=black,color=black] (1.000000,-2.000000) circle [radius=0.100000];
\path[draw,fill=black,color=black] (1.000000,-1.000000) circle [radius=0.100000];
\path[draw,fill=black,color=black] (1.000000,0.000000) circle [radius=0.100000];
\path[draw,fill=black,color=black] (1.000000,1.000000) circle [radius=0.100000];
\path[draw,fill=black,color=black] (1.000000,2.000000) circle [radius=0.100000];
\path[draw,fill=black,color=black] (1.000000,3.000000) circle [radius=0.100000];
\path[draw,fill=black,color=black] (2.000000,-1.000000) circle [radius=0.100000];
\path[draw,fill=black,color=black] (2.000000,0.000000) circle [radius=0.100000];
\path[draw,fill=black,color=black] (2.000000,1.000000) circle [radius=0.100000];
\path[draw,fill=black,color=black] (2.000000,2.000000) circle [radius=0.100000];
\path[draw,fill=black,color=black] (2.000000,3.000000) circle [radius=0.100000];
\path[draw,fill=black,color=black] (3.000000,0.000000) circle [radius=0.100000];
\path[draw,fill=black,color=black] (3.000000,1.000000) circle [radius=0.100000];
\path[draw,fill=black,color=black] (3.000000,2.000000) circle [radius=0.100000];
\path[draw,fill=black,color=black] (3.000000,3.000000) circle [radius=0.100000];
\end{tikzpicture}
\]
(the red dots are the weights of $W$).
Assume that $M=\bigoplus_{\chi\in S} M(\chi)$ with $S\subset X(T)$ yields a NCCR for $(SW)^T$. Then
by definition  $M(\chi_1\chi_2^{-1})$ is Cohen-Macaulay for all $\chi_1,\chi_2\in S$. 
Moreover by Proposition \ref{ref-3.5-35} $S$ must be maximal with respect to this property.
It is then an easy verification that, up to translation and reflection
around the line $y=x$, there are only two possibilities for such maximal $S$:
\[
\begin{tikzpicture}[scale=0.500000]
\path[draw,fill=black,color=black] (0.000000,0.000000) circle [radius=0.100000];
\path[draw,fill=black,color=black] (1.000000,0.000000) circle [radius=0.100000];
\path[draw,fill=black,color=black] (2.000000,0.000000) circle [radius=0.100000];
\path[draw,fill=black,color=black] (0.000000,1.000000) circle [radius=0.100000];
\path[draw,fill=black,color=black] (1.000000,1.000000) circle [radius=0.100000];
\path[draw,fill=black,color=black] (2.000000,1.000000) circle [radius=0.100000];
\path[draw,fill=black,color=black] (3.000000,1.000000) circle [radius=0.100000];
\path[draw,fill=black,color=black] (1.000000,2.000000) circle [radius=0.100000];
\path[draw,fill=black,color=black] (2.000000,2.000000) circle [radius=0.100000];
\path[draw,fill=black,color=black] (3.000000,2.000000) circle [radius=0.100000];
\path[draw,fill=black,color=black] (2.000000,3.000000) circle [radius=0.100000];
\path[draw,fill=black,color=black] (3.000000,3.000000) circle [radius=0.100000];
\end{tikzpicture}
 \qquad  \qquad \qquad 
\begin{tikzpicture}[scale=0.500000]
\path[draw,fill=black,color=black] (0.000000,0.000000) circle [radius=0.100000];
\path[draw,fill=black,color=black] (1.000000,0.000000) circle [radius=0.100000];
\path[draw,fill=black,color=black] (2.000000,0.000000) circle [radius=0.100000];
\path[draw,fill=black,color=black] (3.000000,0.000000) circle [radius=0.100000];
\path[draw,fill=black,color=black] (1.000000,1.000000) circle [radius=0.100000];
\path[draw,fill=black,color=black] (2.000000,1.000000) circle [radius=0.100000];
\path[draw,fill=black,color=black] (3.000000,1.000000) circle [radius=0.100000];
\path[draw,fill=black,color=black] (2.000000,2.000000) circle [radius=0.100000];
\path[draw,fill=black,color=black] (3.000000,2.000000) circle [radius=0.100000];
\path[draw,fill=black,color=black] (3.000000,3.000000) circle [radius=0.100000];
\end{tikzpicture}
\]
For a $\ZZ$-graded vector space $V=\oplus_i V_i$ we define its Poincare series as $H(V,t)=\sum_i \dim V_i t^i$.
If $\Lambda=\End(\bigoplus_{\chi\in S} M(\chi))=\bigoplus_{\chi_1,\chi_2\in S} M(\chi_1\chi_2^{-1})$ has finite global dimension then
by considering the minimal projective resolutions of the graded simple $\Lambda$-modules one obtains
that the matrix $H(M(\chi_1\chi_2^{-1}),t)_{\chi_1,\chi_2\in S}\in M_{|S|}(\ZZ[[t]])$ has an inverse with polynomial entries. 
One may check\footnote{We have used Mathematica to do the computations.}
that this property does not hold for the two possible sets $S$ indicated above.
 Thus, the global dimension of the endomorphism rings cannot be finite and we do not obtain a NCCR in this way.

Note that $(SW)^T$ is the hypersurface $k[x_1,x_2,x_3,x_4,x_5]/(x_1^3x_2-x_3x_4x_5)$. It is shown in \cite{SVdB4} that $(SW)^T$ has a NCCR 
given by the direct sum of 12 rank one modules and a single rank two module.
\subsection{Example}
\label{ref-10.2-100}
In this section we give an example of an $\Sl_2$-representation where
the corresponding invariant ring has no NCCR.  We let $G=\Sl_2(k)=\Sl(V)$ with $V=k^2$
and $W=V^4$. Then $(SW)^G\cong k[x_1,\ldots,x_6]/(x_1^2+\cdots+x_6^2)$. Now
we use the fact that an isolated hypersurface singularity of dimension $\ge 4$
never has a NCCR. In the case of even dimension this is proved in
\cite{Dao}. The case of odd dimension (relevant for this example) is treated in Appendix \ref{Dao}. 
\subsection{Example}\label{ref-10.3-101}
We know that $\TT_{3,2}$ is a twisted NCCR for $Z_{3,2}$ (see \S\ref{ref-1.4.5-21}).
We will now show that $Z_{3,2}$ does not admit an ordinary NCCR. 

Let us denote $S=Z_{3,2}$ and let $\ol p\in \Spec S$ be the
(non-closed) point corresponding to simultaneously diagonalizable
matrices.  We claim that the localization $S_{\ol p}$ of $S$ at $\ol
p$ does not have a NCCR. This is clearly sufficient.

Since $S$ is a factorial hypersurface (see e.g.~\cite[Theorem 5.3]{Leb3} and \cite[Theorem 22]{Formanek}), $S_{\ol p}$ is a factorial local hypersurface. It is $3$-dimensional and it has an isolated singularity. 
Therefore $S_{\ol p}$ does not have an NCCR by \cite[Theorem 1.2(1)]{Dao}.
\section{Proofs of the main results: the general case}
\label{ref-11-102}
\subsection{Preliminaries}
We use the notations which were introduced in \S\ref{ref-1.5-23} (in particular
the symbols $T,B,G,G_e,\Wscr,\Phi,\Phi^{\pm },W, (\beta_i)_i,\Sigma$). \emph{Throughout we assume that $G$ is connected} so that $G=G_e$.
 For more unexplained
notation and terminology  regarding root systems we refer to
Appendix \ref{ref-D-147}. 

Let
$\Ascr=\mod(G,SW)$ be the category of finitely generated
$G$-equivariant $SW$-modules. 
By Lemma \ref{ref-4.1.1-36}(4) $\gldim \Ascr=d$. For $\chi\in X(T)^+$ we write $P_\chi=V(\chi)\otimes_k
R$. By Lemma \ref{ref-4.1.1-36}(1) $\Ascr$ has a distinguished set of indecomposable projective
generators given by $P_\chi$ for $\chi\in X(T)^+$, as well as a
distinguished set of simple objects $S_\chi=V(\chi)\otimes_k SW/SW_{>0}$ also with $\chi\in X(T)^+$. The
projectives and simples are dual in the following sense
\[
\Ascr(P_{\chi_1},S_{\chi_2})=\delta_{\chi_1,\chi_2}\cdot k\quad \text{for $\chi_1,\chi_2\in X(T)^+$}.
\]
Note that we have
\[
\Ascr(P_{\chi_1},P_{\chi_2})=M(\Hom_k(V(\chi_1),V(\chi_2))).
\]
Fix a finite subset ${\Lscr}$ of $X(T)^+$
and put 
\[
P_{\Lscr}=\bigoplus_{\chi\in {\Lscr}} P_\chi,
\]
\[
\Lambda_{\Lscr}=\Ascr(P_{\Lscr},P_{\Lscr}).
\]
We want to find conditions on ${\Lscr}$ under which one has $\gldim \Lambda_{\Lscr}<\infty$. For $\chi\in X(T)^+$ put
\[
\tilde{P}_{{\Lscr},\chi}=\Ascr(P_{\Lscr},P_\chi).
\]
This is a right projective $\Lambda_{\Lscr}$-module if $\chi\in \Lscr$. Similarly we put
\[
\tilde{S}_{{\Lscr},\chi}=\Ascr(P_{\Lscr},S_\chi).
\]
The
graded simple right $\Lambda_{\Lscr}$-modules are of the form
$\tilde{S}_{{\Lscr},\chi}$ for $\chi\in {\Lscr}$. Note that if $\chi\not\in {\Lscr}$ then $\tilde{S}_{{\Lscr},\chi}=0$.
\begin{lemmas}
\label{ref-11.1.1-103}
The ring $\Lambda_{\Lscr}$ has finite global dimension if and only for all $\chi\in X(T)^+$ one has
\begin{equation}
\label{ref-11.1-104}
\pdim_{\Lambda_{\Lscr}} \tilde{P}_{{\Lscr},\chi}<\infty.
\end{equation}
\end{lemmas}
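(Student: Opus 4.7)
The plan is to prove the two directions separately. The forward implication is immediate: if $\gldim\Lambda_{\Lscr}<\infty$ then every $\Lambda_{\Lscr}$-module, and in particular every $\tilde{P}_{{\Lscr},\chi}$, has finite projective dimension. All the work is in the converse.

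For the converse I would first reduce the problem to checking finite projective dimension for the finitely many graded simples of $\Lambda_{\Lscr}$. The ring $R=SW$ is connected graded with $R_0=k$, the $G$-action preserves the grading, and so $\Lambda_{\Lscr}=\Ascr(P_{\Lscr},P_{\Lscr})$ inherits an $\NN$-grading with degree-zero part
\[
(\Lambda_{\Lscr})_0=\End_G\Bigl(\bigoplus_{\chi\in\Lscr}V(\chi)\Bigr),
\]
a finite-dimensional semisimple $k$-algebra whose simple factors correspond bijectively to $\Lscr$. Hence the graded simple (right) $\Lambda_{\Lscr}$-modules are exactly the $\tilde{S}_{{\Lscr},\chi}$ for $\chi\in\Lscr$, and a standard argument (lift a minimal graded projective resolution of a graded simple to any graded module) shows $\gldim\Lambda_{\Lscr}=\sup_{\chi\in\Lscr}\pdim_{\Lambda_{\Lscr}}\tilde{S}_{{\Lscr},\chi}$. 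Thus it suffices to bound $\pdim_{\Lambda_{\Lscr}}\tilde{S}_{{\Lscr},\chi}$ for each $\chi\in\Lscr$.

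To obtain such a bound I would tensor the classical Koszul resolution of $k=SW/SW_{>0}$ by $V(\chi)$ to produce a $G$-equivariant length-$d$ projective resolution of $S_{\chi}$ in $\Ascr$,
\[
0\r V(\chi)\otimes \wedge^d W\otimes R\r\cdots\r V(\chi)\otimes W\otimes R\r V(\chi)\otimes R\r S_\chi\r 0,
\]
and apply the exact functor $\Ascr(P_{\Lscr},-)$ (exact because $P_{\Lscr}$ is projective in $\Ascr$). This yields an exact sequence of right $\Lambda_{\Lscr}$-modules of length $d$. Decomposing each finite-dimensional $G$-representation $V(\chi)\otimes\wedge^i W$ into irreducibles $\bigoplus_\psi V(\psi)^{n_{i,\psi}}$ identifies the $i$-th term with the finite direct sum $\bigoplus_\psi \tilde{P}_{{\Lscr},\psi}^{n_{i,\psi}}$. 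By hypothesis every $\tilde{P}_{{\Lscr},\psi}$ has finite projective dimension over $\Lambda_{\Lscr}$, so each term of the resolution does, and dimension shifting along the sequence produces a finite bound on $\pdim_{\Lambda_{\Lscr}}\tilde{S}_{{\Lscr},\chi}$.

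There is no serious obstacle: the argument is essentially just Koszul-plus-dimension-shifting. The one point worth flagging is that the hypothesis must range over \emph{all} $\chi\in X(T)^+$, not merely over $\Lscr$, because the weights $\psi$ arising from the Clebsch--Gordan decompositions of $V(\chi)\otimes\wedge^i W$ will in general lie outside $\Lscr$; for such $\psi$ the module $\tilde{P}_{{\Lscr},\psi}$ is not a priori projective over $\Lambda_{\Lscr}$, and one genuinely needs the assumed finiteness of its projective dimension to make the dimension-shift go through.
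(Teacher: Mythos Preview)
Your proof is correct and follows essentially the same route as the paper: both directions handle the trivial implication, reduce the converse to the graded simples $\tilde{S}_{{\Lscr},\chi}$ for $\chi\in\Lscr$, apply $\Ascr(P_{\Lscr},-)$ to the $G$-equivariant Koszul resolution of $S_\chi$, decompose each term as a sum of $\tilde{P}_{{\Lscr},\mu}$'s, and conclude by the hypothesis plus dimension shifting. Your added justification for why graded simples suffice and your closing remark on why the hypothesis must range over all of $X(T)^+$ are both accurate and helpful.
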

\begin{proof} The $\Rightarrow$ direction is trivial so let us
  consider the $\Leftarrow$-direction. So we assume that
  \eqref{ref-11.1-104} holds and we have to prove that $\pdim_{\Lambda_{\Lscr}} \tilde{S}_{{\Lscr},\chi}<\infty$ for $\chi\in {\Lscr}$.

The Koszul complex gives us a resolution of $S_\chi$:
\[
0\r P_d\r \cdots\r P_i \r \cdots \r P_{0}\r S_\chi\r 0
\]
with 
\[
P_i=\bigoplus_{\mu} P_\mu^{m_{\mu,i}}
\]
where $m_{\mu,i}$ is the multiplicity of $V(\mu)$ in $V(\chi)\otimes_k \wedge^i W$.

Applying $\Ascr(P_{\Lscr},-)$ we get an exact sequence in $\mod(\Lambda_{\Lscr}^\circ)$
\[
0\r \tilde{P}_d \r \cdots \r \tilde{P}_i  \r \cdots \r  \tilde{P}_{0}\r \tilde{S}_{{\Lscr},\chi}\r 0
\]
with
\begin{align*}
\tilde{P}_i&=\Ascr(P_{\Lscr},P_i)\\
&=\bigoplus_{\mu} \tilde{P}_{{\Lscr},\mu}^{m_{\mu,i}}.
\end{align*}
Since by \eqref{ref-11.1-104} each of the $\tilde{P}_i$ has finite projective dimension
over $\Lambda_{{\Lscr}}$, the same holds for $\tilde{S}_{{\Lscr},\chi}$.
\end{proof}
\subsection{Creating complexes}
\label{ref-11.2-105}
By $Y(T)$ we denote the group of one parameter subgroups of $T$. 
We let $Y(T)_\RR^-$ be the subset of $Y(T)_\RR$  consisting of all $\lambda$
such that for all $\rho\in \Phi^+$ we have $\langle \lambda ,\rho\rangle \le 0$.
Since $\Phi^-=-\Phi^+$ this  implies $\langle\lambda,\rho\rangle \ge 0$ for all $\rho\in \Phi^-$.
We also put
$Y(T)^-=Y(T)_\RR^-\cap Y(T)$.

For $0\neq\lambda\in Y(T)$  we put
\[
Z_\lambda=\{x\in X\mid \lim_{t\r 0} \lambda(t)x\text{ exists }\},
\]
\[
Q_\lambda=\{g\in G\mid \lim_{t\r 0} \lambda(t)g\lambda(t)^{-1}\text{ exists }\}.
\]
Then $Z_\lambda$ is a linear subspace of $X$ cut out in $X\cong
W^\ast$ by the subspace $K_\lambda$ of~$W$ spanned by the weight
vectors $w_j$ such that $\langle \lambda,\beta_j\rangle> 0$. Moreover
$Q_\lambda$ is the parabolic subgroup of $G$ containing $T$ and having roots
$\rho\in \Phi$ such that $\langle \lambda,\rho\rangle\ge 0$. If $\lambda\in Y(T)^-$ this implies $B\subset Q_\lambda$
and then $Z_\lambda$ is
preserved by $B$.

The 
descriptions of $Q_\lambda,Z_\lambda$ using roots and weights still make sense if $\lambda\in Y(T)_\RR$. 
Note that if $\lambda\in Y(T)^-_\RR$ then there is
always a $\lambda'\in Y(T)^-$  such that $Z_\lambda=Z_{\lambda'}$.
So that if $\lambda\in Y(T)_\RR^-$ it is still true that $Q_\lambda$ contains $B$ and $Z_\lambda$ is preserved by $Q_\lambda$ and hence by $B$.

For $\lambda\in Y(T)_\RR^-$ we consider the usual ``Springer type'' diagram 
\begin{equation}
\label{ref-11.2-106}
\xymatrix{
G\times^{B} Z_{\lambda}\ar[d]\ar@{^(->}[r] & G\times^{B} X\ar[d]\ar[r] & G/B\ar[d]\\
GZ_{\lambda}\ar[r]& X\ar[r] &\bullet
}
\end{equation}
Denote the category equivalence from $B$-representations to
$G$-equivariant bundles on $G/B$ by $\widetilde{?}$. The inverse is
given by taking the fiber in $e:=[B]\in G/B$.

Since $G\times^ {B} X\r G/B$ and $G\times^{B} Z_{\lambda}\r G/B$ are
vector bundles we see that the left most top arrow in \eqref{ref-11.2-106} is obtained by applying
$\underline{\Spec}$ to the sheaves of $\Oscr_{G/B}$-algebras
\[
SW\otimes_k \Oscr_{G/B}\r S_{G/B}((W/K_{\lambda})\,\tilde{}\,).
\]
We have the corresponding Koszul resolution of sheaves of  $\Oscr_{G/B}$-modules:
\[
0\r \wedge^{d_{\lambda}} K_{\lambda}\,\tilde{}\,\otimes_{k} SW
\r \wedge^{d_{\lambda}-1} K_{\lambda}\,\tilde{}\,\otimes_{k} SW
\r
\cdots
\r 
\Oscr_{G/B}\otimes_k SW
\r
S_{G/B}((W/K_{\lambda})\,\tilde{}\,)
\r
0
\]
for $d_{\lambda}=\dim_k K_{\lambda}$. 

Let $\chi\in X(T)^+$. Then we still have an exact sequence
\begin{multline*}
0\r (\chi\otimes_k \wedge^{d_{\lambda}} K_{\lambda})\,\tilde{}\,\otimes_{k} SW
\r (\chi\otimes_k \wedge^{d_{\lambda}-1} K_{\lambda})\,\tilde{}\,\otimes_{k} SW
\r
\cdots\\\cdots
\r 
\tilde{\chi}\otimes_k SW
\r
\tilde{\chi}\otimes_k S_{G/B}((W/K_{\lambda})\,\tilde{}\,)
\r
0.
\end{multline*}
Then by Theorem \ref{ref-A.3-137}  we get a $G$-equivariant quasi-isomorphism. 
\begin{equation}
\label{ref-11.3-107}
C_{\lambda,\chi}\overset{\text{def}}{=}\left(\bigoplus_{p\le 0,q\ge 0} H^q(G/B,(\chi\otimes_k \wedge^{-p} K_{\lambda})\,\tilde{}\,)\otimes_{k} SW[-p-q],d\right)
\cong 
R\Gamma(G/B,\tilde{\chi}\otimes_k S_{G/B}((W/K_{\lambda})\,\tilde{}\,))
\end{equation}
where $d$ is obtained from a ``horizontal twisted differential'' (see 
Appendix \ref{ref-A-130}).
 We have also used here that the $G$-equivariant and non-equivariant
$R\Gamma$ coincide by \cite[Lemma 1.5.9]{VV3}.

As above we fix  a subset ${\Lscr}$ of $X(T)^+$. 
We denote by $C_{{\Lscr},\lambda,\chi}$ the complex of right $\Lambda_{\Lscr}$-modules
given by applying $\Ascr(P_{\Lscr},-)$ to $C_{\lambda,\chi}$.

We say that $\chi\in X(T)^+$ is separated from ${\Lscr}$ by $\lambda\in Y(T)^-_\RR$ if 
\[
\langle \lambda,\chi\rangle <\langle\lambda,\mu\rangle\qquad \text{for every $\mu \in {\Lscr}$.}
\]
If there exists $w\in \Wscr$
such that $\mu=w\ast \chi:=w(\chi+\bar{\rho})-\bar{\rho}$ is dominant (see Appendix \ref{ref-D-147})  then we write $\chi^+=\mu$.

\begin{lemmas}
\label{ref-11.2.1-108}
 Assume that $\chi\in X(T)^+$ is separated from ${\Lscr}$ by $\lambda\in Y(T)_\RR^-$. Then $C_{{\Lscr},\lambda,\chi}$ is acyclic.
Furthermore, forgetting the differential $C_{{\Lscr},\lambda,\chi}$ is a sum of right $\Lambda_{\Lscr}$-modules of the form $\tilde{P}_{{\Lscr},\mu}$ where
the $\mu$ are among the weights
\[
(\chi+\beta_{i_1}+\beta_{i_2}+\cdots+\beta_{i_{-p}})^+
\]
(with each such expression occurring at most once)
where $\{i_1,\ldots,i_{-p}\}\subset\{1,\ldots,d\}$, $i_j\neq i_{j'}$ for
$j\neq j'$ and $\langle\lambda,\beta_{i_j}\rangle>0$. 
\end{lemmas}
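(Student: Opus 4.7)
The plan is to establish (2) by a direct weight-theoretic description of the graded pieces of the complex via Borel--Weil--Bott, and then to deduce (1) by combining this description with a separation argument at the level of $\lambda$-pairings.

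For (2), each graded piece $C_{\lambda,\chi}^{n}$ is the direct sum of the summands $H^q(G/B,(\chi\otimes_k\wedge^{-p}K_\lambda)\,\tilde{}\,)\otimes_k SW$ with $p+q=n$. Since $K_\lambda$ is spanned by the weight vectors $w_j$ with $\langle\lambda,\beta_j\rangle>0$, a $B$-stable flag in $\chi\otimes\wedge^{-p}K_\lambda$ has one-dimensional subquotients whose $T$-weights are exactly the expressions $\chi+\beta_{i_1}+\cdots+\beta_{i_{-p}}$ with $\{i_1,\ldots,i_{-p}\}$ an unordered subset of $\{j:\langle\lambda,\beta_j\rangle>0\}$, and each such subset appears exactly once. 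Applying Borel--Weil--Bott to the resulting filtration of the bundle produces a spectral sequence whose $E_1$-page contributes $V((\chi+\sum_j\beta_{i_j})^+)$ in one cohomological degree whenever $\chi+\sum_j\beta_{i_j}+\bar{\rho}$ is regular, and zero otherwise. Hence $H^q(G/B,(\chi\otimes\wedge^{-p}K_\lambda)\,\tilde{}\,)\otimes SW$ is a sum of modules $P_\mu=V(\mu)\otimes SW$ with $\mu$ of the form stated in (2), and applying $\Ascr(P_\Lscr,-)$ turns each $P_\mu$ into the corresponding $\tilde{P}_{\Lscr,\mu}$.

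For (1), $P_\Lscr$ is projective in $\Ascr$ by Lemma~\ref{ref-4.1.1-36}, so $\Ascr(P_\Lscr,-)$ is exact and therefore $H^i(C_{\Lscr,\lambda,\chi})\cong\Ascr(P_\Lscr,H^i(C_{\lambda,\chi}))$. Through the quasi-isomorphism \eqref{ref-11.3-107}, the $G$-content of $H^i(C_{\lambda,\chi})$ is controlled by Borel--Weil--Bott applied to the $T$-weight filtration of $\tilde{\chi}\otimes S_{G/B}((W/K_\lambda)\,\tilde{}\,)$: every irreducible summand has highest weight $\mu=(\chi+\nu)^+$ for some $\nu=\sum_j a_j\beta_j$ with $a_j\ge 0$ and the $\beta_j$ ranging over weights of $W/K_\lambda$, so $\langle\lambda,\beta_j\rangle\le 0$. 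It suffices to exclude $\mu\in\Lscr$. Writing $\mu+\bar{\rho}=w(\chi+\nu+\bar{\rho})$ and using that $\mu+\bar{\rho}$ is dominant while $\lambda\in Y(T)_\RR^-$ is antidominant, the standard inequality $\langle\lambda,\eta\rangle\le\langle\lambda,w'\eta\rangle$, valid for any dominant $\eta$ and any $w'\in\Wscr$, yields
\[
\langle\lambda,\mu\rangle\le\langle\lambda,\chi+\nu\rangle\le\langle\lambda,\chi\rangle,
\]
where the second step uses $\langle\lambda,\nu\rangle\le 0$. This contradicts the separation condition $\langle\lambda,\chi\rangle<\langle\lambda,\mu\rangle$.

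The main difficulty is the bookkeeping of sign conventions (the antidominant cone $Y(T)_\RR^-$, the dot action $w*\chi=w(\chi+\bar{\rho})-\bar{\rho}$, and the direction of the Weyl-group pairing inequality) so that the chain of estimates points the correct way. The spectral sequence used in (2) need not degenerate, but the stated description is an upper bound on the $G$-content of each graded piece, which is all that (1) requires; the complex-level refinement of Borel--Weil--Bott needed to make \eqref{ref-11.3-107} a genuine quasi-isomorphism is supplied by the results of Appendix~\ref{ref-A-130}.
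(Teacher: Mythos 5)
Your proof is correct and follows essentially the same route as the paper. You identify the graded pieces of $C_{\lambda,\chi}$ by filtering $\chi\otimes_k\wedge^{-p}K_\lambda$ by $T$-weights and invoking Bott's theorem, and you prove acyclicity by noting that every irreducible $V(\mu)$ appearing in $H^\ast(G/B,\tilde{\chi}\otimes_k S((W/K_\lambda)\,\tilde{}\,))$ has $\mu=(\chi+\nu)^+$ with $\langle\lambda,\nu\rangle\le 0$, and then using the pairing inequality for antidominant $\lambda$ (the content of Corollary~\ref{ref-D.3-151}) to deduce $\langle\lambda,\mu\rangle\le\langle\lambda,\chi\rangle$, contradicting the separation hypothesis — which is exactly the paper's argument.
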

\begin{proof} 
The claim about the $\tilde{P}_{{\Lscr},\mu}$ that appear is a straightforward application of Bott's theorem
after filtering the $B$-representations $\chi\otimes_k \wedge^{-p}K_\lambda$ by $T$-representations.
So we only have to prove acyclicity.  In other words we have to prove that the righthand side of \eqref{ref-11.3-107}
is acyclic after applying $\Ascr(P_{{\Lscr}},-)$. This amounts to showing 
that the simple
$G$-representations that appear as summand of 
\[
H^\ast(G/B,\tilde{\chi}\otimes_k S((W/K_{\lambda})\,\tilde{}\,))
\]
are not of the form $V(\mu)$ for $\mu\in {\Lscr}$.

The weights of $\chi\otimes_k S(W/K_{\lambda})$ are of
the form
\[
\chi+\beta_{i_1}+\cdots+\beta_{i_q}
\]
where $i_j\in\{1,\ldots,d\}$ (repetitions are allowed) and $\langle\lambda,\beta_{i_j}\rangle \le 0$.
By Bott's theorem it follows that $H^\ast(G/B,\tilde{\chi}\otimes_k S((W/K_{\lambda})\,\tilde{}\,))$ is a direct sum of representations
of the form
\[
V((\chi+\beta_{i_1}+\cdots+\beta_{i_q})^+).
\]
We have
\[
\langle\lambda ,\chi+\beta_{i_1}+\cdots+\beta_{i_q}\rangle\le \langle \lambda,\chi\rangle.
\]
Thus it suffices to show that if $\mu\in X(T)$ and $\mu^+$ exists then $\langle
\lambda ,\mu^+\rangle\le
\langle\lambda,\mu\rangle$. This follows immediately from Corollary \ref{ref-D.3-151} (with $y=-\lambda$, $x=\chi$).
\end{proof}
\subsection{Proof of Theorem \ref{ref-1.5.1-24}} 
\label{ref-11.3-109}
Put 
$
\Gamma=-\bar{\rho}+\left\{\sum_i a_i \beta_i\mid a_i\le 0\right\}+\Delta
$.
For $\chi\in \Gamma$ let
$r_\chi\ge 0$ be minimal with respect to the property $\chi\in
-\bar{\rho}+r_\chi\bar{\Sigma}+\Delta$. Note that $\chi\not\in -\bar{\rho}+r_\chi\Sigma+\Delta$
for otherwise we could reduce $r_\chi$. 

For $\chi\in \Gamma$ let $p_\chi$ be the minimal number of $a_i$ equal
to $-r_\chi$ among all ways of writing $\chi=-\bar{\rho}+\sum_i a_i\beta_i+\delta$ with
$a_i\in [-r_\chi,0]$, $\delta\in \Delta$.  The following properties follow directly  from the definitions. 
\begin{enumerate}
\item Both $r_{\chi}$ and $p_\chi$ depend only on the $\Wscr$-orbit of $\chi$ for the $\ast$-action.
\item If $\chi\in ]\chi',\chi''[$ and $r_\chi=r_{\chi'}=r_{\chi''}$ then $p_{\chi}\le \min(p_{\chi'},p_{\chi''})$.
\end{enumerate}
Assume $\gldim\Lambda_{\Lscr}=\infty$. Then by Lemma \ref{ref-11.1.1-103} there is some $\chi\in X(T)^+$ such
that $\pdim_{\Lambda_{\Lscr}} \tilde{P}_{{\Lscr},\chi}=\infty$. Then by Lemma \ref{ref-11.3.1-113}
below
$\chi$ must be in $\Gamma$ for 
otherwise $\tilde{P}_{{\Lscr},\chi}=0$. 

We pick $\chi$ such that first $r_\chi$ is minimal and then $p_\chi$ is minimal. We have $r_\chi\ge 1$ (for otherwise $\chi\in {\Lscr}$ and hence $\pdim \tilde{P}_{{\Lscr},\chi}=0$). 
We find by Lemma \ref{ref-C.2-146} below (changing the sign of $\lambda$) that there exists $\lambda$ such that for all $\mu\in -\bar{\rho}+r_\chi\Sigma+\Delta$
we have $\langle \lambda, \chi\rangle>\langle \lambda,\mu\rangle$. Hence also $\langle \lambda, \chi+\bar{\rho}\rangle>\langle \lambda,\mu+\bar{\rho}\rangle$. Let $w\in \Wscr$ be such that $w\lambda$ is dominant. Then since $r_\chi\Sigma+\Delta$
is $\Wscr$-invariant we still have for all  $\mu\in -\bar{\rho}+r_\chi\Sigma+\Delta$: $\langle w\lambda, w(\chi+\bar{\rho})\rangle>\langle w\lambda,\mu+\bar{\rho}\rangle$. Moreover by Corollary \ref{ref-D.3-151} below we also have 
$\langle w\lambda, w(\chi+\bar{\rho})\rangle\le  \langle w\lambda, \chi+\bar{\rho}\rangle$. Replacing $\lambda$ then by 
$-w\lambda$ we find $\lambda\in Y(T)^-_\RR$ such that $\langle \lambda, \chi+\bar{\rho}\rangle<\langle \lambda,\mu+\bar{\rho}\rangle$, so that finally we have shown
\begin{equation}
\label{ref-11.4-110}
\forall \mu\in -\bar{\rho}+r_\chi \Sigma+\Delta:\langle \lambda,\chi\rangle < \langle\lambda,\mu\rangle
\end{equation}
Adding the boundary this becomes
\begin{equation}
\label{ref-11.5-111}
\forall \mu\in -\bar{\rho}+r_\chi \bar{\Sigma}+\Delta:\langle \lambda,\chi\rangle \le \langle\lambda,\mu\rangle
\end{equation}

Since $\Lscr\subset -\bar{\rho}+\Sigma+\Delta\subset -\bar{\rho}+r_\chi \Sigma+\Delta$ we obtain in particular $\chi$ is separated from ${\Lscr}$ by $\lambda$ 
and hence by Lemma \ref{ref-11.2.1-108} we
have an exact sequence $C_{{\Lscr},\lambda,\chi}$.
Let 
\[
\mu=(\chi+\beta_{i_1}+\beta_{i_2}+\cdots+\beta_{i_{-p}})^+
\]
as in Lemma \ref{ref-11.2.1-108}. If $p=0$ then $\chi^+=\chi$ and hence $\tilde{P}_{{\Lscr},\mu}=\tilde{P}_{{\Lscr},\chi}$.
If $p<0$ then we claim that either $r_\mu<r_\chi$, or else $p_\mu<p_\chi$. To start put
\begin{equation}
\label{ref-11.6-112}
\mu'=\chi+\beta_{i_1}+\beta_{i_2}+\cdots+\beta_{i_{-p}}.
\end{equation}
By Claim (1) above it is sufficient to prove that either $r_{\mu'}<r_\chi$ or else $p_{\mu'}<p_\chi$. This follows easily from
the following observation
\begin{enumerate}
\setcounter{enumi}{2}
\item Write $\chi=-\bar{\rho}+\sum_i a_i\beta_i+\delta$ with $a_i\in [-r_\chi,0]$, $\delta\in \Delta$. If
  $\langle \lambda,\beta_i\rangle>0$ then $a_i=-r_\chi$.
\end{enumerate}
If this claim is false then there is an $\epsilon>0$ such that
$\chi-\epsilon\beta_i\in -\bar{\rho}+r_\chi\bar{\Sigma}+\Delta$ but
$\langle\lambda,\chi-\epsilon\beta_i\rangle=\langle\lambda,\chi\rangle-\epsilon\langle
\lambda,\beta_i\rangle<\langle\lambda,\chi\rangle$ which  contradicts \eqref{ref-11.5-111}.

So we conclude that the indecomposable projective right $\Lambda_{\Lscr}$-modules $\tilde{P}_{{\Lscr},\mu}$ occurring in $C_{{\Lscr},\lambda,\chi}$, which are different from the single copy of $\tilde{P}_{{\Lscr},\chi}$,
satisfy either $r_{\mu}<r_{\chi}$ or else $p_{\mu}<p_\chi$. By the minimality assumptions on $\chi$ we  have $\pdim \tilde{P}_{{\Lscr},\mu}<\infty$. This implies that $\pdim \tilde{P}_{{\Lscr},\chi}<\infty$
as well, which is a contradiction.
\begin{lemmas}
\label{ref-11.3.1-113}
Assume that $\Ascr(P_{\Lscr},P_\chi)\neq 0$. Then $\chi\in \Gamma$.
\end{lemmas}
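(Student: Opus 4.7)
The plan is to unpack the hypothesis $\Ascr(P_\Lscr,P_\chi)\ne 0$ into a weight identity and then match it against the definition of $\Gamma$. Using the standard identification
\[
\Ascr(P_\mu,P_\chi)=\Hom_G(V(\mu),V(\chi)\otimes SW),
\]
non-vanishing yields a $\mu\in\Lscr$ together with a $G$-equivariant embedding $V(\mu)\hookrightarrow V(\chi)\otimes SW$. Picking a highest weight vector of this copy of $V(\mu)$ and expanding its image in a basis of pure weight tensors, I extract a relation
\[
\mu=\nu+\sum_i n_i\beta_i,
\]
in which $\nu$ is a weight of $V(\chi)$ and each $n_i\in\ZZ_{\ge 0}$.

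Next, since $\mu\in\Lscr$ I would write $\mu+\bar\rho=\sigma_0+\delta_0$ with $\sigma_0=\sum_i b_i\beta_i\in\Sigma$ (so each $b_i\in\;]-1,0]$) and $\delta_0\in\Delta$. Substituting the previous relation and solving for $\nu+\bar\rho$ gives
\[
\nu+\bar\rho=\sum_i(b_i-n_i)\beta_i+\delta_0,
\]
and because every $b_i-n_i\le 0$, this immediately places $\nu$ in $-\bar\rho+C+\Delta=\Gamma$, where $C=\{\sum_i a_i\beta_i\mid a_i\le 0\}$.

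The third and most delicate step is to bridge from $\nu\in\Gamma$ to $\chi\in\Gamma$. The bridge is that $\chi-\nu$ is a non-negative integer combination of simple positive roots, since $\chi$ is the dominant highest weight and $\nu$ is a weight of the irreducible $V(\chi)$. The plan is to combine this with the $\Wscr$-invariance of $\Sigma$, $\Delta$, and $C$ (which is inherited from the $\Wscr$-invariance of the multiset of weights of $W$) and the dominance of $\chi+\bar\rho$ to absorb the positive-root correction into the $C+\Delta$ decomposition: one applies a Weyl element $w$ sending $\nu$ to its dominant representative, uses that $\bar\rho-w\bar\rho$ is itself a non-negative combination of simple positive roots, and then trades these positive-root contributions against $C$ by a convexity argument analogous to those already assembled in Appendix D of the paper. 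Carrying out this final absorption cleanly---ensuring the positive-root slack can always be rewritten as a non-positive $\beta_i$-combination up to $\Delta$---is the main obstacle I anticipate, and it is the place where the dominance of $\chi$ really does the work.
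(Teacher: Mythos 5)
Your first two steps are sound: from $V(\mu)\hookrightarrow V(\chi)\otimes S^dW$ you extract $\mu=\nu+\sum_i n_i\beta_i$ with $\nu$ a weight of $V(\chi)$ and $n_i\ge 0$, and you correctly conclude $\nu\in\Gamma$. The gap is exactly where you anticipate it: the bridge from $\nu\in\Gamma$ to $\chi\in\Gamma$ is not achievable by the route you sketch, because the implication ``$\nu\in\Gamma$, $\nu$ a weight of $V(\chi)$, hence $\chi\in\Gamma$'' is simply false. The set $\Gamma=-\bar\rho+C+\Delta$, where $C=\{\sum_i a_i\beta_i\mid a_i\le 0\}$, has no reason to be closed under \emph{adding} non-negative combinations of positive roots --- $\chi-\nu$ has nothing to do with the cone $-C$. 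Moreover $\Gamma$ is not even $\Wscr$-invariant (conjugating shifts $-\bar\rho$ to $-\bar\rho$ plus a sum of positive roots), so the ``absorption'' you propose via Weyl elements and Appendix D convexity cannot be carried out in general. The extra information you discard---that the particular $\nu$ you chose is related to $\mu$ by $\mu=\nu+\sum n_i\beta_i$---is not enough to rescue the step either.

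The fix is to use the sharp form of \cite[Ex.\ 25.33]{FH}, which the paper invokes repeatedly: if $V(\mu)$ appears as a summand of $V(\chi)\otimes S^dW$, then one may take $\nu=\chi$ itself, i.e.\ $\mu=\chi+\eta$ where $\eta$ is a weight of $S^dW$ (so $\eta\in -C$). Your step 2 then gives $\chi=\mu-\eta\in(-\bar\rho+\Sigma+\Delta)+C\subset -\bar\rho+C+\Delta=\Gamma$ immediately, with no bridge needed. The paper's own proof achieves the same thing by first dualizing --- rephrasing the hypothesis as $V(-w_0\chi)$ appearing in $V(-w_0\mu)\otimes S^dW$ --- and then applying the same FH exercise, so that the dominant weight $-w_0\chi$ is pinned down directly as $-w_0\mu+\eta$. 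Both routes are equivalent in substance; the essential point you miss is that the FH exercise identifies the dominant highest weight of the new summand as the dominant highest weight of one factor plus a weight of the other, not merely as a sum of two arbitrary weights.
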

\begin{proof}
If $\Ascr(P_{\Lscr},P_\chi)\neq 0$ then some $V(\mu)$ for $\mu\in {\Lscr}$ is a summand of $V(\chi)\otimes S^dW$ for some $d$. This is equivalent to $V(\chi)^\ast$ being a summand of
some $V(\mu)^\ast\otimes S^dW$. We have $V(\mu)^\ast=V(\mu^\ast)$ with $\mu^\ast=-w_0\mu$ where $w_0$ is the longest element of $\Wscr$. So in particular
$\mu^\ast\in -\Gamma-2\bar{\rho}$.

Since the weights of $S^dW$ are in $\{\sum_i a_i\beta_i\mid a_i\ge
0\}$ we conclude by \cite[Ex.\ 25.33]{FH} that $V(\mu^\ast)\otimes
S^dW$ is a sum of representations of the form $V(\theta)$ with
$\theta\in(-\Gamma-2\bar{\rho})\cap X(T)^+$.  Hence if
$V(\chi)^\ast=V(-w_0\chi)$ appears as a summand of $V(\mu^\ast)\otimes
S^dW$ then $-w_0\chi\in -\Gamma-2\bar{\rho}$ and hence $\chi\in \Gamma$.
\end{proof}
\begin{remarks}
\label{ref-11.3.2-114}
The proof Theorem \ref{ref-1.5.1-24} may be converted into a kind of
algorithm for recognizing algebras of covariants $\Lambda_{\Lscr}$ that
have finite global dimension. It is only a pseudo-algorithm in the
sense that \emph{if} it gives a positive answer then definitely
$\gldim \Lambda_{\Lscr}<\infty$ but the algorithm is not guaranteed to
detect every $\Lambda_{\Lscr}$ of finite global dimension.
Nonetheless the algorithm appears to be quite effective in practice
and for small examples it can even be carried out manually.
Furthermore we have programmed it for two-dimensional tori and in that
case it has been very useful in our investigations.

The basis of the algorithm is to verify  \eqref{ref-11.1-104} for all $\chi\in X(T)^+$. 
If we have verified \eqref{ref-11.1-104} for a certain finite set of weights $\chi\in {\Lscr}'$ 
(initially ${\Lscr}={\Lscr}'$) then
we attempt to enlarge this set using Lemma \ref{ref-11.2.1-108} (as in the proof of Theorem \ref{ref-1.5.1-24}). If this turns out to be impossible then the algorithm returns no answer. Otherwise we attempt to continue enlarging ${\Lscr}'$ until
we arrive at a situation where 
\begin{equation}
\label{ref-11.7-115}
{\Lscr}'\supset X(T)^+\cap (-\bar{\rho} +r\Sigma+\Delta)\supset {\Lscr}
\end{equation}
for a suitable
$r\ge 1$ and a suitable bounded closed convex $\Wscr$-invariant set $\Delta$ (usually we may take $\Delta=\emptyset$).
In that case we return a positive answer.
We may really stop at this stage since now we may proceed as in the proof of Theorem \ref{ref-1.5.1-24}
to enlarge ${\Lscr}'$ to $X(T)^+$.

There are situations where we can keep enlarging ${\Lscr}'$ without
\eqref{ref-11.7-115} ever becoming true.  It is not so clear how
to recognize this situation algorithmically. So we simply set a bound on
the running time  and return no answer if that bound
is reached.
\end{remarks}
\subsection{Proof of Theorem \ref{ref-1.3.1-5}}
\label{ref-11.4-116}
 By
embedding $X$ as a closed subvariety in a $G$-representation we may by Theorem \ref{ref-4.3.1-48}
reduce to the case that $X$ is itself a representation. 

We now invoke Theorem \ref{ref-1.5.1-24} to obtain a $G_e$-representation 
$U_e$ containing a trivial direct summand such that $M_{G_e,X}(\End(U_e))$ has finite global dimension.
Using Lemmas \ref{ref-4.5.1-63},\ref{ref-4.5.2-67} one obtains a
$G$-representation $U=\Ind^G_{G_e} U_e$ from $U_e$ 
such that $M_{G,X}(\End(U))$
has finite global dimension. It is clear that $U$ still contains a trivial direct summand. 
\subsection{Proof of Theorem \ref{ref-1.3.3-7}}
\label{ref-11.5-117}
Let $T$ be the homogeneous coordinate ring of $(X,\Lscr)$. Then $X=\Proj T$ and $X^{ss}\quot G=\Proj T^G$.  
Since $T$ and $T^G$ are finitely generated commutative rings, they have Veronese subrings generated in degree one. 
Hence replacing $\Lscr$ by a suitable power we may assume that $T$ and $T^G$ are both generated in degree one. We choose a $G$-representation $W$ together with a $G$-equivariant graded surjective map $SW\r T$ ($W$ in degree one). 
As in the proof of Theorem \ref{ref-1.3.1-5} above we may find a~$G$-representation $U$ containing the trivial 
representation such that $M_{SW}(\End(U))$ has finite global dimension. 

By construction $X^{ss}$ has a covering by $G$-invariant affine
open sets $\Spec (T_f)_0$ where $f$ runs through the homogeneous
elements of $T^G$ of strictly positive degree.  For each such $f$ let
$\tilde{f}$ be a lift of $f$ in $(SW)^G$. Then
$M_{SW_{\tilde{f}}}(\End(U))=M_{SW}(\End(U))_{\tilde{f}}$ also has
finite global dimension. Now $\Spec (T_f)_0$ is an affine open subset of  $X$ and
since $X$ is smooth the same holds for $\Spec (T_f)_0$. Since $T$ is
generated in degree one, $T_f$ is strongly graded (see the proof of Lemma \ref{ref-4.5.1-63}) and hence $\Spec
T_f$ is smooth as well. So $\Spec T_f$ is a smooth, closed subvariety
of $\Spec (SW)_{\tilde{f}}$ and moreover since $\tilde{f}$ is an
invariant function 
it follows that closed orbits
in $\Spec T_f$ are also closed in $\Spec SW$. Hence by Theorem
\ref{ref-4.3.1-48} we obtain that $M_{T_f}(\End(U))$ has finite global
dimension.

Now the fact that $T_f^G$ is strongly graded implies that $M_{T_f}(\End(U))$ is strongly graded and hence 
$M_{T_f}(\End(U))_0$ has finite global dimension \cite[Thm I.3.4]{NVO}. Since $M_{T_f}(\End(U))_0$ are the sections of $M^{ss}(\End(U))$ 
on $\Spec (T^G_f)_0$, we are done.

\subsection{Proof of Proposition \ref{ref-1.3.6-10}}
\label{ref-11.6-118}
Let $S$ be as in the statement of the Proposition. 
\subsubsection{Preliminaries} We recall some ingredients
from the theory of affine toric varieties. Let $M$ be the
quotient group of $S$. Then following e.g.\ \cite{BrGu} there exists a
finite irredundant set of primitive $(\sigma_i)_{i=1}^d\in M^\ast$ such that
$S=M\cap \sigma^\vee$ where $\sigma^\vee$ is the cone $\{x\in M_\RR\mid
\forall i:\sigma_i(x)\ge 0\}$. One knows that $\sigma^\vee$ contains 
an interior point and no linear subspace. 

For $\gamma\in M_\RR$ define
$
S_\gamma=\{z\in M\mid \sigma_i(z)\ge \sigma_i(\gamma)\}
$. If $\gamma'=\gamma+m$ with $m\in M$ then
$S_{\gamma'}=m+ S_\gamma$. Thus $S_\gamma$ as an $S$-module depends up to isomorphism only
on $\bar{\gamma}\in M_{\RR}/M$. Following \cite{BrGu} we call 
$S_\gamma$ a \emph{conic} fractional $S$-ideal. Note that since $\sigma_i$ takes integral values on $M$ there exists
${\bar{n}}\in \NN$ such that if $n\ge {\bar{n}}$ then for every $\gamma\in M_{\RR}$ there exists
$\gamma'\in (1/n)M$ such that $S_\gamma=S_{\gamma'}$.
The following result is stated in \cite{BrGu} (see also \cite{SmithVdB,Yasuda}).
\begin{lemmas}
\label{ref-11.6.1-119} Let $n>0$.  The $S$-module $(1/n)S\subset M_\RR$ 
  is a union of translates (in $M_\RR$) of conic fractional ideals. Moreover
every conic fractional ideal occurs in this way if $n\ge {\bar{n}}$.
\end{lemmas}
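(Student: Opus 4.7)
The plan is to decompose $(1/n)S$ according to cosets of $M$ inside $(1/n)M$ and identify each piece as a translate of a conic fractional ideal.

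First, since $\sigma^\vee$ is a cone we have $(1/n)\sigma^\vee = \sigma^\vee$, so
\[
(1/n)S = (1/n)(M\cap \sigma^\vee) = (1/n)M \cap \sigma^\vee.
\]
Partitioning $(1/n)M$ into the finitely many cosets modulo $M$ and choosing representatives $\gamma_1,\ldots,\gamma_r \in (1/n)M$ yields
\[
(1/n)S = \bigsqcup_{j=1}^{r} \bigl((\gamma_j + M)\cap \sigma^\vee\bigr).
\]
Next I would compute each piece. For $\gamma \in (1/n)M$,
\[
(\gamma+M)\cap \sigma^\vee = \gamma + \{m\in M \mid \sigma_i(\gamma+m)\ge 0\ \forall i\} = \gamma + \{m\in M \mid \sigma_i(m)\ge \sigma_i(-\gamma)\ \forall i\} = \gamma + S_{-\gamma}.
\]
This is an $S$-stable subset (translating by $s\in S$ preserves $(\gamma+M)\cap\sigma^\vee$ because $\sigma_i(s)\ge 0$), and as an abstract $S$-module it is the translate $\gamma + S_{-\gamma}$ of a conic fractional ideal. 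This gives the first assertion.

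For the second assertion, I would invoke the observation stated immediately before the lemma: when $n\ge \bar{n}$, every $\gamma\in M_\RR$ admits $\gamma'\in (1/n)M$ with $S_\gamma = S_{\gamma'}$. Given an arbitrary conic fractional ideal $S_\gamma$, apply this to $-\gamma$ to obtain $\gamma'\in (1/n)M$ with $S_{-\gamma}=S_{\gamma'}$; then $-\gamma'\in (1/n)M$ contributes the piece $-\gamma' + S_{\gamma'} = -\gamma' + S_{-\gamma}$ to the above decomposition, which is a translate of $S_\gamma$ (since $S_{-\gamma}$ and $S_\gamma$ may be identified by noting the argument applies symmetrically, or more directly by applying the same observation to $\gamma$ itself and taking $-\gamma'$).

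There is no real obstacle here; the only thing to be careful about is tracking signs (the piece above $\gamma$ is a translate of $S_{-\gamma}$, not $S_\gamma$), which is handled simply by replacing $\gamma$ by $-\gamma$ when matching a target conic fractional ideal to a piece of the decomposition.
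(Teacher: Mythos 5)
Your proof is correct and follows essentially the same route as the paper, which simply writes $(1/n)S=\coprod_i(-\gamma_i+S_{\gamma_i})$ and leaves the verification to the reader; your derivation $(\gamma+M)\cap\sigma^\vee=\gamma+S_{-\gamma}$ is that verification. Note that your first phrasing of the second assertion is off by a sign (a translate of $S_{-\gamma}$ is not in general a translate of $S_\gamma$), but you correctly identify and repair this at the end: apply the cited observation to $\gamma$ itself to get $\gamma'\in(1/n)M$ with $S_\gamma=S_{\gamma'}$, and the coset of $-\gamma'$ then supplies a piece which is a translate of $S_{-(-\gamma')}=S_{\gamma'}=S_\gamma$.
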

\begin{proof}
 Choose representatives $(\gamma_i)_i$ for $(1/n)M/M$.  One verifies
$
(1/n)S=\coprod_i(-\gamma_i+S_{\gamma_i})
$.
\end{proof}
According to \cite{chouinard} there is an exact sequence
\begin{equation}
\label{ref-11.8-120}
0\r M\xrightarrow{\phi} \ZZ^d\r \Cl(k[S])\r 0
\end{equation}
where $\phi(m)=(\sigma_i(m)_i)$ such that $S=\phi^{-1}(\NN^d)$. According to \cite{BrGu} this makes $R\overset{\text{def}}{=}k[\NN^d]$ into a $\Cl(k[S])$-graded ring such that its part of degree zero is $k[S]$.  Put $G=\Hom(\Cl(k[S]),G_m)$. This
is an abelian algebraic group which is (non-canonically) the sum of a torus and a finite group.
The $\Cl(k[S])$-grading may be transformed into a $G$-action such that one has
$R^G=k[S]$ and $X(G)=\Cl(k[S])$.

 Let $w_i$ be the $i$'th generator of $\NN^d$ considered as an element
of $R$ and let us define the degree of $w_i$ to be its image $\beta_i\in \Cl(k[S])$. According to
the equivalence between (1)(3) in
\cite[Thm 2]{chouinard} one obtains that for all $1\le i\le d$: 
\[
\NN\beta_1+\cdots+\widehat{\NN\beta_i}+\cdots+\NN\beta_d=\Cl(k[S]).
\]
It is easy to see that this implies that $W:=\sum_i kw_i$ is generic. 

Let $T=G_e$ be the connected component of the identity. This is a torus.
\begin{definitions} An element $\chi\in \Cl(k[S])=X(G)$ is strongly critical if
it is strongly critical for $T$ (cfr \S\ref{ref-4.4.1-58}).
\end{definitions}
The following result is usually stated in the case that $G=T$.
\begin{lemmas} 
\label{ref-11.6.3-121} For every $\gamma\in M_\RR$ there exists a strongly critical weight $\chi$ such that $k[S_\gamma]=M(-\chi)$ and vice versa if $M(-\chi)\neq 0$.
\end{lemmas}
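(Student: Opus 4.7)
The plan is to identify $M(-\chi) = (k_{-\chi}\otimes R)^G$ with the $\chi$-weight space $R_\chi$ of the $X(G)$-graded ring $R = k[\NN^d]$, where the weight of the monomial $w^a$ is $[a] = \sum_i a_i\beta_i$. For any lift $a_0 \in \ZZ^d$ of $\chi \in \ZZ^d/\phi(M) = X(G)$, the monomials of weight $\chi$ in $R$ are precisely the $w^{a_0+\phi(m)}$ with $m \in M$ satisfying $\sigma_i(m) \geq -(a_0)_i$ for every $i$, and the assignment $w^{a_0+\phi(m)} \mapsto x^m$ is a $k[S]$-linear isomorphism onto $k[\{m \in M : \sigma_i(m) \geq -(a_0)_i\}]$, as the action of $x^s \in k[S] = R^G$ via $w^{\phi(s)}$ simply shifts $m$ to $m+s$.

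For the first direction, given $\gamma \in M_\RR$ I would set $(a_0)_i := -\lceil \sigma_i(\gamma)\rceil$ and $\chi := [a_0] \in X(G)$. Since $\sigma_i$ is integer-valued on $M$, the condition $\sigma_i(m) \geq -(a_0)_i$ is equivalent to $\sigma_i(m) \geq \sigma_i(\gamma)$, so the identification above gives $M(-\chi) \cong k[S_\gamma]$. To check that $\chi$ is strongly critical, I write $(a_0)_i = -\sigma_i(\gamma) - f_i$ with $f_i := \lceil \sigma_i(\gamma)\rceil - \sigma_i(\gamma) \in [0,1)$; since $[-\phi(\gamma)] = 0$ in $X(G)_\RR = \RR^d/\phi(M_\RR)$, this yields $\chi = \sum_i (-f_i)\beta_i$ with coefficients in $(-1,0]$, placing $\chi$ in $\Sigma$, hence in $-2\bar\rho + \Sigma$ because $G$ is abelian and so $\bar\rho = 0$.

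For the converse, given $\chi$ strongly critical with $M(-\chi) \neq 0$, I pick $a_0 \in \NN^d$ with $[a_0] = \chi$ and use the strong criticality to write $\chi = \sum c_i \beta_i$ in $X(G)_\RR$ with $c_i \in (-1,0]$. Because $\phi_\RR \colon M_\RR \to \RR^d$ is injective (the cone $\sigma^\vee$ contains no linear subspace), the relation $((a_0)_i - c_i)_i \in \phi(M_\RR)$ produces a unique $\gamma' \in M_\RR$ with $\sigma_i(\gamma') = (a_0)_i - c_i$; setting $\gamma := -\gamma'$ gives $\sigma_i(\gamma) \in (-(a_0)_i - 1, -(a_0)_i]$, hence $\lceil \sigma_i(\gamma)\rceil = -(a_0)_i$, and $M(-\chi) \cong k[S_\gamma]$ via the identification of the first paragraph.

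The one point that requires care is threading between $X(G) = \ZZ^d/\phi(M)$, which may carry torsion, and the rationalization $X(G)_\RR = X(T)_\RR = \RR^d/\phi(M_\RR)$ in which $\Sigma$ is defined; this is harmless in the present abelian setting because there are no roots, $X(T)^+ = X(T)$, and $\bar\rho = 0$, so strong criticality reduces to the purely real condition that the class of $\chi$ in $\RR^d/\phi(M_\RR)$ be a $(-1,0]$-combination of the $\beta_i$. The fractional-part computation in the forward direction and the choice of $\gamma'$ in the converse exactly exhibit this.
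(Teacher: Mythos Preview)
Your proof is correct and follows essentially the same approach as the paper. The paper identifies $M\subset\ZZ^d$ via $\phi$, writes $\gamma_i=n_i+\delta_i$ with $n_i=\lceil\gamma_i\rceil$ and $\delta_i\in(-1,0]$, and sets $\chi=-\sum_i n_i\beta_i$; your choice $(a_0)_i=-\lceil\sigma_i(\gamma)\rceil$ and $\chi=[a_0]$ is the same construction in slightly different notation, and your $-f_i$ plays the role of the paper's $\delta_i$. The paper simply asserts that ``it is easy to see that this procedure is reversible'' for the converse, whereas you spell this out explicitly; your argument there is fine, though note that the injectivity of $\phi_\RR$ follows more directly from the exactness of $0\to M\to\ZZ^d\to\Cl(k[S])\to 0$ (tensoring with $\RR$ is exact) than from the cone containing no linear subspace.
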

\begin{proof}
Using the map $\phi$ above we have the following identifications
\begin{align*}
M&=\{(a_i)_i\in \ZZ^d\mid \sum_i a_i\beta_i=0\text{ in $\Cl(k[S])$}\},\\
M_\RR&=\{(a_i)_i\in \RR^d\mid \sum_i a_i\beta_i=0\text{ in $\Cl(k[S])_\RR$}\}
\end{align*}
with $\sigma_i((a_i)_i)=a_i$.
Hence $\gamma\in M_\RR$ may be identified with  $(\gamma_i)_i\in \RR^d$ such that $\sum_i\gamma_i\beta_i=0$
in $\Cl(k[S])_\RR$. We obtain
\[
S_\gamma=\{(a_i)_i\in \ZZ^d\mid \sum_i a_i\beta_i=0, a_i\ge \gamma_i\}.
\]
Write $\gamma_i=n_i+\delta_i$ with $n_i\in \ZZ$ and $\delta_i\in ]-1,0]$. 
$b_i=a_i-n_i$, $\chi=-\sum_i n_i\beta_i\in\Cl(k[S])$.
Then we find that as $S$-fractional ideal we have
\[
S_\gamma=(n_i)_i+ \{(b_i)_i\in \NN^d\mid \sum_i b_i\beta_i=\chi\}.
\]
In other words
$
k[S_\gamma]\cong M(-\chi)
$.
Note that $\chi=\sum_i \delta_i \beta_i$ in $\Cl(k[S])_\RR=X(T)_\RR$ so $\chi$ is
strongly critical. It is easy to see that this procedure is reversible.
\end{proof}
\subsubsection{Proof of Proposition \ref{ref-1.3.6-10}}
We let $(G,T,W)$, etc\dots, be as in the previous section. Recall that
$W$ is generic.
We may write $G=T\times A$ where $A$ is a finite abelian group.
Combining Corollary \ref{ref-1.5.2-25} for $G=G_e=T$, $\Delta=0$ with Lemma \ref{ref-4.5.1-63}  we see that if we put
\begin{align*}
{\Lscr}&=\{\mu_1\otimes \mu_2\mid \mu_1\in X(A),\mu_2\in \Sigma\cap X(T)\},\\
M&=\bigoplus_{\chi\in {\Lscr}} M(\chi),\\
\Lambda&=\End_{R^G}(M)
\end{align*}
then $\Lambda$ is a NCR for $R^G=k[S]$. However by Lemma \ref{ref-11.6.3-121} and
Lemma \ref{ref-11.6.1-119} 
$M$ has the same summands as $k[(1/n)S]$ for $n$ large. This finishes the proof.

\section{Proofs of the main results: the quasi-symmetric case}
\label{ref-12-122}
\subsection{Proof of Theorem \ref{ref-1.6.1-27}}
\label{ref-12.1-123}
The proof runs parallel with the one of Theorem \ref{ref-1.5.1-24}. We only highlight  the differences. Instead of
$r_\chi\ge 1$, we now have $r_{\chi}>1/2$. Under this condition we have to show that
if
\[
\mu'=\chi+\sum_{i\in S} \beta_i,
\]
with $S=\{i_1,\ldots,i_{-p}\}\neq\emptyset$
as in \eqref{ref-11.6-112} 
then $r_{\mu'}<r_\chi$ or $p_{\mu'}<p_\mu$ where the following additional conditions are satisfied
\begin{enumerate}
\item
$\chi=-\bar{\rho}+\sum_i a_i\beta_i+\delta$ with $a_i\in [-r_\chi,0]$ and $\delta\in \Delta$
with the number of $a_i$ satisfying $a_i=-r_{\chi}$ being minimal.
\item All $\beta_i$ for $i\in S$ are in an open half space $\langle \lambda,-\rangle>0$.
\item $a_i= -r_\chi$ for $i\in T_\lambda:=\{i\mid \langle\lambda,\beta_i\rangle>0\}\supset S$ (by ``observation (3)'' in the proof of Theorem of \ref{ref-1.5.1-24}).
\end{enumerate}
In addition we may and we will assume
\begin{enumerate}
\setcounter{enumi}{3}
\item For every line $\ell\subset X(T)_\RR$
through the origin the $\beta_i\in\ell$ with $a_i\neq 0$ are all
\emph{on the same side of the origin}.
\end{enumerate}
This uses the fact that the set
$\{\beta_i\in \ell\}$ contains only the zero weight, or else contains weights $\beta_i$ on
both sides of the origin (since otherwise $\sum_{\beta_i\in \ell}\beta_i\neq
0$).

We have
\[
\mu'=-\bar{\rho}+\sum_i a'_i\beta_i+\delta,
\]
\[
a'_i=
\begin{cases}
a_i&\text{$i\not \in S$}\\
a_i+1&\text{$i \in S$}
\end{cases}
\]
We now write
\begin{equation}
  \label{ref-12.1-124}
\mu'=-\bar{\rho}+\sum_\ell\sum_{\beta_i\in\ell} a_i'\beta_i+\delta
\end{equation}
where the sum is over the lines $\ell\subset X(T)_\RR$ through the origin. 
 Fix such a line.
If $\ell\cap \{\beta_i\mid i\in S\}=\emptyset$ then $a'_i=a_i$ for all $i$ such that $\beta_i\in \ell$
and hence $a'_i\in [-r_\chi,0]$.

We assume now that $\ell\cap \{\beta_i\mid i\in S\}\neq\emptyset$
(note that it is clear that there are $\ell$ for which this holds). In
particular $\langle \lambda,-\rangle$ is non-zero on $\ell$. Let
$\gamma_u$ be a unit vector on $\ell$ such that
$\langle\lambda,\gamma_u\rangle>0$.

Put $T_\lambda^\ell=\{i\in T_\lambda\mid \beta_i\in \ell\}$, $S^\ell=S\cap T_\lambda^\ell$. For $\beta_i\in\ell$ put
 $\beta_i=c_i\gamma_u$. Then $c_i>0$ for all $i\in T_\lambda^\ell$. 
If $i\in S^\ell$ then by (3) $a_i=-r_{\chi}\neq 0$. By (4)
we deduce from this that if $\beta_i\in \ell-\{0\}$ is such that $a_i\neq 0$ then  $i\in T^\ell_\lambda$. We then
compute
\begin{align*}
\sum_{\beta_i\in \ell}a'_i\beta_i
&=\sum_{i\in S^\ell}(1-r_{\chi})\beta_i
+
\sum_{i\in T_\lambda^\ell\setminus S^\ell}(-r_{\chi})\beta_i
+
\sum_{\beta_i\in\ell, i\not\in T_\lambda^\ell}a_i\beta_i\\
&=\sum_{i\in S^\ell}(1-r_{\chi})\beta_i
+
\sum_{i\in T_\lambda^\ell\setminus S^\ell}(-r_{\chi})\beta_i\\
&=\pi\gamma_u
\end{align*}
with
\[
\pi=\sum_{i\in S^\ell}(1-r_{\chi})c_i
+
\sum_{i\in T_\lambda^\ell\setminus S^\ell}(-r_{\chi})c_i
\]
where there is at least one term of the form $(1-r_{\chi})c_i$.
Set $c=\sum_{i\in T_\lambda^\ell}c_i$.  Since $c_i>0$ for $i\in T^\ell_\lambda$ we have
\[
\pi>\sum_{i\in S^\ell}(-r_{\chi})c_i
+
\sum_{i\in T_\lambda^\ell\setminus S^\ell}(-r_{\chi})c_i=-r_{\chi}c.
\]
On the other hand as $ r_{\chi}>1/2$ we have $1- r_{\chi}<r_{\chi}$, $-r_{\chi}<r_{\chi}$ and hence
\[
\pi
<\sum_{i\in S^\ell}r_{\chi}c_i
+
\sum_{i\in T_\lambda^\ell\setminus S^\ell}r_{\chi}c_i=r_{\chi}c.
\]
Assume $\pi< 0$ and put $a'=\pi/c$. Then we have $a'\in ]-r_{\chi},0]$ and 
\begin{equation}
\label{ref-12.2-125}
\sum_{\beta_i\in \ell}a'_i\beta_i=\pi\gamma_u=ca'\gamma_u=\sum_{i\in T^\ell_\lambda} a'\beta_i.
\end{equation}
Similarly assume $\pi\ge 0$ and put $a'=-\pi/c$. Then again we have $a'\in ]-r_\chi,0]$ and 
\begin{equation}
\label{ref-12.3-126}
\sum_{\beta_i\in \ell}a'_i\beta_i=\pi\gamma_u=-ca'\gamma_u=-\sum_{i\in T^\ell_\lambda} a'\beta_i=
\sum_{\beta_i\in\ell,i\not\in T^\ell_\lambda} a'\beta_i.
\end{equation}
Note that in the last equality we finally used the full force of the
hypothesis $\sum_{\beta_i\in\ell}\beta_i=0$.  Plugging the righthand
sides of \eqref{ref-12.2-125}\eqref{ref-12.3-126} into \eqref{ref-12.1-124} we
conclude that either $r_{\chi'}<r_{\chi}$ or
else $p_{\chi'}<p_{\chi}$, contradicting the
minimality of $\chi$.

\subsection{Proof of Theorem \ref{ref-1.6.3-29}}
\label{ref-12.2-127}
We first prove that $\gldim \Lambda<\infty$.
For small strictly positive $r$ we have
\begin{align*}
X(T)_{\bar{\mu}}^+\cap (-\bar{\rho}+(1/2)\bar{\Sigma}_\varepsilon)
&=X(T)_{\bar{\mu}}^+\cap (-\bar{\rho}+(1/2)(\bar{\Sigma}\cap (r\varepsilon+\bar{\Sigma})))\\
&=X(T)_{\bar{\mu}}^+\cap (-\bar{\rho}+(1/2)(r\varepsilon+ \bar{\Sigma}))\\
&=X(T)_{\bar{\mu}}^+\cap (-\bar{\rho}+r\varepsilon/2+ (1/2)\bar{\Sigma})
\end{align*}
since for such small $r$ one has that 
$
X(T)_{\bar{\mu}}^+\cap (-\bar{\rho}+(1/2)((r\varepsilon+\bar{\Sigma})\setminus \bar{\Sigma}))$ is empty.
We now apply Theorem \ref{ref-1.6.1-27} with $\Delta=\{r\epsilon/2\}$.

\medskip

The statement about the index of $\Lambda$ in the generic case follows from Proposition \ref{ref-4.1.6-40}.

\subsection{Proof of Theorem \ref{ref-1.6.4-30}}
\label{ref-12.3-128}
Using Theorem \ref{ref-1.6.3-29} it is sufficient to prove that  $\Lambda$ is Cohen-Macaulay. I.e. if 
$\chi_1,\chi_2\in X(T)^+_{\bar{\mu}}\cap (-\bar{\rho}+(1/2)\bar{\Sigma}_\varepsilon)$ then $M(V(\chi_1)^\ast\otimes V(\chi_2))=
M(V(\chi_1)^\ast\otimes V(-w_0\chi_2)^\ast)$ is Cohen-Macaulay. By Proposition \ref{ref-4.4.4-61} it is sufficient to 
prove that $\chi_1-w_0\chi_2$ is strongly critical.

We have $-w_0\chi_2\in -\bar{\rho}+(1/2)(-\bar{\Sigma})$. Since $\sum_i\beta_i=0$ it easy to see that  
$-\bar{\Sigma}=\bar\Sigma$. Thus $\chi_1-w_0\chi_2\in -2\bar{\rho}+
(1/2)(\bar{\Sigma}+\bar{\Sigma}))=-2\bar{\rho}+\bar{\Sigma}$.

If $\chi=\sum_i a_i\beta_i$ with $a_i\in ]-1,0]$ then by subtracting a
small multiple of the identity $\sum_i \beta_i=0$ we may assume that
$a_i\in ]-1,0[$. Hence $\Sigma$ is relatively open and thus it is
equal to $\bar{\Sigma}-\partial\bar{\Sigma}$. Assume that $\chi_1-w_0\chi_2\not\in -2\bar{\rho}+\Sigma$. This
is only possible if $\bar{\rho}+\chi_1$ and $\bar{\rho}-w_0\chi_2$ are elements of the same boundary face $F$ of 
$(1/2)\bar{\Sigma}$.
Then $F$ must be a boundary face of $(1/2)\Sigma_\varepsilon$ and a boundary face of $(1/2)(-w_0)(\Sigma_\varepsilon)=
(1/2)\Sigma_{-\varepsilon}$. So $F$ is in fact a boundary face of $(1/2)\Sigma_{\pm\varepsilon}$.  
Now $\chi_1\in X(T)_{\bar{\mu}}\cap(-\bar{\rho}+F)$ which is empty by the hypothesis \eqref{ref-1.5-31}. This is a contradiction.
\subsection{Proof of Theorem \ref{ref-1.6.2-28}}
\label{ref-12.4-129}
This result is an immediate consequence of Theorem \ref{ref-1.6.4-30}, taking $\bar{G}=G=T$ and $\varepsilon$ generic.
\appendix

\section{A refinement of the  $E_1$-hypercohomology spectral sequence}
\label{ref-A-130}
Below $\Cscr$ is an abelian category. 
\begin{lemma} 
\label{ref-A.1-131} Assume that $I^\bullet$ is a complex over $\Cscr$ with projective homology. Then
there is a quasi-isomorphism 
\[
j:\left(\bigoplus_n H^n(I^\bullet)[-n],0\right)\r I^\bullet.
\]
\end{lemma}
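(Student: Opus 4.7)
For each $n$ write $Z^n = \ker d^n \subset I^n$ and $B^n = \operatorname{im} d^{n-1} \subset Z^n$, so we have the defining short exact sequence
\[
0 \to B^n \to Z^n \xrightarrow{\pi^n} H^n(I^\bullet) \to 0.
\]
The plan is to construct $j$ degree by degree by lifting each cohomology object to a subobject of cocycles in $I^n$. Since $H^n(I^\bullet)$ is projective by hypothesis, the surjection $\pi^n$ admits a section $s^n : H^n(I^\bullet) \to Z^n$. Define $j^n$ as the composite
\[
H^n(I^\bullet) \xrightarrow{s^n} Z^n \hookrightarrow I^n.
\]
Because $\operatorname{im} j^n \subseteq \ker d^n$, the map $d^n \circ j^n$ vanishes, so the collection $(j^n)_n$ assembles into a morphism of complexes from $(\bigoplus_n H^n(I^\bullet)[-n], 0)$ to $I^\bullet$.

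It remains to check that $j$ is a quasi-isomorphism. The map induced on $H^n$ by $j$ sends a class $x \in H^n(I^\bullet)$ to the class of $j^n(x) = s^n(x) \in Z^n$ modulo $B^n$, which is $\pi^n(s^n(x)) = x$ by the section property. Thus $H^n(j) = \operatorname{id}$ for every $n$, so $j$ is a quasi-isomorphism. The only nontrivial input is the existence of the section $s^n$, and this is immediate from projectivity; there is no real obstacle beyond this formal construction.
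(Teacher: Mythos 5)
Your proof is correct and follows essentially the same approach as the paper: choose a splitting of the projection $Z^n(I^\bullet)\to H^n(I^\bullet)$ using projectivity of $H^n(I^\bullet)$, and compose with the inclusion $Z^n(I^\bullet)\hookrightarrow I^n$. The paper states this more tersely, while you spell out the verification that the resulting map is a quasi-isomorphism, but the content is identical.
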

\begin{proof}
For each $i$ choose a splitting $\beta_i$ for the projection $Z^i(I^\bullet)\r H^i(I^\bullet)$
and let  $j_i$ be the composition
\[
H^i(I^\bullet)\xrightarrow{\beta_i} Z^i(I^\bullet)\r I^i.
\]
It now suffices to take $j=\oplus_i j_i$.
\end{proof}
We now discuss a two-dimensional variant of this result.  If $A^{\bullet\bullet}$ is a bigraded object in $\Cscr$ then 
 $\Tot_{\oplus}(A^{\bullet\bullet})$ is the graded object in $\Cscr$ given by
\[
\Tot_{\oplus}(A^{\bullet\bullet})^m\overset{\text{def}}{=}\bigoplus_{p+q=m}  A^{pq}
\]
provided this coproduct is finite.
We will usually write $\Tot_{\oplus}(A)=\bigoplus_{pq}A^{pq}[-p-q]$.

A twisted differential on $A^{pq}$ is a collection of maps
$d_{n}^{pq}: A^{pq}\r A^{p+n,q-n+1}$ such that
$d=\sum_{pqn}d^{pq}_n$ induces a differential on
$\Tot_{\oplus}(A^{\bullet\bullet})$. In other word we require for all
$p,q,p'$
\begin{equation}
\label{ref-A.1-132}
\sum_{n+n'=p'-p} d^{p+n,q-n+1}_{n'}  d^{pq}_n=0.
\end{equation}
Below we will make the adhoc definition
that a twisted differential is horizontal (htd) if  $d^{pq}_n=0$ for $n\le 0$.
Note that in that case $d_1^2=0$ for $d_1=\sum_1 d^{pq}_1$.
\begin{lemma}
\label{ref-A.2-133}
Let $I^{\bullet\bullet}$ be a double complex over $\Cscr$. Assume
\begin{enumerate}
\item There are some $p_0,p_1$ such that $I^{pq}=0$ for all $q$ and all $p\not\in [p_0,p_1]$.
\item For each $p$, $I^{p,\bullet}$ has bounded cohomology.
\item For each $p$ the cohomology of $I^{p,\bullet}$ is projective.
\end{enumerate}
Then there exists a quasi-isomorphism
\[
\left(\bigoplus_{pq} H^q(I^{p,\bullet})[-p-q],d\right)\rightarrow \Tot_{\oplus} I^{\bullet\bullet}
\]
where the differential on the left is obtained from a htd on $(H^q(I^{p,\bullet}))_{pq}$ with
$d_1^{pq}:H^q(I^{p,\bullet})\r H^q(I^{p+1,\bullet})$ being obtained from the differential
$I^{pq}\r I^{p+1,q}$ in $I^{\bullet\bullet}$.
\end{lemma}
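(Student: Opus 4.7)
The plan is to construct $\phi\colon \Tot_{\oplus}(H^{\bullet}(I^{\bullet,\bullet})) \to \Tot_{\oplus}(I^{\bullet\bullet})$ together with the horizontal twisted differential $d = \sum_{n\geq 1} d_n$ on the source simultaneously, by recursion on $n$. Write $\phi = \sum_{n\geq 0}\phi_n$ with $\phi_n^{pq}\colon H^q(I^{p,\bullet}) \to I^{p+n,q-n}$, and set $\phi_0^{pq}$ to be the restriction of the quasi-isomorphism $j^p$ of Lemma \ref{ref-A.1-131} to $H^q(I^{p,\bullet})$ (so $d_0=0$). The requirement that $\phi$ is a chain map amounts, in each bidegree, to
\[
d_v\phi_k^{pq} + d_h\phi_{k-1}^{pq} \;=\; \sum_{m+n=k}\phi_n^{p+m,\,q-m+1}\circ d_m^{pq}, \qquad k\ge 0.
\]

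At stage $k\ge 1$ I isolate the term $m=k$, $n=0$ on the right-hand side as $j^{p+k}\circ d_k^{pq}$; the remainder $E_k^{pq}$ involves only previously constructed $\phi_{<k}$, $d_{<k}$. A direct calculation, using the $k-1,\ldots,1$ instances of the above identity together with $d_v d_h = d_h d_v = 0$, shows $E_k^{pq}$ takes values in vertical cocycles $Z^{q-k+1}(I^{p+k,\bullet})$; I then define $d_k^{pq}$ as the composition of $E_k^{pq}$ with the projection to $H^{q-k+1}(I^{p+k,\bullet})$. The corrected expression $E_k^{pq} - j^{p+k}\circ d_k^{pq}$ lands in $B^{q-k+1}(I^{p+k,\bullet})$, and since $H^q(I^{p,\bullet})$ is projective by hypothesis (3) and $d_v\colon I^{p+k,q-k}\twoheadrightarrow B^{q-k+1}(I^{p+k,\bullet})$ is surjective, the required lift $\phi_k^{pq}$ exists. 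Hypothesis (1) forces $\phi_k^{pq}=0$ once $p+k>p_1$, and hypothesis (2) kills all but finitely many rows of $H^q(I^{p,\bullet})$, so every source fires only finitely many components and all sums are finite.

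Once the $\phi_n, d_n$ are in hand one still has to verify the htd condition \eqref{ref-A.1-132} for $d=\sum d_n$; this is again a routine unwinding of the inductive identities, using that $\phi_0$ induces an isomorphism on vertical cohomology column-by-column. The quasi-isomorphism assertion is then immediate from the bounded decreasing column filtration $F^p(-) = \{\text{summands with column index}\ge p\}$, which is preserved by $\phi$ and on whose associated graded $\phi$ restricts (in column $p$) to $j^p\colon \bigoplus_q H^q(I^{p,\bullet})[-q]\to I^{p,\bullet}$, a quasi-isomorphism by Lemma \ref{ref-A.1-131}. The main obstacle is the combinatorial bookkeeping required to check $E_k^{pq}\in Z^{q-k+1}$ at each stage and that the resulting $\sum d_n$ obeys \eqref{ref-A.1-132}; these are the standard cocycle/homotopy identities underlying the homotopy-transfer / $A_\infty$ formalism, here executed by projective lifting rather than by the usual strong-deformation-retract data.
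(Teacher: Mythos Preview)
Your approach is essentially correct but genuinely different from the paper's. The paper argues by induction on the number of nonzero columns $p_1-p_0$: it peels off the leftmost column via the short exact sequence $0\to I^{\ge p_0+1,\bullet}\to I^{\ge p_0,\bullet}\to I^{p_0,\bullet}\to 0$, rotates the resulting distinguished triangle to a cone description, applies Lemma~\ref{ref-A.1-131} to the single column and the inductive hypothesis to the rest, and then observes that the connecting map $\theta$ is a genuine map of complexes because both sides are bounded complexes of projectives. Your approach is instead an explicit homotopy-transfer/perturbation construction, building $\phi_n$ and $d_n$ simultaneously by recursion on $n$ and lifting through $d_v$ using projectivity of the vertical cohomology. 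The paper's argument is shorter and more conceptual; yours is more constructive and in principle yields explicit formulas for the $d_n$.

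Two points to tighten. First, you write ``$d_v d_h = d_h d_v = 0$'', which is false for a double complex; the correct identity is $d_v d_h + d_h d_v = 0$ (together with $d_v^2=d_h^2=0$), and it is this anticommutation that makes the computation of $d_v E_k$ collapse. Second, the verifications you defer (``routine unwinding'') are intertwined rather than sequential: showing $d_v E_k=0$ already uses the degree-$k$ instance of the htd relation $\sum_{m+n=k}d_n d_m=0$ for the previously constructed $d_1,\dots,d_{k-1}$, so this relation must be established inside the induction, not afterwards. Concretely, from the chain-map identities in degrees $<k$ one extracts $j\circ\bigl(\sum_{m+n=k}d_n d_m\bigr)\in B^{\bullet}$, and since $j$ is a section of $Z\to H$ (hence injective into $Z$), the sum vanishes; with this in hand $d_vE_k=0$ follows. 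Once you reorganise the induction to carry both the chain-map identity and the htd relation, the argument is complete.
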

\begin{proof}
We have an exact sequence of bicomplexes
\[
0\r I^{\ge p_0+1,\bullet}\r I^{\ge p_0,\bullet}\r I^{p_0,\bullet}\r 0,
\]
which is split if we ignore the horizontal differential. Hence the total complexes form a distinguished
triangle in the homotopy category of complexes over $\Cscr$. After rotating this distinguished triangle
we obtain a quasi-isomorphism
\begin{equation}
\label{ref-A.2-134}
\Tot_{\oplus} I^{\ge p_0\bullet} \cong \cone(\Tot_{\oplus} I^{p_0,\bullet}[-1] \xrightarrow{\theta} \Tot_{\oplus} I^{\ge p_0+1,\bullet}).
\end{equation}
for suitable $\theta$.
Using Lemma \ref{ref-A.1-131} we have in $D(\Cscr)$
\begin{equation}
\label{ref-A.3-135}
\Tot_{\oplus} I^{p_0,\bullet}\cong \left(\bigoplus_q H^{q}(I^{p_0,\bullet})[-p_0-q],0\right)
\end{equation}
and by induction we also have
\begin{equation}
\label{ref-A.4-136}
\Tot_{\oplus} I^{\ge p_0+1,\bullet} \cong \left(\bigoplus_{p\ge p_0+1,q} H^q(I^{p,\bullet})[-p-q],d'\right)
\end{equation}
where $d'$ is obtained from a htd. 
Substituting \eqref{ref-A.3-135},\eqref{ref-A.4-136} in \eqref{ref-A.2-134}  and noting that now~$\theta$ becomes 
a map between bounded projective complexes, we find that $\theta$ is represented by an actual map of complexes in $\Cscr$. The lemma now follows
using the standard construction of the cone. 
\end{proof}
The following is a refined version of \cite[Proposition 4.4]{VdB34}.
\begin{theorem} \label{ref-A.3-137} Let $F:\Ascr\r \Bscr$ be a left exact functor between abelian categories and assume in addition
that $\Ascr$  has enough injectives. Let $A^\bullet$ be a (literally) bounded complex in $\Ascr$ and assume in addition that
for each $p$, $R^q F(A^p)\in \Ob(\Bscr)$ is projective for all $q$, and zero for $q\gg 0$. Then there is an isomorphism
in $D(\Bscr)$
\begin{equation}
\label{ref-A.5-138}
\left(\bigoplus_{p,q} R^q F(A^p)[-p-q],d\right) \cong RF(A^\bullet)
\end{equation}
where $d$ on the left is obtained from a htd such that 
$d_{1}^{pq}: RF^q(A^p)\r RF^{q}(A^{p+1})$ is equal to $RF^q(d^p_{A^\bullet})$.
\end{theorem}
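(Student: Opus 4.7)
The plan is to deduce this from Lemma \ref{ref-A.2-133} applied to a Cartan-Eilenberg resolution. Since $\Ascr$ has enough injectives and $A^\bullet$ is literally bounded, we may choose a Cartan-Eilenberg resolution $A^\bullet \to J^{\bullet\bullet}$, i.e.\ a bounded-below double complex of injectives such that $J^{p,\bullet} = 0$ outside the range of $A^\bullet$, and such that for each $p$ the augmentation $A^p \to J^{p,\bullet}$ is an injective resolution (and similarly for the coboundaries and cocycles in each column). By the standard construction of the right-derived functor of $F$, we have an isomorphism $RF(A^\bullet) \cong \Tot_\oplus F(J^{\bullet\bullet})$ in $D(\Bscr)$.

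Next I would apply Lemma \ref{ref-A.2-133} to the double complex $I^{\bullet\bullet} := F(J^{\bullet\bullet})$. The hypotheses are immediate: (1) $I^{pq} = 0$ outside a bounded range of $p$, since $J^{p,\bullet}$ vanishes there; (2) for each $p$, $H^q(I^{p,\bullet}) = H^q(F(J^{p,\bullet})) = R^q F(A^p)$, which is zero for $q \gg 0$ by hypothesis; (3) these same cohomologies are projective in $\Bscr$, again by hypothesis. Lemma \ref{ref-A.2-133} then produces a quasi-isomorphism
\[
\left(\bigoplus_{p,q} R^q F(A^p)[-p-q],d\right) \longrightarrow \Tot_\oplus F(J^{\bullet\bullet}) \cong RF(A^\bullet)
\]
with $d$ coming from an htd, which is the conclusion \eqref{ref-A.5-138}.

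It remains to identify the leading part of the differential. By the construction in Lemma \ref{ref-A.2-133}, the component $d_1^{pq}$ is the map on $H^q$ induced by the horizontal differential of $I^{\bullet\bullet}$, i.e.\ by $F$ applied to the horizontal differential $d_J^{p,\bullet}\colon J^{p,\bullet}\to J^{p+1,\bullet}$. By the defining property of a Cartan-Eilenberg resolution, the chain map $d_J^{p,\bullet}$ lifts the differential $d_{A^\bullet}^p\colon A^p \to A^{p+1}$ to a morphism of injective resolutions. Consequently the induced map on $H^q F(-)$ is precisely $R^q F(d_{A^\bullet}^p)$, as claimed.

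The main (minor) obstacle is merely bookkeeping: one must ensure that the inductive construction of the quasi-isomorphism inside the proof of Lemma \ref{ref-A.2-133}, which proceeds by splitting off columns one at a time, is compatible with the functoriality of Cartan-Eilenberg resolutions so that the $d_1^{pq}$ actually identifies with $R^q F(d_{A^\bullet}^p)$ on the nose rather than just up to a sign ambiguity. This is routine, since the splittings $j_i$ of Lemma \ref{ref-A.1-131} are only used to transport the already-determined map $\theta$ between projective complexes, and the computation of $\theta$ on $H^q$ of the first column is forced by the horizontal differential.
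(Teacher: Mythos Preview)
Your proof is correct and follows exactly the same approach as the paper: take a Cartan--Eilenberg resolution, compute $RF(A^\bullet)$ as the total complex of $F$ applied to it, and invoke Lemma~\ref{ref-A.2-133}. You have simply spelled out the verification of the hypotheses and the identification of $d_1^{pq}$ in more detail than the paper does.
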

\begin{proof}
  Let $A^\bullet\r I^{\bullet\bullet}$ be an injective
  Cartan-Eilenberg resolution of $A^\bullet$ (with $I^{p,\bullet}$
  resolving $A^p$). Then $RFA$ is computed by
  $F(\Tot_{\oplus}(I^{\bullet\bullet}))$. It now suffices to use Lemma \ref{ref-A.2-133}
  (with $F(I^{\bullet\bullet})$ playing the role of
  $I^{\bullet\bullet}$).
\end{proof}
\begin{remark} Note that \eqref{ref-A.5-138} is a refined version  of the standard hypercohomology spectral sequences
\[
E^{pq}_1=RF^q(A^p)\Rightarrow R^{p+q}F(A^\bullet).
\]
\end{remark}
\section{Faces of some polygons}
\label{ref-B-139}
Let $E$ be a Euclidean space, $(\beta_i)_{i=1,\ldots,d}\subset E$ a collection of points and let $a_i<b_i$, $i=1,\ldots,d$ be a collection of real numbers. Consider the
closed polygon 
\[
\nabla=\left\{\sum_i g_i\beta_i\mid g_i\in [a_i,b_i]\right\}.
\]
For $\lambda\in E^\ast$  write $\nabla_\lambda$ for the set of $\beta=\sum_i g_i\beta_i\in \nabla$
  that satisfy 
\begin{align}
\langle \lambda,\beta_i\rangle>0&\Rightarrow g_i=a_i\label{ref-B.1-140},\\
\langle \lambda,\beta_i\rangle<0&\Rightarrow g_i=b_i\label{ref-B.2-141}.
\end{align}
\begin{lemma}  \label{ref-B.1-142}
$\nabla_\lambda$ is a face of $\nabla$ whose linear span is 
\[
H_\lambda=C_\lambda+\sum_{\langle \lambda,\beta_i\rangle=0} \RR\beta_i
\]
where
\begin{equation}
\label{ref-B.3-143}
C_\lambda=\sum_{\langle\lambda,\beta_i\rangle>0} a_i\beta_i+\sum_{\langle\lambda,\beta_i\rangle<0} b_i\beta_i\in \nabla.
\end{equation}
\end{lemma}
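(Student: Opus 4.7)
The plan is to identify $\nabla_\lambda$ with the set of points of $\nabla$ at which the linear functional $\langle \lambda, -\rangle$ attains its minimum; this realizes it as a face of $\nabla$ essentially by definition. The key computation is that for any $\beta = \sum_i g_i \beta_i \in \nabla$ one has
\[
\langle \lambda, \beta\rangle = \sum_i g_i \langle \lambda, \beta_i\rangle,
\]
and since the $g_i$ vary independently in $[a_i, b_i]$, the right-hand side is minimized term by term. This forces $g_i = a_i$ precisely when $\langle \lambda, \beta_i\rangle > 0$, forces $g_i = b_i$ precisely when $\langle \lambda, \beta_i\rangle < 0$, and leaves $g_i$ free in $[a_i, b_i]$ when $\langle \lambda, \beta_i\rangle = 0$. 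These are exactly conditions \eqref{ref-B.1-140} and \eqref{ref-B.2-141}, so $\nabla_\lambda$ is the minimizing locus, whence a face.

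For the claim about the linear span, I would use the explicit parametrization that falls out of the first step:
\[
\nabla_\lambda = C_\lambda + \Bigl\{\sum_{\langle \lambda, \beta_i\rangle = 0} g_i \beta_i \,\Big|\, g_i \in [a_i, b_i]\Bigr\},
\]
with $C_\lambda$ as defined in \eqref{ref-B.3-143}. This directly exhibits $\nabla_\lambda - C_\lambda$ as a subset of the linear subspace $\sum_{\langle \lambda, \beta_i\rangle = 0} \RR \beta_i$. Since for each $i$ with $\langle \lambda, \beta_i\rangle = 0$ the coefficient $g_i$ can vary over a full interval of positive length, the translated set $\nabla_\lambda - C_\lambda$ contains a relatively open neighbourhood of a point in that subspace, so its affine span is all of $\sum_{\langle \lambda, \beta_i\rangle = 0} \RR \beta_i$. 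Translating back yields $H_\lambda = C_\lambda + \sum_{\langle \lambda, \beta_i\rangle = 0} \RR \beta_i$, as required. Finally, the assertion $C_\lambda \in \nabla$ is immediate from $a_i, b_i \in [a_i, b_i]$.

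No serious obstacle is expected: everything reduces to the observation that on a product of intervals, a linear functional is minimized coordinatewise. The only mild subtlety is the use of the word ``linear span'', which in context must be read as affine span; this is harmless since the face $\nabla_\lambda$ is naturally an affine subset of $E$.
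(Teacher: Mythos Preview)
Your proof is correct and follows essentially the same approach as the paper's: both identify $\nabla_\lambda$ as the locus in $\nabla$ where the linear functional $\langle\lambda,-\rangle$ attains its minimum (the paper phrases this as $\nabla\subset\{c_\lambda\le\langle\lambda,-\rangle\}$ with $\nabla_\lambda=\nabla\cap\{c_\lambda=\langle\lambda,-\rangle\}$), which immediately gives the face property. Your treatment of the affine span is more explicit than the paper's, which simply declares that claim ``clear''; your remark about ``linear span'' meaning affine span is apt.
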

\begin{proof} If $\lambda=0$ then $\nabla_\lambda=\nabla$. Assume $\lambda\neq 0$.
Put  
$
c_\lambda=\langle \lambda,C_\lambda\rangle$.
It is an easy verification that $ \nabla$ is contained in the half space
$
H_\lambda=\{c_\lambda\le
\langle\lambda,-\rangle\}
$
and moreover $\nabla_\lambda$ is the intersection $\nabla$ with $\partial H_\lambda=\{c_\lambda=
\langle\lambda,-\rangle\}$.
This proves that $\nabla_\lambda$ is a face. The claim about the linear span is clear.
\end{proof}
\begin{lemma} If $F$ is a face in $\nabla$ then $F=\nabla_\lambda$
for suitable $\lambda$.
\end{lemma}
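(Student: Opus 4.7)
The plan is to invoke the standard characterization of faces of a bounded convex polytope as sets of minimizers of linear functionals, and then simply apply the previous lemma to recognize such a minimizer set as some $\nabla_\lambda$.

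First I would observe that $\nabla$ is a bounded convex polytope in $E$, being the image under the linear map $(g_i)_i\mapsto \sum_i g_i\beta_i$ of the box $\prod_{i=1}^d [a_i,b_i]$. A basic fact from convex geometry then tells us that every non-empty face $F$ of $\nabla$ arises as a supporting-hyperplane intersection: there exists $\lambda\in E^\ast$ and $c\in\RR$ such that
\[
\nabla\subset \{x\in E\mid \langle\lambda,x\rangle\ge c\}\quad\text{and}\quad F=\nabla\cap \{x\in E\mid \langle\lambda,x\rangle=c\}.
\]
In particular $c=\min_{x\in\nabla}\langle\lambda,x\rangle$. (For $F=\nabla$ itself one takes $\lambda=0$.) This is a direct consequence of the Hahn--Banach separation theorem applied to $F$ and the convex set $\nabla\setminus F$, and is standard polytope theory.

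Now I would apply Lemma B.1 to this $\lambda$. That lemma exhibits $c_\lambda=\langle\lambda,C_\lambda\rangle$ as the minimum of $\langle\lambda,-\rangle$ on $\nabla$, and identifies
\[
\nabla_\lambda=\nabla\cap \{x\in E\mid \langle\lambda,x\rangle=c_\lambda\}.
\]
Since the minimum of $\langle\lambda,-\rangle$ on $\nabla$ is unique, we must have $c=c_\lambda$, and hence $F=\nabla_\lambda$.

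There is essentially no obstacle in this argument; the only point to notice is that the sign convention adopted in the definition of $\nabla_\lambda$ (namely $\langle\lambda,\beta_i\rangle>0\Rightarrow g_i=a_i$ and $\langle\lambda,\beta_i\rangle<0\Rightarrow g_i=b_i$) is precisely the one making $\nabla_\lambda$ the set of \emph{minimizers} (rather than maximizers) of $\langle\lambda,-\rangle$ on $\nabla$, which is consistent with Lemma B.1's computation of $C_\lambda$. This matches the supporting hyperplane chosen above, so no sign correction is needed.
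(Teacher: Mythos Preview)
Your proof is correct and follows essentially the same approach as the paper: handle $F=\nabla$ with $\lambda=0$, and for a proper face invoke the supporting-hyperplane description to get $\lambda$ and $c$, then use Lemma~\ref{ref-B.1-142} to identify the minimizer set with $\nabla_\lambda$ and conclude $c=c_\lambda$, hence $F=\nabla_\lambda$.
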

\begin{proof}
If $F=\nabla$ then $F=\nabla_0$. If $F\neq\nabla$ then there exists 
$\lambda\in E^\ast$, $c\in\RR$ such that $\nabla\subset 
H_\lambda=\{\langle \lambda,-\rangle\ge c\}$
and $F=\nabla\cap \partial H_\lambda$.
Let $c_\lambda=\langle\lambda,C_\lambda\rangle$ be as above. Then the minimum of $\langle\lambda,-\rangle$
attained on $\nabla$ is both $c$ and $c_\lambda$ and these minima are achieved on
$F$ and $\nabla_\lambda$ respectively. Hence $c_\lambda=c$ and $F=\nabla_\lambda$.
\end{proof}
\begin{corollary} 
\label{ref-B.3-144} $\nabla$ is the convex hull of $C_\lambda$ where $\lambda$ runs through those
  elements of $E^\ast$ such that $\langle \lambda,\beta_i\rangle\neq 0$ for all $i$.
\end{corollary}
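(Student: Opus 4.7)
The plan is to show both inclusions. One direction is immediate: for any $\lambda\in E^\ast$ with $\langle\lambda,\beta_i\rangle\neq 0$ for all $i$, the point $C_\lambda$ lies in $\nabla$ (it is obtained by choosing each $g_i$ to be either $a_i$ or $b_i$). Hence the convex hull of all such $C_\lambda$ is contained in $\nabla$.

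For the reverse inclusion, I would use that $\nabla$ is a compact convex polytope and therefore equals the convex hull of its vertices (extreme points). By Lemma \ref{ref-B.1-142}, every face of $\nabla$ has the form $\nabla_\lambda$ for some $\lambda\in E^\ast$. So let $v$ be a vertex of $\nabla$; then $v=\nabla_\lambda$ for some $\lambda$, and since this face is a single point, its linear span $H_\lambda = C_\lambda + \sum_{\langle\lambda,\beta_i\rangle=0}\RR\beta_i$ has dimension zero. This forces every $\beta_i$ with $\langle\lambda,\beta_i\rangle=0$ to be the zero vector, and moreover $v=C_\lambda$.

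The remaining step, which is the only subtle point, is to show that one can choose $\lambda$ to be generic, i.e.\ with $\langle\lambda,\beta_i\rangle\neq 0$ for all $i$. Since the zero $\beta_i$'s contribute nothing to $C_\lambda$, perturbing $\lambda$ to some nearby $\lambda'$ does not affect $C_{\lambda'}$ as long as the signs of $\langle\lambda',\beta_j\rangle$ agree with those of $\langle\lambda,\beta_j\rangle$ for all nonzero $\beta_j$. Such a perturbation exists (choose $\lambda'=\lambda+\epsilon\mu$ for generic $\mu$ and small $\epsilon>0$) and can be taken so that $\langle\lambda',\beta_i\rangle\neq 0$ for every $i$, since this amounts to avoiding finitely many hyperplanes in $E^\ast$. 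Then $C_{\lambda'}=C_\lambda=v$, and the vertex $v$ is realized as a $C_{\lambda'}$ with $\lambda'$ generic. This finishes the proof.

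The main (and only) obstacle is the genericity step, but it is handled by a straightforward perturbation argument using the fact that the $\beta_i$ contributing a zero pairing are themselves zero and thus contribute nothing to $C_\lambda$.
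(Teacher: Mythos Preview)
Your proof is correct and follows the same approach as the paper, which simply observes that the $C_\lambda$ for generic $\lambda$ are precisely the vertices of $\nabla$. Two minor remarks: the statement that every face has the form $\nabla_\lambda$ is the content of the lemma \emph{following} Lemma~\ref{ref-B.1-142}, not of Lemma~\ref{ref-B.1-142} itself; and your perturbation step is in fact unnecessary, since the corollary tacitly assumes all $\beta_i\neq 0$ (otherwise no $\lambda$ with $\langle\lambda,\beta_i\rangle\neq 0$ for all $i$ exists and the statement is vacuous), and under that assumption your dimension argument already forces the $\lambda$ exhibiting a given vertex to be generic.
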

\begin{proof} By the above discussion the set of $C_\lambda$ we have described is precisely the set
of vertices of $\nabla$.
\end{proof}
\section{Supporting hyperplanes of Minkowski sums}
This section is related to Appendix \ref{ref-B-139}. Presumably the following result is standard.
\begin{lemma}
Let $({\Pi}_i)_{i=1,\ldots,n}$ be closed convex sets in a finite dimensional vector space $E$ over $\RR$ and let
${\Pi}=\{\sum_i x_i\mid x_i\in{\Pi}_i\}$ be their Minkowski sum. Let $x\in  \Pi$. Then there 
exists $\lambda\in E^\ast$ such that $\Pi$ is contained in the set $\langle \lambda,-\rangle\ge \langle \lambda,x\rangle$ and such that 
 $x$ can be written as $\sum_i x_i$ with $x_i\in {\Pi}_i$ in such
a way that $\langle \lambda,-\rangle$ is constant on ${\Pi}_i$ if and only if $x_i\not\in\partial{\Pi}_i$.
\end{lemma}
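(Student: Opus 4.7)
The plan is to identify the smallest face $F$ of $\Pi$ containing $x$ in its relative interior, and then to use the standard decomposition of exposed faces of a Minkowski sum as a Minkowski sum of exposed faces of the summands.

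If $x\in\relint\Pi$, I would take $\lambda=0$: then $\lambda$ is (trivially) constant on every $\Pi_i$, and one decomposes $x=\sum_i x_i$ with $x_i\in\relint\Pi_i$ using the standard identity $\relint(\sum_i\Pi_i)=\sum_i\relint\Pi_i$ for finite collections of convex sets.

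Otherwise $x$ lies on the relative boundary of $\Pi$. Let $F$ be the unique face of $\Pi$ with $x\in\relint F$, which exists because the faces of $\Pi$ partition it into their relative interiors. I would then choose $\lambda\in E^\ast$ from the relative interior of the normal cone to $F$; this guarantees that $\lambda$ attains its minimum on $\Pi$ exactly along $F$ and that this minimum equals $\langle\lambda,x\rangle$, so $\Pi\subseteq\{\langle\lambda,-\rangle\ge\langle\lambda,x\rangle\}$.

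Setting $F_i:=\{y\in\Pi_i\mid\langle\lambda,y\rangle\text{ is minimal on }\Pi_i\}$, the standard Minkowski-sum identity yields $F=\sum_i F_i$, with $F_i=\Pi_i$ precisely when $\lambda$ is constant on $\Pi_i$ (otherwise $F_i$ is a proper face of $\Pi_i$, hence contained in $\partial\Pi_i$). Applying distributivity of the relative interior again gives $\relint F=\sum_i\relint F_i$, and since $x\in\relint F$ we can write $x=\sum_i x_i$ with $x_i\in\relint F_i$. The iff now follows at once: for $i$ with $\lambda$ constant on $\Pi_i$ we have $F_i=\Pi_i$ and $x_i\in\relint\Pi_i$, i.e.\ $x_i\notin\partial\Pi_i$; for the remaining $i$, $F_i$ is a proper face of $\Pi_i$ and so $x_i\in F_i\subseteq\partial\Pi_i$. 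The only mildly delicate point is choosing $\lambda$ whose exposed face is \emph{exactly} $F$ and not some larger face, but this is standard convex analysis: pick $\lambda$ in the (nonempty) relative interior of the normal cone to $F$. Otherwise the argument is just an organized application of well-known facts (minimal-face characterization, support functionals, and distributivity of $\relint$ and of exposed-face formation over finite Minkowski sums).
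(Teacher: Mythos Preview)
Your argument has a genuine gap precisely at the point you call ``mildly delicate'': for general closed convex sets the minimal face $F$ containing $x$ need not be exposed, and then no $\lambda$ has exposed face equal to $F$. Concretely, take $\Pi_1$ the closed unit disk and $\Pi_2=[0,1]\times\{0\}$ in $\RR^2$, so that $\Pi$ is a stadium; the point $x=(0,1)$ is a non-exposed extreme point of $\Pi$. The only way to write $x=x_1+x_2$ with $x_i\in\Pi_i$ is $x_1=(0,1)\in\partial\Pi_1$, $x_2=(0,0)\in\partial\Pi_2$, while the only supporting functionals at $x$ are nonnegative multiples of $(0,-1)$, all of which are constant on $\Pi_2$. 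Thus the iff in the conclusion cannot be satisfied, and the lemma is in fact \emph{false} in the stated generality.

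To be fair, the paper's own proof carries a parallel hidden assumption: it asserts the existence of $\lambda$ strictly positive on $\Gamma\setminus L$ and zero on $L$, which requires the cone $\Gamma$ spanned by all $y-x_i$ to be closed (or polyhedral); in the stadium example one finds $\Gamma=\{(a,b):b<0\}\cup\{(a,0):a\ge 0\}$, so $L=\{0\}$ but no such $\lambda$ exists. Both arguments become correct once the $\Pi_i$ are polytopes (then every face is exposed and $\Gamma$ is polyhedral), and this is all that is needed downstream, since in the application the full iff is only invoked for the interval summands $[a_i,b_i]\beta_i$, while for the general convex summand $\Delta$ one only uses that $\delta$ minimises $\lambda$. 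In that polytope setting your face-theoretic route (normal cones, distributivity of $\relint$ and of exposed-face formation over Minkowski sums) and the paper's route (pick a decomposition minimising the number of boundary components, then extract $\lambda$ from the resulting tangent cone) are genuinely different but equally valid.
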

\begin{proof} For $z\in {\Pi}$ let $p(z)$ be the minimal number of $z_i\in \partial {\Pi}_i$ among all ways
of writing $z=\sum_i z_i$ with $z_i\in {\Pi}_i$.  Note that for $z_1,z_2\in {\Pi}$ and $z\in ]z_1,z_2[$
we have 
\begin{equation}
\label{ref-C.1-145}
p(z)\le \min (p(z_1),p(z_2)).
\end{equation}

With $x$ as in the statement of the lemma, write $x=\sum_i x_i$ such that the number of $x_i$ in $\partial {\Pi}_i$ is mininal.
Let $\Gamma$ be the polyhedral cone spanned by all $y-x_i$ with $i\in\{1,\ldots,n\}$ and 
$y\in {\Pi}_i$. Note that if $\sigma\in \Gamma$ then for $\epsilon>0$ small enough we have
$x+\epsilon\sigma\in {\Pi}$ and moreover $p(x+\epsilon \sigma)\le p(x)$.

Let $L$ be the maximal linear subspace in $\Gamma$ (which could be $E$) and let $\lambda\in E^\ast$ be such
that $\langle \lambda,-\rangle > 0$ on $\Gamma-L$ and $\langle \lambda,-\rangle=0$ on $L$.

If $x_i \not\in\partial{\Pi}_i$ then $y-x_i$ with $y\in {\Pi}_i$ spans a linear subspace of $\Gamma$ and hence
$\langle \lambda,y-x_i\rangle =0$ for $y\in {\Pi}_i$.

Assume $x_i\in \partial {\Pi}_i$. It is sufficient to prove that $y-x_i\in \Gamma-L$
for $y\in \relint {\Pi}_i$. If $y-x_i\in L$ then
$-(y-x_i)\in \Gamma$ and hence  as noted above, for $\epsilon>0$ small enough we have $x'=x-\epsilon(y-x_i)\in {\Pi}$ as well as $p(x')\le p(x)$. On the other hand 
we have for $x''=x+(y-x_i)\in {\Pi}$:
$p(x'')
<p(x)$. Since $x\in ]x_1,x_2[$ it now suffices to invoke \eqref{ref-C.1-145} to obtain the
contradiction $p(x)<p(x)$.
\end{proof}
Now we use the notations of Appendix \ref{ref-B-139}. We define in addition
\[
\Sigma=\left\{\sum_i g_i\beta_i\mid g_i\in ]a_i,b_i]\right\}
\]
such that $\nabla=\bar{\Sigma}$.
\begin{lemma} \label{ref-C.2-146} Let $\Delta$ be a closed convex subset of $X(T)_\RR$. If $x\in( \bar{\Sigma}+\Delta)-(\Sigma+\Delta)$
then there exists $\lambda\in Y(T)_\RR$ such that for all $z\in \Sigma+\Delta$ we have $\langle \lambda,z\rangle>\langle\lambda,x\rangle$.
\end{lemma}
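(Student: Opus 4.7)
The approach is to apply the preceding Minkowski decomposition lemma to a carefully chosen factorisation of $\bar\Sigma+\Delta$. Set $\Pi_i = [a_i,b_i]\beta_i$ for $i=1,\ldots,d$ and $\Pi_{d+1}=\Delta$, so that $\bar\Sigma+\Delta = \sum_{i=1}^{d+1}\Pi_i$ as a Minkowski sum of closed convex sets. The preceding lemma then supplies a linear functional $\lambda\in X(T)_\RR^\ast=Y(T)_\RR$ together with a decomposition $x = \sum_{i=1}^d x_i + \delta$ (with $x_i=g_i\beta_i\in\Pi_i$ and $\delta\in\Delta$) such that $\bar\Sigma+\Delta\subset\{\langle\lambda,-\rangle\geq\langle\lambda,x\rangle\}$ and such that $\langle\lambda,-\rangle$ is constant on a given $\Pi_j$ iff $x_j$ lies in the relative interior of $\Pi_j$. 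Testing the half-space inclusion against perturbations of the form $x+(y-x_i)$ with $y\in\Pi_i$ shows that $x_i$ minimises $\langle\lambda,-\rangle$ on $\Pi_i$, and similarly $\delta$ minimises it on $\Delta$.

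The key step, and the only real obstacle, is to locate an index that is actually responsible for the failure $x\notin\Sigma+\Delta$. The point is that $\Sigma$ differs from $\bar\Sigma$ only through the constraint $g_i>a_i$ at those indices where $\beta_i\ne 0$ (for $\beta_i=0$ the coefficient is irrelevant and can be altered freely). So the hypothesis $x\notin\Sigma+\Delta$, applied to the particular decomposition produced above, forces the existence of an index $i_0$ with $\beta_{i_0}\ne 0$ and $g_{i_0}=a_{i_0}$, i.e.\ $x_{i_0}=a_{i_0}\beta_{i_0}\in\partial\Pi_{i_0}$. The constancy equivalence then yields $\langle\lambda,\beta_{i_0}\rangle\ne 0$, and since $x_{i_0}=a_{i_0}\beta_{i_0}$ is the minimiser of $\langle\lambda,-\rangle$ on the non-degenerate segment $[a_{i_0},b_{i_0}]\beta_{i_0}$ with $a_{i_0}<b_{i_0}$, we must have $\langle\lambda,\beta_{i_0}\rangle>0$.

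To conclude, pick any $z\in\Sigma+\Delta$ and write $z=\sum_i h_i\beta_i+\eta$ with $h_i\in{]a_i,b_i]}$ and $\eta\in\Delta$. Term-by-term, the minimisation property gives $\langle\lambda,h_i\beta_i\rangle\ge\langle\lambda,x_i\rangle$ for each $i$ and $\langle\lambda,\eta\rangle\ge\langle\lambda,\delta\rangle$; at the distinguished index the strict inequality $h_{i_0}>a_{i_0}$ combined with $\langle\lambda,\beta_{i_0}\rangle>0$ upgrades one of these to $\langle\lambda,h_{i_0}\beta_{i_0}\rangle>\langle\lambda,x_{i_0}\rangle$. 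Summing yields $\langle\lambda,z\rangle>\langle\lambda,x\rangle$, as required. The only bookkeeping subtlety is the careful treatment of the zero weights $\beta_i$, whose coordinates are invisible to the defining condition for $\Sigma$ and therefore must be excluded when extracting $i_0$.
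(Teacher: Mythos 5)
Your proof is correct and follows essentially the same route as the paper: apply the Minkowski-sum supporting-hyperplane lemma to $\Pi_i=[a_i,b_i]\beta_i$ and $\Delta$, observe that $x\notin\Sigma+\Delta$ forces some $g_{i_0}=a_{i_0}$ with $\langle\lambda,\beta_{i_0}\rangle>0$, and upgrade to a strict inequality. You are somewhat more explicit than the paper about the bookkeeping for zero weights $\beta_i=0$, which the paper leaves implicit, but the underlying argument is the same.
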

\begin{proof} We apply the previous lemma (changing the indexing) with ${\Pi}_i=[a_i,b_i]\beta_i$ and ${\Pi}_0=\Delta$. There exists
$\lambda\in Y(T)_\RR$ such that we may write $x$ as
\[
x=\sum_i g_i\beta_i+\delta
\]
such that  for 
all $\delta'\in\Delta$ we have $\langle\lambda,\delta\rangle \le \langle\lambda,\delta'\rangle$ and moreover
\begin{align*}
\langle \lambda,\beta_i\rangle >0 &\iff g_i=a_i,\\
\langle \lambda,\beta_i\rangle <0 &\iff g_i=b_i,\\
\langle \lambda,\beta_i\rangle =0 &\iff g_i\neq a_i,b_i.
\end{align*}
Since $x\not \in \Sigma+\Delta$ there is at least one $g_i$ equal to $a_i$. From this one easily deduces the
claim in the statement of the lemma. 
\end{proof}
\section{Some elementary facts about root systems}
\label{ref-D-147}
Let $E$ be a finite dimensional real vector space equipped with a positive definite quadratic
form $(-,-)$. Let $\Phi$ be a root system in $E$ spanning some subspace $E'\subset E$. Let $\Phi^+\subset \Phi$ be a set of positive roots and let $S\subset \Phi^+$
be the corresponding simple roots. For $\rho$ a root let $\check{\rho}$ be the corresponding
coroot, given by $\check{\rho}=2\rho/(\rho,\rho)$. 

We say that $x\in E$ is dominant if $(\rho,x)\ge 0$
for all $\rho\in \Phi^+$. The reflection associated to a simple root $\alpha$ is
\[
s_\alpha(x)=x-(\check{\alpha},x)\alpha
\]
By definition the reflections generate the Weyl group $\Wscr$ of $\Phi$. $\Wscr$ preserves $E,(-,-),\Phi$ 

\begin{lemma}
\label{ref-D.1-148}
Let $\Delta$ be a $\Wscr$-invariant convex subset of $E$. Let $x,y\in E$ be dominant.
If $x+y\in \Delta$
then for all $v,w\in \Wscr$ we have $vx+wy\in\Delta$.
\end{lemma}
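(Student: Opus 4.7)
The plan is to reduce to the purely combinatorial assertion that $vx+wy$ lies in the convex hull $C$ of the $\Wscr$-orbit of $x+y$. Once this is established, $\Wscr$-invariance of $\Delta$ gives $u(x+y) \in \Delta$ for every $u \in \Wscr$, and convexity then yields $C \subseteq \Delta$, whence $vx+wy \in \Delta$.

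To show $vx+wy \in C$, I will argue by separation. If the inclusion fails, standard separation (a finite-dimensional form of Hahn--Banach) produces $\lambda \in E$ with $(\lambda, vx+wy) > (\lambda, u(x+y))$ for every $u \in \Wscr$. Using invariance of $(-,-)$, rewrite $(\lambda, u(x+y)) = (u^{-1}\lambda, x+y)$; as $u$ varies, $u^{-1}\lambda$ ranges over the orbit $\Wscr\lambda$, so taking $\lambda^+$ to be its (unique) dominant representative and using that $x+y$ is itself dominant, one has $\max_{u}(\lambda, u(x+y)) = (\lambda^+, x+y)$. Expanding the left-hand side as $(v^{-1}\lambda, x) + (w^{-1}\lambda, y)$, the separating inequality becomes
\[
(v^{-1}\lambda, x) + (w^{-1}\lambda, y) > (\lambda^+, x) + (\lambda^+, y).
\]

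The contradiction will come from the classical inequality that for any $\mu \in E$ and any dominant $z$, one has $(\mu, z) \le (\mu^+, z)$, where $\mu^+$ is the dominant representative of $\Wscr\mu$. This follows from the standard fact that $\mu^+ - \mu$ lies in $\sum_{\alpha \in S}\RR_{\ge 0}\alpha$ (write $\mu = w\mu^+$ and use that $\mu^+ - w\mu^+$ is a nonnegative real combination of simple roots for $\mu^+$ dominant), combined with the observation that each simple root pairs non-negatively with a dominant vector. Applied separately to $(v^{-1}\lambda, x)$ and $(w^{-1}\lambda, y)$---and noting that both $v^{-1}\lambda$ and $w^{-1}\lambda$ lie in the single orbit $\Wscr\lambda$, hence share the same dominant representative $\lambda^+$---this yields the reverse of the displayed inequality, the desired contradiction. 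The only point worth flagging, and really the reason both $x$ and $y$ must be individually dominant rather than just their sum, is that both estimates are controlled by the \emph{same} $\lambda^+$ so that the bounds add cleanly; beyond this bookkeeping I do not foresee any substantive obstacle.
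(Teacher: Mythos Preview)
Your proof is correct, but it takes a genuinely different route from the paper's. The paper argues by a step-by-step induction: after reducing to $v=1$ via $\Wscr$-invariance of $\Delta$, it writes $w=s_{\alpha_n}\cdots s_{\alpha_1}$ and shows for each $i$ that $x+y_{i+1}$ lies on the segment between $x+y_i$ and $s_{\alpha_{i+1}}(x+y_i)$, both of which are in $\Delta$ by the inductive hypothesis and $\Wscr$-invariance. So the paper never leaves $\Delta$; it walks from $x+y$ to $x+wy$ one simple reflection at a time using only the formula for $s_\alpha$ and convexity.

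Your argument instead proves the sharper, intrinsic statement that $vx+wy$ already lies in the convex hull of the orbit $\Wscr(x+y)$, and does so by duality: a separating hyperplane together with the classical inequality $(\mu,z)\le(\mu^+,z)$ for dominant $z$. This is cleaner and handles both $v$ and $w$ at once with no WLOG reduction; it also makes transparent that the minimal $\Wscr$-invariant convex set containing $x+y$ already contains every $vx+wy$. The trade-off is that you import the fact that $\lambda^+-w\lambda^+\in\sum_{\alpha\in S}\RR_{\ge 0}\alpha$, whereas the paper's inductive argument is entirely self-contained, using nothing beyond the definition of a simple reflection. Either approach is perfectly acceptable here.
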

\begin{proof}
Without loss of generality we may assume $v=1$. Since $y$ is dominant there exist simple roots
$\alpha_1,\ldots,\alpha_n$ such that $w=s_{\alpha_n}\cdots s_{\alpha_1}$ and such that if we put
$y_i=s_{\alpha_i}\cdots s_{\alpha_1}y$ then $(\alpha_{i+1},y_i)>0$. By induction on $i$ we may assume
$x+y_i\in \Delta$. Then we have $s_{\alpha_{i+1}}(x+y_i)\in \Delta$ and hence
\begin{align*}
x+y_i-0\cdot {\alpha}_{i+1}&\in \Delta,\\
x+y_i-(\check{\alpha}_{i+1},x+y_i){\alpha}_{i+1}&\in \Delta.
\end{align*}
 Now note
\[
x+y_{i+1}=x+y_i-(\check{\alpha}_{i+1},y_i){\alpha}_{i+1}
\]
and hence since $x$ is dominant and therefore $(\check{\alpha}_{i+1},x)\ge 0$, we have
\[
0\le (\check{\alpha}_{i+1},y_i)\le (\check{\alpha}_{i+1},x+y_i).
\]
Thus $x+y_{i+1}$ is in the convex hull of $x+y_i$ and $s_{\alpha_{i+1}}(x+y_i)$ and therefore $x+y_{i+1}\in \Delta$.
\end{proof}
Let $\bar{\rho}=(1/2)\sum_{\rho\in \Phi^+}\rho$. Then it is well known that for each simple root we have
$s_\alpha(\bar{\rho})=\bar{\rho}-\alpha$. 
Put
\[
E_\ZZ=\{x\in E\mid \forall \rho\in \Phi:(\hat{\rho},x)\in \ZZ\}.
\]
For $w\in \Wscr$, $x\in E$ put $w{\ast} x=w(x+\bar{\rho})-\bar{\rho}$. This defines an (affine) action of
$\Wscr$ on $E$, which preserves $E_\ZZ$.
\begin{lemma}
Assume that $y\in E$ is dominant.
If $x\in E$ and $\alpha\in S$ is such that $(\alpha,x)<0$ then 
\begin{equation}
\label{ref-D.1-149}
(s_\alpha x,y)\ge (x,y).
\end{equation}
Moreover if $x\in E_\ZZ$ then also
\begin{equation}
\label{ref-D.2-150}
(s_\alpha {\ast} x,y)\ge (x,y).
\end{equation}
\end{lemma}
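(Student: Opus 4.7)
My plan is a short direct calculation; no serious obstacle is expected.

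First I would note that by the definition of the reflection,
\[
s_\alpha x = x - (\check\alpha,x)\,\alpha,
\]
so
\[
(s_\alpha x, y) - (x,y) = -(\check\alpha, x)\,(\alpha, y).
\]
Since $\alpha \in S \subset \Phi^+$ is a positive root and $y$ is dominant, we have $(\alpha, y) \ge 0$. Since $\check\alpha = 2\alpha/(\alpha,\alpha)$ is a positive multiple of $\alpha$, the hypothesis $(\alpha, x) < 0$ gives $(\check\alpha, x) < 0$, whence $-(\check\alpha, x)(\alpha, y) \ge 0$. This proves the first inequality.

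For the second inequality I would use the identity $s_\alpha(\bar\rho) = \bar\rho - \alpha$ recalled just above the lemma, which is equivalent to $(\check\alpha, \bar\rho) = 1$. Then
\[
s_\alpha \ast x = s_\alpha(x+\bar\rho) - \bar\rho = x - \bigl((\check\alpha, x)+1\bigr)\alpha,
\]
so
\[
(s_\alpha \ast x, y) - (x, y) = -\bigl((\check\alpha, x)+1\bigr)(\alpha, y).
\]
Again $(\alpha, y) \ge 0$, so it suffices to show $(\check\alpha, x) + 1 \le 0$. Now the hypothesis $x \in E_\ZZ$ forces $(\check\alpha, x) \in \ZZ$, and combined with $(\check\alpha, x) < 0$ from the first part, this yields $(\check\alpha, x) \le -1$, as required.

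The only mildly delicate point is remembering that $(\check\alpha, \bar\rho) = 1$ for simple roots $\alpha$ (equivalently, $\bar\rho$ equals the sum of fundamental weights); everything else is a one-line computation.
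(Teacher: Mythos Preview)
Your proof is correct and follows essentially the same approach as the paper: both compute $s_\alpha x = x - (\check\alpha,x)\alpha$ and $s_\alpha\ast x = x - ((\check\alpha,x)+1)\alpha$, then use $(\alpha,y)\ge 0$ together with $(\check\alpha,x)<0$ (resp.\ $(\check\alpha,x)\le -1$ from integrality) to conclude. You spell out a bit more detail, such as deriving $(\check\alpha,\bar\rho)=1$ from $s_\alpha(\bar\rho)=\bar\rho-\alpha$, but the argument is the same.
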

\begin{proof} We compute
\[
(s_\alpha x,y)=(x-(\check{\alpha},x)\alpha,y)=(x,y)-(\check{\alpha},x)(\alpha,y)\ge (x,y)
\]
using the fact that $y$ is dominant and hence $(\alpha,y)\ge 0$.
Similarly
\[
(s_\alpha {\ast} x,y)=(x-((\check{\alpha},x)+1)\alpha,y)=(x,y)-((\check{\alpha},x)+1)(\alpha,y)\ge (x,y)
\]
using now in addition that $(\check{\alpha},x)\in \ZZ$.
\end{proof}
\begin{corollary} \label{ref-D.3-151}
Assume that $y\in E$ is dominant and $x\in E$ is arbitrary. If $w\in \Wscr$ is such
that $wx$ is dominant then 
\[
(wx,y)\ge (x,y).
\]
If $x\in E_\ZZ$ and $w{\ast} x$ is dominant then
\[
(w{\ast} x,y)\ge (x,y).
\]
\end{corollary}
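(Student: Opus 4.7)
The plan is to prove both inequalities by induction on a count of root-positivity violations. For the linear inequality I would use
\[
N(x) = \bigl|\{\rho\in\Phi^+ : (\rho,x)<0\}\bigr|,
\]
and for the $\ast$-version the shifted count
\[
N^{\ast}(x) = \bigl|\{\rho\in\Phi^+ : (\rho,x+\bar{\rho})<0\}\bigr|.
\]
In each case the inductive step will use the immediately preceding lemma to peel off a simple reflection while the inner product with $y$ only increases.

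For the linear statement, the base case $N(x)=0$ says that $x$ is dominant; since $wx$ is also dominant and lies in the same $\Wscr$-orbit as $x$, uniqueness of dominant representatives in a $\Wscr$-orbit forces $wx=x$. For the inductive step, pick a simple $\alpha$ with $(\alpha,x)<0$; the preceding lemma gives $(s_\alpha x,y)\ge (x,y)$. A direct book-keeping argument (using that $s_\alpha$ permutes $\Phi^+\setminus\{\alpha\}$ and sends $\alpha$ to $-\alpha$) shows $N(s_\alpha x)=N(x)-1$, and since $w$ and $s_\alpha x$ are replaced by $ws_\alpha$ and $s_\alpha x$ with $(ws_\alpha)(s_\alpha x)=wx$ still dominant, the inductive hypothesis yields $(wx,y)\ge(s_\alpha x,y)\ge(x,y)$.

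For the $\ast$-version the same skeleton works, but I would first verify an affine subtlety: the hypothesis that $w\ast x$ is dominant forces $x+\bar{\rho}$ to lie off every root hyperplane. Indeed $w\ast x$ dominant means $(\alpha,w(x+\bar{\rho}))\ge (\alpha,\bar{\rho})>0$ for every simple $\alpha$, i.e.\ $w(x+\bar{\rho})$ is strictly dominant; if $x+\bar{\rho}$ lay on some wall $H_{\alpha_0}$ then $w(x+\bar{\rho})$ would lie on $H_{w\alpha_0}$, contradicting strict dominance. Consequently $(\check{\alpha},x)\ne -1$ for every root, and $(\rho,x+\bar{\rho})\le 0$ and $(\rho,x+\bar{\rho})<0$ agree. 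In the base case $N^{\ast}(x)=0$ the vector $x+\bar{\rho}$ is strictly dominant, so $x$ is dominant and the same uniqueness argument in the $\Wscr$-orbit of $x+\bar{\rho}$ gives $w\ast x=x$. For the inductive step one picks a simple $\alpha$ with $(\alpha,x+\bar{\rho})<0$, which entails $(\check{\alpha},x)\le -2$ and in particular $(\alpha,x)<0$, so the preceding lemma supplies $(s_\alpha\ast x,y)\ge (x,y)$; then $N^{\ast}(s_\alpha\ast x)=N^{\ast}(x)-1$ by the same bijective counting, and since $\ast$ is a group action we have $(ws_\alpha)\ast(s_\alpha\ast x)=w\ast x$ still dominant, so the inductive hypothesis closes the argument.

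The main obstacle is the affine case: one has to show that the count $N^{\ast}$ really does drop and that it never "gets stuck" at $(\check{\alpha},x)=-1$ (where $s_\alpha\ast x=x$). The key observation that unlocks this, and which I would highlight at the start of the $\ast$-case, is the no-wall consequence of $w\ast x$ being dominant; after that the two inductions run in parallel.
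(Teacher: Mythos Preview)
Your proof is correct and follows essentially the same approach as the paper's: iterate the preceding lemma along a chain of simple reflections moving $x$ (resp.\ $x+\bar{\rho}$) toward the dominant chamber. Your treatment of the $\ast$-case is in fact more careful than the paper's terse ``the argument is similar'', since you make explicit the no-wall observation (that $w\ast x$ dominant forces $x+\bar{\rho}$ off all root hyperplanes) which is exactly what guarantees the induction never stalls at $(\check{\alpha},x)=-1$.
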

\begin{proof} For the first inequality note that if $wx$ is dominant then it can be written as $s_{\alpha_n}\cdots s_{\alpha_1} x$
such that for each $x_i=s_{\alpha_i}\cdots s_{\alpha_1} x$ the inequality $(\alpha_{i+1},x_i)<0$ is satisfied. It now suffices to invoke \eqref{ref-D.1-149}. The 
argument for $w{\ast} x$ is similar, now using \eqref{ref-D.2-150}.
\end{proof}
\section{NCCRs for hypersurface singularities}
\label{Dao}
The following result is an analogue of \cite[Thm 2.7]{Dao} for odd dimensional hypersurface singularities. Combining Dao's result with ours\footnote{Hailong Dao was also aware of our result.} we obtain that isolated hypersurface singularities 
of dimension $\ge 4$ do not admit NCCRs.
\begin{proposition} Let $R$ be a commutative noetherian local $k$-algebra over an infinite field $k$ with an isolated singularity. Assume $\hat{R}\cong S/(f)$ where $S$ is an 
(equicharacteristic)  regular local ring and $f$ is a regular element. 
Assume $n:=\dim R$ is odd and $\ge 5$ and let $M$ be a reflexive $R$ module. If $\End_R(M)$ satisfies $S_4$ then $M$ is free.
\end{proposition}
\begin{proof} 
Taking a generic hyperplane section of $R$ we will reduce to the even dimensional case and then we invoke  \cite[Thm 2.7]{Dao}.
Put $\Lambda=\End_R(M)$ and let $P\in \Spec R$ have codimension $d\le 4$. Since $\Lambda$ satisfies $S_4$, $\Lambda_P$ has depth $d$ and hence is Cohen-Macaulay. Since
$\dim R\ge 5$ and $R$ has an isolated singularity we have that $R_P$ is regular local. Hence $\Lambda_P=\End_{R_P}(M_P)$ is projective and therefore by \cite[Thm.\ 4.4]{AG1} 
$M_P$ is projective. Hence $M$ is locally free in codimension $4$. It now follows from\footnote{We are grateful to Hailong Dao for suggesting this simplification of our original argument.}  \cite[Lemma 2.3]{Dao} (applied with $M=N$, $n=2$) 
that $\Ext^1_R(M,M)=0$.
%
%

Let $m$ be the maximal ideal of $R$. 
By \cite[Thm.\ 4.1]{Flenner} 
there exists a non-zero divisor $x\in m-m^2$ such that $\bar{R}=R/xR$ also has an isolated singularity. 
Note that if $\tilde{x}$ is a lift of $x$ in $S$ then $\bar{S}=S/\tilde{x}S$ is regular local and 
the completion of $\bar{R}$ is of the form $\bar{S}/(\bar{f})$.

Below we write $\overline{?}$ for $\bar{R}\otimes_R-$. We have a short exact sequence
\[
0\r M\xrightarrow{x} M\r \bar{M}\r 0
\]
which using $\Ext_R^1(M,M)=0$ implies $\bar{\Lambda}\cong \End_{\bar{R}}(\bar{M})$.
Since $\dim \bar{R}=n-1$ is even and $\ge 4$ and $\bar{\Lambda}$ satisfies $S_3$ it follows from \cite[Thm 2.7]{Dao} 
that $\bar{M}$ is a free $\bar{R}$-module. Hence  $\End_{\bar{R}}(\bar{M})$ is a matrix ring. Lifting idempotents we see that $\hat{\Lambda}=\End_{\hat{R}}(\hat{M})$ is
a matrix ring over $\hat{R}$. Hence $\hat{M}$ is isomorphic to $I^{\oplus m}$ for a reflexive $I$ ideal in $\hat{R}$. Since $\hat{R}$ is a complete intersection of dimension $\ge 4$ it is 
parafactorial \cite[Thm.\ 18.13]{Fossum} and since $\hat{R}$ has an isolated singularity 
 it follows that $\hat{R}$ is factorial.
Therefore $\hat{M}$ is free and hence projective. By faithfully flat descent for $\hat{R}/R$ it follows that $M$ is projective and hence free.
\end{proof}

\def\Spenko{\v{S}penko}\def\cprime{$'$} \def\cprime{$'$} \def\cprime{$'$}
\providecommand{\bysame}{\leavevmode\hbox to3em{\hrulefill}\thinspace}
\providecommand{\MR}{\relax\ifhmode\unskip\space\fi MR }
\providecommand{\MRhref}[2]{%
  \href{http://www.ams.org/mathscinet-getitem?mr=#1}{#2}
}
\providecommand{\href}[2]{#2}

\end{document}